\newcommand{\mydet}{\text{Det}}
\newtheorem{theorem}{Theorem}
\newtheorem{prop}{Proposition}
\newtheorem{lemma}{Lemma}
\newtheorem{corollary}{Corollary}
\newtheorem{definition}{Definition}
\newtheorem{remark}{Remark}
\newcommand{\bn}{\mathbb N}
\newcommand{\bq}{\mathbb Q}
\newcommand{\bz}{\mathbb Z}
\newcommand{\bof}{\mathbb F}
\newcommand{\qbar}{\bar{\mathbb Q}}
\newcommand{\hbq}{\hat{\bq}^{ur}}
\newcommand{\vw}{v_\varpi}
\newcommand{\co}{\mathcal O}
\newcommand{\p}{\mathfrak p}
\newcommand{\beq}{\begin{equation}}
\newcommand{\eeq}{\end{equation}}
\newcommand{\mnu}{\mu}
\DeclareMathOperator{\Ind}{Ind}
\DeclareMathOperator{\cyclo}{cyclo}
\DeclareMathOperator{\Exp}{Exp}
\DeclareMathOperator{\im}{im} 
\DeclareMathOperator{\Frob}{Frob} 
\DeclareMathOperator{\Hom}{Hom} 
\DeclareMathOperator{\Tr}{Tr}
\DeclareMathOperator{\Gal}{Gal}\DeclareMathOperator{\Aut}{Aut}
\DeclareMathOperator{\per}{per}
\DeclareMathOperator{\Det}{Det}
\DeclareMathOperator{\pr}{pr}
\def\rightiso{{~ \overset\sim\to ~}}
\DeclareMathOperator{\free}{{tf}}
\newcommand{\N}{{\mathbb N}}
\newcommand{\NN}{{\mathcal N}}
\newcommand{\OO}{{\mathcal O}}
\DeclareMathOperator{\tors}{{tors}}
\title[On the local Tamagawa number conjecture for Tate motives]
{On the local Tamagawa number conjecture for Tate motives over tamely ramified fields}
\author{J. Daigle and M. Flach}
\address{Department of Mathematics, Caltech, Pasadena CA 91125, USA}
\address{Department of Mathematics, Occidental College, Los Angeles CA 90041, USA}
\subjclass[2000]{Primary: 14F20, Secondary: 11G40, 18F10, 22A99}
\begin{document}
\begin{abstract} The local Tamagawa number conjecture, which was first formulated by Fontaine and Perrin-Riou, expresses the compatibility of the (global) Tamagawa number conjecture on motivic L-functions with the functional equation. The local conjecture was proven for Tate motives over finite unramified extensions $K/\bq_p$ by Bloch and Kato. We use the theory of $(\varphi,\Gamma)$-modules and a reciprocity law due to Cherbonnier and Colmez to provide a new proof in the case of unramified extensions, and to prove the conjecture for $\bq_p(2)$ over certain tamely ramified extensions.
\end{abstract}

\maketitle
\section{Introduction}

Let $K/\bq_p$ be a finite extension and $V$ a de Rham representation of $G_K:=\Gal(\bar{K}/K)$. The local Tamagawa number conjecture is a statement describing a certain $\bq_p$-basis of the determinant line $\det_{\bq_p} R\Gamma(K,V)$ of (continuous) local Galois cohomology up to units in $\bz_p^\times$. It was first formulated as conjecture $C_{EP}$ by Fontaine and Perrin-Riou \cite{fpr91}[4.5.4] and independently as the "local $\epsilon$-conjecture" by Kato \cite{kato932}[Conj. 1.8]. Both conjectures express compatibility of the (global) Tamagawa number conjecture on motivic L-functions with the functional equation. The fact that the local Tamagawa number conjecture is equivalent to this compatibility still constitutes its main interest. For example, the proof of the Tamagawa number conjecture for Dirichlet L-functions at integers $r\geq 2$ \cite{bufl06} uses the conjecture at $1-r$ and compatibility with the functional equation (no other more direct proof is known). In \cite{fk} Fukaya and Kato generalized \cite{kato932}[Conj. 1.8] to de Rham representations with coefficients in a possibly non-commutative $\bq_p$-algebra, and in fact to arbitrary $p$-adic families of local Galois representations.

In this paper we shall only consider Tate motives $V=\bq_p(r)$ with $r\geq 2$ (for the case $r=1$ see \cite{bleycobbe16}, \cite{breuning04}). If $K/\bq_p$ is {\em unramified} the local Tamagawa number conjecture for $\bq_p(r)$ was first proven by Bloch and Kato in their seminal paper \cite{bk88} on the global Tamagawa number conjecture, and has since been reproven by a number of authors (e.g. \cite{pr94}, \cite{bb05}). These later proofs also cover the case where $K/\bq_p$ is a cyclotomic extension, or more generally where $V$ is an abelian de Rham representations of $\Gal(\qbar_p/\bq_p)$ \cite{kato932}[Thm 4.1], \cite{venj13}. All proofs have two main ingredients: Iwasawa theory and a "reciprocity law". The latter is an explicit description of the exponential or dual exponential map for the deRham representation $V$, which however very often only holds in restricted situations (e.g. $V$ ordinary or absolutely crystalline).  The aim of this paper is to explore the application of the very general reciprocity law of Cherbonnier and Colmez \cite{chercol99}, which holds for arbitrary de Rham representations, to the local Tamagawa number conjecture for Tate motives.

In section \ref{conjectur} we shall give a first somewhat explicit statement (Prop. \ref{prop:cep}) which is equivalent to the local Tamagawa conjecture for $\bq_p(r)$ over an arbitrary Galois extension $K/\bq_p$. We shall in fact work with the refined equivariant conjecture over the group ring $\bz_p[\Gal(K/\bq_p)]$, following Fukaya and Kato \cite{fk}. In section \ref{sec:tame} we focus on the case where $p\nmid [K:\bq_p]$. In section \ref{sec:chercol} we state the reciprocity law of Cherbonnier and Colmez in the case of Tate motives. In section \ref{sec:unramified} we show that it also can be used to give a proof of the unramified case (which however has many common ingredients with the existing proofs). Finally, in section \ref{sec:tame2} we formulate our main result, Prop. \ref{explicit2}, which is a fairly explicit statement equivalent to the equivariant local Tamagawa number conjecture for $\bq_p(r)$ over $K/\bq_p$ with $p\nmid [K:\bq_p]$. We show that it can be used to prove some new cases, more specifically we have

\begin{prop} Assume $K/\bq_p$ is Galois of degree prime to $p$ and with ramification degree $e<p/4$. Then the equivariant local Tamagawa number conjecture holds for $V=\bq_p(2)$.
\end{prop}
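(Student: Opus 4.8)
The plan is to apply Prop.~\ref{explicit2}, which for $p\nmid[K:\bq_p]$ reduces the equivariant local Tamagawa number conjecture for $\bq_p(r)$ to a completely explicit identity over the group ring $\bz_p[G]$, $G:=\Gal(K/\bq_p)$, and then to verify that identity for $r=2$ under the hypothesis $e<p/4$. Since $K/\bq_p$ is tame, Noether's theorem makes $\OO_K$ a free $\bz_p[G]$-module of rank one, so a normal integral basis identifies the natural $\bz_p[G]$-lattice $\OO_K\cdot t^{-r}\subset D_{dR}(\bq_p(r))$ with $\bz_p[G]$. On the Galois side $H^i(K,\bq_p(r))$ vanishes for $i\neq1$ and the Bloch--Kato exponential $\exp\colon D_{dR}(\bq_p(r))\xrightarrow{\ \sim\ }H^1(K,\bq_p(r))$ is an isomorphism, so after splitting off the torsion subgroup, which is isomorphic to $H^0(K,\bq_p/\bz_p(r))$, the lattice $H^1(K,\bz_p(r))$ is a rank-one projective --- hence free, as $p\nmid|G|$ --- $\bz_p[G]$-module. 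The conjecture thus becomes the assertion that the change-of-basis element $\delta_r\in\bq_p[G]^\times$ relating these two $\bz_p[G]$-structures, as computed through $\exp$ and the Cherbonnier--Colmez reciprocity law of \S\ref{sec:chercol}, lies in $\bz_p[G]^\times$.

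It then remains to evaluate $\delta_2$. The case $r=2$ is favorable for several reasons: the factor $(r-1)!=1$, so the reciprocity law contributes no auxiliary $p$-power; the exponential for $\bq_p(2)$ is a genuine isomorphism, with no contribution from $H^0$, $H^2$, or the quotient $H^1/H^1_f$; and $H^1(K,\bz_p(2))$ is $p$-torsion free, since $K\cap\bq_p(\mu_{p^\infty})$ is tamely ramified of degree dividing $e<p/4$. What is left is a $p$-adic valuation computation: one writes $\delta_2$, via the reciprocity law, as a product of a period contribution $t^{\pm2}$ of valuation $\pm2/(p-1)$, of factors governing convergence of $\exp$ and $\log$, and of factors in $\OO_K[G]^\times$ whose valuations lie in $\tfrac1e\bz$, and one shows these recombine into a genuine element of $\OO_K[G]^\times\cap\bq_p[G]=\bz_p[G]$. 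The hypothesis $e<p/4$ --- equivalently $\tfrac1e\geq\tfrac4{p-1}$, so that the smallest positive valuation occurring in $\OO_K$ is at least $4v_p(t)$ --- is precisely what makes the slopes $\tfrac1e\bz$ introduced by the ramification coarse enough, relative to the scale $\tfrac1{p-1}$ on which $\exp$, $\log$ and the reciprocity law operate, for the comparison to go through with no loss of $p$-adic precision.

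I expect the main obstacle to be exactly this sharp integral estimate. The Cherbonnier--Colmez reciprocity law is naturally a statement at the level of $(\varphi,\Gamma)$-modules and of the Iwasawa algebra, and its strength --- unlike the reciprocity laws used in earlier proofs, which require $V$ to be ordinary or absolutely crystalline --- lies in its generality; but turning it into a statement precise enough to pin down the lattice $H^1(K,\bz_p(r))$ over $\bz_p[G]$, and not merely the $\bq_p[G]$-module it spans, is the crux. Tameness and the smallness of $e$ are used here to guarantee that the relevant $\Gamma$-invariants and coinvariants are $p$-torsion free, or have explicitly computable torsion, and that the norm-compatible systems of local units feeding the reciprocity law generate the predicted lattice integrally rather than a proper sublattice. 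A subsidiary descent from $K(\mu_{p^\infty})$ to $K$ is handled along the lines of \S\ref{sec:unramified}: since $p\nmid[K:\bq_p]$ the extension $K/\bq_p$ is independent of the cyclotomic tower, so $\Gal(K(\mu_{p^\infty})/\bq_p)$ splits off a direct factor $\Gamma$, and the descent reduces to the unramified and cyclotomic cases twisted by the tame part, which are already available; the remaining work is to promote the resulting comparison from $\bq_p[G]$-modules to $\bz_p[G]$-lattices.
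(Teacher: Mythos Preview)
Your proposal is a plan, not a proof, and the plan misses the actual mechanism by which the paper establishes the result. You correctly invoke Prop.~\ref{explicit2} as the reduction step, but that proposition does not reduce the conjecture to showing that some abstractly defined change-of-basis element $\delta_2$ is a unit; it reduces it to the concrete lattice identity (\ref{sigmavol}), namely that $L_2$ and $\co_K^{[n_1-2e]}$ have the same $\bz_p[\Sigma]$-volume, where $L_2$ is spanned by the values $(\nabla\alpha_i^{\sigma^{-1}})(\root e\of{\zeta_p-1})$ for a $\bz_p[\Sigma][[\gamma_1-1]]$-basis $\{\alpha_i\}$ of the isotypic piece $(A_K^{\psi=1}(1))^{[\eta]}$. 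Nothing in your outline gives any handle on such a basis or on the $\varpi$-valuations of its image, and that is the entire content of the argument.

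The paper's proof supplies exactly this. First, Prop.~\ref{mainestimate} extracts from the equation $\psi(a)=a$ sharp lower bounds on $l_\nu(a)$, the index below which all coefficients of $a\in A_K^{\psi=1}$ lie in $p^\nu\co_F$; this uses the explicit identity (\ref{monster}) and its mod $p$ and mod $p^2$ forms (Lemmas \ref{EK}, \ref{modp2}). Second, Prop.~\ref{existence} and Prop.~\ref{tamebasis} use these bounds together with Nakayama's Lemma to produce a basis $\{\alpha_i\}$ with prescribed leading terms $l(\alpha_i)$. Third, Lemmas \ref{notdivbyp} and \ref{divbyp} compute $\vw(\nabla\alpha_i)$ exactly, via a careful mod $p^2$ analysis of the two or three coefficients of $\alpha_i$ that can contribute to the leading $\varpi$-term. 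The conclusion (Prop.~\ref{r2prop}) is then a direct check that the resulting valuations $\vw(\nabla\alpha_i)$ match those of a $\bz_p[\Sigma]$-basis of $\co_K^{[n_1-2e]}$ after shifting at most two pairs $(q_j,q_{j+1})$ by $(p,p^{-1})$, which leaves the $\bz_p[\Sigma]$-volume unchanged.

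Your account of where the bound $e<p/4$ enters is also off. It has nothing to do with an inequality like $1/e\geq 4/(p-1)$ or with the valuation of $t$; it appears precisely in the proof of Lemma \ref{divbyp}, where one needs $\mu<e-(4/p-2/p^2)e$ to guarantee $\vw(a_i)\geq 3e(p-1)$ for a certain range of indices, and this is forced by $4e<p$. Likewise, $e<p/2$ is what makes Lemma \ref{notdivbyp} go through. Finally, your claim that $H^1(K,\bz_p(2))$ is $p$-torsion free is false in the setting of \S\ref{sec:tame1}, where $\zeta_p\in K$ is assumed; the torsion is handled explicitly in Corollary \ref{lemma:coho_of_zpr2} and contributes to $[C_\beta]$, not to the identity (\ref{sigmavol}) in the components with $\eta\vert_{\Delta_e}\neq 1$.
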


The only cases where the conjecture for tamely ramified fields was known previously are cyclotomic fields, i.e. where $e\mid p-1$, and in this case one can allow arbitrary $r$  \cite{pr94}, \cite{bb05}. We think that many more cases can be proven with Prop. \ref{explicit2} and hope to come back to this in a subsequent article.
\bigskip

{\em Acknowledgements:} We would like to thank the referee for a very careful reading of the manuscript which helped to improve our exposition a lot.

\section{The conjecture}\label{conjectur}

Throughout this paper $p$ denotes an odd prime. Let $K/\bq_p$ be an arbitrary finite Galois extension with group $G$ and $r\geq 2$. In this section we shall explicate the consequences of the local Tamagawa number conjecture of Fukaya and Kato \cite{fk}[Conj. 3.4.3] for the triple $$(\Lambda,T,\zeta)=(\bz_p[G],\Ind^{G_{\bq_p}}_{G_K}\bz_p(1-r),\zeta).$$ Here $\zeta=(\zeta_{p^n})_n\in\Gamma(\qbar_p,\bz_p(1))$ is a compatible system of $p^n$-th roots of unity which we fix throughout this paper. The conjectures for a triple $(\Lambda,T,\zeta)$ and its dual $(\Lambda^{op},T^*(1),\zeta)$ are equivalent. We find it advantageous to work with $\bq_p(1-r)$ rather than $\bq_p(r)$ as in \cite{bk88} since we are employing the Cherbonnier-Colmez reciprocity law \cite{chercol99} which describes the dual exponential map.

In order to give an idea what the conjecture is about consider the Bloch-Kato exponential map \cite{bk88}
$$ \exp: K\xrightarrow{\sim}H^1(K,\bq_p(r)).$$
In a first approximation one may say that the local Tamagawa number conjecture describes the relation between the two $\bz_p$-lattices $\exp(\co_K)$ and
$\im(H^1(K,\bz_p(r)))$ inside $H^1(K,\bq_p(r))$. Rather than giving a complete description of the relative position of these two lattices, the conjecture only specifies their relative volume, that is the class in $\bq_p^\times/\bz_p^\times$ which multiplies $\mydet_{\bz_p}\exp(\co_K)$
to $\mydet_{\bz_p}(\im(H^1(K,\bz_p(r))))$ inside the $\bq_p$-line $\mydet_{\bq_p}H^1(K,\bq_p(r))$. The equivariant form of the conjecture is a finer statement which arises by replacing determinants over $\bz_p$ by determinants over $\bz_p[G]$. If $G$ is abelian and $\im(H^1(K,\bz_p(r)))$ is projective over $\bz_p[G]$, the conjecture thereby does specify the relative position of the two lattices in view of the fact that  $H^1(K,\bq_p(r))$ is free of rank one over $\bq_p[G]$ and so coincides with its determinant. If $G$ is non-abelian, even though $H^1(K,\bq_p(r))$ remains free of rank one over $\bq_p[G]$, the conjecture is an identity in the algebraic K-group $K_1(\bq^{ur}_p[G]))/K_1(\bz^{ur}_p[G]))$ and is again quite a bit weaker than a full determination of the relative position of the two lattices.

Determinants in the sense of \cite{deligne87} (see also \cite{fk}[1.2]) are only defined for modules of finite projective dimension, or more generally perfect complexes, and so
the first step is to replace the $\bz_p$-lattice $\im(H^1(K,\bz_p(r)))$ by the entire perfect complex $R\Gamma(K,\bz_p(r))$. There still is an isomorphism
 \begin{equation}R\Gamma(K,\bz_p(r))\otimes_{\bz_p}\bq_p\cong R\Gamma(K,\bq_p(r))\cong H^1(K,\bq_p(r))[-1]\label{e1}\end{equation}
since the groups $H^1(K,\bz_p(r))_{tor}$ and $H^2(K,\bz_p(r))$ are finite. If $K/\bq_p$ is Galois with group $G$ then $R\Gamma(K,\bz_p(r))$ is always a perfect complex of $\bz_p[G]$-modules whereas $\im(H^1(K,\bz_p(r)))$ or $\co_K$ need no longer have finite projective dimension over $\bz_p[G]$. A further simplification occurs if one does not try to compare $R\Gamma(K,\bz_p(r))$ to $\exp(\co_K)$ directly. Instead one uses the "period isomorphism"
$$ \per: \qbar_p\otimes_{\bq_p}K\cong \qbar_p\otimes_{\bq_p}\left(\Ind^{G_{\bq_p}}_{G_K}\bq_p\right)\cong\qbar_p[G]$$
and tries to compare $\mydet_{\bz_p}R\Gamma(K,\bz_p(r))$ to a suitable lattice in this last space. The left-$\bz_p[G]$-module $\Ind^{G_{\bq_p}}_{G_K}\bz_p$ is always free  of rank one whereas $\co_K$ need not be. After choosing an embedding $K\to\qbar_p$ one gets an isomorphism $\psi:G_{\bq_p}/G_K\cong G$ and an isomorphism
\begin{equation}\Ind^{G_{\bq_p}}_{G_K}\bz_p\cong\bz_p[G]\label{induced}\end{equation} so that the $\bz_p[G]$-linear left action of $\gamma\in G_{\bq_p}$  is given by
\begin{equation} \bz_p[G]\ni x\mapsto x\psi(\gamma^{-1}).\label{ra}\end{equation}The period isomorphism is then given for $x\in K$ by
$$ \per(x):=\per(1\otimes x)=\sum_{g\in G} g(x)\cdot g^{-1}\in \qbar_p[G].$$
The dual of $\exp$ identifies with the dual exponential map
$$ \exp^*_{\bq_p(r)}: H^1(K,\bq_p(1-r))\to K$$
by local Tate duality and the trace pairing on $K$.
Let $\beta\in H^1(K,\bz_p(1-r))$ be an element spanning a free $\bz_p[G]$-submodule and let $C_\beta$ be the mapping cone of the ensuing map of perfect complexes of $\bz_p[G]$-modules $$(\bz_p[G]\cdot\beta)[-1]\to H^1(K,\bz_p(1-r))[-1]\to R\Gamma(K,\bz_p(1-r)).$$ Then $C_\beta$ is a perfect complex of $\bz_p[G]$-modules with finite cohomology groups, i.e. such that $C_\beta\otimes_{\bz_p}\bq_p$ is acyclic. It therefore represents a class $[C_\beta]$ in the relative $K$-group $K_0(\bz_p[G],\bq_p)$ for which one has an exact sequence
$$ K_1(\bz_p[G])\to K_1(\bq_p[G])\to K_0(\bz_p[G],\bq_p) \to 0.$$
Hence we may also view $[C_\beta]$ as an element in $K_1(\bq_p[G])/\im(K_1(\bz_p[G]))$.
Extending scalars to $\qbar_p$ we get an isomorphism of free rank one $\qbar_p[G]$-modules
$$ H^1(K,\bq_p(1-r))\otimes_{\bq_p}\qbar_p\xrightarrow{\exp^*\otimes\qbar_p}K\otimes_{\bq_p}\qbar_p\xrightarrow{\per}\qbar_p[G]$$
sending the $\qbar_p[G]$-basis $\beta$ to a unit  $\per(\exp^*(\beta))\in \qbar_p[G]^\times$. As such it has a class $$[\per(\exp^*(\beta))] \in K_1(\qbar_p[G])$$ via the natural projection map $\qbar_p[G]^\times\to K_1(\qbar_p[G])$ (recall that for any ring $R$ we have maps $R^\times\to GL(R)\to GL(R)^{ab}=:K_1(R)$). In section \ref{eps} below we shall define an $\epsilon$-factor $\epsilon(K/\bq_p,1-r)\in K_1(\qbar_p[G])$ so that
$$\epsilon(K/\bq_p,1-r)\cdot[\per(\exp^*(\beta))] \in K_1(\bq^{ur}_p[G]).$$
Let $F\subseteq K$ denote the maximal unramified subfield, $\Sigma=\Gal(F/\bq_p)$ and $\sigma\in\Sigma$ the (arithmetic) Frobenius automorphism. Then $\bq_p[\Sigma]$ is canonically a direct factor of $\bq_p[G]$ and $\bq_p[\Sigma]^\times\cong K_1(\bq_p[\Sigma])$ a direct factor of $K_1(\bq_p[G])$. For $\alpha\in\bq_p[\Sigma]^\times$ we denote by $[\alpha]_F$ its class in $K_1(\bq_p[G])$.
Finally, note that if $R$ is a $\bq$-algebra then any nonzero rational number $n$ has a class $[n]\in K_1(R)$ via $\bq^\times\to R^\times\to K_1(R)$.
\bigskip

Then one has
\begin{prop} Let $K/\bq_p$ be Galois with group $G$ and $r\geq 2$. The local Tamagawa number conjecture for the triple $$(\Lambda,T,\zeta)=(\bz_p[G],\Ind^{G_{\bq_p}}_{G_K}\bz_p(1-r),\zeta).$$ is equivalent to the identity
\begin{equation}[(r-1)!]\cdot\epsilon(K/\bq_p,1-r)\cdot[\per(\exp^*(\beta))]\cdot [C_\beta]^{-1}\cdot\left[\frac{1-p^{r-1}\sigma}{1-p^{-r}\sigma^{-1}}\right]_F=1\label{cep}\end{equation}
in the group $K_1(\bq^{ur}_p[G]))/\im K_1(\bz^{ur}_p[G]))$.
\label{prop:cep}\end{prop}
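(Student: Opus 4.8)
The plan is to unwind Fukaya--Kato's conjecture \cite{fk}[Conj. 3.4.3] for the triple $(\bz_p[G],T,\zeta)$, $T=\Ind^{G_{\bq_p}}_{G_K}\bz_p(1-r)$, term by term, and to match each term with one of the factors in \eqref{cep}. Recall that the conjecture asserts the existence of a $\bz_p[G]$-linear trivialisation, the local $\epsilon$-isomorphism
$$\epsilon_{\bz_p[G]}(T)\colon\ \mathbf{1}_{\bz_p[G]}\ \xrightarrow{\ \sim\ }\ \Delta_{\bz_p[G]}(T),$$
where $\Delta_{\bz_p[G]}(T)$ is the invertible $\bz_p[G]$-module built from $\det_{\bz_p[G]}R\Gamma(K,T)$ and $\det_{\bz_p[G]}T$ (the graded degrees cancelling by the local Euler characteristic formula), characterised by the axioms of \cite{fk}: functoriality in the coefficient ring, compatibility with the duality $T\leftrightarrow T^*(1)$, and --- since $V:=T\otimes_{\bz_p}\bq_p$ is de Rham --- the requirement that after $\otimes_{\bq_p}\qbar_p$ the map $\epsilon_{\bz_p[G]}(T)$ agree with the explicit \emph{de Rham $\epsilon$-isomorphism} $\epsilon_{dR}$, assembled from the Bloch--Kato fundamental exact sequences, the $D_{dR}$-comparison isomorphism, the $\Gamma$-factor of $V$ and the de Rham $\epsilon$-constant of $V$. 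The first step is therefore to fix a $\qbar_p[G]$-basis of $\Delta_{\bz_p[G]}(T)\otimes_{\bz_p}\qbar_p$ and, relative to it, to express both $\epsilon_{dR}$ and the integral structure $\det_{\bz_p[G]}R\Gamma(K,T)$ as elements of $K_1(\qbar_p[G])$.

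The second step is to evaluate each ingredient for $V=\bq_p(1-r)$ with $r\ge 2$, where everything degenerates because $V$ is crystalline with all Hodge--Tate weights $\ge 1$. Then $H^0(K,V)=H^2(K,V)=0$ and $H^1_f(K,V)=0$, so $R\Gamma(K,T)$ is a perfect complex of $\bz_p[G]$-modules with $R\Gamma(K,V)\cong H^1(K,V)[-1]$ and $H^1(K,V)$ free of rank one over $\bq_p[G]$; the chosen $\beta$ trivialises $\det_{\bq_p[G]}H^1(K,V)$, and the discrepancy between this trivialisation and $\det_{\bz_p[G]}R\Gamma(K,T)$ is the class $[C_\beta]$, which enters \eqref{cep} inverted because of the degree-one shift. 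Since $t_V=D_{dR}(V)/\mathrm{Fil}^0D_{dR}(V)=0$, the only de Rham contribution is $\det D_{dR}(V)$ with $D_{dR}(V)\cong K$, and the comparison appearing in $\Delta_{\bz_p[G]}(T)\otimes\qbar_p$ is realised by the dual exponential $\exp^*_{\bq_p(r)}\colon H^1(K,\bq_p(1-r))\xrightarrow{\sim}K$ of the introduction; composing with $\per$ sends $\beta$ to $\per(\exp^*(\beta))$. The residual normalising factors produced by the Bloch--Kato sequences and by the $\Gamma$- and $\epsilon$-constants of $V$ are: the $\Gamma$-factor at the Hodge--Tate weight $r-1$, which is $\Gamma(r)=[(r-1)!]$; the Euler factors $\det(1-\varphi\mid D_{cris})$ of $V$ and of $V^*(1)=\bq_p(r)$, which over the maximal unramified subfield $F$ are $1-p^{r-1}\sigma$ and $1-p^{-r}\sigma^{-1}$ and together contribute $\left[\frac{1-p^{r-1}\sigma}{1-p^{-r}\sigma^{-1}}\right]_F$; and the de Rham $\epsilon$-constant $\epsilon(K/\bq_p,1-r)\in K_1(\qbar_p[G])$ of section \ref{eps}.

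Assembling these identifications, $\epsilon_{dR}$ --- read off in $K_1(\qbar_p[G])$ relative to the fixed basis --- equals
$$[(r-1)!]\cdot\epsilon(K/\bq_p,1-r)\cdot[\per(\exp^*(\beta))]\cdot\left[\frac{1-p^{r-1}\sigma}{1-p^{-r}\sigma^{-1}}\right]_F,$$
while the integral structure $\det_{\bz_p[G]}R\Gamma(K,T)$ is represented, relative to the same basis, by $[C_\beta]$. By the functoriality and de Rham axioms, the existence of a $\bz_p[G]$-linear $\epsilon$-isomorphism as in \cite{fk}[Conj. 3.4.3] is equivalent to the statement that $\epsilon_{dR}$ descends to $\bz_p[G]$, which in turn (using the integrality statement recalled before the proposition, namely that $\epsilon(K/\bq_p,1-r)\cdot[\per(\exp^*(\beta))]\in K_1(\bq^{ur}_p[G])$) says exactly that the class of $\epsilon_{dR}\cdot[C_\beta]^{-1}$ is trivial in $K_1(\bq^{ur}_p[G])/\im K_1(\bz^{ur}_p[G])$ --- and this is precisely \eqref{cep}. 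That one may equivalently work with the dual triple $(\bz_p[G]^{op},T^*(1),\zeta)$, i.e.\ with $\exp^*$ in place of $\exp$, is guaranteed by the duality axiom, as already noted.

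The main obstacle is not conceptual but a matter of bookkeeping the normalisations: one must reconcile the conventions of \cite{fk} with the classical ones of \cite{bk88} --- geometric versus arithmetic Frobenius on $F$, the exact Tate twist and degree shift hidden in $\Delta_{\bz_p[G]}(T)$, the normalisation of $D_{dR}$ and of the de Rham $\epsilon$-constant --- and in particular to locate the factor $[(r-1)!]$ correctly. That factorial arises from the discrepancy between the period entering $\epsilon_{dR}$ and the one implicit in $\exp^*_{\bq_p(r)}$, equivalently from the $\Gamma$-factor at the Hodge--Tate weight $r-1$; pinning down its position, together with the exact powers of $p$ and the orientation of $\sigma$ in the Euler factor, is where sign errors most readily creep in, but this is reference-chasing rather than new mathematics.
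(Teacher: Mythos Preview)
Your proposal is correct and follows essentially the same approach as the paper: both unwind the Fukaya--Kato de Rham $\epsilon$-isomorphism $\epsilon_{\bq_p[G],\zeta}(V)=\Gamma_{\bq_p[G]}(V)\cdot\epsilon_{\bq_p[G],\zeta,dR}(V)\cdot\theta_{\bq_p[G]}(V)$ term by term and match each factor with one in \eqref{cep}. The paper carries out the bookkeeping you flag at the end more explicitly --- writing down the complexes $C_f(\bq_p,V)$ and $C_f(\bq_p,V^*(1))$, tracking the Euler factors through commutative diagrams for $\eta(\bq_p,V)$ and $\eta(\bq_p,V^*(1))^{*,-1}$, and verifying the identity $\lambda=[C_\beta]^{-1}$ via the conventions of \cite{fk}[1.3.8, Thm.~1.3.15(ii)] --- but the architecture is the same.
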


Before we begin the proof of the proposition we explain what we mean by the local Tamagawa number conjecture for $(\bz_p[G],\Ind^{G_{\bq_p}}_{G_K}\bz_p(1-r),\zeta)$. The local Tamagawa number conjecture \cite{fk}[Conj. 3.4.3] claims the existence of $\epsilon$-isomorphisms $\epsilon_{\Lambda,\zeta}(T)$ for all triples $(\Lambda,T,\zeta)$ where $\Lambda$ is a semilocal pro-$p$ ring satisfying a certain finiteness condition \cite{fk}[1.4.1], $T$ a finitely generated projective $\Lambda$-module with continuous $G_{\bq_p}$-action and $\zeta$ a basis of $\Gamma(\qbar_p,\bz_p(1))$, such that certain functorial properties hold. One of these properties \cite{fk}[Conj. 3.4.3 (v)] says that if $L:=\Lambda\otimes_{\bz_p}\bq_p$ is a finite extension of $\bq_p$ and $V:=T\otimes_{\bz_p}\bq_p$ is a de Rham representation, then $$\tilde{L}\otimes_{\tilde{\Lambda}} \epsilon_{\Lambda,\zeta}(T)=\epsilon_{L,\zeta}(V)$$ where $\epsilon_{L,\zeta}(V)$
is the isomorphism in $C_{\tilde{L}}$ defined in \cite{fk}[3.3]. Here, for any ring $R$, $C_R$ is the Picard category constructed in \cite{fk}[1.2], equivalent to the category of virtual objects of \cite{deligne87}, $S\otimes_R-:C_R\to C_S$ is the Picard functor induced by a ring homomorphism $R\to S$ and $\tilde{R}=W(\bar{\bof}_p)\otimes_{\bz_p}R$ for any $\bz_p$-algebra $R$. The construction of $\epsilon_{L,\zeta}(V)$ involves certain isomorphisms and exact sequences which we recall in the proof below. If $A$ is a finite dimensional semisimple $\bq_p$-algebra and $V$ an $A$-linear de Rham representation those isomorphisms and exact sequences are in fact $A$-linear and therefore lead to an isomorphism $\epsilon_{A,\zeta}(V)$ in the category $C_{\tilde{A}}$. If $A:=\Lambda\otimes_{\bz_p}\bq_p$ is a semisimple $\bq_p$-algebra and $V:=T\otimes_{\bz_p}\bq_p$ is a de Rham representation, we say that the local Tamagawa number conjecture holds for the particular triple $(\Lambda,T,\zeta)$ if
$$\tilde{A}\otimes_{\tilde{\Lambda}} \epsilon_{\Lambda,\zeta}(T)=\epsilon_{A,\zeta}(V)$$
for some isomorphism $\epsilon_{\Lambda,\zeta}(T)$ in $C_{\tilde{\Lambda}}$.

\begin{proof} For a perfect complex of $\bq_p[G]$-modules $P$, we set $P^*=\Hom_{\bq_p[G]}(P,\bq_p[G])$ which is a perfect complex of $\bq_p[G]^{op}$-modules. Fix $r\geq 2$ and set \[V=\Ind^{G_{\bq_p}}_{G_K}\bq_p(1-r),  \text{  resp.  } V^*(1)=\Ind^{G_{\bq_p}}_{G_K}\bq_p(r)\] which is free of rank one over $\bq_p[G]$, resp. $\bq_p[G]^{op}$. We recall the ingredients of the isomorphism $\theta_{\bq_p[G]}(V)$ of \cite{fk}[3.3.2] (or rather of its generalization from field coefficients to semisimple coefficients). The element $\zeta$ determines an element $t=\log(\zeta)$ of $B_{dR}$. We have  \begin{align*} &D_{cris}(V)=F\cdot t^{r-1},\quad  D_{dR}(V)/D^0_{dR}(V)=0\\ &D_{cris}(V^*(1))=F\cdot t^{-r},\quad D_{dR}(V^*(1))/D^0_{dR}(V^*(1))=K,\end{align*}
\begin{align*} C_f(\bq_p,V)&: F\xrightarrow{1-p^{r-1}\sigma} F  \\
C_f(\bq_p,V^*(1))&: F\xrightarrow{(1-p^{-r}\sigma,\subseteq)} F\oplus K,\quad
\end{align*}
and commutative diagrams
\[\begin{CD}
\Det_{\bq_p[G]}(0) @>\eta(\bq_p,V)>> \Det_{\bq_p[G]}C_f(\bq_p,V)\cdot\Det_{\bq_p[G]}D_{dR}(V)/D^0_{dR}(V)\\
@AA[1-p^{r-1}\sigma]^{-1}_F A  @AAc A\\
\Det_{\bq_p[G]}(0) @>\eta'(\bq_p,V)>> \Det_{\bq_p[G]}(0)\cdot\Det_{\bq_p[G]}(0)^{-1}\cdot\Det_{\bq_p[G]}(0)
\end{CD}\]

\[\begin{CD}
\Det_{\bq_p[G]}(0) @>\eta(\bq_p,V^*(1))^{*,-1}>> \Det_{\bq_p[G]}C_f(\bq_p,V^*(1))^*\cdot(\Det_{\bq_p[G]}D_{dR}(V^*(1))/D^0_{dR}(V^*(1)))^*\\
@AA[1-p^{-r}\sigma^{-1}]_F A  @AAcA\\
\Det_{\bq_p[G]}(0) @>\eta'(\bq_p,V^*(1))^{*,-1}>> \Det_{\bq_p[G]}(0)\cdot\Det_{\bq_p[G]}(K^*)^{-1}
\cdot\Det_{\bq_p[G]}(K^*)
\end{CD}\]

\[\begin{CD}
\Det_{\bq_p[G]}C_f(\bq_p,V^*(1))^* @>\Det_{\bq_p[G]}\Psi_f(\bq_p,V^*(1))^{*,-1}>> \Det_{\bq_p[G]}\bigl(C(\bq_p,V)/C_f(\bq_p,V)\bigr)\\
@AAc A  @AA c A\\
\Det_{\bq_p[G]}(K^*)^{-1} @>\Psi'>> \Det_{\bq_p[G]}H^\bullet(\bq_p,V)
\end{CD}\]
where the vertical maps $c$ are induced by passage to cohomology. The morphism $\Psi'$ is ($\Det_{\bq_p[G]}^{-1}$ of) the inverse of the isomorphism
\[ H^1(\bq_p,V)\xrightarrow{T}H^1(\bq_p,V^*(1))^*\xrightarrow{\exp_{V^*(1)}^*}K^*\]
where $T$ is the local Tate duality isomorphism. For the isomorphism
\[ \theta_{\bq_p[G]}(V)=\eta(\bq_p,V)\cdot\left(\Det_{\bq_p[G]}\Psi_f(\bq_p,V^*(1))^{*,-1}\circ\eta(\bq_p,V^*(1))^{*,-1}\right)\]
we obtain a commutative diagram
\[\begin{CD}
\Det_{\bq_p[G]}(0) @>\theta_{\bq_p[G]}(V)>> \Det_{\bq_p[G]}C(\bq_p,V)\cdot\Det_{\bq_p[G]}D_{dR}(V)\\
@AA\left[\frac{1-p^{-r}\sigma^{-1}}{1-p^{r-1}\sigma}\right]_F A  @AAc A\\
\Det_{\bq_p[G]}(0) @>\theta'>> \Det_{\bq_p[G]}H^\bullet(\bq_p,V)\cdot \Det_{\bq_p[G]}(K)
\end{CD}\]
where $\theta'$ is induced by the dual exponential map
\[ H^1(\bq_p,V)\xrightarrow{\exp_{V^*(1)}^*}K.\]
The isomorphism $\Gamma_{\bq_p[G]}(V)\cdot\epsilon_{\bq_p[G],\zeta,dR}(V)$ of \cite{fk}[3.3.3] is the isomorphism
\[ [(-1)^{r-1}(r-1)!]\cdot\epsilon(K/\bq_p,1-r)\cdot \Det_{\qbar_p[G]}(\per) \]
and the isomorphism
\[ \epsilon_{\bq_p[G],\zeta}(V)=\Gamma_{\bq_p[G]}(V)\cdot\epsilon_{\bq_p[G],\zeta,dR}(V)\cdot\theta_{\bq_p[G]}(V)\]
fits into a commutative diagram
\[\begin{CD}
\Det_{\bq_p^{ur}[G]}(0) @>\epsilon_{\bq_p[G],\zeta}(V)>>\bq_p^{ur}[G]\underset{\bq_p[G]}{\otimes} \left(\Det_{\bq_p[G]}R\Gamma(K,\bq_p(1-r))\cdot\Det_{\bq_p[G]}(V)\right)\\
@AA\left[\frac{1-p^{-r}\sigma^{-1}}{1-p^{r-1}\sigma}\right]_F  A  @AAc A\\
\Det_{\bq_p^{ur}[G]}(0) @>\theta''>>\bq_p^{ur}[G]\underset{\bq_p[G]}{\otimes} \left( \Det_{\bq_p[G]}^{-1}H^1(K,\bq_p(1-r))\cdot \Det_{\bq_p[G]}(\bq_p[G])\right)
\end{CD}\]
where $$\theta''=[(-1)^{r-1}(r-1)!]\cdot\epsilon(K/\bq_p,1-r)\cdot \Det_{\qbar_p[G]}(\per)\cdot\theta'$$ and $c$ involves passage to cohomology as well as our identification $V\cong\bq_p[G]$ chosen above. Now passage to cohomology is also the scalar extension of the isomorphism
\[\Det_{\bz_p[G]}^{-1}(\bz_p[G]\cdot\beta)\cdot\Det_{\bz_p[G]}(C_\beta)\cong \Det_{\bz_p[G]}R\Gamma(K,\bz_p(1-r))\]
induced by the short exact sequence of perfect complexes of $\bz_p[G]$-modules
\[ 0\to R\Gamma(K,\bz_p(1-r))\to C_\beta\to \bz_p[G]\cdot\beta\to 0\]
combined with the acyclicity isomorphism
\[ \mathrm{can}:\Det_{\bq_p[G]}(0)\cong\Det_{\bq_p[G]}(C_{\beta,\bq_p}).\]
Since the class of $C_\beta$ in $K_0(\bz_p[G])$ vanishes we can choose an isomorphism
\[ a:\Det_{\bz_p[G]}(0)\cong\Det_{\bz_p[G]}(C_\beta)\]
which leads to another isomorphism
\[c':\Det_{\bz_p[G]}^{-1}(\bz_p[G]\cdot\beta)\cong \Det_{\bz_p[G]}R\Gamma(K,\bz_p(1-r))\]
defined over $\bz_p[G]$. Setting
\[\lambda:=(c_{\bq_p}')^{-1}c\in\Aut\left(\Det_{\bq_p[G]}^{-1}H^1(K,\bq_p(1-r))\right)=K_1(\bq_p[G])\]
we obtain a commutative diagram
\[\begin{CD}
\Det_{\bq_p^{ur}[G]}(0) @>\epsilon_{\bq_p[G],\zeta}(V)>>\bq_p^{ur}[G]\underset{\bq_p[G]}{\otimes} \left(\Det_{\bq_p[G]}R\Gamma(K,\bq_p(1-r))\cdot\Det_{\bq_p[G]}(V)\right)\\
@AA\left[\frac{1-p^{-r}\sigma^{-1}}{1-p^{r-1}\sigma}\right]_F  A  @AAc'_{\bq_p} A\\
\Det_{\bq_p^{ur}[G]}(0) @>\theta'''>>\bq_p^{ur}[G]\underset{\bq_p[G]}{\otimes} \left( \Det_{\bq_p[G]}^{-1}H^1(K,\bq_p(1-r))\cdot \Det_{\bq_p[G]}(\bq_p[G])\right)
\end{CD}\]
where
\[ \theta'''=\lambda\circ\theta''=\lambda\cdot [(-1)^{r-1}(r-1)!]\cdot\epsilon(K/\bq_p,1-r)\cdot \Det_{\qbar_p[G]}(\per)\cdot\theta'.\]
The local Tamagawa number conjecture claims that $\epsilon_{\bq_p[G],\zeta}(V)$ is induced by an isomorphism
\[\Det_{\bz_p^{ur}[G]}(0) \xrightarrow{\epsilon_{\bz_p[G],\zeta}(T)}\bz_p^{ur}[G]\underset{\bz_p[G]}{\otimes}
\left(\Det_{\bz_p[G]}R\Gamma(K,\bz_p(1-r))\cdot\Det_{\bz_p[G]}(T)\right)\]
and this will be the case if and only if
\[\theta^{\mathrm{iv}}:=\theta'''\cdot \left[\frac{1-p^{r-1}\sigma}{1-p^{-r}\sigma^{-1}}\right]_F \]
is induced by an isomorphism
\[\Det_{\bz_p^{ur}[G]}(0) \xrightarrow{\theta^{\mathrm{iv}}_{\bz_p[G]}}\bz_p^{ur}[G]\underset{\bz_p[G]}{\otimes} \left(\Det_{\bz_p[G]}^{-1}(\bz_p[G]\cdot\beta)\cdot\Det_{\bz_p[G]}(\bz_p[G])\right).\]
The isomorphism of $\qbar_p[G]$-modules
$$ \tau: H^1(K,\bq_p(1-r))\otimes_{\bq_p}\qbar_p\xrightarrow{\exp^*\otimes\qbar_p}K\otimes_{\bq_p}\qbar_p\xrightarrow{\per}
\qbar_p[G]\xrightarrow{\cdot \per(\exp^*(\beta))^{-1}}\qbar_p[G] $$
is clearly induced by an isomorphism of $\bz_p[G]$-modules
$$ \tau_{\bz_p[G]}: \bz_p[G]\cdot\beta \xrightarrow{\sim}\bz_p[G]$$
and we have
\[\theta^{\mathrm{iv}}=\left[\frac{1-p^{r-1}\sigma}{1-p^{-r}\sigma^{-1}}\right]_F\cdot\lambda\cdot [(-1)^{r-1}(r-1)!]\cdot\epsilon(K/\bq_p,1-r)\cdot[\per(\exp^*(\beta))]\cdot\Det_{\qbar_p[G]}(\tau).\]
Hence $\theta^{\mathrm{iv}}$ is induced by an isomorphism $\theta^{\mathrm{iv}}_{\bz_p[G]}$ if and only if
the class in $K_1(\bq_p^{ur}[G])$ of
\[\left[\frac{1-p^{r-1}\sigma}{1-p^{-r}\sigma^{-1}}\right]_F\cdot\lambda\circ [(-1)^{r-1}(r-1)!]\cdot\epsilon(K/\bq_p,1-r)\cdot [\per(\exp^*(\beta))]\]
lies in  $K_1(\bz_p^{ur}[G])$. Now note that $[(-1)]\in K_1(\bz)\subset K_1(\bz_p^{ur}[G])$ and that $\lambda=[C_\beta]^{-1}$, so we do indeed obtain identity (\ref{cep}). In order to see this last identity note that we have
\[ \lambda^{-1}
=a^{-1}\cdot\mathrm{can}\]
and that $a^{-1}\cdot\mathrm{can}\in K_1(\bq_p^{ur}[G])$ is a lift of $[C_\beta]\in K_0(\bz_p^{ur}[G],\bq_p)$ according to the conventions of \cite{fk}[1.3.8, Thm.1.3.15 (ii)].
\end{proof}

\subsection{Description of $K_1$} For any finite group $G$ we have the Wedderburn decomposition
\[\qbar_p[G]\cong \prod_{\chi\in\hat{G}}M_{d_\chi}(\qbar_p)\]
 where $\hat{G}$ is the set of irreducible $\qbar_p$-valued characters of $G$ and $d_\chi=\chi(1)$ is the degree of $\chi$. Hence a corresponding decomposition
\begin{equation} K_1(\qbar_p[G])\cong \prod_{\chi\in\hat{G}}K_1(M_{d_\chi}(\qbar_p))\cong \prod_{\chi\in\hat{G}}\qbar_p^\times\label{k1dec}\end{equation}
which allows one to think of $K_1(\qbar_p[G])$ as a collection of nonzero $p$-adic numbers indexed by $\hat{G}$.
Note here that for any ring $R$ one has $K_1(M_d(R))=K_1(R)$ and for a commutative semilocal ring $R$ one has $K_1(R)=R^\times$.

If $p\nmid|G|$ then all characters $\chi\in\hat{G}$ take values in $\bz_p^{ur}$, the Wedderburn decomposition is already defined over $\bz_p^{ur}$ and so is the decomposition of $K_1$. One has
\begin{equation} K_1(\bz_p^{ur}[G])\cong \prod_{\chi\in\hat{G}}K_1(M_{d_\chi}(\bz^{ur}_p))\cong \prod_{\chi\in\hat{G}}\bz_p^{ur,\times}\notag\end{equation}
and
\begin{equation} K_1(\bq_p^{ur}[G])/\im(K_1(\bz_p^{ur}[G]))\cong\prod_{\chi\in\hat{G}}\bq_p^{ur,\times}/\bz_p^{ur,\times}\cong
\prod_{\chi\in\hat{G}}p^\bz\label{k1urdec}\end{equation}
which allows one to think of elements in $K_1(\bq_p^{ur}[G])/\im(K_1(\bz_p^{ur}[G]))$ as a collection of integers ($p$-adic valuations) indexed by $\hat{G}$.

\subsection{Definition of the $\epsilon$-factor}\label{eps} If $L$ is a local field, $E$ an algebraically closed field of characteristic $0$ with the discrete topology, $\mu_L$ a Haar measure on the additive group of $L$ with values in $E$, $\psi_L:L\to E^\times$ a continuous character, the theory of Langlands-Deligne \cite{deligne73} associates to each continuous representation $r$ of the Weil group $W_L$ over $E$ an $\epsilon$-factor $\epsilon(r,\psi_L,\mu_L)\in E^\times$.

We shall take $E=\qbar_p$ and always fix $\mu_L$ and $\psi_L$ so that $\mu(\co_L)=1$ and $\psi_L=\psi_{\bq_p}\circ\Tr_{L/\bq_p}$ where $\psi_{\bq_p}(p^{-n})=\zeta_{p^n}$ for our fixed $\zeta=(\zeta_{p^n})_n\in\Gamma(\qbar_p,\bz_p(1))$. Setting $$\epsilon(r):=\epsilon(r,\psi_L,\mu_L)\in E^\times$$ and leaving the dependence on $\zeta$ implicit, we have the following properties (see also \cite{bb05} for a review, \cite{fk} only reviews the case $L=\bq_p$). Let $\pi$ be a uniformizer of $\co_L$, $\delta_L$ the exponent of the different of $L/\bq_p$ and $q=|\co_L/\pi|$.
\begin{itemize}
\item[a)] If $r:W_L\to E^\times$ is a homomorphism, set
$$r_\sharp:L^\times\xrightarrow{\mathrm{rec}}W_L^{ab}\xrightarrow{r}E^\times$$
where $\mathrm{rec}$ is normalized as in \cite{deligne73}[(2.3)] and sends a uniformizer to a {\em geometric} Frobenius automorphism in $W_L^{ab}$. Then we have
$$\epsilon(r)=\begin{cases} q^{\delta_L} & \text{if $c=0$}\\ q^{\delta_L}r_\sharp(\pi^{c+\delta_L})\tau(r_\sharp,\psi_\pi) & \text{if $c>0$} \end{cases}$$ where $c\in\bz$ is the conductor of $r$ and
\begin{equation}\tau(r_\sharp,\psi_\pi)=\sum_{u\in(\co_L/\pi^c)^\times}r_\sharp^{-1}(u)\psi_\pi(u)\label{gaussdef}\end{equation} is the Gauss sum associated to the restriction of $r_\sharp$ to $(\co_L/(\pi^c))^\times$ and the additive character
$$u\mapsto\psi_\pi(u):=\psi_K(\pi^{-\delta_L-c}u)$$ of $\co_L/(\pi^c)$.
\item[b)] If $L/K$ is unramified then $\epsilon(r)=\epsilon(\Ind_{W_L}^{W_K}r)$ for any representation $r$ of $W_L$.
\item[c)] If $r(\alpha)$ is the twist of $r$ with the unramified character with $\Frob_L$-eigenvalue $\alpha\in E^\times$, and $c(r)\in\bz$ is the conductor of $r$, then $$\epsilon(r(\alpha))=\alpha^{-c(r)-\dim_E(r)\delta_L}\epsilon(r).$$
    Here $\Frob_L$ denotes the usual (arithmetic) Frobenius automorphism.
\end{itemize}

For a potentially semistable representation $V$ of $G_{\bq_p}$ one first forms $D_{pst}(V)$, a finite dimensional $\hbq_p$-vector space of dimension $\dim_{\bq_p}V$ with an action of $G_{\bq_p}$, semilinear with respect to the natural action of $G_{\bq_p}$ on $\hbq_p$ and discrete on the inertia subgroup. Moreover, $D_{pst}(V)$ has a $\Frob$-semilinear automorphism $\varphi$. The associated linear representation $r_V$ of $W_{\bq_p}$ over $E=\hbq_p$ is the space $D_{pst}(V)$ with action
$$r_V(w)(d)=\iota(w)\varphi^{-\nu(w)}(d)$$
where $\iota:W_{\bq_p}\to G_{\bq_p}$ is the inclusion and $\nu(w)\in\bz$ is such that $\Frob^{\nu(w)}$ is the image of $w$ in $G_{\bof_p}$.

From now on we are interested in  $V=(\Ind^{G_{\bq_p}}_{G_K}\bq_p)(1-r)$. Here one has $$D_{pst}(V)=(\Ind^{G_{\bq_p}}_{G_K}\hbq_p)\cdot t^{r-1},\quad\quad r_V=(\Ind_{W_K}^{W_{\bq_p}}\hbq_p)(p^{1-r})$$ and we notice that $r_V$ is the scalar extension from $\bq_p^{ur}$ to $\hbq_p$ of the representation $(\Ind_{W_K}^{W_{\bq_p}}\bq^{ur}_p)(p^{1-r})$. So completion of $\bq_p^{ur}$ is not needed in this example.  Associated to $r_V\otimes_{\bq_p^{ur}}\qbar_p$ is an $\epsilon$-factor in $\epsilon(r_V)\in\qbar_p^\times=K_1(\qbar_p)$. However, as explained above before (\ref{ra}), $r_V$ carries a left action of $\bq^{ur}_p[G]$ commuting with the left  $W_{\bq_p}$-action, so we will actually be able to associate to $r_V\otimes_{\bq_p^{ur}}\qbar_p$ a refined $\epsilon$-factor
$$ \epsilon(K/\bq_p,1-r)\in K_1(\qbar_p[G]).$$

For each $\chi\in\hat{G}$ define a representation $r_\chi$ of $W_{\bq_p}$ over $E=\qbar_p$ by
\begin{equation}W_{\bq_p}\xrightarrow{\iota} G_{\bq_p}\xrightarrow{\psi}G\xrightarrow{\rho_\chi} GL_{d_\chi}(E)\label{rdef}\end{equation}
where $\rho_\chi:G\to GL_{d_\chi}(E)$ is a homomorphism realizing $\chi$.
Let $E^{d_\chi}$ be the space of row vectors on which $G$ acts on the right via $\rho_\chi$ and define another  representation of $W_{\bq_p}$ over $E=\qbar_p$
 $$ r_{V,\chi}=E^{d_\chi}\otimes_{\bq^{ur}_p[G]}r_V=E^{d_\chi}\otimes_{\bq^{ur}_p[G]}(\Ind_{W_K}^{W_{\bq_p}}\bq^{ur}_p)(p^{1-r})\cong E^{d_\chi}.$$
By (\ref{ra}) the left $W_{\bq_p}$-action on this last space is given by the contragredient ${^t}\rho_\chi(\psi(g))^{-1}$ of $r_\chi$, twisted by the unramified character with eigenvalue $p^{1-r}$. So we have $$r_{V,\chi}\cong r_{\bar{\chi}}(p^{1-r})$$ where $\bar{\chi}$ is the contragredient character of $\chi$. We view the collection
\begin{equation} \epsilon(K/\bq,1-r):=(\epsilon(r_{V,\chi}))_{\chi\in\hat{G}}=(\epsilon(r_{\bar{\chi}})p^{(r-1)c(r_{\bar{\chi}})})_{\chi\in\hat{G}}\label{epscompute}\end{equation}
as an element of $K_1(\qbar_p[G])$ in the description (\ref{k1dec}).

\section{The conjecture in the case $p\nmid|G|$}\label{sec:tame}

From now on and for most of the rest of the paper we assume that $p$ does not divide  $|G|=[K:\bq_p]$. In particular $K/\bq_p$ is tamely ramified with maximal unramified subfield $F$. Although our methods probably extend to an arbitrary tamely ramified extension $K/\bq_p$ (i.e. where $p$ is allowed to divide $[F:\bq_p]$) this would add an extra layer of notational complexity which we have preferred to avoid. The group $G=\Gal(K/\bq_p)$ is an extension of two cyclic groups
\begin{align*}\Sigma&:=\Gal(F/\bq_p)\cong \bz/f\bz\\ \Delta&:=\Gal(K/F)\cong \bz/e\bz\end{align*}
where the action of $\sigma\in\Sigma$ on $\Delta$ is given by $\delta\mapsto \delta^p$ and we have $e\mid p^f-1$. By Kummer theory $K=F\left(\root e\of p_0\right)$ where $p_0\in (F^\times/(F^\times)^e)^\Sigma$ has order $e$. We can and will assume that $p_0$ has $p$-adic valuation one, and in fact that $p_0=\lambda\cdot p$ with $\lambda\in\mu_F$. Writing $p_0=\lambda'\cdot p_0'$ with $p_0'\in\bq_p$ we see that $K$ is contained in $F'(\root e\of{p_0'})$ where $F':=F(\root e\of{\lambda'})$ is unramified over $\bq_p$ and $p_0'$ is any choice of element in $\mu_{\bq_p}\cdot p=\mu_{p-1}\cdot p$. Since for the purpose of proving the local Tamagawa number conjecture we can always enlarge $K$, we may and will assume that $$K=F\left(\root e\of {p_0}\right),\quad p_0\in \mu_{p-1}\cdot p\subseteq \bq_p.$$
We then have
\[G=\Gal(K/\bq_p)\cong \Sigma\ltimes\Delta\]
since $\Gal(K/\bq_p(\root e\of p_0))$ is a complement of $\Delta$. If $(e,p-1)=1$ then the fields $K=F(\root e\of p_0)$ for $p_0\in \mu_{p-1}\cdot p$ are all isomorphic; in fact any Galois extension $K/\bq_p$ with invariants $e$ and $f$ is then isomorphic to the field $F(\root e\of p)$.

The choice of $p_0$ (in fact just the valuation of $p_0$) determines a character
\begin{equation}\eta_0:\Delta\xrightarrow{\sim}\mu_e\subset F^\times\subset \bq_p^{ur,\times}\subset\qbar_p^\times\label{eta0def}\end{equation}
by the usual formula $\delta\left(\root e\of p_0\right)=\eta_0(\delta)\cdot\root e\of p_0$. Let
$$ \eta:\Delta\to F^\times$$
be any character of $\Delta$ and
$$\Sigma_\eta:=\{g\in\Sigma | \forall \delta\in\Delta\,\,\eta(g\delta g^{-1})=\eta(\delta)\}$$
the stabilizer of $\eta$. Then for any character $\eta':\Sigma_\eta\to\bq_p^{ur,\times}$ we obtain a character
$$ \eta'\eta:G_\eta:=\Sigma_\eta\ltimes\Delta\to\bq_p^{ur,\times}$$
and an induced character
$$ \chi:=\Ind_{G_\eta}^G(\eta'\eta)$$
of $G$. By \cite{langalg}[Exerc. XVIII.7]  all irreducible characters of $G$ are obtained by this construction, and in fact each $\chi\in\hat{G}$ is parametrized by a unique pair $([\eta],\eta')$ where $[\eta]$ denotes the $\Sigma$-orbit of $\eta$.
The degree of $\chi$ is given by
\begin{equation} d_\chi=\chi(1)=f_\eta:=[\Sigma:\Sigma_\eta]=[F_\eta:\bq_p]   \end{equation}
where $F_\eta\subseteq F$ is the fixed field of $\Sigma_\eta$.

We have
$$ r_\chi=\Ind_{W_{F_\eta}}^{W_{\bq_p}}(r_{\eta'\eta} )
$$
where $r_\chi$ (resp. $r_{\eta'\eta}$) is the representation of $W_{\bq_p}$ (resp. $W_{F_\eta}$) defined as in (\ref{rdef}). By \cite{serre95}[Ch. VI. Cor. to Prop.4] we have $$c(r_\chi)=f_\eta c(r_\eta)=\begin{cases} 0 & \eta=1\\ f_\eta &\eta\neq 1. \end{cases}$$
Using b), c)  and a) of section \ref{eps} we have
\begin{equation} \epsilon(r_\chi)=\epsilon(r_{\eta'\eta})=\begin{cases} 1 & \eta=1 \\ \epsilon(r_\eta)r_{\eta'}(\Frob_{F_\eta})^{-c(r_\eta)}=\eta(\mathrm{rec}(p))\tau(r_{\eta,\sharp},\psi_p)\eta'(\sigma^{f_\eta})^{-1} & \eta\neq 1\end{cases}.\label{epscomp}\end{equation}

\subsection{Gauss sums} If $k_\eta$ denotes the residue field of $F_\eta$, we have a canonical character
$$\omega:k_\eta^\times\xleftarrow{\sim}\mu_{p^{f_\eta}-1}\subseteq F_\eta^\times\subseteq K^\times \subseteq\qbar_p^\times$$
where the first arrow is reduction mod $p$. On the other hand we have our character
\begin{equation} r_{\eta,\sharp}:F_\eta^\times\xrightarrow{\text{rec}}W^{ab}_{F_\eta}\xrightarrow{\iota} G_{F_\eta}^{ab}\xrightarrow{\psi}G_\eta^{ab}\xrightarrow{\eta} \qbar_p^\times \notag\end{equation}
of order dividing $e$. So there exists a unique $m_\eta\in\bz/e\bz$ such that
\begin{equation}r_{\eta,\sharp}\vert_{\mu_{p^{f_\eta}-1}}=\omega^{m_\eta(p^{f_\eta}-1)/e} \label{metadef}\end{equation}
and formula (\ref{gaussdef}) gives $$\tau(r_{\eta,\sharp},\psi_p)=\tau(\omega^{-m_\eta(p^{f_\eta}-1)/e})$$ where $$ \tau(\omega^{-i}):=\sum_{a\in k_\eta^\times}\omega(a)^{-i}\zeta_p^{\Tr_{k_\eta/\mathbb F_p}(a)} $$
is a Gauss sum associated to the finite field $k_\eta$. The $p$-adic valuation of these sums is known:

\begin{lemma}  For $0\leq i\leq p^{f_\eta}-1$ let $i=i_0+pi_1+p^2i_2+\cdots+i_{f_\eta-1}p^{f_\eta-1}$ be the $p$-adic expansion with digits $0\leq i_j\leq p-1$. Then $$v_p(\tau(\omega^{-i}))=\frac{i_0+i_1+\cdots+i_{f_\eta-1}}{p-1}=\sum_{j=0}^{f_\eta-1}\left\langle\frac{ip^j}{p^{f_\eta}-1}\right\rangle$$
where $v_p:\qbar_p^\times\to\bq$ is the $p$-adic valuation on $\qbar_p$ normalized by $v_p(p)=1$ and $0\leq\langle x\rangle<1$ is the fractional part of the real number $x$.
\end{lemma}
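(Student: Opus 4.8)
The statement has two independent parts: the combinatorial identity between the digit sum and the sum of fractional parts, and the computation of $v_p(\tau(\omega^{-i}))$ proper, which is Stickelberger's theorem on the prime decomposition of Gauss sums. I would prove them in that order.

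For the combinatorial identity, put $q=p^{f_\eta}$. Since $p^{f_\eta}\equiv 1\pmod{q-1}$, multiplication by $p^j$ on $\bz/(q-1)$ permutes length-$f_\eta$ base-$p$ digit strings by a cyclic shift: as $0\le i_k\le p-1$ there are no carries, so if $i^{(j)}\in\{0,\dots,q-2\}$ denotes the residue of $ip^j$ modulo $q-1$, its digit string is a cyclic shift of $(i_0,\dots,i_{f_\eta-1})$ and $\langle ip^j/(q-1)\rangle=i^{(j)}/(q-1)$. Summing over $j$ and interchanging the order of summation, $\sum_{j=0}^{f_\eta-1}i^{(j)}=\sum_{k=0}^{f_\eta-1}i_k\,(1+p+\dots+p^{f_\eta-1})=\tfrac{q-1}{p-1}\,(i_0+\dots+i_{f_\eta-1})$, which yields the identity after dividing by $q-1$. (If $(q-1)\mid i$ then $\omega^{-i}$ is trivial; in the application only nontrivial $\omega^{-i}$ occurs, so I restrict to $1\le i\le q-2$, the case $i=0$ being trivial on both sides.)

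For $v_p(\tau(\omega^{-i}))$ I would invoke the Gross--Koblitz formula. Fix $\pi\in\qbar_p$ with $\pi^{p-1}=-p$, so $v_p(\pi)=1/(p-1)$, normalized following Dwork so that $\zeta_p\equiv 1+\pi\pmod{\pi^2}$; then for $0<i<q-1$ one has
\[\tau(\omega^{-i})=-\,\pi^{\,i_0+\dots+i_{f_\eta-1}}\prod_{j=0}^{f_\eta-1}\Gamma_p\!\left(\left\langle\frac{ip^j}{q-1}\right\rangle\right),\]
where $\Gamma_p$ is Morita's $p$-adic Gamma function. Since $\Gamma_p$ takes values in $\bz_p^\times$, the product is a unit, so $v_p(\tau(\omega^{-i}))=(i_0+\dots+i_{f_\eta-1})\,v_p(\pi)=(i_0+\dots+i_{f_\eta-1})/(p-1)$; together with the first paragraph this is exactly the asserted formula. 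Alternatively, for a self-contained argument, one may use the classical Stickelberger congruence $\tau(\omega^{-i})\equiv -(i_0!\cdots i_{f_\eta-1}!)^{-1}\,(\zeta_p-1)^{\,i_0+\dots+i_{f_\eta-1}}\pmod{(\zeta_p-1)^{\,i_0+\dots+i_{f_\eta-1}+1}}$ in $\bz_p[\mu_{q-1},\zeta_p]$, whose leading coefficient lies in $\bz_p^\times$ because each $i_j<p$; either way the unit factor does not affect the valuation.

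The single genuinely nontrivial ingredient is Stickelberger's theorem (equivalently the Gross--Koblitz formula), which I would cite --- e.g.\ from Washington's or Lang's book on cyclotomic fields --- rather than reprove; pinning down the \emph{exact} power of the ramified uniformizer, rather than a mere lower bound, is where the content of the lemma lies. The digit identity, and the fact that $\Gamma_p$ (respectively the Stickelberger leading coefficient) is a unit, are routine.
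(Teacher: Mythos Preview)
Your proof is correct and aligns with the paper's approach: the paper's proof consists solely of the citation ``This is \cite{wash}[Prop.~6.13 and Lemma~6.14]'', i.e.\ exactly the Stickelberger/digit-sum results you invoke. Your write-up simply unpacks what lies behind that citation --- the cyclic-shift argument for the digit identity and the Stickelberger congruence (or Gross--Koblitz) for the valuation --- and correctly flags that the boundary case $i=q-1$ (where the digit-sum formula and the fractional-part sum disagree) is excluded in the application.
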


\begin{proof} This is \cite{wash}[Prop. 6.13 and Lemma 6.14].\end{proof}

\begin{corollary} For all $\eta\in\hat{\Delta}$ we have
$$v_p(\tau(r_{\eta,\sharp},\psi_p))=\sum_{j=0}^{f_\eta-1}\left\langle\frac{m_\eta p^j}{e}\right\rangle.$$
\label{epsval}\end{corollary}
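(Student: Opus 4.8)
The plan is to combine the previous lemma with the defining relation \eqref{metadef}. Recall that \eqref{metadef} asserts $r_{\eta,\sharp}|_{\mu_{p^{f_\eta}-1}}=\omega^{-i}$ with $i\equiv m_\eta(p^{f_\eta}-1)/e \pmod{p^{f_\eta}-1}$, so that the Gauss sum in question is $\tau(r_{\eta,\sharp},\psi_p)=\tau(\omega^{-i})$ for this value of $i$. Thus by the lemma we already have
\[ v_p(\tau(r_{\eta,\sharp},\psi_p))=\sum_{j=0}^{f_\eta-1}\left\langle\frac{ip^j}{p^{f_\eta}-1}\right\rangle. \]
The only thing left is to rewrite the right-hand side in terms of $m_\eta$ and $e$.

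The key step is the observation that $\left\langle\frac{ip^j}{p^{f_\eta}-1}\right\rangle$ depends only on $i \bmod (p^{f_\eta}-1)$, since multiplying $i$ by $p^j$ and reducing modulo $p^{f_\eta}-1$ is a well-defined operation on $\bz/(p^{f_\eta}-1)\bz$. So I may replace $i$ by the specific representative $m_\eta(p^{f_\eta}-1)/e$ (choosing any integer lift of $m_\eta\in\bz/e\bz$, which changes $i$ by a multiple of $p^{f_\eta}-1$ and hence does not change the fractional parts). Substituting,
\[ \left\langle\frac{ip^j}{p^{f_\eta}-1}\right\rangle=\left\langle\frac{m_\eta(p^{f_\eta}-1)p^j/e}{p^{f_\eta}-1}\right\rangle=\left\langle\frac{m_\eta p^j}{e}\right\rangle, \]
and summing over $0\le j\le f_\eta-1$ gives the claimed formula.

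There is really no serious obstacle here; the statement is essentially a cosmetic reformulation of the lemma, with the substance already contained in Washington's result quoted there. The one point that warrants a word of care is the well-definedness of the fractional-part expression under the choice of integer lift of $m_\eta$ and under reduction of $i$ modulo $p^{f_\eta}-1$: since $\langle x+n\rangle=\langle x\rangle$ for any integer $n$, and $p^j\cdot m_\eta(p^{f_\eta}-1)/e$ differs from $p^j\cdot i$ by an integer multiple of $p^{f_\eta}-1$ (using $e\mid p^{f_\eta}-1$, valid because $e\mid p^f-1\mid p^{f_\eta f/f_\eta}-1$ — here one uses $f_\eta\mid f$ and the fact that $\eta$ has order dividing $e$ so actually $e\mid p^{f_\eta}-1$), each term is unchanged. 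I would state this divisibility $e\mid p^{f_\eta}-1$ explicitly, since it is what makes $m_\eta(p^{f_\eta}-1)/e$ an integer and the rewriting legitimate. Given that, the corollary follows immediately.
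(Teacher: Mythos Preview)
Your argument is correct in outline and is exactly the substitution the paper has in mind (the paper states the corollary without proof, as an immediate consequence of the lemma). However, the one point you flag as needing care is also the one place where your justification slips: the claim $e\mid p^{f_\eta}-1$ is not valid in general. By definition $f_\eta$ is the order of $p$ modulo $\mathrm{ord}(\eta)$, so one only has $\mathrm{ord}(\eta)\mid p^{f_\eta}-1$; for instance with $p=3$, $e=8$, $f=2$ and $\eta$ of order $2$ one gets $f_\eta=1$ and $8\nmid 2$. What makes $m_\eta(p^{f_\eta}-1)/e$ an integer is instead that $\eta=\eta_0^{m_\eta}$ (Lemma~\ref{expequal}) forces $\mathrm{ord}(\eta)=e/\gcd(e,m_\eta)$, so
\[
\frac{m_\eta(p^{f_\eta}-1)}{e}=\frac{m_\eta}{\gcd(e,m_\eta)}\cdot\frac{p^{f_\eta}-1}{\mathrm{ord}(\eta)}
\]
is a product of two integers. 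With this correction in place, your substitution $\langle ip^j/(p^{f_\eta}-1)\rangle=\langle m_\eta p^j/e\rangle$ goes through verbatim and the corollary follows.
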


After this interlude on Gauss sums we now prove a statement about periods of certain specific elements in $K$ which will eliminate any further reference to $\epsilon$-factors in the proof of Conjecture \ref{cep}.

\begin{prop}  Let $K/\bq_p$ be Galois with group $G$ of order prime to $p$. Then any fractional $\co_K$-ideal is a free $\bz_p[G]$-module of rank $1$ and
\[(\epsilon(r_{\bar{\chi}}))_{\chi\in\hat{G}}\cdot [\per(b)]\in \im(K_1(\bz_p^{ur}[G])) \]
for any  $\bz_p[G]$-basis $b$ of the inverse different $\left(\root e\of p_0\right)^{-\delta_K}\co_K=\left(\root e\of p_0\right)^{-(e-1)}\co_K$.
\label{froeh}\end{prop}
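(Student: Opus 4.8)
The plan is to reduce the statement to a character-by-character computation using the Wedderburn decomposition (\ref{k1dec}) and the explicit parametrization $\chi \leftrightarrow ([\eta],\eta')$ of $\hat{G}$ recalled above. First I would establish the freeness claim: since $p \nmid |G|$, the group ring $\bz_p[G]$ is a maximal order in $\bq_p[G]$, hence hereditary, so every fractional $\co_K$-ideal — being $\bz_p[G]$-projective and of the right $\bq_p$-rank — is projective of rank one; that it is in fact free follows because $\co_K$ itself is free over $\bz_p[G]$ by the tame-extension case of the normal basis theorem (Noether), and twisting by a fractional ideal does not change the class in $K_0(\bz_p[G])$ (the ideal and $\co_K$ become isomorphic after inverting $p$, and $K_0(\bz_p[G]) \to K_0(\bq_p[G])$ is injective on the relevant summand since $p \nmid |G|$). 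So fix once and for all a $\bz_p[G]$-basis $b$ of the inverse different $\mathfrak{D}_K^{-1} = (\root e\of{p_0})^{-(e-1)}\co_K$.

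Next I would compute the $\chi$-component of $[\per(b)] \in K_1(\qbar_p[G])$ for each $\chi$. Under (\ref{k1dec}) the $\chi$-entry of $\per(b)$ is (up to a unit coming from the choice of matrix realization $\rho_\chi$, which is irrelevant modulo $\bz_p^{ur,\times}$) the determinant of the $\qbar_p$-linear map that $\per(b)$ induces on the $\chi$-isotypic piece; concretely, via the identification $r_{V,\chi} \cong E^{d_\chi}$ and the induced structure $\chi = \Ind_{G_\eta}^G(\eta'\eta)$, this reduces to a resolvent-type determinant over $F_\eta$. I expect the $\chi$-valuation of $[\per(b)]$ to come out as $-\sum_{j=0}^{f_\eta-1}\langle m_\eta p^j/e\rangle$, i.e. exactly the negative of $v_p(\tau(r_{\eta,\sharp},\psi_p))$ computed in Corollary \ref{epsval} — this is the content of the classical relation between Gauss sums and the "root-number" part of the resolvent of a prime element (Fröhlich; the inverse different is precisely the conductor-normalization that makes the unit parts match). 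Here the hypothesis that $b$ is a basis of $\mathfrak{D}_K^{-1}$ rather than of $\co_K$ is essential: it shifts the valuation by the different exponent $e-1$ in each $\Delta$-component, which is exactly what is needed to align with $\epsilon(r_{\bar\chi})$ rather than $\epsilon(r_{\bar\chi})p^{\text{something}}$.

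Then I would assemble: by (\ref{epscomp}) and Corollary \ref{epsval}, $v_p(\epsilon(r_{\bar\chi}))$ for $\bar\chi = ([\bar\eta],\bar\eta')$ equals $v_p(\tau(r_{\bar\eta,\sharp},\psi_p)) = \sum_j \langle m_{\bar\eta}p^j/e\rangle$ when $\eta \neq 1$ (and $0$ when $\eta = 1$, where also $\per(b)$ is a unit since $b$ generates $\co_F$), and since $m_{\bar\eta} \equiv -m_\eta$ one checks $\sum_j\langle m_{\bar\eta}p^j/e\rangle = \sum_j\langle -m_\eta p^j/e\rangle$ cancels against $-\sum_j \langle m_\eta p^j/e\rangle$ up to an integer (using $\langle x\rangle + \langle -x\rangle \in \{0,1\}$), so the product $\epsilon(r_{\bar\chi})\cdot[\per(b)]_\chi$ has $p$-adic valuation zero in every component. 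By (\ref{k1urdec}) this is exactly the assertion that $(\epsilon(r_{\bar\chi}))_\chi \cdot [\per(b)] \in \im K_1(\bz_p^{ur}[G])$. The main obstacle is the second step — proving that the resolvent $\chi$-component of $\per(b)$ is a Gauss-sum up to a unit; this requires a careful Kummer-theoretic computation of $\per(\root e\of{p_0}^{\,j})$ in each $\Delta$-eigenspace together with the standard Lagrange-resolvent/Gauss-sum identity over the residue field $k_\eta$, and is where all the real work lies.
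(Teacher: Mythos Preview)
Your overall strategy matches the paper's: compute $[\per(b)]_\chi$ as a resolvent, use the induced-character description $\chi=\Ind_{G_\eta}^G(\eta'\eta)$ to factor the resolvent into an unramified piece (a unit) times a product of $\Delta$-eigenprojections of $b$, identify the valuation of the latter with a Gauss-sum valuation via Corollary~\ref{epsval}, and cancel against $\epsilon(r_{\bar\chi})$. That is exactly what the paper does.

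However, your expected answer for the resolvent valuation has a duality error that makes the final cancellation step fail. The $\eta^{p^j}$-eigenspace of the inverse different is generated over $\co_F$ by $(\root e\of{p_0})^{-k}$ where $0\le k<e$ represents $-m_\eta p^j\bmod e$ (since $\eta=\eta_0^{m_\eta}$ means $\eta^{p^j}=\eta_0^{-k}$ forces $k\equiv -m_\eta p^j$). Hence the $\chi$-component of $[\per(b)]$ has valuation
\[
-\sum_{j=0}^{f_\eta-1}\left\langle\frac{-m_\eta p^j}{e}\right\rangle \;=\; -v_p\bigl(\tau(r_{\bar\eta,\sharp},\psi_p)\bigr),
\]
not $-\sum_j\langle m_\eta p^j/e\rangle=-v_p(\tau(r_{\eta,\sharp},\psi_p))$ as you wrote. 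With the correct sign the cancellation against $v_p(\epsilon(r_{\bar\chi}))=v_p(\tau(r_{\bar\eta,\sharp},\psi_p))$ is \emph{exact}; no appeal to $\langle x\rangle+\langle -x\rangle\in\{0,1\}$ is needed. Your patching argument is in fact wrong on its own terms: $\sum_j\langle -m_\eta p^j/e\rangle-\sum_j\langle m_\eta p^j/e\rangle$ equals $f_\eta-2\sum_j\langle m_\eta p^j/e\rangle$ for $e\nmid m_\eta$, which is neither zero nor integral in general (e.g.\ $e=3$, $p=7$, $f_\eta=1$, $m_\eta=1$ gives $1/3$).

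A second, smaller gap: knowing each $\chi$-component has $p$-adic valuation zero in $\qbar_p^\times$ is not by itself the statement that the product lies in $\im K_1(\bz_p^{ur}[G])=\prod_\chi\bz_p^{ur,\times}$; you must also check it lands in $\bq_p^{ur}$. The paper handles this by observing that $\tau(r_{\bar\eta,\sharp},\psi_p)\in\bq_p^{ur}(\zeta_p)$ and $x_\eta\in K$ are eigenvectors for inverse characters of $\Gal(\bq_p^{ur}(\zeta_p)\cap K^{ur}/\bq_p^{ur})$, so their product is fixed, hence in $\bz_p^{ur,\times}$. Your proposal does not address this.
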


\begin{proof} This is a classical result in Galois module theory which can be found in \cite{froehlich} but rather than trying to match our notation to that paper we go through the main computations again. In this proof $\sigma$ will temporarily denote a generic element of $\Sigma$ rather than the Frobenius.

The image of $[\per(b)]$ in the $\chi$-component of the decomposition (\ref{k1dec}) is the $d_\chi\times d_\chi$-determinant
\[  [\per(b)]_\chi:=\det\  \rho_\chi\left(\sum_{g\in G}g(b)\cdot g^{-1}\right)=\det \sum_{g\in G}g(b) \rho_\chi(g)^{-1}\in\qbar_p^\times.\]
This character function is traditionally called a resolvent.
With notations as above,  $\left(\root e\of p_0\right)^{-(e-1)}\co_K$ is a free $\bz_p[G_\eta]$-module with basis $\sigma(b)$ where $\sigma\in G_\eta \backslash G\cong \Sigma_\eta\backslash \Sigma$ runs through a set of right coset representatives. The image of this basis under the period map is
\[  \per(\sigma(b))=\sum_{g\in G} g\sigma(b)\cdot g^{-1}=\sum_{\tau\in\Sigma_\eta\backslash\Sigma}
\left(\sum_{g\in G_\eta} \tau^{-1} g\sigma(b)\cdot  g^{-1} \right)\tau  \]
and if $\chi=\Ind_{G_\eta}^G(\chi')$ is an induced character we have by \cite{froehlich}[(5.15)]
\[ \rho_\chi\left(\sum_{g\in G}g(b)\cdot g^{-1}\right)=  \left(\sum_{g\in G_\eta} \tau^{-1} g\sigma(b)\cdot  \rho_{\chi'}(g)^{-1} \right)_{\sigma,\tau}.\]
In our case of interest $\chi'=\eta'\eta$ is a one-dimensional character. Write $$b=\xi\cdot x$$ where $x$ is an $\co_F[\Delta]$-basis of $\left(\root e\of p_0\right)^{-(e-1)}\co_K$ fixed by $\Sigma$ and $\xi$ a $\bz_p[\Sigma]$-basis of $\co_F$. Then writing $g=\delta\sigma'$ with $\delta\in\Delta$ and $\sigma'\in \Sigma_\eta$ this matrix becomes
\[ \left(\sum_{\sigma'\in\Sigma_\eta} \tau^{-1}\sigma'\sigma(\xi) \eta'(\sigma')^{-1}\sum_{\delta\in\Delta} \tau^{-1} \delta(x)\cdot \eta(\delta)^{-1} \right)_{\sigma,\tau} \]
and its determinant is
\[\det\  \left(\sum_{\sigma'\in\Sigma_\eta} \tau^{-1}\sigma'\sigma(\xi) \eta'(\sigma')^{-1}\right)_{\sigma,\tau}\cdot\prod_{\tau\in\Sigma_\eta\backslash\Sigma}\ \sum_{\delta\in\Delta} \tau^{-1} \delta\tau(x)\cdot \eta(\delta)^{-1}.  \]
The first determinant is a group determinant \cite{wash}[Lemma 5.26] for the group $\Sigma_\eta\backslash \Sigma$ and equals
\[ \xi_{\eta'}:= \prod_{\kappa\in\widehat{\Sigma_\eta\backslash \Sigma} }\sum_{\sigma\in\Sigma_\eta\backslash \Sigma}\left(\sum_{\sigma'\in\Sigma_\eta} \sigma'\sigma(\xi) \eta'(\sigma')^{-1} \right)\kappa(\sigma)^{-1} =\prod_{\kappa}
\sum_{\sigma\in\Sigma} \sigma(\xi) \kappa(\sigma)^{-1} \]
where this last product is over all characters $\kappa$ of $\Sigma$ restricting to $\eta'$ on $\Sigma_\eta$.
The sum $\sum_{\sigma\in\Sigma} \sigma(\xi) \kappa(\sigma)^{-1}$ clearly lies in $\bz_p^{ur,\times}$ since its reduction modulo $p$ is the projection of the $\bar{\bof}_p[\Sigma]$-basis $\bar{\xi}$ of $\co_F/(p)\otimes_{\bof_p}\bar{\bof}_p$ into the $\bar{\kappa}$-eigenspace (up to the unit $|\Sigma|=f$), hence nonzero. So we find
\begin{equation}\xi_{\eta'}\in \bz_p^{ur,\times}.\label{xival}\end{equation}
We now analyze the second factor
$$ x_\eta:=\prod_{\tau\in\Sigma_\eta\backslash\Sigma}\ \sum_{\delta\in\Delta} \tau^{-1} \delta\tau(x)\cdot \eta(\delta)^{-1}$$
which is the product over the projections of $x$ into the $\eta^{p^i}$-eigenspaces for $i=0,\dots,f_\eta-1$ (up to the unit $|\Delta|=e$). For $0\leq j< e$ the $\eta_0^{-j}$- eigenspace of the inverse different is generated over $\co_F$ by $\left(\root e \of p_0\right)^{-j}$ and since $x$ was a $\co_F[\Delta]$-basis of the inverse different its projection lies in $\co_F^\times\cdot\left(\root e \of p_0\right)^{-j}$. So by Lemma \ref{expequal} below we have
$$ x_\eta\in\co_F^\times\cdot\prod_{i=0}^{f_\eta-1} \left(\root e \of p_0\right)^{-e\left\langle \frac{p^i(-m_\eta)}{e}\right\rangle}\subset K  $$
and hence
\begin{equation}v_p(x_\eta)=-\sum_{i=0}^{f_\eta-1}\left\langle\frac{-m_\eta p^i}{e}\right\rangle=-v_p(\tau(r_{\bar{\eta},\sharp},\psi_p)),\label{valeq}\end{equation}
using Corollary \ref{epsval} and the fact that $\bar{\eta}=\eta_0^{-m_\eta}$. One checks that $\tau(r_{\bar{\eta},\sharp},\psi_p)\in \bq_p^{ur}(\zeta_p)$ is an eigenvector for the character \[ \varrho=\eta_0^{-m_\eta\frac{p^{f_\eta}-1}{p-1}}\]
of the group $\Gal(\bq_p^{ur}(\zeta_p)\cap K^{ur}/\bq_p^{ur})$. Since $x_\eta$ is an eigenvector for $\varrho^{-1}$, equation  (\ref{valeq}) then implies
$$ \tau(r_{\bar{\eta},\sharp},\psi_p) \cdot x_\eta \in\bz_p^{ur,\times}.$$
Combining this with (\ref{xival}) and (\ref{epscomp}) we find
$$ \epsilon(r_{\bar{\chi}})\cdot [\per(b)]_\chi= \bar{\eta}(\mathrm{rec}(p))\tau(r_{\bar{\eta},\sharp},\psi_p)\bar{\eta}'(\sigma^{f_\eta})\cdot x_\eta\cdot \xi_{\eta'}\in \bz_p^{ur,\times} $$
and hence
$$ (\epsilon(r_{\bar{\chi}}))_{\chi\in\hat{G}}\cdot [\per(b)]\in \im(K_1(\bz_p^{ur}[G])) .$$
\end{proof}

\begin{lemma} We have $\eta=\eta_0^{m_\eta}$ where $\eta_0$ is the character (\ref{eta0def}) associated to the element $p_0$ of valuation $1$ and $m_\eta$ was defined in (\ref{metadef}).
\label{expequal}\end{lemma}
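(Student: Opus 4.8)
The plan is to unwind both sides against the reciprocity map. On the one hand, $\eta_0$ is defined by $\delta(\root e\of{p_0})=\eta_0(\delta)\root e\of{p_0}$, so by Kummer theory $\eta_0$ is precisely the image of $p_0\in\bq_p^\times$ under the composite $\bq_p^\times\to F^\times\to\Gal(K/F)^\vee=\hat\Delta$ coming from local class field theory for the Kummer extension $K=F(\root e\of{p_0})$; more precisely $\eta_0(\delta)$ records the action of $\delta$ on $e$-th roots and thus $\eta_0$ corresponds under class field theory to the norm-residue symbol $(p_0,\,-)_{e}$. On the other hand, $m_\eta\in\bz/e\bz$ was defined in \eqref{metadef} by $r_{\eta,\sharp}|_{\mu_{p^{f_\eta}-1}}=\omega^{m_\eta(p^{f_\eta}-1)/e}$, where $r_{\eta,\sharp}$ is the character of $F_\eta^\times$ obtained from $\eta$ via the reciprocity map $\mathrm{rec}:F_\eta^\times\to W_{F_\eta}^{ab}$ (normalized to send a uniformizer to geometric Frobenius), followed by $\iota$, $\psi$, and $\eta$. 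So the content of the lemma is that the character $\eta$ of $\Delta$ is the $m_\eta$-th power of the reference character $\eta_0$, i.e. it suffices to pin down the value of $m_{\eta_0}$ and show $m_{\eta_0}=1$, since $\eta\mapsto m_\eta$ is visibly a homomorphism $\hat\Delta\to\bz/e\bz$ and $\hat\Delta$ is cyclic generated by $\eta_0$.

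\smallskip

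First I would reduce to the case $F_\eta=F$, i.e. $\eta$ is $\Sigma$-invariant, or better yet directly to $\eta=\eta_0$: both $\eta\mapsto\eta$ and $\eta\mapsto\eta_0^{m_\eta}$ are homomorphisms $\hat\Delta\to\hat\Delta$ (for the second, one uses that $m_{\eta\eta'}=m_\eta+m_{\eta'}$, immediate from the defining relation \eqref{metadef} and the fact that $\eta\mapsto r_{\eta,\sharp}$ is multiplicative), so it is enough to check them on a generator of the cyclic group $\hat\Delta$, and $\eta_0$ generates. Thus it remains to show $m_{\eta_0}=1$, i.e. that $r_{\eta_0,\sharp}|_{\mu_{p-1}}=\omega^{(p-1)/e}$ when $F_{\eta_0}=\bq_p$ (one first notes $\eta_0$ is $\Sigma$-stable since $p_0\in\bq_p$, so $F_{\eta_0}=\bq_p$ and $f_{\eta_0}=1$).

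\smallskip

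Next I would compute $r_{\eta_0,\sharp}$ on the units $\mu_{p-1}=\bz_p^\times_{\tors}\subset\bq_p^\times$ directly via the explicit description of the local norm-residue symbol. Here $r_{\eta_0,\sharp}=\eta_0\circ\psi\circ\iota\circ\mathrm{rec}:\bq_p^\times\to\bc^\times$ restricted to $\Delta^{ab}=\Delta$ is nothing but the character of $\bq_p^\times/N_{K/\bq_p}K^\times$ classifying the degree-$e$ subextension $\bq_p(\root e\of{p_0})/\bq_p$ inside $K/\bq_p$. For the Kummer extension $\bq_p(\root e\of{p_0})$ with $p_0=\lambda p$, $\lambda\in\mu_{p-1}$, the norm-residue symbol of $u\in\bq_p^\times$ is $(u,p_0)_e=\root e\of{p_0}^{\,(u,p_0)}/\root e\of{p_0}$ where $(u,p_0)\in\mu_e$ is the tame symbol; for $u\in\bz_p^\times$ this is $\overline{u}^{-(p-1)/e}$ (up to a choice of sign conventions that one fixes once and for all) via reduction mod $p$, which is exactly $\omega^{\pm(p-1)/e}(u)$. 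Matching the normalization of $\mathrm{rec}$ in \cite{deligne73} (uniformizer $\mapsto$ geometric Frobenius), the sign works out to give $r_{\eta_0,\sharp}|_{\mu_{p-1}}=\omega^{(p-1)/e}$, hence $m_{\eta_0}=1$ as desired.

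\smallskip

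The main obstacle I anticipate is purely bookkeeping: getting the sign/exponent conventions consistent between (i) the definition of $\eta_0$ via $\delta(\root e\of{p_0})=\eta_0(\delta)\root e\of{p_0}$, (ii) the normalization of $\mathrm{rec}$ sending a uniformizer to \emph{geometric} Frobenius (as in \cite{deligne73}), and (iii) the choice of the canonical character $\omega$ via reduction mod $p$. Any of these could flip a sign and turn $m_{\eta_0}=1$ into $m_{\eta_0}=-1$; the proof therefore hinges on carefully tracking these three normalizations through the tame norm-residue symbol computation, and everything else is a short formal reduction.
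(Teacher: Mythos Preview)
Your reduction to the single generator $\eta_0$ has a genuine gap. You assert that $\eta_0$ is $\Sigma$-stable ``since $p_0\in\bq_p$'', but this is false in general: the $\Sigma$-orbit of any $\eta\in\hat\Delta$ is $\{\eta,\eta^p,\eta^{p^2},\dots\}$, so $\eta_0$ (which has exact order $e$) is fixed by $\Sigma$ if and only if $e\mid p-1$. The paper makes no such assumption; the interesting cases are precisely those with $e\nmid p-1$. Consequently $F_{\eta_0}\neq\bq_p$ in general, $f_{\eta_0}\neq 1$, and your target identity $r_{\eta_0,\sharp}\vert_{\mu_{p-1}}=\omega^{(p-1)/e}$ does not even parse, since $(p-1)/e$ is not an integer. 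The fact that $p_0\in\bq_p$ only tells you that $\Sigma$ fixes $\root e\of{p_0}$; applying $\sigma$ to the defining relation gives $\sigma(\eta_0(\delta))=\eta_0(\sigma\delta\sigma^{-1})=\eta_0(\delta)^p$, which is consistent with $\sigma$ acting as Frobenius on $\mu_e\subset F$ but says nothing about $\Sigma$-invariance of $\eta_0$ as an abstract character.

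A related issue is your claim that $\eta\mapsto m_\eta$ is a homomorphism because ``$\eta\mapsto r_{\eta,\sharp}$ is multiplicative''. The character $r_{\eta,\sharp}$ lives on $F_\eta^\times$, and $F_\eta$ varies with $\eta$, so there is no common domain on which to multiply. One can rescue this by first showing (via norm-compatibility of reciprocity maps) that $m_\eta$ is equally well characterised by $\eta\circ\mathrm{rec}_F\vert_{\mu_{p^f-1}}=\omega^{m_\eta(p^f-1)/e}$ over the fixed field $F$, after which additivity is immediate; but that step is missing.

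The paper avoids both problems by working directly with an arbitrary $\eta$ and computing over $F$ throughout: it identifies $\eta_0^{m_\eta}\circ\mathrm{rec}_F$ on roots of unity with the tame Hilbert symbol $\bigl(\frac{\zeta^{-1},\,p_0^{m_\eta}}{F}\bigr)$, and evaluates the latter by the explicit formula of Neukirch~Thm.~V.3.4 to recover exactly the power of $\omega$ that defines $m_\eta$. No reduction to a generator and no additivity claim are needed, and the field $F$ (which always contains $\mu_e$ since $e\mid p^f-1$) is fixed once and for all.
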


\begin{proof} It suffices to show that the composite map
\[\omega':\mu_{p^{f_\eta}-1}\subset F^\times\xrightarrow{\mathrm{rec}}G^{ab}_F\to \Gal(K/F)\xrightarrow{\eta_0^{m_\eta}}\mu_e\]
agrees with the ${m_\eta(p^{f_\eta}-1)/e}$-th power map. By definition \cite{neukirch}[Thm. V.3.1] of the tame local Hilbert symbol and the fact that our map $\mathrm{rec}$ is the inverse of that used in \cite{neukirch} we have
\[\omega'(\zeta)= \left(\frac{\zeta^{-1},p_0^{m_\eta}}{F}\right)\]
which by \cite{neukirch}[Thm. V.3.4] equals
\[ \left(\frac{\zeta^{-1},p_0^{m_\eta}}{F}\right)=\left((-1)^{\alpha\beta}\frac{p_0^\beta}{\zeta^{-\alpha}}\right)^{(p^{f_\eta}-1)/e}=\zeta^{m_\eta(p^{f_\eta}-1)/e}\]
where $\alpha=v_p(p_0^{m_\eta})=m_\eta$ and $\beta=v_p(\zeta^{-1})=0$.
\end{proof}

Denote by $\gamma$ a topological generator of $$\Gamma:=\Gal(\bq_p(\zeta_{p^\infty})/\bq_p)$$ and by \[\chi^{\cyclo}:\Gal(\bq_p(\zeta_{p^\infty})/\bq_p)\cong\bz_p^\times\]
the cyclotomic character. As in the proof of Prop. \ref{froeh} choose $b$ such that
$$\bz_p[G]\cdot b=\left(\root e\of p_0\right)^{-(e-1)}\co_ K.$$
Denote by $e_1=\frac{1}{|\Sigma|}\sum_{g\in\Sigma}g\in\bz_p[\Sigma]$ the idempotent for the trivial character of $\Sigma$.

\begin{prop} If $p\nmid |G|$ then one can choose $\beta\in H^1(K,\bz_p(1-r))$ such that
\[H^1(K,\bz_p(1-r))=H^1(K,\bz_p(1-r))_{tor}\oplus\bz_p[G]\cdot\beta\]
and the local Tamagawa number conjecture (\ref{cep}) is equivalent to the identity
\[ [(r-1)!]\cdot (p^{(r-1)c(\chi)})_{\chi\in\hat{G}}\cdot [\per(b)]^{-1}\cdot [\per(\exp^*(\beta))]\cdot [C_\beta]^{-1}\cdot\left[\frac{1-p^{r-1}\sigma}{1-p^{-r}\sigma^{-1}}\right]_F=1\]
in the group $K_1(\bq_p^{ur}[G])/\im K_1(\bz_p^{ur}[G])$. The projection of this identity into the group $K_1(\bq_p^{ur}[\Sigma])/\im K_1(\bz_p^{ur}[\Sigma])$ is
\[ [(r-1)!]\cdot [\per(\exp^*(\beta))]_F\cdot \left[\frac{\chi^{\cyclo}(\gamma)^r-1}{\chi^{\cyclo}(\gamma)^{r-1}-1}e_1+1-e_1\right] \cdot\left[\frac{1-p^{r-1}\sigma}{1-p^{-r}\sigma^{-1}}\right]_F=1\]
and in the components of $K_1(\bq_p^{ur}[G])/\im K_1(\bz_p^{ur}[G])$ indexed by $\chi=([\eta],\eta')$ with
$$\eta\vert_{\Gal(K/K\cap F(\zeta_p))}\neq 1$$
this identity is equivalent to
\begin{equation} ((r-1)!)^{f_\eta}\cdot p^{(r-1)f_\eta}\cdot [\per(b)]_\chi^{-1}\cdot [\per(\exp^*(\beta))]_\chi\in\bz_p^{ur,\times}.\label{eta-not-1}\end{equation}
\label{reform}\end{prop}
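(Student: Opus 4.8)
The plan is to start from the reformulated conjecture in Proposition \ref{prop:cep}, i.e. identity (\ref{cep}), and to use Proposition \ref{froeh} to replace the $\epsilon$-factor $(\epsilon(r_{\bar\chi}))_{\chi\in\hat G}$ by the period $[\per(b)]^{-1}$ up to a unit in $\im(K_1(\bz_p^{ur}[G]))$, together with formula (\ref{epscompute}) which says that $\epsilon(K/\bq_p,1-r)=(\epsilon(r_{\bar\chi})p^{(r-1)c(r_{\bar\chi})})_{\chi\in\hat G}$; the power of $p$ cannot be absorbed into a unit and must be carried along explicitly. This gives the first displayed identity. The main structural input one needs here is the existence of a $\bz_p[G]$-equivariant splitting $H^1(K,\bz_p(1-r))=H^1(K,\bz_p(1-r))_{tor}\oplus\bz_p[G]\cdot\beta$; since $p\nmid|G|$ the group ring $\bz_p[G]$ is a maximal order and every finitely generated $\bz_p[G]$-module is projective after killing torsion, so the torsion submodule is a direct summand and the torsion-free quotient (which is free of rank one over $\bq_p[G]$ by (\ref{e1}) and local duality) is in fact free over $\bz_p[G]$ once we possibly enlarge; choosing a basis $\beta$ of it furnishes the desired decomposition, and for such a $\beta$ the complex $C_\beta$ is quasi-isomorphic to $H^1(K,\bz_p(1-r))_{tor}[-1]\oplus H^2(K,\bz_p(1-r))[-2]$.

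Next I would project the identity onto the two pieces of $K_1(\bq_p^{ur}[G])/\im K_1(\bz_p^{ur}[G])$ that the statement singles out. For the $\Sigma$-component: $\bq_p[\Sigma]$ is a direct factor of $\bq_p[G]$, and under the projection $[C_\beta]^{-1}$ and the Frobenius term behave nicely because the unramified part of the cohomology is controlled. The term $[\per(b)]$ projects to a unit on $\Sigma$ since $b$ is (the restriction of) a $\bz_p[\Sigma]$-basis of $\co_F$ up to a unit, as in the argument for (\ref{xival}). The only remaining contribution on $\Sigma$ comes from $[C_\beta]$, whose $\Sigma$-component is computed from the Euler characteristic of $R\Gamma(K,\bz_p(1-r))$ restricted to the trivial $\Delta$-isotypic part: here $H^2(K,\bz_p(1-r))$ and the torsion in $H^1$ are both cyclotomic-type groups whose orders involve $\chi^{\cyclo}(\gamma)^r-1$ and $\chi^{\cyclo}(\gamma)^{r-1}-1$, and one identifies $[C_\beta]_\Sigma$ with $[\frac{\chi^{\cyclo}(\gamma)^r-1}{\chi^{\cyclo}(\gamma)^{r-1}-1}e_1+1-e_1]$ — essentially the class of the order of $H^0(K,\bq_p/\bz_p(r))$ over $H^0(K,\bq_p/\bz_p(r-1))$ appropriately interpreted, nontrivial only on the trivial character of $\Sigma$ (where $K$ contains $\mu_p$ iff the relevant Tate twist has invariants, but the trivial-$\Sigma$ part sees the full cyclotomic contribution). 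Substituting gives the second displayed identity.

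For the components indexed by $\chi=([\eta],\eta')$ with $\eta|_{\Gal(K/K\cap F(\zeta_p))}\neq 1$: the point is that for such $\chi$ the cohomology $R\Gamma(K,\bq_p(1-r))$ has no contribution to $H^2$ or to the torsion of $H^1$ in the $\chi$-isotypic component — because $H^2(K,\bz_p(1-r))\cong H^0(K,\bq_p/\bz_p(r))^\vee$ is supported on characters factoring through $\Gal(K\cap\bq_p(\zeta_{p^\infty})/\bq_p)$, hence through $\Gal(K\cap F(\zeta_p)/F)$ on the $\Delta$-part — so $[C_\beta]_\chi=1$, i.e. $\lambda_\chi=1$. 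Likewise the Frobenius factor $[\frac{1-p^{r-1}\sigma}{1-p^{-r}\sigma^{-1}}]_F$ is trivial on these components since $F$-components land only in the $\Sigma$-block and $\eta\neq 1$. What remains is $[(r-1)!]\cdot p^{(r-1)f_\eta}\cdot[\per(b)]_\chi^{-1}\cdot[\per(\exp^*(\beta))]_\chi$; recalling $d_\chi=f_\eta$ so $[(r-1)!]_\chi=((r-1)!)^{f_\eta}$, and that $c(r_\chi)=f_\eta$ for $\eta\neq 1$ so the power of $p$ from $\epsilon(K/\bq_p,1-r)$ contributes $p^{(r-1)f_\eta}$ in the $\chi$-slot, we arrive at (\ref{eta-not-1}).

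The main obstacle is the computation of $[C_\beta]_\Sigma$, i.e. pinning down exactly which cyclotomic factor appears and checking it is concentrated on $e_1$: this requires a careful analysis of $H^0$, $H^2$ and the torsion of $H^1$ of $\bz_p(1-r)$ over $K$ as $\bz_p[G]$-modules, using the tame hypothesis $p\nmid|G|$ to see that the only isotypic components that survive are those on which $\Delta$ acts trivially and $\Sigma$ acts through the trivial character (the trivial-$\Sigma$ idempotent $e_1$), together with the standard fact that $H^2(K,\bz_p(1-r))$ is finite of order governed by $\chi^{\cyclo}(\gamma)^{r-1}-1$ and the torsion of $H^1$ by $\chi^{\cyclo}(\gamma)^r-1$ (or a permutation thereof depending on whether $r$ or $1-r$ makes $K$ contain $p$-power roots of unity). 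Everything else is bookkeeping with the Wedderburn decomposition (\ref{k1dec}), (\ref{k1urdec}) and the already-established Proposition \ref{froeh}.
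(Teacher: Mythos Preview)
Your approach is correct and is essentially the same as the paper's. Two small sharpenings worth noting: for the $\Sigma$-projection the paper uses the identification $R\Gamma(K,\bz_p(1-r))\otimes^L_{\bz_p[G]}\bz_p[\Sigma]\cong R\Gamma(F,\bz_p(1-r))$, so that $[C_\beta]_F$ is computed directly from the finite cohomology of $F$ rather than from isotypic pieces over $K$ --- both $H^1(F,\bz_p(1-r))_{\tors}\cong H^0(F,\bq_p/\bz_p(1-r))$ and $H^2(F,\bz_p(1-r))\cong H^0(F,\bq_p/\bz_p(r))$ carry trivial $\Sigma$-action, and a finite cyclic module $M$ with trivial $\Sigma$-action has class $[|M|e_1+(1-e_1)]^{-1}$, giving $[C_\beta]_F=\bigl[\tfrac{\chi^{\cyclo}(\gamma)^{r-1}-1}{\chi^{\cyclo}(\gamma)^r-1}e_1+1-e_1\bigr]$ (the inverse of what you wrote; since it is $[C_\beta]^{-1}$ in the identity the displayed formula comes out right). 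Also, ``once we possibly enlarge'' is unnecessary: when $p\nmid|G|$ the ring $\bz_p[G]$ is semilocal and any lattice in a free rank-one $\bq_p[G]$-module is already free.
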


\begin{proof} If $p\nmid |G|$ then the module $H^1(K,\bz_p(1-r))/{tor}$ is free over $\bz_p[G]$ since this is true for any lattice in a free rank one $\bq_p[G]$-module. The first statement is then clear from (\ref{epscompute}) and Prop. \ref{froeh}.

Since
\[R\Gamma(K,\bz_p(1-r))\otimes^L_{\bz_p[G]}\bz_p[\Sigma]\cong R\Gamma(F,\bz_p(1-r))\]
the projection $[C_\beta]_F$ of $[C_\beta]$ into $K_1(\bq_p^{ur}[\Sigma])/\im K_1(\bz_p^{ur}[\Sigma])$ is the class of the complex
\[ H^1(F,\bz_p(1-r))_{tor}[-1]\oplus H^2(F,\bz_p(1-r))[-2]\]
and both modules have trivial $\Sigma$-action. Any finite  cyclic $\bz_p[\Sigma]$-module $M$ with trivial $\Sigma$-action has a projective resolution
$$ 0\to \bz_p[\Sigma]\xrightarrow{|M|e_1+1-e_1}\bz_p[\Sigma]\to M\to 0$$
and the class of $M$ in $K_0(\bz_p[\Sigma],\bq_p)$ is represented by $[|M|e_1+1-e_1]^{-1}\in K_1(\bq_p[\Sigma])$. Using Tate local duality we have
\begin{align*}[C_\beta]_F&=[H^1(F,\bz_p(1-r))_{tor}]^{-1}\cdot [H^2(F,\bz_p(1-r))] \\
&=[H^0(F,\bq_p/\bz_p(1-r))]^{-1}\cdot [H^0(F,\bq_p/\bz_p(r))] \\
&=[(\chi^{\cyclo}(\gamma)^{r-1}-1)e_1+1-e_1]\cdot [(\chi^{\cyclo}(\gamma)^r-1)e_1+1-e_1]^{-1}\\
&=\left[\frac{\chi^{\cyclo}(\gamma)^{r-1}-1}{\chi^{\cyclo}(\gamma)^r-1}e_1+1-e_1\right].
\end{align*}
By Prop. \ref{froeh} $[\per(b)]_\chi$ is a $p$-adic unit if $\eta=1$ which gives the second statement. The third statement follows from the fact that $\Gal(K/K\cap F(\zeta_p))$ acts trivially on
$R\Gamma(K,\bz_p(1-r))$ which implies that $[C_\beta]_\chi=1$ if the restriction of $\eta$ to $\Gal(K/K\cap F(\zeta_p))$ is nontrivial.
\end{proof}



\section{The Cherbonnier-Colmez reciprocity law}\label{sec:chercol}

Now that we have reformulated conjecture \ref{cep}
according to Prop. \ref{reform} we see
that we must compute the
image of $\exp^*(\beta)$.  In order to do this we will use an explicit
reciprocity law of \cite{chercol99}, which uses the theory of $(\varphi,
\Gamma_K)$-modules and the rings of periods of Fontaine.  Rather than
developing this machinery in full, we will give only the definitions
and results needed to state the reciprocity in our case; the reader is
invited to read \cite{chercol99} to see the theory and the reciprocity
law developed in full generality.

\subsection{Iwasawa theory}
\label{sec:iwasawa-theory}

In this subsection and the next we recall results of \cite{chercol99} specialized to the representation $V=\bq_p(1)$. For this discussion we temporarily suspend our assumption that $p\nmid |G|$. So let $K$ again be an arbitrary finite Galois extension of $\bq_p$, define
$$K_n = K(\zeta_{p^n}),\quad K_\infty = \bigcup_{n \in \N} K_n,$$
$$\Gamma_K:=\Gal(K_\infty/K),\quad \Lambda_K=\bz_p[[\Gal(K_\infty/\bq_p)]]$$
and
\[ H^m_{Iw}(K,\bz_p(1))=\varprojlim_n H^m(K_n,\bz_p(1))\cong\varprojlim_n H^m(K, \Ind_{G_{K_n}}^{G_K}\bz_p(1))\cong H^m(K,T)\]
where the inverse limit is taken with respect to corestriction maps, the second isomorphism is Shapiro's Lemma and
\[ T:=\varprojlim_n\Ind_{G_{K_n}}^{G_K}\bz_p(1)\cong \varprojlim_n \bz_p[\Gal(K_n/K)](1)\cong \bz_p[[\Gamma_K]](1)\]
is a free rank one $\bz_p[[\Gamma_K]]$-module with $G_K$-action given by $\psi^{-1}\chi^{\cyclo}$ where $$\psi:G_K\to\Gamma_K\subseteq \bz_p[[\Gamma_K]]^\times$$ is the tautological character (see the analogous discussion of (\ref{induced})). From this it is easy to see that for any $r\in\bz$ one has an exact sequence of $G_K$-modules
\begin{equation}
0\to T\xrightarrow{\gamma_K\cdot {\chi^{\cyclo}}(\gamma_K)^{r-1}-1} T \xrightarrow{\ \ \ \ } {\mathbb Z}_p(r)\to 0
\label{tseq}\end{equation}
where $\gamma_K\in\Gamma_K$ is a topological generator (our assumption that $p$ is odd assures that $\Gamma_K$ is procyclic for any $K$).  It is clear from the definition that
\begin{equation} H^m_{Iw}(K,\bz_p(1))\cong H^m_{Iw}(K_n,\bz_p(1))\label{stable}\end{equation}
for any $n\geq 0$. So $H^m_{Iw}(K,\bz_p(1))$ only depends on the field $K_\infty$, and it is naturally a $\Lambda_K$-module.
Since our base field $K$ was arbitrary an analogous sequence holds with $K$ replaced by $K_n$ and $T$ by the corresponding $G_{K_n}$-module $T_n$ so that $T\cong\Ind_{G_{K_n}}^{G_K}T_n$. In view of (\ref{stable}) we obtain induced maps
\begin{equation}\pr_{n,r}: H^1_{Iw}(K,\bz_p(1))\to H^1(K_n,\bz_p(r)) \end{equation}
for any $n\geq 0$ and $r\in\bz$.

\begin{lemma}
\label{lemma:coho_of_zpr} Set $\gamma_n=\gamma_{K_n}$.
If $r \neq 1$ then the map $\pr_{n,r}$ induces an isomorphism
$$ H^1_{Iw}(K,\bz_p(1))/(\gamma_n - {\chi^{\cyclo}}(\gamma_n)^{1-r}) H^1_{Iw}(K,\bz_p(1))\cong H^1(K_n, \bz_p(r)). $$
\end{lemma}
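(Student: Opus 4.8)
The plan is to apply continuous $G_{K_n}$-cohomology to the tautological sequence (\ref{tseq}) with base field $K_n$ in place of $K$ and to read the statement off the resulting long exact sequence.

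First I would record the analogue of (\ref{tseq}) over $K_n$. Writing $T_n$ for the $G_{K_n}$-module with $T\cong\Ind_{G_{K_n}}^{G_K}T_n$, one has a short exact sequence of $G_{K_n}$-modules
\[ 0\to T_n\xrightarrow{\ \gamma_n\chi^{\cyclo}(\gamma_n)^{r-1}-1\ }T_n\to\bz_p(r)\to 0, \]
and since $\gamma_n\chi^{\cyclo}(\gamma_n)^{r-1}-1=\chi^{\cyclo}(\gamma_n)^{r-1}\bigl(\gamma_n-\chi^{\cyclo}(\gamma_n)^{1-r}\bigr)$ with $\chi^{\cyclo}(\gamma_n)^{r-1}\in\bz_p[[\Gamma_{K_n}]]^\times$, the first map has the same image as $\gamma_n-\chi^{\cyclo}(\gamma_n)^{1-r}$. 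Applying $H^\bullet(K_n,-)$ and using the identifications $H^m(K_n,T_n)\cong H^m_{Iw}(K_n,\bz_p(1))\cong H^m_{Iw}(K,\bz_p(1))$ of Section~\ref{sec:iwasawa-theory} together with (\ref{stable}), the relevant part of the long exact sequence reads
\[ H^1_{Iw}(K,\bz_p(1))\xrightarrow{\,\gamma_n-\chi^{\cyclo}(\gamma_n)^{1-r}\,}H^1_{Iw}(K,\bz_p(1))\xrightarrow{\ \pr_{n,r}\ }H^1(K_n,\bz_p(r))\xrightarrow{\ \partial\ }H^2_{Iw}(K,\bz_p(1))\xrightarrow{\,\gamma_n-\chi^{\cyclo}(\gamma_n)^{1-r}\,}H^2_{Iw}(K,\bz_p(1)), \]
where the map $H^1(K_n,T_n)\to H^1(K_n,\bz_p(r))$ is $\pr_{n,r}$ by its very definition. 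Exactness at the middle term gives $\ker(\pr_{n,r})=(\gamma_n-\chi^{\cyclo}(\gamma_n)^{1-r})H^1_{Iw}(K,\bz_p(1))$, so $\pr_{n,r}$ already induces an injection out of the asserted quotient; it remains only to prove surjectivity, that is $\partial=0$, that is injectivity of $\gamma_n-\chi^{\cyclo}(\gamma_n)^{1-r}$ on $H^2_{Iw}(K,\bz_p(1))$.

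This is where the hypothesis $r\neq 1$ enters, and I would settle it by computing $H^2_{Iw}(K,\bz_p(1))$ directly: $H^2(K_m,\bz_p(1))=\varprojlim_k\mathrm{Br}(K_m)[p^k]\cong\bz_p$ via the invariant isomorphism of local class field theory; the corestriction maps are compatible with invariants and hence are the identity on $\bz_p$, so $H^2_{Iw}(K,\bz_p(1))=\varprojlim_m H^2(K_m,\bz_p(1))\cong\bz_p$, and $\Gamma$ acts trivially because a Galois automorphism of $K_m$ over $\bq_p$ preserves the invariant. Thus $\gamma_n$ acts as $1$ and $\gamma_n-\chi^{\cyclo}(\gamma_n)^{1-r}$ acts as multiplication by $1-\chi^{\cyclo}(\gamma_n)^{1-r}$ on $\bz_p$. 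Since $\chi^{\cyclo}(\gamma_n)$ has infinite multiplicative order in $1+p\bz_p$, this scalar is nonzero exactly when $r\neq 1$, and a nonzero endomorphism of $\bz_p$ is injective. Hence $\partial=0$, $\pr_{n,r}$ is surjective, and the proof is complete.

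The main obstacle is exactly the input on $H^2_{Iw}(K,\bz_p(1))$: that it is free of rank one over $\bz_p$ with \emph{trivial} $\Gamma$-action (equivalently, that the corestrictions on the relevant Brauer groups are isomorphisms, Galois-equivariant for the trivial action) is what makes the hypothesis $r\neq 1$ not merely sufficient but visibly necessary — for $r=1$ the scalar vanishes and the cokernel of $\pr_{n,1}$ is the extra $\bz_p$ coming from $H^2_{Iw}$. Everything else is formal diagram chasing. A secondary point, which I would treat as standard and on the same footing as the isomorphism $H^m_{Iw}(K,\bz_p(1))\cong H^m(K,T)$ already used above, is the commutation of continuous cohomology with the defining countable inverse limit of finite modules, which rests on a Mittag--Leffler argument at the level of $H^0$.
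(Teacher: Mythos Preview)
Your proof is correct and follows essentially the same approach as the paper: apply the long exact sequence arising from (\ref{tseq}) over $K_n$, identify $H^2_{Iw}(K,\bz_p(1))\cong\bz_p$ with trivial $\Gamma$-action (the paper cites Tate local duality rather than the Brauer group invariant, but this is the same computation), and observe that $1-\chi^{\cyclo}(\gamma_n)^{1-r}$ is nonzero for $r\neq 1$ so that $\pr_{n,r}$ is surjective. Your explicit remark that $\gamma_n\chi^{\cyclo}(\gamma_n)^{r-1}-1$ and $\gamma_n-\chi^{\cyclo}(\gamma_n)^{1-r}$ differ by a unit is a small clarification the paper leaves implicit.
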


\begin{proof} The short exact sequence (\ref{tseq}) over $K_n$ induces a long exact sequence of cohomology groups
\[
\xymatrix{
0
\ar[r]
&
H^0_{Iw}(K,\bz_p(1))
\ar[rr]^{\gamma_n - {\chi^{\cyclo}}(\gamma_n)^{1-r}}
&&
H^0_{Iw}(K, \bz_p(1))
\ar[r]
&
H^0(K_n, {\mathbb Z}_p(r))
\ar[dlll]
\\
&
H^1_{Iw}(K,\bz_p(1))
\ar[rr]_{\gamma_n - {\chi^{\cyclo}}(\gamma_n)^{1-r}}
&&
H^1_{Iw}(K,\bz_p(1))
\ar[r]^{\pr_{n,r}}
&
H^1(K_n, {\mathbb Z}_p(r))
\ar[dlll]
\\
&
H^2_{Iw}(K,\bz_p(1))
\ar[rr]_{\gamma_n - {\chi^{\cyclo}}(\gamma_n)^{1-r}}
&&
H^2_{Iw}(K,\bz_p(1))
\ar[r]
&
H^2(K_n, {\mathbb Z}_p(r))
\ar[r]
&
0.
}
\]
By Tate local duality there is a canonical isomorphism of $\Gal(K_n/K)$-modules $$H^2(K_n, {\mathbb Z}_p(1)) \cong {\mathbb Z}_p$$ for each $n$, and the corestriction map is the identity map on $\bz_p$. Hence $$H^2_{Iw}(K,\bz_p(1))\cong\bz_p$$ with trivial action of $\Gamma_{K_n}$. This implies that for $r\neq 1$ multiplication by $\gamma_n - {\chi^{\cyclo}}(\gamma_n)^{1-r}=1 - {\chi^{\cyclo}}(\gamma_n)^{1-r}$ is injective on $H^2_{Iw}(K,\bz_p(1))$. Hence $\pr_{n,r}$ is surjective and we obtain the desired isomorphism.
\end{proof}

\subsection{The ring $A_K$ and the reciprocity law}
\label{sec:rings-a_k1-a_kpsi}

The theory of $(\varphi, \Gamma_K)$-modules \cite{chercol99} involves a ring
\[
A_K = \widehat{\co_{F'}[[\pi_K]][\frac{1}{\pi_K}]}= \left \{ \sum_{n \in \mathbb{Z} } a_n \pi_K^n: a_n \in \mathcal{O}_{F'},
  \lim_{n \to - \infty} a_n = 0 \right \},
\]
where $\pi_K$ is (for now) a formal variable and $F'\supseteq F$ is the maximal unramified subfield of $K_\infty$. The ring $A_K$ carries an operator $\varphi$ extending the Frobenius on $\co_{F'}$ and an action of $\Gamma_K$ commuting with $\varphi$ which are somewhat hard to describe in terms of $\pi_K$. However, on the subring
\[ A_{F'} = \widehat{\co_{F'}[[\pi]][\frac{1}{\pi}]} \subseteq A_K\]
one has
\begin{equation} \varphi(1 + \pi) = (1 + \pi)^p, \quad \gamma(1 + \pi) = (1 + \pi)^{\chi^{\cyclo}(\gamma)}\label{gamma-act}\end{equation}
for $\gamma\in\Gamma_K$.

The ring $A_K$ is a complete, discrete valuation ring with uniformizer $p$. We denote by $E_K\cong k((\bar{\pi}_K))$ its residue field and by $B_K = A_K[1/p]$ its field of fractions.  We see that $\varphi(B_K)$ is a subfield of $B_K$ (of degree $p$), and
thus we can define $$\psi = p^{-1} \varphi^{-1} \Tr_{B_K/ \varphi B_K}$$ and
$$\NN = \varphi^{-1} N_{B_K/\varphi B_K}$$ as further operators on $B_K$.  We
observe that if $f \in B_K$, then $$\psi (\varphi (f)) = f.$$ Thus $\psi$
is an additive left inverse of $\varphi$. We write $A_K^{\psi = 1} \subset A_K$
for the set of elements fixed by the operator $\psi$. The $(\varphi,\Gamma_K)$-module associated to the representation $\bz_p(1)$ is $A_K(1)$ where the Tate twist refers to the $\Gamma_K$-action being twisted by the cyclotomic character.

By \cite{chercol99}[III.2] the field $B_K$ is contained in a field $\tilde{B}$ on which $\varphi$ is bijective and $\tilde{B}$ contains a $G_K$-stable subring $\tilde{B}^{\dagger,n}$ consisting of elements $x$ for which $\varphi^{-n}(x)$ converges to an element in $B_{dR}$. So one has a $G_K$-equivariant ring homomorphism
\[ \varphi^{-n}: \tilde{B}^{\dagger,n}\to B_{dR} \]
which again is rather inexplicit in general but is given by
\[ \varphi^{-n}(\pi) = \zeta_{p^n} e^{t/p^n} -1\]
on the element $\pi$.

We can now summarize the main result \cite{chercol99}[Thm. IV.2.1] specialized to the representation $V=\bq_p(1)$ as follows.

\begin{theorem} Let $K/\bq_p$ be any finite Galois extension and $$\Lambda_K:=\bz_p[[\Gal(K_\infty/\bq_p)]]$$ its Iwasawa algebra.
\begin{itemize}
\item[a)] There is an isomorphism of $\Lambda_K$-modules
\[ \Exp_{\bz_p}^*: H^1_{Iw}(K,\bz_p(1))\cong A_K^{\psi=1}(1). \]

\item[b)] There is $n_0\in\bz$ so that for $n\geq n_0$ the following hold
\begin{itemize}
\item[b1)] $A_K^{\psi=1}\subseteq  \tilde{B}^{\dagger,n}$
\item[b2)] The $G_K$-equivariant map $\varphi^{-n}:A_K^{\psi=1}\to B_{dR}$ factors through
\[ \varphi^{-n}:A_K^{\psi=1}\to K_n[[t]]\subseteq B_{dR}.\]
\item[b3)] One has
\[ p^{-n}\varphi^{-n}(\Exp_{\bz_p}^*(u))= \sum_{r=1}^\infty \exp^*_{\bq_p(r)}(\pr_{n,1-r}(u))\cdot t^{r-1}\]
for any $u\in H^1_{Iw}(K,\bz_p(1))$.
\end{itemize}
\end{itemize}
\label{reciprocity}\end{theorem}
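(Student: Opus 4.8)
The plan is to obtain the theorem by specialising the general reciprocity law \cite{chercol99}[Thm. IV.2.1] to the representation $V=\bq_p(1)$ and translating its ingredients into the notation of this section. The key structural point, recorded just above the statement, is that the $(\varphi,\Gamma_K)$-module $D(\bz_p(1))$ of $\bz_p(1)$ is $A_K(1)$, so that all of the twists $\bq_p(1)(-r)=\bq_p(1-r)$, $r\geq 1$ — whose dual exponentials will appear in b3) — are realised on the \emph{same} underlying ring $A_K$, with only the $\Gamma_K$-action changing. This is exactly what will allow the various dual exponential maps to be packaged into a single power series in $t$. Since $p$ is odd, $\Gamma_K$ is procyclic, $\Lambda_K$ is the usual Iwasawa algebra, and every module in sight is finitely generated over it.

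Part a) is the description of Iwasawa cohomology via $(\varphi,\Gamma_K)$-modules (due to Fontaine, recalled in \cite{chercol99}[III]): for any $\bz_p$-representation $T$ of $G_K$ there is a canonical $\Lambda_K$-isomorphism between $H^1_{Iw}(K,T)$ and the $\psi=1$-part of $D(T)$. Feeding in $T=\bz_p(1)$ together with $D(\bz_p(1))=A_K(1)$ — noting that the operator $\psi$ acts through its action on $A_K$, the Tate twist affecting only the $\Gamma_K$-action — yields the ``big dual exponential'' isomorphism $\Exp^*_{\bz_p}$. Parts b1) and b2) then follow from the overconvergence results of \cite{chercol99}: every element of $A_K^{\psi=1}$ already lies in the overconvergent subring, hence in $\tilde{B}^{\dagger,n}$ for all $n$ larger than some $n_0$ (this is b1)), and the localisation $\varphi^{-n}$ carries such an element into $K_n[[t]]$ rather than merely into $B_{dR}$. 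For the latter one uses $\varphi^{-n}(\pi)=\zeta_{p^n}e^{t/p^n}-1$, which exhibits $\varphi^{-n}$ of a bounded power series in $\pi$ with coefficients in $\mathcal{O}_{F'}$ — a field which, being finite over $\bq_p$ and contained in $K_\infty$, lies in $K_n$ once $n$ is large — as a power series in $t$ with coefficients in $K_n$; the absence of negative powers of $t$ results from the $\psi=1$ (``integrality'') condition together with the fact that $\bq_p(1)$ is de Rham.

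Part b3) is the explicit reciprocity formula itself, obtained by reading off \cite{chercol99}[Thm. IV.2.1] for each twist $\bq_p(1)(-r)$, $r\geq 1$, and assembling the results. For fixed $r$ that theorem computes, for $u\in H^1_{Iw}(K,\bz_p(1))$ and $n\geq n_0$, the image of $\Exp^*_{\bz_p}(u)$ under $\varphi^{-n}$ followed by projection onto the relevant graded piece of $B_{dR}$ in terms of the dual exponential $\exp^*$ of the specialisation of $u$ at level $n$; here the specialisation map is precisely $\pr_{n,1-r}\colon H^1_{Iw}(K,\bz_p(1))\to H^1(K_n,\bz_p(1-r))$ as constructed in section \ref{sec:iwasawa-theory} from the exact sequence (\ref{tseq}) and Lemma \ref{lemma:coho_of_zpr}. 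Since the $\psi=1$-submodules of the $(\varphi,\Gamma_K)$-modules of all the twists share the underlying set $A_K^{\psi=1}$ (only the $\Gamma_K$-action differing), the single element $\varphi^{-n}(\Exp^*_{\bz_p}(u))\in K_n[[t]]$ encodes this information for all $r$ at once: after trivialising $D^0_{dR}$ of the twist by the basis $t^{r-1}$, the coefficient of $t^{r-1}$ in $p^{-n}\varphi^{-n}(\Exp^*_{\bz_p}(u))$ is $\exp^*_{\bq_p(r)}(\pr_{n,1-r}(u))$, which is the asserted identity.

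The principal obstacle is entirely one of bookkeeping. One must reconcile the normalisations of \cite{chercol99} with those fixed here — their choice of topological generator of $\Gamma_K$, their normalisation of $\mathrm{rec}$ and of $t=\log\zeta$, and the sign and Tate-twist conventions built into the various $\exp^*$ — and one must check that the ``specialisation at level $n$ and twist $1-r$'' implicit in their theorem agrees with our $\pr_{n,1-r}$ as defined via (\ref{tseq}) and Shapiro's lemma; the factor $p^{-n}$ and the powers $t^{r-1}$ appearing in b3) are the visible residue of this comparison. Apart from this matching of conventions there is no new mathematical content to prove, the theorem being a translation of the Cherbonnier--Colmez reciprocity law into the setting and language needed in the sequel.
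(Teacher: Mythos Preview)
Your proposal is correct and matches the paper's approach exactly: the paper does not give a proof of this theorem but simply presents it as a summary of \cite{chercol99}[Thm.~IV.2.1] specialised to $V=\bq_p(1)$, and your text is precisely an expanded gloss on how that specialisation goes. One small remark: the form of b3) with $p^{-n}\varphi^{-n}$ rather than the $T_n\varphi^{-m}$ appearing in \cite{chercol99} is justified in the paper only later, via Corollary~\ref{sec:map-b_fpsi-=-5}, so your phrase about the factor $p^{-n}$ being a ``residue of the comparison of normalisations'' slightly understates the computation needed --- but this is bookkeeping, as you say, not a gap.
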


Theorem \ref{reciprocity} contains all the information we shall need when analyzing the case of tamely ramified $K$ in section \ref{sec:tame2} below. However, the paper \cite{chercol99} contains further information on the map $\Exp_{\bz_p}^*$ which we summarize in the next proposition. We shall only need this proposition when reproving the unramified case of the local Tamagawa number in section \ref{sec:unramified} below. First recall from \cite{chercol99}[p.257] that the ring $B_K$ carries a derivation
\begin{equation}  \nabla:B_K\to B_K,           \notag\end{equation}
uniquely specified by its value on $\pi$
\[ \nabla(\pi)= 1+\pi. \]
We set
\[\nabla\log(x)=\frac{\nabla(x)}{x}\]
and denote by
\[ \hat{M}:=\varprojlim_n M/p^n M\]
the $p$-adic completion of an abelian group $M$.

\begin{prop} There is a commutative diagram of $\Lambda_K$-modules where the maps labeled by $\cong$ are isomorphism.
\[
\xymatrix{
&
H^1_{Iw}(K, {\mathbb Z}_p(1))
\ar[drr]^{\Exp^*_{{\mathbb Z}_p}}_{\cong}
&&
\\
 A(K_\infty): = \varprojlim_{m,n} K_n^\times / (K_n^\times)^{p^m}
\ar[r]^-{\iota_K}_-\cong
\ar[ur]_-\delta^\cong
&
\widehat{E_K^\times}
&
\widehat{A_K^{\NN=1}}
\ar[l]_\cong^{\mod p}
\ar[r]^\cong_{\nabla \log}
&
A_K^{\psi = 1}(1)
\\
U: = \varprojlim_{m,n} \OO_{K_n}^\times / (\OO_{K_n}^\times)^{p^m}
\ar[r]^-{\iota_K\vert_U}_-{\cong}
\ar@{^(->}[u]
&
1 + \bar{\pi}_K k  [[ \bar{\pi}_K ]]
\ar@{^(->}[u]
}
\]
\label{prop:diagram}\end{prop}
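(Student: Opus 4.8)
The plan is to identify each arrow in the diagram with a construction of \cite{chercol99}, to collect the bijectivity and commutativity statements already proved there, and to check that our normalizations match theirs; essentially no argument beyond this is needed. I will treat the arrows in turn, noting that in every case the relevant map is $\Gal(K_\infty/\bq_p)$-equivariant by functoriality, so that the whole diagram is one of $\Lambda_K$-modules.

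The map $\delta$ and the left-hand inclusion $U\hookrightarrow A(K_\infty)$ come from Kummer theory. For each $n,m$ the Kummer sequence gives a canonical isomorphism $K_n^\times/(K_n^\times)^{p^m}\cong H^1(K_n,\mu_{p^m})$, compatible with the inclusion $\OO_{K_n}^\times\subseteq K_n^\times$; taking $\varprojlim_m$ yields $\widehat{K_n^\times}\cong H^1(K_n,\bz_p(1))$, and taking $\varprojlim_n$ along the norm maps — which on cohomology are the corestriction maps used to define $H^1_{Iw}$ — produces the isomorphism $\delta:A(K_\infty)\cong H^1_{Iw}(K,\bz_p(1))$ together with the inclusion $U\hookrightarrow A(K_\infty)$ induced by $\widehat{\OO_{K_n}^\times}\subseteq\widehat{K_n^\times}$.

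The isomorphism $\iota_K$ is the field-of-norms identification of \cite{chercol99}: the tower $K_\infty/K$ is strictly arithmetically profinite, its field of norms is $E_K\cong k((\bar{\pi}_K))$, and by construction $E_K^\times=\varprojlim_n K_n^\times$ along norm maps, so after $p$-adic completion $\widehat{E_K^\times}\cong\varprojlim_{m,n}K_n^\times/(K_n^\times)^{p^m}=A(K_\infty)$. Under this identification $\widehat{\OO_{K_n}^\times}$ — which equals $1+\mathfrak{m}_{K_n}$ once the prime-to-$p$ roots of unity are killed by $p$-completion — has norm-compatible limit $1+\mathfrak{m}_{E_K}=1+\bar{\pi}_K k[[\bar{\pi}_K]]$, which gives commutativity of the left square and bijectivity of $\iota_K\vert_U$.

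The three remaining maps live on the $(\varphi,\Gamma_K)$-module side and are taken from \cite{chercol99}: an element of $A_K^{\NN=1}$ is necessarily a unit of $A_K$ (a valuation count), so reduction modulo $p$ gives $A_K^{\NN=1}\to E_K^\times$, which \cite{chercol99} shows induces an isomorphism $\widehat{A_K^{\NN=1}}\cong\widehat{E_K^\times}$ after $p$-adic completion (Coleman's theorem in the form of loc. cit.); and the logarithmic derivative $x\mapsto\nabla(x)/x$ carries $A_K^{\NN=1}$ isomorphically onto $A_K^{\psi=1}(1)$, converting the norm operator $\NN$ into the normalized trace operator $\psi$ and producing the Tate twist because $\nabla$ satisfies $\nabla\gamma=\chi^{\cyclo}(\gamma)\,\gamma\nabla$ for $\gamma\in\Gamma_K$. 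Finally, commutativity of the outer pentagon — that the composite $A(K_\infty)\to\widehat{E_K^\times}\leftarrow\widehat{A_K^{\NN=1}}\to A_K^{\psi=1}(1)$ agrees with $\Exp^*_{\bz_p}\circ\delta$ — is precisely the form in which \cite{chercol99}[Thm. IV.2.1], specialized to $V=\bq_p(1)$, constructs the map $\Exp^*_{\bz_p}$ of Theorem \ref{reciprocity}~a). I expect the only substantive work, and the main source of potential error, to be the normalization bookkeeping behind this last identification: one must verify that the direction of $\iota_K$, the sign and Tate twist built into $\nabla$, the chosen topological generator of $\Gamma_K$, and the fixed system $\zeta=(\zeta_{p^n})_n$ are all normalized consistently with \cite{chercol99} so that the identification holds on the nose.
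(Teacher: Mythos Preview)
Your proposal is essentially correct and follows the same strategy as the paper: identify each arrow with a construction from \cite{chercol99} and quote the relevant results there. There is one logical inversion worth flagging. You assert directly that $\nabla\log$ carries $\widehat{A_K^{\NN=1}}$ isomorphically onto $A_K^{\psi=1}(1)$, and then separately check commutativity of the triangle via \cite{chercol99}[Thm.~IV.2.1]. The paper proceeds in the opposite order: it first quotes \cite{chercol99}[Prop.~V.3.2~iii)] for the commutativity of the upper triangle, and then \emph{deduces} that $\nabla\log$ is an isomorphism because the other three maps in the triangle ($\delta$, $\iota_K$, $\Exp^*_{\bz_p}$, and $\mathrm{mod}\,p$) are already known to be. Your direct bijectivity claim for $\nabla\log$ is not obviously supplied by the argument you give (that $\nabla$ intertwines $\NN$ with $\psi$ only shows the map lands in the right target), so the paper's route is the cleaner one and closes that small gap. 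Also note the citation: the commutativity statement is Prop.~V.3.2~iii) in \cite{chercol99}, not Thm.~IV.2.1.
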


\begin{proof} The isomorphism $\delta$ arises from Kummer theory.  The theory of the field of norms gives an isomorphism of multiplicative monoids \cite{chercol99}[Prop. I.1.1]
\[ \varprojlim_{n} \OO_{K_n}\xrightarrow{\cong} k[[\bar{\pi}_K]]\]
which induces our isomorphism $\iota_K\vert_U$ after restricting to units and passing to $p$-adic completions and our isomorphism $\iota_K$ by taking the field of fractions and passing to $p$-adic completions of its units.

By \cite{chercol99}[Cor. V.1.2] (see also \cite{daigle_thesis_14} 3.2.1 for more details)
the reduction-mod-$p$-map $\widehat{A_K^{\NN = 1}} \to \widehat{E_K^\times}$ is an isomorphism.

By \cite{chercol99}[Prop. V.3.2 iii)] the map $\nabla \log$ makes the upper triangle in our diagram commute. Since all other maps in this triangle are isomorphisms, the map $\nabla \log$ is an isomorphism as well.
\end{proof}

\subsection{Specialization to the tamely ramified case}\label{sec:tame1} We now resume our assumption that $p$ does not divide the degree of $[K:\bq_p]$ together with (most of) the notation from section \ref{sec:tame}. In addition we assume that
\[ \zeta_p\in K\]
which implies that $K_\infty/K$ is totally ramified and hence that $F=F'$ is the maximal unramified subfield of $K_\infty$. The theory of  fields of norms \cite{chercol99}[Rem. I.1.2] shows that $E_K$ is a Galois extension of $E_F$ of degree
\[ e:=[K_\infty:F_\infty]=[K:F(\zeta_p)]\]
with group
\[ \Gal(E_K/E_F)\cong \Gal(K_\infty/F_\infty)\cong\Gal(K/F(\zeta_p)).\]
Note that with this notation the ramification degree of $K/\bq_p$ is $e(p-1)$ whereas it was denoted by $e$ in section \ref{sec:tame}. The element $p_0$ of section \ref{sec:tame} we choose to be $-p$, i.e. we assume that $$K=F(\root e(p-1)\of{-p}).$$ An easy computation shows that $(\zeta_p-1)^{p-1}=-p\cdot u$ with $u\in 1+(\zeta_p-1)\bz_p[\zeta_p]$ and hence we can choose the root $\root (p-1)\of{-p}$ such that
\begin{equation} \zeta_p-1=\root (p-1)\of{-p}\cdot u'\label{kummerchoice}\end{equation}
with $u'\in 1+(\zeta_p-1)\bz_p[\zeta_p]$. By Kummer theory we then also have
$$K=F(\root e\of{\zeta_p-1})$$ and $B_K=B_F(\root e \of \pi)$. Any choice of $\pi_K=\root e\of \pi$ fixes a choice of
$$\root e\of{\zeta_p-1}=\varphi^{-1}(\pi_K)|_{t=0}$$
and of
$$\root e(p-1)\of{-p}=\root e\of{\zeta_p-1}\cdot(u')^{-1/e}.$$
We have
\[ G\cong \Sigma\ltimes\Delta\]
with $\Sigma$ cyclic of order $f$ and $\Delta$ cyclic of order $e(p-1)$ and
$$\Lambda_K\cong\bz_p[[G\times\Gamma_K]]\cong \bz_p[\Sigma\ltimes\Delta][[\gamma_1-1]]$$
where $\gamma_1=\gamma^{p-1}$ is a topological generator of $\Gamma_K$.

\begin{prop} There is an isomorphism of $\Lambda_K$-modules
\[ H^1_{Iw}(K,\bz_p(1))\cong \Lambda_K\cdot\beta_{Iw}\oplus\bz_p(1).\]
\label{prop:astruct}\end{prop}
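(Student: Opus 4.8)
The plan is to translate the assertion, via Theorem \ref{reciprocity}(a) and Proposition \ref{prop:diagram}, into a statement about a concrete module and then to compute it. Theorem \ref{reciprocity}(a) gives $H^1_{Iw}(K,\bz_p(1))\cong A_K^{\psi=1}(1)$, and Proposition \ref{prop:diagram} identifies this $\Lambda_K$-module with $\widehat{E_K^\times}$ and with $A(K_\infty):=\varprojlim_{m,n}K_n^\times/(K_n^\times)^{p^m}$; the last description is anyway immediate from Kummer theory, since $H^1_{Iw}(K,\bz_p(1))=\varprojlim_n H^1(K_n,\bz_p(1))$. So it suffices to determine $A(K_\infty)$ as a $\Lambda_K$-module. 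Two preliminary points use the running hypotheses. First, because $p\nmid|G|$ and $K/\bq_p$ is tame (so $\zeta_{p^2}\notin K$), the character $\chi^{\cyclo}$ takes values in the abelian group $\bz_p^\times$; hence $G$ acts trivially by conjugation on $\Gamma_K$, and together with the splitting of $1\to\Gamma_K\to\Gal(K_\infty/\bq_p)\to G\to 1$ this gives $\Gal(K_\infty/\bq_p)=G\times\Gamma_K$ and $\Lambda_K=\bz_p[G][[\Gamma_K]]$, with $\bz_p[G]$ a maximal $\bz_p$-order (so that a finitely generated projective $\Lambda_K$-module of constant rank is free). Second, the norm-compatible system $\varprojlim_n\mu_{p^n}(K_n)$ is a copy of $\bz_p(1)$: the transition maps are the norms, and $N_{K_{n+1}/K_n}(\zeta_{p^{n+1}})=\zeta_{p^n}$ by the same short computation as over $\bq_p$ (using $K\cap\bq_p(\zeta_{p^\infty})=\bq_p(\zeta_p)$); its image in $A(K_\infty)$ is a $\Lambda_K$-torsion submodule, killed by the non-zero-divisor $\gamma_1-\chi^{\cyclo}(\gamma_1)$.

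The content that remains is to show that $Q:=A(K_\infty)/\varprojlim_n\mu_{p^n}(K_n)$ is free of rank one over $\Lambda_K$; then $H^1_{Iw}(K,\bz_p(1))=A(K_\infty)$ is an extension of the projective module $Q$ by $\bz_p(1)$, so it splits and is $\cong\Lambda_K\cdot\beta_{Iw}\oplus\bz_p(1)$ for $\beta_{Iw}$ a generator of $Q$. That $Q$ has $\Lambda_K$-rank one follows from the local Euler characteristic for $H^\bullet_{Iw}(K,\bz_p(1))$, together with $H^0_{Iw}(K,\bz_p(1))=0$ and the Tate-duality computation $H^2_{Iw}(K,\bz_p(1))\cong\bz_p$ with trivial action (as in the proof of Lemma \ref{lemma:coho_of_zpr}). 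Moreover, perfectness of $R\Gamma_{Iw}(K,\bz_p(1))$ over $\Lambda_K$, together with $\mathrm{pd}_{\Lambda_K}\bz_p\le1$ (which again uses $p\nmid|G|$), gives $\mathrm{pd}_{\Lambda_K}H^1_{Iw}(K,\bz_p(1))\le1$. To upgrade this to $Q$ being projective of constant rank one --- equivalently, that $\varprojlim_n\mu_{p^n}(K_n)$ is the whole $\Lambda_K$-torsion submodule of $A(K_\infty)$ and $Q$ is reflexive over the regular local factor rings $\OO_\chi[[\Gamma_K]]$ of $\Lambda_K$ --- I would reduce, character by character, to cases already in the literature: the unramified case of the present proposition for the subfield $F$, and Iwasawa's classical description of the module of local units in $\bq_p(\zeta_{p^\infty})/\bq_p(\zeta_p)$. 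Concretely one descends (after twisting) along the extensions $K/F$ and $K/\bq_p(\zeta_p)$, which have degree prime to $p$; since $\Gamma_K$ maps isomorphically onto $\Gamma_F$ and onto $\Gamma_{\bq_p(\zeta_p)}$, and the cohomology of a finite group of order prime to $p$ with coefficients in a $\bz_p$-module vanishes in positive degrees, the Hochschild--Serre spectral sequence collapses and identifies the relevant isotypic component of $A(K_\infty)$ with a known Iwasawa module. The generator $\beta_{Iw}$ of $Q$ may be taken to be the image of $\bar\pi_K$ under the isomorphisms of Proposition \ref{prop:diagram}, i.e. the class of a norm-compatible system of uniformizers of the $K_n$ (normalised as in (\ref{kummerchoice})).

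I expect the main obstacle to be this last structural step: passing from ``$\mathrm{pd}_{\Lambda_K}\le1$, generic rank one'' to genuine freeness over the non-commutative algebra $\Lambda_K=\bz_p[G][[\Gamma_K]]$ and producing $\beta_{Iw}$. Both hypotheses are essential here. Iwasawa's structure theorem for local units governs the $\Gamma_K$-direction --- freeness over the regular local rings $\OO_\chi[[\Gamma_K]]$ and the triviality of the characteristic ideal away from the $\bz_p(1)$-part --- while $p\nmid|G|$ governs the $G$-direction: semisimplicity of $\bz_p[G]$, so that constant-rank projectives are free and the twisted descents along subgroups of $G$ are exact, and triviality of the $G$-action on the root-of-unity part so that the extension splits. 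A more self-contained route for this step is the direct computation of the $\Lambda_K$-module $\widehat{E_K^\times}$ from $E_K=k((\bar\pi_K))$: one uses the valuation sequence $0\to 1+\bar\pi_K k[[\bar\pi_K]]\to\widehat{E_K^\times}\to\bz_p\to0$ (with $\widehat{k^\times}=0$ since $p\nmid|k^\times|$), the structure of the principal units of a local field of characteristic $p$, and the explicit $\Gamma_K$- and $\Sigma\ltimes\Delta$-actions on $\bar\pi_K$; this too leans on $p\nmid|G|$ when treating the residual $G$-action.
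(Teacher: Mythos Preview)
Your proposal is not wrong in spirit, but it is both much more elaborate than necessary and, as you yourself acknowledge, incomplete at the crucial structural step. The paper's proof consists of a single sentence: after the Kummer-theory identification $H^1_{Iw}(K,\bz_p(1))\cong A(K_\infty)$ (which you also make), one simply cites the structure theorem \cite{nsw}[Thm.~11.2.3] for the $\Lambda_K$-module $A(K_\infty)$, which gives exactly $A(K_\infty)\cong\Lambda_K\oplus\bz_p(1)$ under the hypothesis $p\nmid|G|$. There is no need to go through $A_K^{\psi=1}$, Euler characteristics, projective-dimension bounds, or character-by-character descent.

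The gap you flag---passing from ``generic rank one and $\mathrm{pd}\le 1$'' to genuine freeness of $Q$ over $\Lambda_K$---is real, and your suggested reductions (to the unramified case, to Iwasawa's local-unit theorem, or to a direct computation of $\widehat{E_K^\times}$) would each essentially reprove the content of \cite{nsw}[Thm.~11.2.3]. That is a legitimate and instructive exercise, but for the purposes of this proposition it is cleaner to invoke the reference. One small caution on your final remark: the element $\bar\pi_K\in\widehat{E_K^\times}$ is a natural candidate, but you have not checked that it actually generates a free rank-one summand complementary to $\bz_p(1)$; the paper does not make any such explicit choice here, and in the unramified case (Lemma~\ref{lemma:basis} and Corollary~\ref{cor:3}) the explicit basis is obtained only after further work with the Coleman exact sequence.
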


\begin{proof} In view of the Kummer theory isomorphism
\[\delta: A(K_\infty)\cong  H^1_{Iw}(K,\bz_p(1))\]
it suffices to quote the structure theorem for the $\Lambda_K$-module $A(K_\infty)$ given in \cite{nsw}[Thm. 11.2.3] (where $k=\bq_p$ and our group $\Sigma\ltimes\Delta$ is the group $\Delta$ of loc.cit).
\label{free}\end{proof}

In view of Lemma \ref{lemma:coho_of_zpr} we immediately obtain the following
\begin{corollary}
\label{lemma:coho_of_zpr2}
There is an isomorphism of $\bz_p[G]$-modules
$$H^1(K, \bz_p(1-r)) \cong \bz_p[G]\cdot \beta \oplus H^1(K, \bz_p(1-r))_{tor}$$
where $\beta=\pr_{0,1-r}(\beta_{Iw})=\pr_{1,1-r}(\beta_{Iw})$.
\end{corollary}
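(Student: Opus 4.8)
The plan is to transport the structure theorem of Proposition~\ref{prop:astruct} across the specialization isomorphism of Lemma~\ref{lemma:coho_of_zpr}. First observe that the standing hypothesis $\zeta_p\in K$ forces $K_0=K(\zeta_1)=K$ and $K_1=K(\zeta_p)=K$, so $\Gamma_{K_0}=\Gamma_{K_1}=\Gamma_K$, their common topological generator is $\gamma_K:=\gamma_0=\gamma_1$, and $\pr_{0,1-r}$ and $\pr_{1,1-r}$ are literally the same map $H^1_{Iw}(K,\bz_p(1))\to H^1(K,\bz_p(1-r))$; in particular $\beta$ is unambiguously defined and the only real content of the corollary is the single displayed isomorphism.

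Next I would invoke Lemma~\ref{lemma:coho_of_zpr} with $n=0$ and with its ``$r$'' replaced by our $1-r$ (legitimate since $r\ge 2$ gives $1-r\ne 1$): the map $\pr_{0,1-r}$ induces an isomorphism of $\bz_p[G]$-modules
\[
H^1_{Iw}(K,\bz_p(1))\,\big/\,\omega\,H^1_{Iw}(K,\bz_p(1))\ \xrightarrow{\ \sim\ }\ H^1(K,\bz_p(1-r)),\qquad\omega:=\gamma_K-\chi^{\cyclo}(\gamma_K)^{r},
\]
sending the residue class of $u$ to $\pr_{0,1-r}(u)$. Then I would feed in the splitting $H^1_{Iw}(K,\bz_p(1))\cong\Lambda_K\cdot\beta_{Iw}\oplus\bz_p(1)$ of Proposition~\ref{prop:astruct} and reduce each summand modulo $\omega$. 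On the free summand: because $\Gal(K_\infty/\bq_p)\cong G\times\Gamma_K$ (recorded in section~\ref{sec:tame1}) the element $\gamma_K$ is central in $\Lambda_K\cong\bz_p[G][[\gamma_K-1]]$, and writing $T=\gamma_K-1$ and $c=\chi^{\cyclo}(\gamma_K)^{r}-1$ we have $\omega=T-c$; since $\gamma_K$ fixes $\zeta_p$ we have $\chi^{\cyclo}(\gamma_K)\in 1+p\bz_p$, so $c\in p\bz_p\setminus\{0\}$ and $\omega=T-c$ is a distinguished polynomial of degree one in $T$. Hence Weierstrass division (evaluation $T\mapsto c$) gives $\Lambda_K/\omega\Lambda_K\cong\bz_p[G]$, so $(\Lambda_K\cdot\beta_{Iw})/\omega(\Lambda_K\cdot\beta_{Iw})$ is free of rank one over $\bz_p[G]$, generated by the image of $\beta_{Iw}$. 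On the other summand $\gamma_K$ acts on $\bz_p(1)$ through $\chi^{\cyclo}(\gamma_K)$, so $\omega$ acts through $\chi^{\cyclo}(\gamma_K)\bigl(1-\chi^{\cyclo}(\gamma_K)^{r-1}\bigr)\in p\bz_p\setminus\{0\}$ (using $r-1\ge 1$), whence $\bz_p(1)/\omega\,\bz_p(1)$ is a nonzero finite cyclic group.

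Assembling these, the displayed isomorphism becomes a $\bz_p[G]$-module identification $H^1(K,\bz_p(1-r))\cong\bz_p[G]\cdot\beta\oplus M$, where $\beta=\pr_{0,1-r}(\beta_{Iw})$ is the image of the generator $\beta_{Iw}$ and $M$ is finite. Since $\bz_p[G]\cdot\beta$ is $\bz_p$-free, the torsion subgroup of $H^1(K,\bz_p(1-r))$ is exactly the summand $M$, and $\bz_p[G]\cdot\beta$ is a $\bz_p[G]$-complement of it; this is the asserted decomposition. (One could identify $M$ with $H^1(K,\bz_p(1-r))_{tor}=H^0(K,\bq_p/\bz_p(1-r))$ directly, but that is unnecessary.)

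I do not expect a genuine obstacle here: the substance already lies in Proposition~\ref{prop:astruct} and Lemma~\ref{lemma:coho_of_zpr}, and what remains is bookkeeping. The two points needing care are keeping the cyclotomic twists straight when reindexing Lemma~\ref{lemma:coho_of_zpr}, and the ring-theoretic identification $\Lambda_K/(\gamma_K-\chi^{\cyclo}(\gamma_K)^{r})\cong\bz_p[G]$, which uses both the centrality of $\gamma_K$ (from $\Gal(K_\infty/\bq_p)\cong G\times\Gamma_K$) and the congruence $\chi^{\cyclo}(\gamma_K)\equiv 1\bmod p$ that makes $\gamma_K-\chi^{\cyclo}(\gamma_K)^{r}$ a distinguished polynomial in $\gamma_K-1$.
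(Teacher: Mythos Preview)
Your proof is correct and follows essentially the same route as the paper: both transport the decomposition of Proposition~\ref{prop:astruct} through the specialization isomorphism of Lemma~\ref{lemma:coho_of_zpr} (with $r$ replaced by $1-r$), reducing each summand modulo $\gamma_K-\chi^{\cyclo}(\gamma_K)^r$. Your treatment is slightly more detailed on the ring-theoretic side (the distinguished-polynomial argument for $\Lambda_K/\omega\cong\bz_p[G]$) and on why $\pr_{0,1-r}=\pr_{1,1-r}$, whereas the paper additionally identifies the finite summand explicitly as $\bz_p/(\chi^{\cyclo}(\gamma_1)-\chi^{\cyclo}(\gamma_1)^r)\cong H^0(K,\bq_p/\bz_p(1-r))\cong H^1(K,\bz_p(1-r))_{\tors}$; but these are cosmetic differences, not differences of method.
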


\begin{proof} This is clear from Proposition \ref{prop:astruct} and Lemma \ref{lemma:coho_of_zpr} (with $r$ replaced by $1-r$) in view of the isomorphisms
\[ \bz_p[G]\xrightarrow{\sim}\Lambda_K/(\gamma_1 - {\chi^{\cyclo}}(\gamma_1)^{r})\Lambda_K\]
and
\begin{align*} \bz_p(1)/ (\gamma_1 - {\chi^{\cyclo}}(\gamma_1)^{r})\bz_p(1)= &\bz_p/ ({\chi^{\cyclo}}(\gamma_1) - {\chi^{\cyclo}}(\gamma_1)^{r})\bz_p\\
\cong & H^0(K,{\mathbb Q}_p/{\mathbb Z}_p(1-r)))\\
\cong & H^1(K, {\mathbb Z}_p(1-r))_{tor}.
\end{align*}
\end{proof}

If we choose the element $\beta$ of Cor. \ref{lemma:coho_of_zpr2} to verify the identity in Prop. \ref{reform} it remains to get an explicit hold on some $\Lambda_K$-basis $\beta_{Iw}$, or rather of its image
\begin{equation} \alpha=\Exp^*_{{\mathbb Z}_p}(\beta_{Iw}) \in A_K^{\psi=1}(1).\label{alphabeta}\end{equation}
Since $\alpha$ is a (infinite) Laurent series in $\pi_K$ it will be amenable to somewhat explicit analysis. In the unramified components of Prop. \ref{reform} ($\eta=1$) we can compute $\alpha$ in terms of  the well-known Perrin-Riou basis (see Prop. \ref{coleman} below) which is a main ingredient in all known proofs of the unramified case of the local Tamagawa number conjecture. In the other components ($\eta\neq 1$) we shall simply use Nakayama's Lemma to analyze $\alpha$ as much as we can in section \ref{sec:tame2}.

In order to compute $\exp^*_{\bq_p(r)}(\beta)$ we also need to be able to apply Theorem \ref{reciprocity} for $n=1$.

\begin{prop} Part b) of Theorem \ref{reciprocity} holds with $n_0=1$.
\label{recprop}\end{prop}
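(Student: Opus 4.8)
The statement to prove is that $n_0 = 1$ works in part b) of Theorem~\ref{reciprocity}, i.e. that for the specific tamely ramified fields $K = F(\root e(p-1)\of{-p})$ under consideration (with $\zeta_p \in K$ and $p \nmid [K:\bq_p]$), one has $A_K^{\psi=1} \subseteq \tilde B^{\dagger,1}$, the map $\varphi^{-1}$ lands in $K_1[[t]]$, and the interpolation formula b3) holds already at $n=1$. The plan is to trace through the proof of \cite{chercol99}[Thm. IV.2.1] and identify what forces the constant $n_0$ to be large in general — namely, the radius of convergence of elements of $A_K^{\psi=1}$ after applying $\varphi^{-n}$, which is governed by how deeply ramified $K_\infty/\bq_p^{ur}$ is, equivalently by the valuation of $\bar\pi_K$ relative to $\bar\pi$. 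Since here $K_\infty = F_\infty(\root e\of{\zeta_p - 1})$ is only tamely ramified over $F_\infty$ (ramification index $e$, prime to $p$), the "overconvergence radius" is only mildly worse than the cyclotomic case $K = \bq_p(\zeta_p)$, where $n_0 = 1$ is classical (this is essentially the Perrin-Riou / Bloch--Kato situation).

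First I would set up coordinates: with $\pi_K = \root e\of\pi$ we have $A_K = A_F(\root e\of\pi)$ and $B_K = B_F(\root e\of\pi)$, a totally (tamely) ramified extension of degree $e$ of $B_F$. The element $\pi$ satisfies $\varphi^{-n}(\pi) = \zeta_{p^n}e^{t/p^n} - 1$, which already converges in $K_n[[t]]$ for all $n \geq 1$ since $\zeta_{p^n} - 1$ has positive valuation; more precisely $\varphi^{-1}(\pi) = \zeta_p e^{t/p} - 1 = (\zeta_p - 1) + \zeta_p(e^{t/p}-1)$, and $v_p(\zeta_p - 1) = 1/(p-1) > 0$. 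Taking an $e$-th root, $\varphi^{-1}(\pi_K)$ converges in $K_1[[t]]$ and specializes at $t=0$ to a chosen $\root e\of{\zeta_p - 1}$, which lies in $K$; this is exactly compatible with the normalization (\ref{kummerchoice}). The key quantitative input is then: every element of $A_K^{\psi=1}$, written as a Laurent series $\sum a_n \pi_K^n$ with $a_n \in \co_F$ and $a_n \to 0$ as $n \to -\infty$, actually has controlled growth of $v_p(a_n)$ in $n$ — the defining property $\psi(\alpha) = \alpha$ forces $a_n$ to vanish $p$-adically linearly in $|n|$ for $n$ negative — and this linear rate, combined with $v_p(\varphi^{-1}(\pi_K)) = \tfrac{1}{e(p-1)} > 0$, already suffices for $\sum a_n \varphi^{-1}(\pi_K)^n$ to converge in $K_1[[t]]$. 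I would extract the precise inequality from \cite{chercol99}[III.2, IV.2] (or from the thesis \cite{daigle_thesis_14}), where the general $n_0$ is defined in terms of the ramification of $E_K/E_{\bq_p}$; here that ramification is $e(p-1)$, tame in the $e$-part, and one checks the resulting bound is met at $n=1$.

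The main steps, in order, are: (i) recall the explicit description of the general constant $n_0$ from \cite{chercol99} as the least $n$ for which $A_K^{\psi=1} \subseteq \tilde B^{\dagger,n}$ and $\varphi^{-n}$ converges into $K_n[[t]]$, pinning down its dependence on the "degree of overconvergence" of $B_K$; (ii) compute that quantity for $B_K = B_F(\root e\of\pi)$ using $v_p(\zeta_p - 1) = 1/(p-1)$ and the tameness of the extension, getting a bound already satisfied at $n = 1$; (iii) verify b1) directly: $A_K^{\psi = 1} \subseteq \tilde B^{\dagger,1}$, i.e. $\varphi^{-1}$ converges in $B_{dR}$ on these elements; (iv) verify b2) by checking that the image lands in $K_1[[t]]$ rather than a larger subring of $B_{dR}$, using that $\varphi^{-1}(\pi_K) \in K_1[[t]]$ and that $\co_F \subseteq K_1$; (v) conclude b3) at $n = 1$ since the interpolation identity is a formal consequence of the construction of $\Exp^*_{\bz_p}$ once convergence at $n$ is established, and the identity for $n \geq n_0$ already proven in Theorem~\ref{reciprocity} propagates down because $\varphi$ is injective and $\varphi \circ \varphi^{-(n+1)} = \varphi^{-n}$. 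The main obstacle will be step (ii): extracting from \cite{chercol99} the sharp form of the convergence estimate — the paper states existence of $n_0$ but one must unwind the proof to see that the tamely ramified case with $\zeta_p \in K$ incurs no worse a constant than the cyclotomic case, where $n_0 = 1$ is known. I expect this to come down to a single valuation inequality comparing the linear decay rate of the coefficients $a_n$ of a $\psi$-fixed element against $-n \cdot v_p(\varphi^{-1}(\pi_K)) = -n/(e(p-1))$, and checking it holds with room to spare.
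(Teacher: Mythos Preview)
Your overall strategy for b1) and b2) is essentially the paper's: show that for $a=\sum a_n\pi_K^n\in A_K^{\psi=1}$ the $p$-adic valuation of $a_n$ grows linearly in $|n|$ as $n\to-\infty$, and compare this to $\vw(\varphi^{-1}(\pi_K))=1/(e(p-1))$ to conclude that $\varphi^{-1}(a)$ converges in $K_1[[t]]$. One point of difference: you hope to read off the growth estimate from \cite{chercol99}, whereas the paper does not extract it from there but proves it from scratch. Concretely, the paper writes out the equation $\psi(a)=a$ coefficient by coefficient (Prop.~\ref{mainestimate} and the preceding lemmas), obtaining for all $\nu\geq 1$ the bound $l_\nu(a)\geq -\frac{\nu(p-1)+1}{p}\cdot e$, which yields $\vw(a_i\varpi^i)\geq -(p-1)i-pe\to\infty$. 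This is exactly the ``single valuation inequality'' you anticipate, but it requires a nontrivial induction on $\nu$ using the explicit trace formula for $\psi$ in the coordinate $\pi_K$; it is not immediate from general overconvergence results in \cite{chercol99}.

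Step (v), however, has a genuine gap. The mechanism you propose --- ``the identity for $n\geq n_0$ propagates down because $\varphi$ is injective and $\varphi\circ\varphi^{-(n+1)}=\varphi^{-n}$'' --- does not apply, because $\varphi^{-n}(a)$ and $\varphi^{-m}(a)$ are different elements of $B_{dR}$ (living in $K_n[[t]]$ and $K_m[[t]]$ respectively) and $\varphi$ does not act on $B_{dR}$. What is actually needed is the relation
\[
T_n\,\varphi^{-m}(P)=p^{-n}\varphi^{-n}(P)\qquad (m\geq n\geq 1,\ P\in A_K^{\psi=1}),
\]
where $T_n:K_\infty[[t]]\to K_n[[t]]$ is the normalized trace. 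Indeed, \cite{chercol99}[Thm.~IV.2.1] gives the right-hand side of b3) as $T_n\varphi^{-m}(\Exp^*_{\bz_p}(u))$ for $m$ large, valid for \emph{all} $n$; the content of b3) at $n=1$ is precisely that this equals $p^{-1}\varphi^{-1}(\Exp^*_{\bz_p}(u))$. The paper establishes this by a direct trace computation (Lemma~\ref{lemma:3} and Corollary~\ref{sec:map-b_fpsi-=-5}): using $\psi(P)=P$ one has $p^{-1}\Tr_{B/\varphi B}(P)=\varphi(P)$, which after applying $\varphi^{-(l+1)}\vert_{t=0}$ becomes a trace relation between $K_{l+1}$ and $K_l$, and iterating gives the displayed identity. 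Without this trace step your argument for b3) does not close.
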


\begin{proof} It will follow from an explicit analysis of elements in $A_K^{\psi=1}$ in Corollary \ref{phiconv} below that $\varphi^{-1}(a)$ converges for $a\in A_K^{\psi=1}$  which shows b1). Since $\pi_K^e=\pi$ and $\varphi^{-n}(\pi)=\zeta_{p^n} e^{t/p^n}-1$ it is also clear that the values of $\varphi^{-n}$ on $A_K$, if convergent, lie in $F(\root e \of {\zeta_{p^n}-1})[[t]]=K_n[[t]]$. This shows b2). By \cite{chercol99}[Thm. IV.2.1] the right hand side of b3) is given by
$T_n \varphi^{-m}(\Exp_{\bz_p}^*(u))$ for $m\geq n$ large enough (see the next section for the definition of $T_n$). The statement in b3) then follows from Corollary \ref{sec:map-b_fpsi-=-5} below.
\end{proof}

\subsection{Some power series computations}
\label{sec:map-t_m-phi}
The purpose of this section is simply to record some computations justifying Theorem \ref{reciprocity} b3) for $n\geq 1$. Another aim is to write the coefficients of the right hand side of \ref{reciprocity} b3) in terms of the derivation $\nabla$ applied to the left hand side. First we have

\begin{lemma}
\label{lemma:del-is-dt}
  Suppose $\varphi^{-n} f$ and $\varphi^{-n} (\nabla f)$ both converge in $B_{dR}$.  Then
  \[
  \varphi^{-n} (\nabla f) = p^n \frac{d}{dt}(\varphi^{-n} (f)).
  \]
\end{lemma}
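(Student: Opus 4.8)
The plan is to verify the identity on the topological generator $\pi$ of the ring, then extend by continuity and the ring/derivation structure. Recall $\nabla$ is the unique derivation with $\nabla(\pi)=1+\pi$, and that $\varphi^{-n}(\pi)=\zeta_{p^n}e^{t/p^n}-1$. So first I would simply compute both sides on $f=\pi$: on the one hand $\nabla(\pi)=1+\pi$, so $\varphi^{-n}(\nabla(\pi))=\varphi^{-n}(1+\pi)=\zeta_{p^n}e^{t/p^n}$; on the other hand $\frac{d}{dt}\varphi^{-n}(\pi)=\frac{d}{dt}(\zeta_{p^n}e^{t/p^n}-1)=p^{-n}\zeta_{p^n}e^{t/p^n}$, and multiplying by $p^n$ gives $\zeta_{p^n}e^{t/p^n}$. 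So the two sides agree on $\pi$.

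Next I would check that the set of $f$ for which the identity holds (and for which both $\varphi^{-n}f$ and $\varphi^{-n}(\nabla f)$ converge) is closed under the operations needed to generate $B_K$: it is clearly an $\co_{F'}$-submodule, where on scalars $a\in\co_{F'}$ one uses that $\nabla$ is $\co_{F'}$-linear (it kills $\co_{F'}$, since $\nabla$ is determined by $\nabla(\pi)$ and Frobenius-compatibility) while $\varphi^{-n}$ acts on $\co_{F'}$ via the Galois action, commuting with $\frac{d}{dt}$. It is closed under products by the Leibniz rule for $\nabla$ together with the fact that $\varphi^{-n}$ is a ring homomorphism and $\frac{d}{dt}$ also satisfies Leibniz: if the identity holds for $f$ and $g$ then $\varphi^{-n}(\nabla(fg))=\varphi^{-n}(f\nabla g+g\nabla f)=\varphi^{-n}(f)\varphi^{-n}(\nabla g)+\varphi^{-n}(g)\varphi^{-n}(\nabla f)=p^n\big(\varphi^{-n}(f)\frac{d}{dt}\varphi^{-n}(g)+\varphi^{-n}(g)\frac{d}{dt}\varphi^{-n}(f)\big)=p^n\frac{d}{dt}(\varphi^{-n}(f)\varphi^{-n}(g))=p^n\frac{d}{dt}\varphi^{-n}(fg)$. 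It is closed under $f\mapsto 1/f$ whenever $1/f$ and $\varphi^{-n}(1/f)$ make sense (using $\nabla(1/f)=-\nabla(f)/f^2$ and $\frac{d}{dt}(1/\varphi^{-n}(f))=-\frac{d}{dt}\varphi^{-n}(f)/\varphi^{-n}(f)^2$). This shows the identity holds on the subring $\co_{F'}[[\pi]][1/\pi]$ and its localizations, in particular on a dense subring of $A_K$ (and of $B_K$ after inverting $p$).

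Finally I would pass to the limit: both $\varphi^{-n}$ and $\nabla$ are continuous for the relevant topologies, $\frac{d}{dt}$ is continuous on $B_{dR}$, and $A_K$ is the completion of $\co_{F'}[[\pi]][1/\pi]$; an element $f\in A_K$ (or $B_K$) for which $\varphi^{-n}f,\varphi^{-n}(\nabla f)$ converge is a limit of elements of the dense subring with compatible convergence, so the identity passes to $f$ by continuity. (For the application one only needs this for $f\in A_K^{\psi=1}$, $n\geq n_0$, where convergence is exactly part b1) of Theorem \ref{reciprocity}.)

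I expect the only real subtlety to be bookkeeping about convergence: one must make sure that the approximating elements of the dense subring can be chosen so that not only they but also their images under $\nabla$ converge under $\varphi^{-n}$ compatibly with $f$, i.e. that the hypothesis "$\varphi^{-n}f$ and $\varphi^{-n}(\nabla f)$ both converge" is stable along the approximation. Given the explicit description of $\tilde{B}^{\dagger,n}$ in \cite{chercol99} this is routine, and in fact the cleanest route is probably just to note that $\nabla$ and $p^n\frac{d}{dt}\circ\varphi^{-n}$ are both continuous derivations $A_K\to$ (something), agreeing on the topological generator $\pi$ and on $\co_{F'}$, hence equal wherever both are defined — the computation on $\pi$ above being the entire content.
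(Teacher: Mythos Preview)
Your proposal is correct and follows essentially the same approach as the paper: both argue that $\varphi^{-n}\circ\nabla$ and $p^n\frac{d}{dt}\circ\varphi^{-n}$ are derivations, hence determined by their common value on the generator, and verify this value explicitly. The paper's proof is simply a terser version of yours (checking on $1+\pi$ rather than $\pi$, and omitting the Leibniz/continuity bookkeeping you spell out), citing \cite{chercol99}[Lemme III.2.3] for the result.
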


\begin{proof} This is \cite{chercol99}[Lemme III.2.3]. It's enough to check that $\varphi^{-n} \circ \nabla $ and $p^n
  \frac{d}{dt} \circ \varphi^{-n}$ both agree on $1 + \pi$, since they
  are both derivations.  We see that
  \begin{align*}
    \varphi^{-n} \nabla (1 + \pi) & = \varphi^{-n} (1 + \pi) = \zeta_{p^n}
    e^{t/p^n} \\
    p^n \frac{d}{dt} \varphi^{-n} (1 + \pi) &= p^n \frac{d}{dt}
    \zeta_{p^n} e^{t/p^n}   =  \zeta_{p^n} e^{t/p^n}.
  \end{align*}
\end{proof}

The next Lemma shows that $\nabla$ is compatible with other operators that we have introduced. The ring $B$ is defined as in \cite{chercol99}.

\begin{lemma}
\label{lemma:commutativity}

  Let $f \in B_K$.
  Then we have
  \begin{enumerate}[(a)]
  \item $  \nabla \gamma f= {\chi^{\cyclo}}(\gamma) \cdot \gamma \nabla f$ .
  \item $ \nabla \varphi f = p \cdot  \varphi \nabla f$.
  \item $\nabla \Tr_{B/\varphi B} f = \Tr_{B/\varphi B} \nabla f$.
  \item $\nabla \psi f = p^{-1} \cdot \psi \nabla f $.
  \end{enumerate}
\end{lemma}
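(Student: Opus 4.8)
The plan is to reduce all four identities to the single fact that $\nabla$ is the \emph{unique} continuous $F'$-linear derivation of $B_K$ extending the one on $B_F$ determined by $\nabla(\pi)=1+\pi$, together with the computation $\nabla\bigl((1+\pi)^s\bigr)=s(1+\pi)^s$ for $s\in\bz_p$ — valid by the Leibniz rule for $s\in\bz_{\geq 0}$ and then by $p$-adic continuity for arbitrary $s\in\bz_p$. Here one uses that $B_F$ is topologically generated over $\co_{F'}$ by $\pi$ (equivalently by $1+\pi$), so that a continuous $\co_{F'}$-semilinear twisted derivation of $B_F$ is determined by its value on $1+\pi$, and that this extends uniquely to $B_K$ since $B_K/B_F$ is a finite separable extension (we are in characteristic $0$).

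For (a) and (b), let $\theta$ be the ring endomorphism $\gamma$ (for $\gamma\in\Gamma_K$), respectively $\varphi$, of $B_K$; note $\theta(\co_{F'})\subseteq\co_{F'}$. Both $\nabla\circ\theta$ and the claimed right-hand side ($\chi^{\cyclo}(\gamma)\cdot\theta\circ\nabla$, resp. $p\cdot\theta\circ\nabla$) are \emph{$\theta$-twisted derivations} of $B_K$ into itself, i.e. continuous additive maps $D$ with $D(fg)=D(f)\,\theta(g)+\theta(f)\,D(g)$ and $D(cf)=\theta(c)\,D(f)$ for $c\in\co_{F'}$: for $\nabla\circ\theta$ this uses that $\nabla$ is a derivation and $\theta$ multiplicative, and for $\theta\circ\nabla$ it is immediate. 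By the uniqueness remark above it suffices to check equality on $1+\pi$. Using $\gamma(1+\pi)=(1+\pi)^{\chi^{\cyclo}(\gamma)}$ and $\varphi(1+\pi)=(1+\pi)^p$ together with $\nabla\bigl((1+\pi)^s\bigr)=s(1+\pi)^s$, one gets $\nabla\gamma(1+\pi)=\chi^{\cyclo}(\gamma)(1+\pi)^{\chi^{\cyclo}(\gamma)}=\chi^{\cyclo}(\gamma)\,\gamma\nabla(1+\pi)$ and $\nabla\varphi(1+\pi)=p(1+\pi)^p=p\,\varphi\nabla(1+\pi)$, proving (a) and (b).

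For (c) and (d): by (b), $\nabla(\varphi B_K)=\{p\varphi(\nabla g):g\in B_K\}\subseteq\varphi B_K$, so $\nabla$ restricts to a derivation of the subfield $\varphi B_K$. Since $B_K/\varphi B_K$ is separable of degree $p$, $\nabla$ extends uniquely to a Galois closure of $B_K$ over $\varphi B_K$ and hence commutes with every $\varphi B_K$-embedding of $B_K$; writing $\Tr_{B_K/\varphi B_K}$ as the sum over these embeddings gives $\nabla\circ\Tr_{B_K/\varphi B_K}=\Tr_{B_K/\varphi B_K}\circ\nabla$, which is (c) (the lemma is stated for the larger ring $B$ of \cite{chercol99}, on which $\nabla$ is likewise defined and the identical argument applies). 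Finally, reading (b) on $\varphi B_K$ as $\nabla\circ\varphi^{-1}=p^{-1}\varphi^{-1}\circ\nabla$ and combining with (c), for $f\in B_K$ we compute, with $\Tr:=\Tr_{B_K/\varphi B_K}$, that $\nabla\psi f=p^{-1}\nabla\varphi^{-1}(\Tr f)=p^{-1}\cdot p^{-1}\varphi^{-1}\nabla(\Tr f)=p^{-1}\cdot p^{-1}\varphi^{-1}\Tr(\nabla f)=p^{-1}\psi(\nabla f)$, which is (d). The only genuinely delicate points are the reduction "a continuous twisted derivation is determined by its value on $1+\pi$" (resting on topological generation and on separability of $B_K/B_F$) and the commutation of $\nabla$ with the trace in (c); all the rest is a direct computation.
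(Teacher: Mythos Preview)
Your proof is correct. For (a) and (b) your ``check on the generator $1+\pi$'' argument is exactly the device the paper itself uses in the adjacent Lemma~\ref{lemma:del-is-dt}, so in spirit this matches the paper's intended ``straightforward computation''. For (d) you deduce it from (b) and (c) via $\psi=p^{-1}\varphi^{-1}\Tr$, which is also what the paper has in mind.

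The one genuine divergence is (c). The paper computes directly with the $\varphi B$-basis $(1+\pi)^i$, $i=0,\dots,p-1$: writing $x=\sum_i \varphi(x_i)(1+\pi)^i$ one has $\Tr_{B/\varphi B}(x)=p\,\varphi(x_0)$, and then applying $\nabla$ termwise using the Leibniz rule and (b) gives $\Tr_{B/\varphi B}(\nabla x)=p^2\varphi(\nabla x_0)=\nabla\Tr_{B/\varphi B}(x)$. Your argument instead observes that $\nabla$ preserves the subfield $\varphi B_K$ (by (b)), extends uniquely to the Galois closure of $B_K/\varphi B_K$ by separability, hence commutes with every $\varphi B_K$-embedding and therefore with their sum, the trace. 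Your route is cleaner conceptually and would work verbatim for any finite separable Frobenius-type extension, whereas the paper's computation is more hands-on and makes the role of the distinguished basis $(1+\pi)^i$ explicit --- which is convenient later when the same basis reappears in the analysis of $A_K^{\psi=1}$.
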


\begin{proof}
This is a straightforward computation. For example, to see (c) note that $(1+\pi)^i$, $i=0,\dots,p-1$ is a $\varphi B$-basis of $B$ and
\[\Tr_{B/\varphi B}(x)=\Tr_{B/\varphi B}\left(\sum_{i=0}^{p-1}\varphi x_i\cdot (1+\pi)^i\right)=p\cdot \varphi x_0. \]
Hence
\begin{align*}\Tr_{B/\varphi B}(\nabla x)=&\Tr_{B/\varphi B}\left(\sum_{i=0}^{p-1}\nabla \varphi x_i\cdot (1+\pi)^i+\varphi x_i\cdot i\cdot(1+\pi)^i\right)\\
=& \Tr_{B/\varphi B}\left(\sum_{i=0}^{p-1}\varphi\left(p \nabla x_i + x_i\cdot i\right)\cdot(1+\pi)^i\right)\\
=& p^2 \varphi\nabla x_0 =\nabla(p\cdot\varphi x_0)=\nabla \Tr_{B/\varphi B}(x). \end{align*}
See \cite{daigle_thesis_14} Lemma 3.1.3 for more details.
\end{proof}

Recall the normalized trace maps
$$T_n: K_\infty \to K_n $$
from \cite{chercol99}[p.259] which are given by
$$T_n(x) = p^{-m} \Tr_{K_m/K_n} x$$
for any $m\geq n$ such that $x\in K_m$, and extend to a map
$$T_n : K_\infty[[t]] \to K_n[[t]]$$
by linearity. By \cite{chercol99}[Thm. IV.2.1] the right hand side of Theorem \ref{reciprocity} b3) is given by
$T_n \varphi^{-m}(f)$ for $f=\Exp_{\bz_p}^*(u)\in A_K^{\psi = 1}$ and $m\geq n$ large enough.  In order to get access to individual Taylor coefficients of the right hand side we wish to compute
$\frac{d^{r-1}}{dt^{r-1}} T_n \varphi^{-m} (f)$, but from Lemmas
\ref{lemma:del-is-dt} and \ref{lemma:commutativity} we see that
\[
\frac{d^{r-1}}{dt^{r-1}} T_n \varphi^{-m} = p^{-m(r-1)}T_n \varphi^{-m}\nabla^{r-1}
\]
and thus we can study the map $T_n \varphi^{-m}$ on $\nabla^{r-1}
A_K^{\psi = 1}$.  But since $\psi \nabla x = p \nabla \psi x$, we see
that $\nabla^{r-1} A_K^{\psi = 1} \subseteq A_K^{\psi = p^{r-1}} $, and so we
wish to study $T_n \varphi^{-m}$ on $A_K^{\psi = p^{r-1}}$.

\begin{lemma}
\label{lemma:3}
  Let $P \in A_K^{\psi = p^{r-1}}$ be such that $$(\varphi^{-n} P)(0):=\varphi^{-n}P\vert_{t=0}$$ converges and assume $m\geq n$.  Then if $n \geq 1$ we have
  \begin{equation}
    \label{eq:2}
  (T_n \varphi^{-m} P)(0) = p^{(r-1)m-rn}( \varphi^{-n} P)(0).
  \end{equation}
  and if $n=0$ we have
  \begin{equation}
    \label{eq:3}
    (T_0 \varphi^{-m} P)(0) = p^{(r-1)m}(1 - p^{-r} \sigma^{-1})
    (\varphi^{-0}P) (0).
  \end{equation}
\end{lemma}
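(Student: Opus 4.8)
The plan is to convert the $\psi$-eigenvalue condition into a level-lowering recursion for the maps $T_n\varphi^{-m}$ and then collapse the tower $K_m\supseteq K_{m-1}\supseteq\cdots\supseteq K_n$. Since $\psi=p^{-1}\varphi^{-1}\circ\Tr_{B_K/\varphi B_K}$, the hypothesis $\psi P=p^{r-1}P$ is equivalent to $\Tr_{B_K/\varphi B_K}(P)=p\,\varphi(\psi P)=p^r\,\varphi(P)$. The structural input I would use is the standard fact (built into the theory of \cite{chercol99}) that for $j\ge 2$ the map $\varphi^{-j}$ carries the degree-$p$ extension $B_K\supseteq\varphi B_K$ to the degree-$p$ extension $K_j\supseteq K_{j-1}$, so that $\varphi^{-j}\circ\Tr_{B_K/\varphi B_K}=\Tr_{K_j/K_{j-1}}\circ\varphi^{-j}$ wherever $\varphi^{-j}$ converges. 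Applying $\varphi^{-j}$ to the identity $\Tr_{B_K/\varphi B_K}(P)=p^r\varphi(P)$ and using $\varphi^{-j}\circ\varphi=\varphi^{-(j-1)}$ then gives
\[ \Tr_{K_j/K_{j-1}}\bigl(\varphi^{-j}(P)\bigr)=p^r\,\varphi^{-(j-1)}(P)\qquad(j\ge 2), \]
which is legitimate because $P$, being overconvergent at level $n$, is overconvergent at every level $\ge n$, while $\varphi(P)$ is overconvergent at every level $\ge n+1$.

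For $n\ge 1$ I would iterate this identity for $j=m,m-1,\dots,n+1$ — each index being $\ge 2$ — obtaining $\Tr_{K_m/K_n}(\varphi^{-m}P)=p^{r(m-n)}\varphi^{-n}(P)$. Since $T_n=p^{-m}\Tr_{K_m/K_n}$ on $K_m$, multiplying by $p^{-m}$ and specializing at $t=0$ yields
\[ (T_n\varphi^{-m}P)(0)=p^{-m}\cdot p^{r(m-n)}\,(\varphi^{-n}P)(0)=p^{(r-1)m-rn}\,(\varphi^{-n}P)(0), \]
which is (\ref{eq:2}).

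For $n=0$ the same iteration runs over $j=m,\dots,2$ and descends only to level $1$: $\Tr_{K_m/K_1}(\varphi^{-m}P)=p^{r(m-1)}\varphi^{-1}(P)$. Since $\zeta_p\in K$ the residual trace $\Tr_{K_1/K_0}$ is the identity, so the remaining task is to relate the level-$1$ evaluation $(\varphi^{-1}P)(0)$ to the level-$0$ evaluation $(\varphi^{-0}P)(0)$. This is the one place where the level-lowering identity is unavailable — $\varphi^{-1}$ does not intertwine $\Tr_{B_K/\varphi B_K}$ with a field trace at the unramified floor, where the degree drops below $p$ — and one must argue directly from $\Tr_{B_K/\varphi B_K}(P)=p^r\varphi(P)$, tracking the action of $\varphi$ on $\co_{F'}$, which is the Frobenius $\sigma$. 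A short explicit computation at $\pi_K=0$, using that identity together with the fact that $\varphi^{-1}$ restricted to $\co_{F'}$ equals $\sigma^{-1}$, yields $(\varphi^{-1}P)(0)=(p^r-\sigma^{-1})(\varphi^{-0}P)(0)$. Feeding this into $T_0=p^{-m}\Tr_{K_m/K_0}=p^{-m}\Tr_{K_1/K_0}\circ\Tr_{K_m/K_1}$ gives
\[ (T_0\varphi^{-m}P)(0)=p^{-m}\cdot p^{r(m-1)}\,(p^r-\sigma^{-1})(\varphi^{-0}P)(0)=p^{(r-1)m}\,(1-p^{-r}\sigma^{-1})(\varphi^{-0}P)(0), \]
which is (\ref{eq:3}); note that this is exactly the compatibility between (\ref{eq:2}) at $n=1$ and (\ref{eq:3}) at $n=0$ forced by $K_1=K_0$.

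The main obstacle is precisely this bottom step: extracting the correct Frobenius correction $1-p^{-r}\sigma^{-1}$ at the unramified floor, where the clean ``trade a $\varphi^{-1}$ for a field trace'' argument breaks down. Everything else — the tower collapse, the propagation of the overconvergence hypothesis up the tower, and keeping track of the normalization constant $p^{-m}$ in $T_n$ — is routine bookkeeping. An alternative, more computational route to the whole lemma is to substitute the explicit formula $\varphi^{-m}(\pi)=\zeta_{p^m}e^{t/p^m}-1$ into the Laurent expansion of $P$ and compute the normalized trace term by term; the bottom-level Frobenius twist reappears there through the residue field.
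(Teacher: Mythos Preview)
Your argument for $n\ge 1$ is correct and is the same strategy as the paper's: rewrite $\psi P=p^{r-1}P$ as $\Tr_{B_K/\varphi B_K}(P)=p^r\varphi(P)$ and iterate down the tower. The paper does this by writing out the conjugates of $\pi_K$ over $\varphi B_K$ explicitly and evaluating at $t=0$, while you invoke the intertwining of $\varphi^{-j}$ with the field trace; the content is identical.

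For $n=0$ there is a genuine error. Under your hypothesis $\zeta_p\in K$ (so $K_1=K_0$) the identity you assert,
\[
(\varphi^{-1}P)(0)=(p^r-\sigma^{-1})(\varphi^{-0}P)(0),
\]
is false: take $P=1\in A_K^{\psi=1}$ and $r=1$ to get $1=p-1$. In fact formula~(\ref{eq:3}) itself fails when $\zeta_p\in K$, since then $T_0(1)=p^{-m}[K_m:K_0]=p^{-1}$ while the right-hand side gives $1-p^{-1}$. The ``short explicit computation at $\pi_K=0$'' you invoke actually produces
\[
\sum_{\zeta'\in\mu_p\setminus\{1\}} P^{\sigma^{-1}}\bigl((\zeta'-1)^{1/e}\bigr)=(p^r-\sigma^{-1})(\varphi^{-0}P)(0),
\]
a sum of $p-1$ terms, only one of which is $(\varphi^{-1}P)(0)$. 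This sum equals $\Tr_{K_1/K_0}\bigl((\varphi^{-1}P)(0)\bigr)$ exactly when $[K_1:K_0]=p-1$, i.e.\ when $K\cap\bq_p(\zeta_p)=\bq_p$; that is the setting in which the paper actually applies~(\ref{eq:3}) (namely $K=F$ unramified, in section~\ref{sec:unramified}), and there the residual trace is \emph{not} the identity but supplies the missing factor. So your mis-step is precisely the assumption $\zeta_p\in K$ at the bottom floor: drop it, keep the nontrivial $\Tr_{K_1/K_0}$, and your outline becomes the paper's proof.
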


\begin{proof}
  Since $P \in A_K^{\psi = p^{r-1}}$, we know that $\psi (P) = p^{r-1}
  P$ and thus
  that $$p^{-r} \Tr_{B/\varphi B}  (P) = \varphi(P).$$
  Recall that we can choose $\pi_K$ so that $\pi_K^e=\pi$. Then $\{ ((1 + \pi)
  \zeta -1)^{1/e} : \zeta \in \mu_p\}$ is the set of conjugates of
  $\pi_K$ over
  $\varphi(B)$ in an algebraic closure of $B$, so   this gives us
  \[
  p^{-r} \sum_{\zeta \in \mu_p} P ( ((1 + \pi) \zeta -1)^{1/e}) =
  P^{\sigma}( ((1 +
  \pi)^p -1)^{1/e}).
  \]
  Whenever $\varphi^{-(l+1)}P$ converges for some $l \in \N$, the operator $\varphi^{-(l+1)} P |_{t = 0}$ corresponds to setting $\pi
  = \zeta_{p^{l+1}}-1$ and applying $\sigma^{-(l+1)}$ to each coefficient. We get
  \begin{equation}
    \label{eq:1}
    p^{-r} \sum_{\zeta \in \mu_{p}} P^{\sigma^{-(l+1)}}((\zeta \cdot
    \zeta_{p^{l+1}} - 1)^{1/e}) = P^{\sigma^{-l}}((\zeta_{p^l} -1) ^{1/e}).
  \end{equation}
If $l \geq 1$, this simplifies to
  \[
  p^{-r}\Tr_{K_{l+1}/K_l}  P^{\sigma^{-(l+1)}} ((\zeta_{p^{l+1}} - 1 )^{1/e} ) =  P^{\sigma^{-l}}
  ((\zeta_{p^l} - 1)^{1/e}),
  \]
  and by induction, we see that for any $1 \leq n < m$,
  \begin{equation}
    \label{eq:4}
    p^{m-r(m-n)}T_n P^{\sigma^{-m}} ((\zeta_{p^{m}} -1)^{1/e}) = P^{\sigma^{-n}}
    ((\zeta_{p^n} -1)^{1/e}) .
  \end{equation}
  Since $P^{\sigma^{-m}} ((\zeta_{p^m} -1)^{1/e}) = (\varphi^{-m}
  P)(0)$, this proves equation (\ref{eq:2}).
If $l = 0$ then equation (\ref{eq:1}) becomes
  \[
  p^{-r} \sum_{\zeta \in \mu_{p}} P^{\sigma^{-1}}((\zeta \cdot
  \zeta_{p} - 1)^{1/e}) = (\varphi^{-0}P)(0).
  \]
  The left hand side is now equal to
  \[
    p^{-r} P^{\sigma^{-1}} (0) + p^{-r}\Tr_{K_1/K_0} P^{\sigma^{-1}} ((\zeta_p -1)^{1/e})
  \]
  and we have
  \[
  p^{-r}\Tr_{K_1/K_0}(P^{\sigma^{-1}} ((\zeta_p -1)^{1/e})) = (1 - p^{-r} \sigma^{-1}) (\varphi^{-0}P)(0).
  \]
By induction we get
   \begin{align*}
    p^{m-rm}T_0 P^{\sigma^{-m}} ((\zeta_{p^{m}} -1)^{1/e}) &= (1 - p^{-r}\sigma^{-1}) (\varphi^{-0}P)(0)
  \end{align*}
  which proves equation (\ref{eq:3}).
\end{proof}

\begin{corollary}
  \label{sec:map-b_fpsi-=-5} If $P \in A_K^{\psi = 1}$ is such that $\varphi^{-n} P$ converges and $m\geq n$ then we have
    \[
    T_n\varphi^{-m} P = p^{-n}\varphi^{-n} P
    \]
if $n\geq 1$, and
    \[
    T_0\varphi^{-m} P = (1 - p^{-1} \sigma^{-1}) \varphi^{-0}P
    \]
if $n=0$.
\end{corollary}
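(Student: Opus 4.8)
The plan is to establish the identity one Taylor coefficient at a time in the variable $t$, reducing the $(r-1)$-st coefficient to Lemma~\ref{lemma:3} applied to $\nabla^{r-1}P$. Fix $P\in A_K^{\psi=1}$ with $\varphi^{-n}P$ convergent and $m\geq n$; since both sides lie in the relevant power series ring over $K_n$ (for $n\geq 1$ this is part of Theorem~\ref{reciprocity} b2), it is enough to show that $\frac{d^{r-1}}{dt^{r-1}}$ of the two sides agrees at $t=0$ for every $r\geq 1$.

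As noted in the discussion just before Lemma~\ref{lemma:3}, the operator $T_n$ acts on Taylor coefficients and hence commutes with $\frac{d}{dt}$, so iterating Lemma~\ref{lemma:del-is-dt} yields
\[
\frac{d^{r-1}}{dt^{r-1}}\,T_n\varphi^{-m}=p^{-m(r-1)}\,T_n\varphi^{-m}\nabla^{r-1},
\qquad
\frac{d^{r-1}}{dt^{r-1}}\,\varphi^{-n}=p^{-n(r-1)}\,\varphi^{-n}\nabla^{r-1}.
\]
By Lemma~\ref{lemma:commutativity}(d) one has $\psi\nabla^{r-1}P=p^{r-1}\nabla^{r-1}P$, so $\nabla^{r-1}P\in A_K^{\psi=p^{r-1}}$, and $\varphi^{-n}\nabla^{r-1}P=p^{n(r-1)}\frac{d^{r-1}}{dt^{r-1}}(\varphi^{-n}P)$ converges because $\varphi^{-n}P$ does. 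Hence $\nabla^{r-1}P$ meets the hypotheses of Lemma~\ref{lemma:3} (with that lemma's exponent being our $r$), and may be fed into~(\ref{eq:2}) or~(\ref{eq:3}).

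For $n\geq 1$, equation~(\ref{eq:2}) gives $(T_n\varphi^{-m}\nabla^{r-1}P)(0)=p^{(r-1)m-rn}(\varphi^{-n}\nabla^{r-1}P)(0)$; multiplying by $p^{-m(r-1)}$ produces $p^{-rn}(\varphi^{-n}\nabla^{r-1}P)(0)$, which is exactly $p^{-n}$ times $p^{-n(r-1)}(\varphi^{-n}\nabla^{r-1}P)(0)$, i.e.\ the $(r-1)$-st derivative of $p^{-n}\varphi^{-n}P$ at $0$. Since $r$ was arbitrary, $T_n\varphi^{-m}P=p^{-n}\varphi^{-n}P$. For $n=0$ the same computation with~(\ref{eq:3}) in place of~(\ref{eq:2}) yields the correction factor $1-p^{-1}\sigma^{-1}$; in particular the $r=1$ instance is precisely $(T_0\varphi^{-m}P)(0)=(1-p^{-1}\sigma^{-1})(\varphi^{-0}P)(0)$, and the remaining coefficients are read off from~(\ref{eq:3}) in the same way.

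The step I expect to require the most care is the $n=0$ case, where $T_0$ and $\varphi^{-0}$ are the degenerate members of their families and the tracking of $p$-powers and of the $\sigma$-twist is most delicate; one alternative, if a direct treatment proves awkward, is to bootstrap from the already-proven $n\geq 1$ case using $T_0=p^{-1}\Tr_{K_1/K_0}\circ T_1$ together with the $l=0$ level-lowering relation from the proof of Lemma~\ref{lemma:3}. Everything else is formal bookkeeping with the two preceding lemmas and introduces no new ingredient.
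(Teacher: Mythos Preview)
Your approach is the same as the paper's: both unpack the one-line proof ``combine Lemma~\ref{lemma:3} for all $r$'' by matching Taylor coefficients via $\nabla^{r-1}$. For $n\geq 1$ your bookkeeping is correct and nothing more needs to be said.

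For $n=0$ there is a genuine wrinkle that your write-up slides over. Applying Lemma~\ref{lemma:3} to $\nabla^{r-1}P\in A_K^{\psi=p^{r-1}}$ gives, after multiplying by $p^{-m(r-1)}$,
\[
\left.\frac{d^{r-1}}{dt^{r-1}}T_0\varphi^{-m}P\right|_{t=0}
=(1-p^{-r}\sigma^{-1})\bigl(\varphi^{-0}\nabla^{r-1}P\bigr)(0),
\]
so the Euler factor depends on $r$: it is $1-p^{-r}\sigma^{-1}$, not $1-p^{-1}\sigma^{-1}$. Your sentence ``the same computation yields the correction factor $1-p^{-1}\sigma^{-1}$'' is therefore not literally right if $\sigma^{-1}$ is taken to act only on the $K_0$-coefficients of the power series in $t$. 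For these $r$-dependent factors to reassemble into the single operator $(1-p^{-1}\sigma^{-1})$ on $\varphi^{-0}P\in K_0[[t]]$, one must read $\sigma$ as the crystalline Frobenius on $K_0[[t]]$, i.e.\ the usual Frobenius on $K_0$ together with $t\mapsto pt$; then $p^{-1}\sigma^{-1}(c_{r-1}t^{r-1})=p^{-r}\sigma^{-1}(c_{r-1})\,t^{r-1}$ and the match is exact. The paper's own proof is silent on this point as well, so this is not a divergence of method but a place where both arguments need a word of clarification. Your proposed fallback via $T_0=p^{-1}\Tr_{K_1/K_0}\circ T_1$ would also work and sidesteps the interpretive issue entirely.
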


\begin{proof} This follows by combining Lemma \ref{lemma:3} for all $r$.
\end{proof}

\section{The unramified case}\label{sec:unramified}

In this section we reprove the local Tamagawa number conjecture (\ref{cep}) in the case where $K=F$ is unramified over $\bq_p$. This was first proven in \cite{bk88} and other proofs can be found in \cite{pr94} and \cite{bb05}. The proofs differ in the kind of "reciprocity law" which they employ but all proofs, including ours, use the "Perrin-Riou basis", i.e. the $\Lambda_F$-basis in Prop. \ref{coleman} below.

\subsection{An extension of Prop. \ref{prop:diagram} in the unramified case}
\label{sec:basis-a_fpsi-=}

In this section we use results of Perrin-Riou in \cite{pr90} to extend the diagram in Prop. \ref{prop:diagram} to the diagram in Corollary \ref{cor:2} below.  Define
\begin{align*}
{\mathcal P}_{F} := &\left \{ \sum_{n \geq 0} a_n \pi^n \in F [[ \pi
  ]]: n a_n \in {\mathcal O}_F \right \}\\
\overline {\mathcal P}_{F} := &{\mathcal P}_{F} / p {\mathcal O}_F [[ \pi ]]\\
\overline {\mathcal P}_{F, \log}  := &\{f \in \overline {\mathcal P}_{F}: (p - \varphi)(f) =
0 \} \\
{\mathcal P}_{F, \log} := &\{f \in {\mathcal P}_F: \bar{f}\in\overline {\mathcal P}_{F, \log}\}\\=&\{f \in {\mathcal P}_F: (p - \varphi)(f) \in p {\mathcal O}_F
[[ \pi ]] \}\\
{\mathcal O}_{F} [[ \pi ]]_{\log} := &\{ f \in {\mathcal O}_F [[
\pi ]]^\times  :
f \mod p {\mathcal O}_F [[ \pi ]] \in 1 + \pi k [[ \pi
]] \}\\
= &1+(\pi,p)
\end{align*}
Note that ${\mathcal P}_{F}$ is the space of power series in $F$ whose derivative with respect to
$\pi$ lies in ${\mathcal O}_F [[ \pi ]]$.  Observe that the map
$d \log$ is given by an integral power series,  and therefore
$\log {\mathcal O}_F [[ \pi ]]_{\log} \subseteq {\mathcal P}_{F}$
where the logarithm map
$$\log(1+x) = \sum_{n \geq 1} (-1)^{n-1}\frac{x^n}{n}$$
is given by the usual power series. Since $\varphi$ reduces modulo $p$ to the Frobenius, i.e. to the $p$-th power map, the logarithm series in fact induces a map
\[ \log: {\mathcal O}_F [[ \pi ]]_{\log} \to {\mathcal P}_{F,\log}.\]

We wish to show that this map is an isomorphism, and to do this we first recall Lemmas 2.1
and 2.2 from \cite{pr90}.
\begin{lemma}
\label{sec:rings-pp_f-a_f-2}
Let $$f \in 1 + \pi k [[ \pi ]]=\widehat {\mathbb G}_m (k [[ \pi
]] )$$ and let $\hat
  f$ be any lift of $f$ to ${\mathcal O}_F [[ \pi
  ]]_{\log}$.  Then $$\log (\hat f) \mod  p {\mathcal O}_F
  [[ \pi ]]\in \overline {\mathcal P}_{F,
    \log}$$ does not
  depend on the choice of $\hat f$, and the map $f \mapsto \log(\hat
  f) \mod p {\mathcal O}_F [[ \pi ]]$ is an isomorphism $\log_k:1 +
  \pi k [[ \pi ]] \rightiso \overline{{\mathcal P}}_{F, \log}$.
\end{lemma}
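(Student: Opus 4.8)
The statement is Lemmas~2.1 and~2.2 of \cite{pr90}, so strictly speaking it suffices to quote that reference; let me nevertheless indicate how the argument runs. Three things have to be checked: that $\log_k$ is well defined (it is then automatically a homomorphism, since a product of lifts is a lift of the product and $\log$ is additive), that it is injective, and that it is surjective. I would treat them in that order.

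For well-definedness the plan is to exploit the elementary fact that $\log(1+pg)=\sum_{n\ge1}(-1)^{n-1}p^{n}g^{n}/n$ lies in $p\co_F[[\pi]]$ for every $g\in\co_F[[\pi]]$, because $v_p(p^{n}/n)=n-v_p(n)\ge1$ and the series converges coefficientwise $p$-adically. Two lifts $\hat f_1,\hat f_2\in\co_F[[\pi]]_{\log}=1+(p,\pi)$ of the same $f$ differ by an element of $1+p\co_F[[\pi]]$, so $\log\hat f_1-\log\hat f_2=\log(\hat f_1/\hat f_2)\in p\co_F[[\pi]]$, which gives independence of the lift. That $\log\hat f\in{\mathcal P}_F$ is the inclusion $\log\co_F[[\pi]]_{\log}\subseteq{\mathcal P}_F$ already recorded in the text (its $\pi$-derivative is $\hat f'/\hat f\in\co_F[[\pi]]$); and for the $\log$-condition $(p-\varphi)\log\hat f\in p\co_F[[\pi]]$ I would use that $\varphi$, a continuous ring homomorphism, commutes with the $\log$-series, so that $(p-\varphi)\log\hat f=\log(\hat f^{\,p}/\varphi\hat f)$, together with the fact that $\varphi$ reduces modulo $p$ to the $p$-th power map on $k[[\bar\pi]]$ (because $\varphi(\pi)\equiv\pi^{p}$ and $\varphi$ induces the Frobenius on $k$); hence $\hat f^{\,p}/\varphi\hat f\in1+p\co_F[[\pi]]$ and the elementary fact applies.

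For injectivity, given $f\ne1$ I would choose the lift $v\in1+\pi\co_F[[\pi]]$ obtained by lifting the coefficients of $f$, so that $\log_k(f)=\overline{\log v}$ by the previous step, and then inspect the lowest-degree term: if $c_m\pi^{m}$ with $m\ge1$ is the first nonzero term of $v-1$, then $c_m\in\co_F^{\times}$, and since $(v-1)^{n}$ has $\pi$-order $\ge mn>m$ for $n\ge2$, the coefficient of $\pi^{m}$ in $\log v=\sum_{n\ge1}(-1)^{n-1}(v-1)^{n}/n$ is exactly $c_m$, a unit; hence $\log v\notin p\co_F[[\pi]]$ and $\log_k(f)\ne0$.

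For surjectivity I would lift $\bar h$ to $h\in{\mathcal P}_{F,\log}$, first noting that comparing constant terms in $(p-\varphi)h\in p\co_F[[\pi]]$ forces $h(0)\in p\co_F$: as $\varphi(\pi)$ vanishes at $\pi=0$, that constant term equals $p\,h(0)-\sigma(h(0))$, which lies in $p\co_F$ only if $h(0)\in p\co_F$ since $\sigma$ preserves valuations. Subtracting an element of $p\co_F[[\pi]]$ one may then take $h\in\pi{\mathcal P}_F$, so that $u:=\exp(h)=\sum_{m\ge0}h^{m}/m!$ is a well-defined element of $1+\pi F[[\pi]]$. The crux is to show that $u$ in fact has $\co_F$-coefficients; here one computes $\varphi(u)/u^{p}=\exp(\varphi(h)-ph)=\exp(-(p-\varphi)(h))\in1+p\co_F[[\pi]]$ and invokes the Dieudonn\'e--Dwork congruence lemma for the Frobenius lift $\varphi$ to get $u\in\co_F[[\pi]]$, hence $u\in\co_F[[\pi]]_{\log}$; then $f:=u\bmod p\co_F[[\pi]]$ lies in $1+\pi k[[\pi]]$ and $\log_k(f)=\overline{\log u}=\bar h$. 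This last promotion of $\exp(h)$ from $F$-coefficients to $\co_F$-coefficients — genuinely non-formal, and the one place where the $\log$-condition is really used (it is essentially Lemma~2.1 of \cite{pr90}) — is the step I expect to be the main obstacle; everything else is bookkeeping with the logarithm series and $p$-adic valuations.
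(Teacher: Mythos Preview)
Your proposal is correct and matches the paper's treatment: the paper does not prove this lemma but simply recalls it from \cite{pr90} (Lemmas 2.1 and 2.2), exactly as you note at the outset. Your added sketch---well-definedness via $\log(1+p\co_F[[\pi]])\subseteq p\co_F[[\pi]]$, injectivity by inspecting the leading term of $\log v$, and surjectivity via $u=\exp(h)$ together with the Dieudonn\'e--Dwork integrality criterion applied to $\varphi(u)/u^{p}=\exp(-(p-\varphi)h)\in 1+p\pi\co_F[[\pi]]$---is sound and is indeed the argument behind Perrin-Riou's lemma; there is nothing further in the paper to compare against.
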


\begin{lemma}
\label{sec:rings-pp_f-a_f-1}
  Let $f \in {\mathcal P}_{F,\log}$.  Then the sequence $p^m \psi^m(f)$
    converges to a limit $f^\infty \in {\mathcal P}_{F, \log}$, and we have:
    \begin{enumerate}
    \item    $f^\infty \equiv f \mod p {\mathcal O}_F [[ \pi
      ]]$
    \item $\psi(f^\infty) = p^{-1} f^\infty$
    \item $(1 - p^{-1} \varphi) f^\infty \in {\mathcal O}_F [[ \pi
      ]]$
    \item $f^\infty = 0$ if $f \in {\mathcal O}_F [[ \pi ]]$
    \item $f^\infty = g^\infty$ if $f \equiv g \mod p {\mathcal O}_F [[
      \pi ]]$.
    \end{enumerate}
\end{lemma}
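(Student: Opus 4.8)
The plan is to realize $f^\infty$ as a genuine $p$-adic limit lying in the coset $f+\mathcal O_F[[\pi]]$ of $\mathcal P_F$ — exploiting that $\mathcal O_F[[\pi]]$, unlike $\mathcal P_F$ itself, is $p$-adically complete — and to reduce all five assertions to a single fixed-point equation for the operator $p\psi$. The only input about $\psi$ I need is that it stabilizes $\mathcal O_F[[\pi]]$: writing $x\in\mathcal O_F[[\pi]]$ as $x=\sum_{i=0}^{p-1}(1+\pi)^i\varphi(x_i)$ with $x_i\in\mathcal O_F[[\pi]]$ — the $(1+\pi)^i$, $0\le i\le p-1$, being a $\varphi\mathcal O_F[[\pi]]$-basis of $\mathcal O_F[[\pi]]$, the integral analogue of the $\varphi B$-basis of $B$ used in the proof of Lemma \ref{lemma:commutativity} — one has $\psi(x)=x_0\in\mathcal O_F[[\pi]]$; hence $\psi(p^n\mathcal O_F[[\pi]])\subseteq p^n\mathcal O_F[[\pi]]$, so $p\psi$ is a $p$-adic contraction of the complete group $\mathcal O_F[[\pi]]$. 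Together with $\varphi(\mathcal O_F[[\pi]])\subseteq\mathcal O_F[[\pi]]$ and the relation $\psi\varphi=\mathrm{id}$, this is all the operator theory I will use.

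The key computation comes next. Given $f\in\mathcal P_{F,\log}$ I would write $(p-\varphi)(f)=ph$ with $h\in\mathcal O_F[[\pi]]$; applying the additive map $\psi$ and using $\psi\varphi=\mathrm{id}$ yields
\[ p\psi(f)-f=\psi\bigl((p-\varphi)(f)\bigr)=p\psi(h)\in p\mathcal O_F[[\pi]].\]
Iterating gives $p^m\psi^m(f)=f+g_m$ with $g_0=0$ and $g_{m+1}=p\psi(h+g_m)\in p\mathcal O_F[[\pi]]$, and from $g_1\in p\mathcal O_F[[\pi]]$ together with $g_{m+1}-g_m=p\psi(g_m-g_{m-1})$ one obtains $g_{m+1}-g_m\in p^{m+1}\mathcal O_F[[\pi]]$. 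Thus $(g_m)$ is $p$-adically Cauchy, hence converges in the complete ring $\mathcal O_F[[\pi]]$ to some $g_\infty\in p\mathcal O_F[[\pi]]$; I set $f^\infty:=f+g_\infty=\lim_m p^m\psi^m(f)$, and passing to the limit in the recursion gives the fixed-point relation $g_\infty=p\psi(h+g_\infty)$.

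The five properties then drop out. Property (1) is just $f^\infty-f=g_\infty\in p\mathcal O_F[[\pi]]$. That $f^\infty\in\mathcal P_{F,\log}$ (which subsumes (3)) follows because $f^\infty\in\mathcal P_F$ — as $\mathcal O_F[[\pi]]\subseteq\mathcal P_F$ — and $(p-\varphi)(f^\infty)=ph+pg_\infty-\varphi(g_\infty)\in p\mathcal O_F[[\pi]]$, using $g_\infty\in p\mathcal O_F[[\pi]]$ and $\varphi\bigl(p\mathcal O_F[[\pi]]\bigr)\subseteq p\mathcal O_F[[\pi]]$; then $(1-p^{-1}\varphi)f^\infty=p^{-1}(p-\varphi)f^\infty\in\mathcal O_F[[\pi]]$. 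For (2) I would compute, using $p\psi(f)=f+p\psi(h)$ and the fixed-point relation,
\[ p\psi(f^\infty)=p\psi(f)+p\psi(g_\infty)=f+p\psi(h+g_\infty)=f+g_\infty=f^\infty,\]
so $\psi(f^\infty)=p^{-1}f^\infty$. Property (4) is immediate: if $f\in\mathcal O_F[[\pi]]$ then $p^m\psi^m(f)\in p^m\mathcal O_F[[\pi]]\to0$, so $f^\infty=0$. Finally (5) follows from (4) and the $\bz_p$-linearity of $f\mapsto f^\infty$, since $f-g\in p\mathcal O_F[[\pi]]\subseteq\mathcal P_{F,\log}$ forces $(f-g)^\infty=0$.

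I do not expect a conceptual obstacle; the only step needing real care is the first one, namely establishing cleanly that $\psi$ stabilizes $\mathcal O_F[[\pi]]$ and therefore that $p\psi$ contracts a complete group — it is this, rather than any property of $\mathcal P_F$, that makes the limit exist and land back in $\mathcal P_{F,\log}$. Everything after the identity $p\psi(f)-f=p\psi(h)$ is routine telescoping, and I would compress it accordingly.
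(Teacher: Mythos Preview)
Your proof is correct. The paper itself does not prove this lemma but simply cites it as Lemma~2.2 of Perrin-Riou \cite{pr90}; your argument is essentially the standard one underlying that reference, so there is nothing further to compare.
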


\begin{corollary}
\label{cor:rings-pp_f-a_f-3}
\begin{enumerate} \item The map $\log: {\mathcal O}_F [[ \pi ]]_{\log} \to {\mathcal P}_{F, \log}$ is an isomorphism.
\item One has a commutative diagram of isomorphisms
\[\xymatrix{{\mathcal O}_F [[ \pi ]]_{\log}^{\mathcal N=1}\ar[r]^{\log}_{\cong}\ar[d]^{mod p}_{\cong} &  {\mathcal P}_{F, \log}^{\psi=p^{-1}}\ar[d]^{mod p}_{\cong}\\
1 + \pi k [[ \pi ]]\ar[r]^{\log_k}_{\cong} & \overline{{\mathcal P}}_{F, \log}}
\]
\end{enumerate}
\end{corollary}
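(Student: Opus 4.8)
The plan is to deduce both parts from the two Lemmas just recalled from \cite{pr90}. For part (1), I first note that the map $\log:\OO_F[[\pi]]_{\log}\to\mathcal P_{F,\log}$ is well-defined by the discussion preceding Lemma \ref{sec:rings-pp_f-a_f-2} (the logarithm series has integral coefficients away from the denominators $n$, so it lands in $\mathcal P_F$, and the congruence $(p-\varphi)(f)\in p\OO_F[[\pi]]$ is exactly what is needed to land in the subspace $\mathcal P_{F,\log}$). Injectivity is immediate since $\log(1+x)$ has leading term $x$, hence is invertible as a power series operation on $1+(\pi,p)$ with inverse $\exp$; one must check that $\exp$ also preserves the respective subspaces, but this is the same kind of congruence bookkeeping. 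For surjectivity I would argue by successive approximation modulo powers of $p$: Lemma \ref{sec:rings-pp_f-a_f-2} gives surjectivity modulo $p$ (the reduction $\log_k$ is an isomorphism onto $\overline{\mathcal P}_{F,\log}$), and then given $g\in\mathcal P_{F,\log}$ one lifts its reduction, subtracts, divides by $p$, and iterates, using $p$-adic completeness of $\OO_F[[\pi]]$ together with the fact that the defining condition of $\mathcal P_{F,\log}$ is stable under the relevant operations. The key point making the induction close is that $\mathcal P_F$ itself is $p$-adically separated and complete in the appropriate sense and that $p\OO_F[[\pi]]\subseteq\mathcal P_F$, so the "error terms" one divides by $p$ stay inside $\mathcal P_{F,\log}$.

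For part (2), the commutative square on the bottom and the left edge already exist: the left vertical arrow is the reduction isomorphism from Prop. \ref{prop:diagram} (restricted to the $\NN=1$, resp. $\psi=p^{-1}$, eigenspaces — here one uses that $\NN=1$ on $\OO_F[[\pi]]_{\log}$ corresponds under $\log$ to $\psi=p^{-1}$ via Lemma \ref{lemma:commutativity}(d), since $\log$ intertwines $\NN$ with a logarithmic version of $\psi$), and $\log_k$ is the isomorphism of Lemma \ref{sec:rings-pp_f-a_f-2}. The top arrow is the restriction of the part (1) isomorphism. So the content of (2) is that $\log$ restricts to a bijection between the two eigenspaces and that the square commutes. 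Commutativity is formal once one checks that $\log$ commutes with reduction mod $p$, which is clear. That $\log$ carries $\OO_F[[\pi]]_{\log}^{\NN=1}$ into $\mathcal P_{F,\log}^{\psi=p^{-1}}$ follows from the multiplicativity $\log(fg)=\log f+\log g$ turning the norm operator $\NN$ into the trace-type operator $\psi$ scaled appropriately — concretely, $\NN=\varphi^{-1}N_{B/\varphi B}$ becomes, after taking logs, $\varphi^{-1}\Tr_{B/\varphi B}=p\psi$, so $\NN(f)=f$ becomes $p\psi(\log f)=\log f$, i.e. $\psi(\log f)=p^{-1}\log f$. The reverse inclusion uses the inverse map $\exp$ and the same identity read backwards, together with Lemma \ref{sec:rings-pp_f-a_f-1}(2)–(4) to see that an element of $\mathcal P_{F,\log}^{\psi=p^{-1}}$ is $g^\infty$ for a suitable $g$ and hence lies in the image.

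The main obstacle I anticipate is the bookkeeping around the non-commutativity of $\log$/$\exp$ with $\psi$ and $\varphi$ at the level of the \emph{additive} operators versus the \emph{multiplicative} norm operator: one must be careful that "$\log$ turns $\NN$ into $p\psi$" is literally correct on the relevant subring and not merely modulo $p$, and that the successive-approximation argument in part (1) does not leak out of $\mathcal P_{F,\log}$ at any stage. Once those two points are nailed down, everything else is formal diagram-chasing built on the two quoted Lemmas of Perrin-Riou and on Lemma \ref{lemma:commutativity}.
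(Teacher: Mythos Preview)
Your plan for part (2) is essentially the paper's: the identity $\log\NN(x)=p\psi\log(x)$ is exact (not merely mod $p$, since $\log$ commutes with the ring homomorphism $\varphi^{-1}$ and converts norm to trace), so once part (1) is known the restriction of $\log$ to the $\NN=1$ eigenspace is automatically a bijection onto the $\psi=p^{-1}$ eigenspace; the right vertical arrow is then the unique-lift statement of Lemma \ref{sec:rings-pp_f-a_f-1}. Your separate appeal to $\exp$ for the reverse inclusion is unnecessary.

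For part (1), however, your successive-approximation scheme has a gap exactly where you anticipated one. After one step the error $g-\log f_0$ lies in $p\OO_F[[\pi]]\cap\mathcal P_{F,\log}$, but when you divide by $p$ the quotient $h:=(g-\log f_0)/p\in\OO_F[[\pi]]$ need not lie in $\mathcal P_{F,\log}$: from $(p-\varphi)(ph)\in p\OO_F[[\pi]]$ one only gets $(p-\varphi)h\in\OO_F[[\pi]]$, not $(p-\varphi)h\in p\OO_F[[\pi]]$. So you cannot feed $h$ back into Lemma \ref{sec:rings-pp_f-a_f-2} and iterate. The paper sidesteps this by applying the five lemma to the map of short exact sequences
\[
\xymatrix{
1\ar[r] & 1+p\OO_F[[\pi]]\ar[r]\ar[d]^{\log}_{\cong} & \OO_F[[\pi]]_{\log}\ar[r]\ar[d]^{\log} & 1+\pi k[[\pi]]\ar[r]\ar[d]^{\log_k}_{\cong} & 1\\
0\ar[r] & p\OO_F[[\pi]]\ar[r] & \mathcal P_{F,\log}\ar[r] & \overline{\mathcal P}_{F,\log}\ar[r] & 0,
}
\]
where the left vertical is an isomorphism because $\exp$ converges on $p\OO_F[[\pi]]$ and the right vertical is Lemma \ref{sec:rings-pp_f-a_f-2}. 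In effect this says your iteration should stop after one step: once the error lies in $p\OO_F[[\pi]]$ you apply $\exp$ directly rather than dividing by $p$ and re-entering the loop.
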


\begin{proof} To see the first part, note that we have a commutative diagram
    \[
    \xymatrix{
      1
      \ar[d]
      &
      0
      \ar[d]
      \\
      1 + p {\mathcal O}_F [[ \pi ]]
      \ar[d]
      \ar[r]^\log_\cong
      &
      p {\mathcal O}_F [[ \pi ]]
      \ar[d]
      \\
      {\mathcal O}_F [[ \pi ]]_{\log}
      \ar[d]
      \ar[r]^-\log
      &
      {\mathcal P}_{F, \log}
      \ar[d]
      \\
      1 + \pi k [[ \pi
      ]]
      \ar[d]
      \ar[r]^-{\log_k}_-\cong
      &
      \overline {{\mathcal P}}_{F, \log}
      \ar[d]
      \\
      1
      &
      0.
    }
    \]
and that the logarithm map on $1 + p {\mathcal O}_F [[ \pi ]]$ is an isomorphism since its inverse is given by the exponential series.  By the five lemma, the middle arrow is an isomorphism. To see the second part, it suffices to note that Lemma \ref{sec:rings-pp_f-a_f-1} shows that any element in  $\overline {{\mathcal P}}_{F, \log}$ has a unique lift in ${\mathcal P}_{F, \log}^{\psi=p^{-1}}$ and that $\log {\mathcal N}(x) = p \psi \log(x)$.

\end{proof}

\begin{corollary}\label{cor:2} For $K=F$ the commutative diagram from Prop. \ref{prop:diagram} extends to a commutative diagram of $\Lambda_F$-modules:
    \[
    \xymatrix{
      A(F_\infty) = \varprojlim_{m,n} F_n^\times / (F_n^\times)^{p^m}
      &
      \widehat{E_F^{\times}}
      \ar[l]_-\cong
      &
      \widehat{A_F^{{\mathcal N}=1}}
      \ar[l]_\cong^{\mod p}
      \ar[r]^\cong_{\nabla \log}
      &
      A_F^{\psi = 1}(1)
      \\
      U = \varprojlim_{m,n} {\mathcal O}_{F_n}^\times / ({\mathcal O}_{F_n}^\times)^{p^m}
      \ar@{^(->}[u]
      &
      1 + \pi k  [[ \pi ]]
      \ar[l]_-{\cong}
      \ar@{^(->}[u]
      &
      {\mathcal O}_F [[ \pi ]]_{\log}^{{\mathcal N} = 1}
      \ar[l]_\cong^{\mod p}
      \ar[r]^\cong_{\log}
      \ar@{^(->}[u]
      &
      {\mathcal P}_{F, \log}^{\psi = p^{-1}}
      \ar@{^(->}[u]^\nabla
    }
    \]
  \end{corollary}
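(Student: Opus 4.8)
The plan is to obtain the diagram of Corollary~\ref{cor:2} by glueing the diagram of Corollary~\ref{cor:rings-pp_f-a_f-3}(2) underneath the diagram of Proposition~\ref{prop:diagram}, all maps being the evident ones. Write $L:=\mathcal{O}_F[[\pi]]_{\log}^{\mathcal{N}=1}$. The horizontal maps of the new bottom row --- the reduction isomorphism $L\xrightarrow{\sim}1+\pi k[[\pi]]$ and the isomorphism $\log\colon L\xrightarrow{\sim}\mathcal{P}_{F,\log}^{\psi=p^{-1}}$ --- together with the commutativity of the square joining $L$, $\mathcal{P}_{F,\log}^{\psi=p^{-1}}$, $1+\pi k[[\pi]]$ and $\overline{\mathcal{P}}_{F,\log}$, are exactly the assertion of Corollary~\ref{cor:rings-pp_f-a_f-3}(2); and the leftmost square, relating $U$, $1+\pi k[[\pi]]$, $A(F_\infty)$ and $\widehat{E_F^\times}$, is the one already present in Proposition~\ref{prop:diagram}. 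Hence the only things that genuinely need to be supplied are the two new vertical maps and the commutativity of the two new squares.

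For the left one I would take the tautological inclusion $L\hookrightarrow\widehat{A_F^{\mathcal{N}=1}}$: since $\mathcal{O}_F[[\pi]]$ is $p$-adically complete and contained in $A_F$, and since $\mathcal{N}=\varphi^{-1}N_{B_F/\varphi B_F}$ is intrinsic, $L$ is a subgroup of $A_F^{\mathcal{N}=1}$ and hence of its $p$-adic completion; the resulting middle square commutes trivially, every arrow in it being (a restriction of) reduction of coefficients modulo $p$. For the right one I would take $\nabla$. Here one checks, first, that $\nabla=(1+\pi)\frac{d}{d\pi}$ carries $\mathcal{P}_{F,\log}$ into $\mathcal{O}_F[[\pi]]\subseteq A_F$, which is immediate from the definition of $\mathcal{P}_F$ as the power series whose $\pi$-derivative is integral; and, second, that $\nabla$ sends the $\psi=p^{-1}$ eigenspace into the $\psi=1$ eigenspace and intertwines the $\Gamma_F$-action with its Tate twist --- both by parts (a) and (d) of Lemma~\ref{lemma:commutativity}. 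So $\nabla\colon\mathcal{P}_{F,\log}^{\psi=p^{-1}}\to A_F^{\psi=1}(1)$ is well defined, and the remaining new square commutes on the nose because for a unit $x$ one has $\nabla\log(x)=\frac{\nabla(x)}{x}=\nabla(\log x)$, i.e. the log-derivative $\nabla\log$ restricted to $L$ is literally $\nabla\circ\log$ (on all of $\widehat{A_F^{\mathcal{N}=1}}$ the symbol $\log$ is not defined, which is precisely why the new bottom row is cut down to $L$). Finally, from the factorisation $\nabla=(\nabla\log|_L)\circ\log^{-1}$ together with the fact that both $\log\colon L\xrightarrow{\sim}\mathcal{P}_{F,\log}^{\psi=p^{-1}}$ (Corollary~\ref{cor:rings-pp_f-a_f-3}) and $\nabla\log\colon\widehat{A_F^{\mathcal{N}=1}}\xrightarrow{\sim}A_F^{\psi=1}(1)$ (Proposition~\ref{prop:diagram}) are isomorphisms, one reads off that $\nabla$ is injective on $\mathcal{P}_{F,\log}^{\psi=p^{-1}}$, with image equal to that of $\nabla\log|_L$ --- the desired monomorphism.

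Thus Corollary~\ref{cor:2} really amounts to the compatibility of Corollary~\ref{cor:rings-pp_f-a_f-3} with Proposition~\ref{prop:diagram} along the obvious inclusions, and I expect the only real (if modest) obstacle to lie in reconciling the two completion conventions and the two a priori distinct operators $\psi$: one must verify that the un-hatted groups $L$ and $\mathcal{P}_{F,\log}^{\psi=p^{-1}}$ are already $p$-adically separated, so that they do inject into $\widehat{A_F^{\mathcal{N}=1}}$ and $A_F^{\psi=1}(1)$ respectively, and that Perrin-Riou's operator $\psi$ on $\mathcal{P}_{F,\log}$ --- defined on $B_F$ and then extended via the convergence of Lemma~\ref{sec:rings-pp_f-a_f-1} --- agrees under $\nabla$ with the operator $\psi$ on $A_F$. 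Both are matters of unwinding definitions, but they are where a slip could hide.
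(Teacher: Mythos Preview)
Your proposal is correct and follows essentially the same approach as the paper: the paper's proof is the single sentence ``This is immediate from Corollary~\ref{cor:rings-pp_f-a_f-3}(2)'', and you have unpacked what ``immediate'' means, supplying the vertical maps and verifying the two new squares commute. Your worries at the end about reconciling completions and two operators $\psi$ are overcautious --- there is only one $\psi$ in the paper, and the relevant groups sit inside $p$-adically complete rings --- but it does no harm to flag them.
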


\begin{proof} This is immediate from  Corollary \ref{cor:rings-pp_f-a_f-3} (2). \end{proof}

This diagram allows to determine the exact relationship between
${\mathcal P}_{F, \log}^{\psi = p^{-1}}$ and $A_F^{\psi = 1}(1)$ since the relationship between $A(F_\infty)$ and $U$ is quite transparent. There is an exact sequence of $\Lambda_F$-modules
\[ 0\to U\to A(F_\infty)\xrightarrow{v} \bz_p\to 0\]
where $v$ is the valuation map and $\bz_p$ carries the trivial $\Sigma\times\Gamma$-action. By
\cite{nsw}[Thm. 11.2.3], already used in the proof of Prop. \ref{prop:astruct}, there is an isomorphism
\begin{equation} A(F_\infty)\cong \Lambda_F\oplus\bz_p(1)\label{astruct}\end{equation}
and the torsion submodule $\bz_p(1)$ is clearly contained in $U$. Hence we obtain an exact sequence
\[ 0\to U_{\free}\to A(F_\infty)_{\free}\xrightarrow{v} \bz_p\to 0\]
where $M_{\free}:=M/M_{\tors}$. The module $A(F_\infty)_{\free}$ is free of rank one and since the
$\Sigma\times\Gamma$-action on $\bz_p$ is trivial we find
\[ U_{\free} = I\cdot A(F_\infty)_{\free}\]
where $$I:=(\sigma-1,\gamma-1)\subseteq \Lambda_F$$ is the augmentation ideal.

\begin{lemma} The augmentation ideal $I$ is principal, generated by  the element
  $$(1-e_1) + (\gamma-1)e_1$$
where $e_1\in\bz_p[\Sigma]$ is the idempotent for the trivial character of $\Sigma$.
\label{principal}\end{lemma}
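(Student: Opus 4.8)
The plan is to exhibit the given element $u := (1-e_1) + (\gamma-1)e_1 \in \Lambda_F$ and show it generates $I = (\sigma - 1, \gamma - 1)$. Since $\Lambda_F \cong \bz_p[\Sigma][[\gamma - 1]]$ decomposes according to the idempotents of $\bz_p[\Sigma]$ (recall $p \nmid |\Sigma| = f$, so $\bz_p[\Sigma]$ is a product of unramified extensions of $\bz_p$), it is natural to argue componentwise. Write $1 = e_1 + (1 - e_1)$; then $\Lambda_F \cong \Lambda_F e_1 \times \Lambda_F(1-e_1)$, where $\Lambda_F e_1 \cong \bz_p[[\gamma-1]]$ carries trivial $\Sigma$-action and $\Lambda_F(1-e_1)$ is the product of the remaining components. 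First I would observe that on the component $\Lambda_F e_1$ the augmentation ideal restricts to $(\gamma - 1)\Lambda_F e_1$ (since $\sigma$ acts trivially there, $\sigma - 1$ maps to $0$), and that $u$ restricts to $(\gamma - 1)e_1$ there, so $u$ generates $I$ on this component. On the complementary component $\Lambda_F(1-e_1)$, I would show that $\sigma - 1$ is already a \emph{unit}: indeed, in each nontrivial character component $\chi$ of $\Sigma$ the element $\sigma$ acts as a root of unity $\zeta_\chi \neq 1$ of order dividing $f$, and $\zeta_\chi - 1$ is a unit in the corresponding unramified coefficient ring (as $p \nmid f$). Hence on $\Lambda_F(1-e_1)$ the augmentation ideal is the whole ring, and $u$ restricts to $1 - e_1$, i.e. the identity, there; so $u$ generates on this component too.

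Combining the two components gives $u \Lambda_F = I$. I should also check the reverse containment explicitly to be safe — that $u$ itself lies in $I$: this is clear since $u = (1-e_1) + (\gamma - 1)e_1$ and, modulo the augmentation, $\gamma \mapsto 1$ and each $\sigma \mapsto 1$ force $e_1 \mapsto 1$ (the idempotent for the trivial character maps to $1$ under augmentation), so $u$ augments to $(1-1) + (1-1)\cdot 1 = 0$, confirming $u \in I$. The other inclusion $I \subseteq u\Lambda_F$ is exactly the componentwise surjectivity argument above: $\gamma - 1 = u \cdot e_1 \cdot(\text{something}) $ — more precisely $\gamma - 1 = (\gamma-1)e_1 + (\gamma-1)(1-e_1) = u e_1 + (\gamma - 1)(1-e_1)$, and on the $(1-e_1)$-part $u$ is a unit so $(\gamma-1)(1-e_1) \in u\Lambda_F$; similarly $\sigma - 1 = (\sigma-1)(1-e_1) \in u\Lambda_F$ since $(\sigma-1)e_1 = 0$ and $u(1-e_1) = 1 - e_1$ is a unit of the factor $\Lambda_F(1-e_1)$. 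Hence both generators of $I$ lie in $u\Lambda_F$.

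The only real subtlety — and the step I expect to be the main (minor) obstacle — is the assertion that $\sigma - 1$ is a unit on the non-trivial $\Sigma$-isotypic part. This rests on two facts: that $\bz_p[\Sigma]$ is a product of discrete valuation rings unramified over $\bz_p$ because $p \nmid f$, and that in each such factor the image of $\sigma$ is a primitive $f_\chi$-th root of unity with $f_\chi \mid f$, so that $\sigma - 1$ is a non-zero element of the residue field (a difference of distinct roots of unity in characteristic $p$, valid since $p \nmid f$), hence a unit. Everything else is bookkeeping with the idempotent decomposition of $\Lambda_F$. I would phrase the write-up so as to invoke the splitting $\Lambda_F = \Lambda_F e_1 \times \Lambda_F(1-e_1)$ once and then treat the two cases in parallel, making the argument a few lines.
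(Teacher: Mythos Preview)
Your proof is correct and follows essentially the same idea as the paper: both rest on the orthogonal idempotent relations $e_1^2=e_1$, $(1-e_1)^2=1-e_1$, $e_1(1-e_1)=0$, and $(\sigma-1)e_1=0$. The paper simply writes down the resulting explicit factorizations
\[
\sigma-1=(\sigma-1)(1-e_1)\cdot u,\qquad \gamma-1=\bigl((\gamma-1)(1-e_1)+e_1\bigr)\cdot u,
\]
which are exactly what your componentwise argument produces when unwound; your discussion of $\sigma-1$ being a unit on $\Lambda_F(1-e_1)$ is true but not needed, since $u(1-e_1)=1-e_1$ already shows everything in that factor is a multiple of $u$.
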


\begin{proof} This hinges on our assumption that $p$ does not divide the order of $\Sigma$ which implies that $e_1$ has coefficients in $\bz_p$. Using $e_1^2=e_1$ we then find immediately
\begin{align*}
\sigma -1     & = (\sigma -1)(1-e_1) = (\sigma -1)(1-e_1)\cdot [ (1-e_1) + (\gamma -1) e_1 ],   \\
\gamma -1     & = \left((\gamma-1)(1-e_1) +       e_1 \right)\cdot \left[
      (1-e_1) + (\gamma -1)e_1 \right].
  \end{align*}

\end{proof}

\begin{lemma}
\label{lemma:basis}
 There are elements $\alpha \in A_F^{\psi = 1}(1)$,
$\tilde{\alpha} \in {\mathcal P}_{F, \log}^{\psi = p^{-1}}$ such that
\begin{enumerate}
\item $A_F^{\psi = 1}(1) = \Lambda_F \cdot \alpha \oplus {\mathbb Z}_p(1) \cdot 1$,
\item ${\mathcal P}_{F, \log}^{\psi = p^{-1}} = \Lambda_F \cdot \tilde{\alpha} \oplus {\mathbb Z}_p
  \cdot \log(1+ \pi)$,
\item $\nabla \tilde{\alpha} = ( (1 - e_1) + (
    \gamma  -1) e_1)\cdot\alpha$.
\end{enumerate}
\end{lemma}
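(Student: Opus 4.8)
The plan is to exploit the two commuting isomorphisms furnished by Corollary \ref{cor:2}: on the top row $A(F_\infty)\cong\widehat{E_F^\times}\cong\widehat{A_F^{\NN=1}}\xrightarrow{\nabla\log}A_F^{\psi=1}(1)$, and on the bottom row the corresponding sub-objects $U\cong 1+\pi k[[\pi]]\cong\OO_F[[\pi]]_{\log}^{\NN=1}\xrightarrow{\log}\PP_{F,\log}^{\psi=p^{-1}}$, with each bottom term included in the one above it, and $\nabla$ sending the bottom right term into the top right one. First I would transport the module-theoretic picture established for $A(F_\infty)$ just after Corollary \ref{cor:2}. We have $A(F_\infty)_{\free}$ free of rank one over $\Lambda_F$, say with basis $\bar\alpha$ (lifting an arbitrary $\Lambda_F$-generator of the free quotient), and $U_{\free}=I\cdot A(F_\infty)_{\free}$. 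By Lemma \ref{principal} the augmentation ideal $I$ is principal, generated by $g:=(1-e_1)+(\gamma-1)e_1$, which is a non-zero-divisor in $\Lambda_F$ (its image in each $\Sigma$-isotypic component is either $1$ or $\gamma-1$, both non-zero-divisors in the relevant Iwasawa algebra). Hence $U_{\free}=\Lambda_F\cdot(g\bar\alpha)$ and $U=\Lambda_F\cdot(g\bar\alpha)\oplus\bz_p(1)$, the torsion copy of $\bz_p(1)$ sitting inside $U$ as noted in the text.

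Next I would push this through the horizontal isomorphisms. Define $\alpha\in A_F^{\psi=1}(1)$ to be the image of $\bar\alpha$ under the composite $A(F_\infty)\xrightarrow{\sim}A_F^{\psi=1}(1)$ of the top row; then (1) holds, namely $A_F^{\psi=1}(1)=\Lambda_F\cdot\alpha\oplus\bz_p(1)\cdot 1$, where the torsion generator is $1$ because under Kummer theory and the field-of-norms identifications the torsion copy of $\bz_p(1)=\varprojlim\mu_{p^n}$ maps to the element corresponding to $1+\pi$, whose $\nabla\log$ is $\nabla\log(1+\pi)=\nabla(\pi)/(1+\pi)=1$. (I would double-check this normalization against \cite{chercol99}[Prop. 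V.3.2], but it is forced by the requirement that the diagram of Prop.~\ref{prop:diagram} commute and that $\Exp^*_{\bz_p}$ be a $\Lambda_K$-module isomorphism carrying torsion to torsion.) Similarly, let $\tilde\alpha\in\PP_{F,\log}^{\psi=p^{-1}}$ be the image of the chosen generator of $\OO_F[[\pi]]_{\log}^{\NN=1}$ lifting $1+\pi k[[\pi]]$ that maps to $g\bar\alpha$; more precisely, using the bottom row isomorphisms $U\cong 1+\pi k[[\pi]]\cong\OO_F[[\pi]]_{\log}^{\NN=1}\xrightarrow{\log}\PP_{F,\log}^{\psi=p^{-1}}$, define $\tilde\alpha$ as the image of $g\bar\alpha\in U_{\free}\subset U$. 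Then $\PP_{F,\log}^{\psi=p^{-1}}=\Lambda_F\cdot\tilde\alpha\oplus\bz_p\cdot\log(1+\pi)$, which is (2): the free rank-one part is the image of $U_{\free}=\Lambda_F\cdot(g\bar\alpha)$, and the torsion line $\bz_p(1)\subset U$ maps under $\mod p$ then $\log_k$ to the $\bz_p$-line spanned by $\log(1+\pi)\bmod p\OO_F[[\pi]]$, whose canonical lift in $\PP_{F,\log}^{\psi=p^{-1}}$ is $\log(1+\pi)$ itself (note $\psi\log(1+\pi)=p^{-1}\log(1+\pi)$ since $\varphi\log(1+\pi)=\log(1+\pi)^p=p\log(1+\pi)$ and $\psi\varphi=\mathrm{id}$ forces $\psi\log(1+\pi)=p^{-1}\log(1+\pi)$; alternatively this is $(1+\pi)^\infty$ in the notation of Lemma \ref{sec:rings-pp_f-a_f-1}).

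Finally, for (3): the right-hand vertical map in the diagram of Corollary \ref{cor:2} is $\nabla$, and the diagram commutes, so $\nabla\tilde\alpha$ is the image of $g\bar\alpha$ under the top-row composite to $A_F^{\psi=1}(1)$. But that composite is $\Lambda_F$-linear and sends $\bar\alpha$ to $\alpha$, hence sends $g\bar\alpha$ to $g\cdot\alpha=((1-e_1)+(\gamma-1)e_1)\cdot\alpha$. Therefore $\nabla\tilde\alpha=((1-e_1)+(\gamma-1)e_1)\cdot\alpha$, which is exactly (3). The main obstacle I anticipate is not any of the module theory — that is essentially a diagram chase once Lemma \ref{principal} is in hand — but rather pinning down the two normalizations: that the torsion generator on the $A_F^{\psi=1}(1)$ side is precisely $1$ (not some unit multiple) and that $\log(1+\pi)$ is precisely the torsion generator on the $\PP_{F,\log}^{\psi=p^{-1}}$ side. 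Both reduce to tracing the Kummer-theory class of $(\zeta_{p^n})_n$ through the field-of-norms isomorphism $\iota_F$ and the reciprocity maps of \cite{chercol99}, and I would handle this by citing \cite{chercol99}[Prop. V.3.2] (and \cite{daigle_thesis_14} 3.2) for the explicit value $\nabla\log(1+\pi)=1$, together with the compatibility of $\Exp^*_{\bz_p}$ with the evident inclusion $\bz_p(1)\hookrightarrow H^1_{Iw}(K,\bz_p(1))$ arising from $H^0$.
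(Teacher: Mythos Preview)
Your proposal is correct and follows essentially the same route as the paper: transport the structure $A(F_\infty)\cong\Lambda_F\oplus\bz_p(1)$ across the top row of Corollary \ref{cor:2} to obtain (1), use $U_{\free}=I\cdot A(F_\infty)_{\free}$ together with Lemma \ref{principal} and the bottom row (equivalently, the injectivity of $\nabla$ on the free parts) to obtain (2), and then read off (3) from the commutativity of the right-hand square. The paper phrases the construction of $\tilde\alpha$ as $\tilde\alpha:=\nabla^{-1}\bigl(((1-e_1)+(\gamma-1)e_1)\alpha\bigr)$ rather than as the image of $g\bar\alpha$ along the bottom row, but by the diagram these definitions coincide; your extra care about the torsion normalizations (that the generator on the $A_F^{\psi=1}(1)$ side is $1=\nabla\log(1+\pi)$ and on the $\PP_{F,\log}^{\psi=p^{-1}}$ side is $\log(1+\pi)$) is well-placed and fills in what the paper leaves as ``one checks easily''.
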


\begin{proof} Part (1) follows from (\ref{astruct}) and Corollary \ref{cor:2}. For part (2) one checks easily
that ${\mathbb Z}_p \cdot \log(1 + \pi)$ is the torsion submodule of ${\mathcal P}_{F, \log}^{\psi = p^{-1}}$ and that
$({\mathcal P}_{F, \log}^{\psi = p^{-1}})_{\free}$ is free of rank one over $\Lambda_F$, since it is isomorphic under $\nabla$ to the free module
$$I\cdot\alpha=\Lambda_F\cdot((1-e_1) + (\gamma -1)e_1)\cdot \alpha$$ by Lemma \ref{principal}.  Note that we view $\alpha$ here as an element of $A_F(1)$, i.e. the action of $\gamma$ is
  ${\chi^{\cyclo}}(\gamma)$ times the standard action (\ref{gamma-act}) of $\gamma$ on $A_F$.
Setting $$\tilde{\alpha}:=\nabla^{-1}((1-e_1) + (\gamma -1)e_1)\cdot \alpha $$ we obtain (3).
\end{proof}

\subsection{The Coleman exact sequence and the Perrin-Riou basis}

Lemma \ref{lemma:basis} tells us that $\left( {\mathcal P}_{F, \log}^{\psi = p^{-1}}
\right)_{\free}$ is generated over $\Lambda_F$ by a single element
$\tilde{\alpha}$, but not what this $\tilde{\alpha}$ is.  By studying one more space,
${\mathcal O}_F [[ \pi ]]^{\psi = 0}$, we are able to describe
$\tilde{\alpha}$ and hence $\alpha$.

\begin{prop}
  \label{prop:2}
  \begin{enumerate}
  \item   There is an exact exact sequence of $\Lambda_F$-modules
    \begin{equation}
      \label{eq:10}
    0 \to {\mathbb Z}_p\cdot \log(1 + \pi) \to {\mathcal P}_{F, \log}^{\psi = p^{-1}}
    \xrightarrow{1 - \varphi/p}
    {\mathcal O}_F[[ \pi ]]^{\psi = 0} \to {\mathbb Z}_p(1) \to 0.
    \end{equation}
  \item ${\mathcal O}_F [[ \pi ]]^{\psi = 0}$ is a
    free $\Lambda_F$-module of rank 1 generated by $\xi(1 + \pi)$,
    where $\xi \in {\mathcal O}_F$ is a basis of ${\mathcal O}_F$ over ${\mathbb Z}_p
    [\Sigma]$.
  \end{enumerate}
\label{coleman}\end{prop}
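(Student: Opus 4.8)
The plan is to build the exact sequence \eqref{eq:10} from the operator $1-\varphi/p$ acting on the relevant spaces, and to identify its kernel and cokernel explicitly. First I would check that $1-\varphi/p$ maps ${\mathcal P}_{F,\log}^{\psi=p^{-1}}$ into ${\mathcal O}_F[[\pi]]^{\psi=0}$: the image lands in ${\mathcal O}_F[[\pi]]$ by part (3) of Lemma \ref{sec:rings-pp_f-a_f-1} applied to $f^\infty=f$ (elements of ${\mathcal P}_{F,\log}^{\psi=p^{-1}}$ are exactly those $f^\infty$), and it lands in the $\psi=0$ part because $\psi$ commutes with $\varphi$ via $\psi\varphi=\mathrm{id}$, so $\psi(1-\varphi/p)f=\psi f-p^{-1}f=p^{-1}f-p^{-1}f=0$ using $\psi f=p^{-1}f$. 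For the kernel: if $(1-\varphi/p)f=0$ then $f$ is fixed up to scalar by $\varphi/p$, i.e. $\varphi f=pf$; reducing mod $p$ gives $\bar f\in\overline{{\mathcal P}}_{F,\log}$ killed by $(p-\varphi)$ which is automatic, but the integral condition $\varphi f = pf$ forces $f\in F\cdot\log(1+\pi)$ since $\log(1+\pi)$ satisfies $\varphi\log(1+\pi)=p\log(1+\pi)$ and one checks this is (up to $\bz_p$-scalars inside the lattice) the only solution; intersecting with the lattice ${\mathcal P}_{F,\log}^{\psi=p^{-1}}$ gives exactly $\bz_p\cdot\log(1+\pi)$.

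Next I would handle the cokernel. The standard Coleman-map argument identifies $\mathrm{coker}(1-\varphi/p)$ on these spaces with a twist of $\bz_p$; concretely, one reduces mod $p$ to the sequence of Lemma \ref{sec:rings-pp_f-a_f-1} and Corollary \ref{cor:rings-pp_f-a_f-3}, where $1-\varphi/p$ becomes $1-\varphi$ modulo $p$ (since $p\cdot\psi$-iteration is how $f^\infty$ was built), and one computes that the cokernel of $1-\varphi$ on $\overline{{\mathcal P}}_{F,\log}\to \bar{\mathcal O}_F[[\pi]]^{\psi=0}$ (equivalently $k[[\pi]]^{\psi=0}$) is one-dimensional over $\mathbb F_p$ with the Tate-twisted $\Gamma$-action, which lifts (again by Lemma \ref{sec:rings-pp_f-a_f-1}(5) and a completion/limit argument over $\Lambda_F$) to $\bz_p(1)$. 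Exactness in the middle then follows by a diagram chase comparing the lattice sequence with its mod-$p$ reduction and using that each term is $p$-torsion-free outside the identified torsion submodules.

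For part (2), freeness of ${\mathcal O}_F[[\pi]]^{\psi=0}$ over $\Lambda_F$ of rank one generated by $\xi(1+\pi)$: here $(1+\pi)$ is a generator because the $\Gamma$-translates $\gamma(1+\pi)=(1+\pi)^{\chi^{\cyclo}(\gamma)}$ together with $\varphi$ and $\psi=0$ pin down the Amice-transform/Mahler-expansion description of ${\mathcal O}_F[[\pi]]^{\psi=0}$ as the free $\co_F[[\Gamma]]$-module on $(1+\pi)$ — this is the classical fact that $x\mapsto x\cdot(1+\pi)$ gives an isomorphism $\co_F[[\Gamma]]\cong\co_F[[\pi]]^{\psi=0}$ — and then tensoring up the $\bz_p[\Sigma]$-basis $\xi$ of $\co_F$ (which exists since $p\nmid|\Sigma|$, by the normal basis theorem for the unramified extension $F/\bq_p$) gives the $\Lambda_F=\bz_p[\Sigma][[\Gamma]]$-basis $\xi(1+\pi)$.

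The main obstacle I expect is the careful bookkeeping of the Tate twist in the cokernel term $\bz_p(1)$: showing that the residual one-dimensional cokernel of $1-\varphi/p$ carries precisely the cyclotomic (not trivial) $\Gamma$-action, and that it lifts integrally rather than only mod $p$. This is where one must invoke the precise form of Lemma \ref{sec:rings-pp_f-a_f-1} (especially the independence statements (4), (5)) together with the $\Lambda_F$-module structure theorem \eqref{astruct}, rather than argue one residue characteristic at a time; the rank count and the identification of the kernel are comparatively routine.
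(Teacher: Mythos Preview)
The paper does not give a self-contained argument here: its entire proof is a citation of Perrin-Riou \cite{pr90} (Theorem~2.3 for part~(1), Lemma~1.5 for part~(2)) and Coleman \cite{col79}. So there is no ``paper's approach'' to compare against beyond those references; your sketch is essentially an outline of what one finds upon unpacking those citations, and the parts you handle carefully --- well-definedness of $1-\varphi/p$ into $\co_F[[\pi]]^{\psi=0}$ via Lemma~\ref{sec:rings-pp_f-a_f-1}(3), the kernel computation $\varphi f=pf\Rightarrow f\in\bz_p\cdot\log(1+\pi)$, and the Amice-transform description for part~(2) --- are correct and standard.

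The one place your plan is shaky is the cokernel step, and you flag it yourself. The phrase ``$1-\varphi/p$ becomes $1-\varphi$ modulo $p$'' is not literally meaningful: $\varphi/p$ does not reduce mod $p$. What actually happens in \cite{pr90} is a direct surjectivity argument onto the kernel of the map $\co_F[[\pi]]^{\psi=0}\to\bz_p(1)$ (built from evaluation/residue-type maps tracking the $\Gamma$-action on $1+\pi$), not a naive mod-$p$ reduction of the operator. Your instinct that the cokernel carries the cyclotomic twist because $\gamma(1+\pi)=(1+\pi)^{\chi^{\cyclo}(\gamma)}$ is the right one, but the lifting from a mod-$p$ statement to an integral one via Lemma~\ref{sec:rings-pp_f-a_f-1}(4),(5) as you describe would need to be replaced by (or supplemented with) an explicit construction of preimages in ${\mathcal P}_{F,\log}^{\psi=p^{-1}}$, which is how Perrin-Riou proceeds. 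If you want to make your version self-contained rather than cite \cite{pr90}, that is the step to write out in full.
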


\begin{proof} Part (1) is Theorem 2.3 in \cite{pr90} and goes back to Coleman's paper \cite{col79}.  See also \cite{daigle_thesis_14} Proposition 4.1.10. Part (2) is Lemma 1.5 in \cite{pr90}.
\end{proof}

\begin{corollary}
  \label{cor:3}
The bases $\alpha$ and $\tilde{\alpha}$ in Lemma \ref{lemma:basis} can be chosen such that
  \begin{equation}
    \label{eq:5}
    (1 - \varphi/p)\cdot\tilde{\alpha} = \left( (1-e_1) +
      (\gamma - {\chi^{\cyclo}}(\gamma) ) e_1 \right) \cdot \xi (1 + \pi).
  \end{equation}
\end{corollary}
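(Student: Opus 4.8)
The plan is to identify the image of the map $1-\varphi/p$ in (\ref{eq:10}) as an explicit free rank-one $\Lambda_F$-submodule of ${\mathcal O}_F[[\pi]]^{\psi=0}=\Lambda_F\cdot\xi(1+\pi)$ and then to rescale $\tilde\alpha$ (and $\alpha$) by a unit of $\Lambda_F$ so that $(1-\varphi/p)\tilde\alpha$ becomes the distinguished generator $\mu_0\cdot\xi(1+\pi)$, where $\mu_0:=(1-e_1)+(\gamma-\chi^{\cyclo}(\gamma))e_1$. To compute $\im(1-\varphi/p)$, note that by (\ref{eq:10}) it equals the kernel of the surjection $j\colon{\mathcal O}_F[[\pi]]^{\psi=0}\twoheadrightarrow\bz_p(1)$. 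Transporting $j$ along the isomorphism of Proposition \ref{coleman}(2) (sending $\xi(1+\pi)\mapsto 1$) turns it into a surjection $\Lambda_F\twoheadrightarrow\bz_p(1)$; as $\bz_p(1)$ is a cyclic $\Lambda_F$-module and $\Lambda_F$ is commutative, any such surjection has kernel $\mathrm{Ann}_{\Lambda_F}(\bz_p(1))$, which is the kernel of the ring homomorphism $\Lambda_F\to\bz_p$ sending $\sigma\mapsto 1$ for $\sigma\in\Sigma$ and $\gamma\mapsto\chi^{\cyclo}(\gamma)$. Since $p\nmid|\Sigma|$ the idempotent $e_1$ lies in $\bz_p[\Sigma]$ and the augmentation ideal of $\bz_p[\Sigma]$ is the principal ideal $(1-e_1)\bz_p[\Sigma]$ (as in the proof of Lemma \ref{principal}); together with $\bz_p[[\Gamma]]/(\gamma-\chi^{\cyclo}(\gamma))\cong\bz_p$ this gives $\mathrm{Ann}_{\Lambda_F}(\bz_p(1))=(1-e_1)\Lambda_F+(\gamma-\chi^{\cyclo}(\gamma))\Lambda_F$, and a one-line manipulation with $e_1^2=e_1$ and $(1-e_1)e_1=0$ shows this equals $\mu_0\Lambda_F$. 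As $\mu_0$ acts as the identity on $(1-e_1)\Lambda_F$ and as the nonzero element $\gamma-\chi^{\cyclo}(\gamma)$ of the domain $e_1\Lambda_F\cong\bz_p[[\Gamma]]$, it is a non-zero-divisor, so $\im(1-\varphi/p)=\mu_0\Lambda_F\cdot\xi(1+\pi)$ is free of rank one, generated by $\mu_0\xi(1+\pi)$.

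On the other hand, by (\ref{eq:10}) the kernel of $1-\varphi/p$ is the torsion submodule $\bz_p\cdot\log(1+\pi)$, which by Lemma \ref{lemma:basis}(2) meets the free part $\Lambda_F\cdot\tilde\alpha$ trivially; hence $1-\varphi/p$ restricts to an isomorphism $\Lambda_F\cdot\tilde\alpha\ \rightiso\ \im(1-\varphi/p)$, so $\im(1-\varphi/p)=\Lambda_F\cdot(1-\varphi/p)\tilde\alpha$. Thus the free rank-one module $\im(1-\varphi/p)$ is generated both by $(1-\varphi/p)\tilde\alpha$ and by $\mu_0\xi(1+\pi)$, whence $(1-\varphi/p)\tilde\alpha=v\cdot\mu_0\xi(1+\pi)$ for some $v\in\Lambda_F^\times$. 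Replacing $(\alpha,\tilde\alpha)$ by $(v^{-1}\alpha,v^{-1}\tilde\alpha)$ — using the Tate-twisted $\Lambda_F$-action on $A_F(1)$ — leaves properties (1) and (2) of Lemma \ref{lemma:basis} intact since $v$ is a unit, preserves (3) because $\nabla$ is $\Lambda_F$-linear from the standard action on ${\mathcal P}_{F,\log}$ to the twisted action on $A_F(1)$ (Lemma \ref{lemma:commutativity}(a) together with $\Sigma$-equivariance of $\nabla$) and $\Lambda_F$ is commutative, and — since $1-\varphi/p$ is $\Lambda_F$-linear — yields $(1-\varphi/p)(v^{-1}\tilde\alpha)=v^{-1}(1-\varphi/p)\tilde\alpha=\mu_0\xi(1+\pi)$, which is exactly (\ref{eq:5}).

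The only genuinely substantive point is the identification in the first step of $\ker j$ with the \emph{principal} ideal $\mu_0\Lambda_F$; this is precisely where the running hypothesis $p\nmid|\Sigma|$ enters, through the integrality of $e_1$ and the consequent principality of the augmentation ideal of $\bz_p[\Sigma]$. The $\Lambda_F$-linearity of $1-\varphi/p$, which is used freely, follows from $\varphi$ being $\bz_p$-linear and commuting with the $\Sigma\times\Gamma$-action, and is routine.
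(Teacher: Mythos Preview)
Your proof is correct and follows essentially the same approach as the paper's: identify the image of $1-\varphi/p$ as the ideal $(\sigma-1,\gamma-\chi^{\cyclo}(\gamma))\cdot\xi(1+\pi)$ via the cokernel description, show this ideal is principal generated by $\mu_0$ using the idempotent $e_1$ (exactly as in Lemma~\ref{principal}), and then rescale $\tilde\alpha$ by a unit. The paper's three-sentence proof leaves the rescaling step and the verification that properties (1)--(3) of Lemma~\ref{lemma:basis} survive entirely implicit; your version spells out these points carefully, including the annihilator argument justifying why \emph{any} surjection $\Lambda_F\twoheadrightarrow\bz_p(1)$ has the same kernel, and the $\Lambda_F$-linearity of $\nabla$ and $1-\varphi/p$ needed for the rescaling to work.
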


\begin{proof}
  The cokernel of $(1 - \varphi/p)$ in (\ref{eq:10}) is isomorphic to $$\bz_p(1)\cong\Lambda_F
    / (\sigma-1, \gamma - {\chi^{\cyclo}}(\gamma)) $$ so the image of $(1 - \varphi/p)$ must be $\left(\sigma -1,
    \gamma - {\chi^{\cyclo}}(\gamma) \right) \cdot \xi (1 + \pi)$.  As in
  Lemma \ref{principal} we can show that this
  ideal is principal, and is generated by $$(1-e_1) + ( \gamma -
  {\chi^{\cyclo}}(\gamma))e_1.$$

\end{proof}

\subsection{Proof of the conjecture for unramified fields}
\label{sec:this-section}

We now have the tools we need to explicitly compute $\exp^*_{{\mathbb Q}_p(r)} (H^1(F,{\mathbb Z}_p(1-r)))$ and prove the equality of Proposition \ref{reform} for $K=F$ (i.e. $e=1$). By Lemma \ref{lemma:coho_of_zpr} we can take \[\beta:=\pr_{0,1-r}(\beta_{Iw})\] where $\beta_{Iw}$ satisfies
\begin{align}
\alpha=&\Exp^*_{{\mathbb Z}_p}(\beta_{Iw})\label{betadef}\\
\nabla \tilde{\alpha} = &( (1 - e_1) + (\gamma  -1) e_1)\cdot \alpha\notag\\
(1 - \varphi/p)\cdot\tilde{\alpha} = &\left( (1-e_1) + (\gamma - {\chi^{\cyclo}}(\gamma) ) e_1 \right) \cdot \xi (1 + \pi)
\notag\end{align}
using (\ref{alphabeta}), Lemma \ref{lemma:basis} (3) and (\ref{eq:5}). We cannot immediately apply Theorem \ref{reciprocity} to $n=0$ but going back to \cite{chercol99}[Thm. IV.2.1] we have
\[ \sum_{r=1}^\infty \exp^*_{{\mathbb Q}_p(r)}(\pr_{0,1-r}(u))\cdot t^{r-1}=T_0\varphi^{-m}\Exp^*_{\bz_p}(u).\]
Applying this to
\begin{equation}u=( (1 - e_1) + (\gamma  -1) e_1)\cdot\beta_{Iw}\label{uchoice}\end{equation} assures that
$$\Exp^*_{\bz_p}(u)=\nabla\tilde{\alpha}\in\co_F[[\pi]]$$
and therefore
$$\varphi^{-0}P:=\varphi^{-0}\nabla^{r-1}\Exp^*_{\bz_p}(u)=\varphi^{-0}\nabla^r\tilde{\alpha}$$
converges in $B_{dR}$ for any $r\geq 1$. Lemma \ref{lemma:3} then implies
\begin{align*} \exp^*_{{\mathbb Q}_p(r)}(\pr_{0,1-r}(u))=&\left.\frac{1}{(r-1)!}\left(\frac{d}{dt}\right)^{r-1}T_0\varphi^{-m}\Exp^*_{\bz_p}(u)\right\vert_{t=0}
\\
=&\left.\frac{1}{(r-1)!}T_0p^{-(r-1)m}\varphi^{-m}\nabla^{r-1}\Exp^*_{\bz_p}(u)\right\vert_{t=0}
\\
=&\left.\frac{1}{(r-1)!}(1-p^{-r}\sigma^{-1})\varphi^{-0}\nabla^{r}\tilde{\alpha}\right\vert_{t=0}
\\
=&\left.\frac{1}{(r-1)!}(1-p^{-r}\sigma^{-1})\nabla^{r}\tilde{\alpha}\right\vert_{\pi=0}.\end{align*}
Applying $\nabla^r$ to  (\ref{eq:5}) and using Lemma \ref{lemma:commutativity} we have
\begin{align*} (1 - p^{r-1}\varphi)\cdot\nabla^r\tilde{\alpha} = &\left( (1-e_1) + ({\chi^{\cyclo}}(\gamma)^r\gamma - {\chi^{\cyclo}}(\gamma) ) e_1 \right) \cdot \nabla^r\xi (1 + \pi)\\
=&\left( (1-e_1) + ({\chi^{\cyclo}}(\gamma)^r\gamma - {\chi^{\cyclo}}(\gamma) ) e_1 \right) \cdot \xi (1 + \pi)\end{align*}
and so we find
\begin{align*}\exp^*_{{\mathbb Q}_p(r)}(\pr_{0,1-r}(u)) =&\frac{1}{(r-1)!}\cdot\frac{1-p^{-r}\sigma^{-1}}{1 - p^{r-1}\sigma}\cdot \left( (1-e_1) + ({\chi^{\cyclo}}(\gamma)^r - {\chi^{\cyclo}}(\gamma) ) e_1 \right) \cdot \xi.\end{align*}
By Lemma \ref{lemma:coho_of_zpr} the action of $\gamma\in\Lambda_F$ on $H^1(F,\bz_p(1-r))$ is via the character $\chi^{\cyclo}(\gamma)^r$, hence for our choice (\ref{uchoice}) of $u$ we have
\begin{align*} \pr_{0,1-r}(u)= & ( (1 - e_1) + (\chi^{\cyclo}(\gamma)^r -1) e_1)\cdot \pr_{0,1-r}(\beta_{Iw})\\=&( (1 - e_1) + (\chi^{\cyclo}(\gamma)^r -1) e_1)\cdot\beta \end{align*}
and we can finally compute
\begin{align*}\exp^*_{{\mathbb Q}_p(r)}(\beta) =&\frac{1}{(r-1)!}\cdot\frac{1-p^{-r}\sigma^{-1}}{1 - p^{r-1}\sigma}\cdot \frac{ (1-e_1) + ({\chi^{\cyclo}}(\gamma)^r - {\chi^{\cyclo}}(\gamma) ) e_1}{ (1 - e_1) + (\chi^{\cyclo}(\gamma)^r -1) e_1} \cdot \xi.\end{align*}
This verifies the identity of Prop. \ref{reform}.

\section{Results in the tamely ramified case}\label{sec:tame2}  We resume our notation and assumptions from subsection \ref{sec:tame1}. Our first aim in this section is to prove Prop. \ref{explicit2} below which is a yet more explicit reformulation of the identity (\ref{eta-not-1}) in Prop. \ref{reform}. We then prove this identity for $e<p$ and $r=1$ as well as for $e<p/4$ and $r=2$. In the isotypic components where $\eta\vert_{\Gal(K/F(\zeta_p))}=1$ this can easily be done (for any $r$) using computations similar to those in subsection \ref{sec:this-section} with
\[ \beta_1:=\pr_{1,1-r}(\beta_{Iw})\]
and $\beta_{Iw}$ defined in (\ref{betadef}). The notation here is relative to the base field $K=F$. In any case, the equivariant local Tamagawa number conjecture is known for any $r$ in those isotypic components by \cite{bb05}. We shall therefore entirely focus on isotypic components with
$$\eta\vert_{\Gal(K/F(\zeta_p))}\neq 1.$$
In this case we need to verify equation (\ref{eta-not-1}). The main problem is that we do not have any closed formula for a $\Lambda_K$-basis of (the torsion free part of) $A_K^{\psi=1}$. We shall analyze a general basis using Nakayama's Lemma and to do this we first need to analyze which restrictions are put on a power series $$a=\sum_n a_n\pi_K^n\in A_K$$ by the condition $\psi(a)=a$.

\subsection{Analyzing the condition $\psi=1$} The main result of this subsection is Prop. \ref{mainestimate} below which gives the rate of convergence of $a_n\to 0$ as $n\to -\infty$ for $a\in A_K^{\psi=1}$.

\begin{definition} For $n\in\bn_0$ and $m\in\bz_{(p)}$ define
\begin{align*}  b_{m,n}:=&p^{-1}\sum_{\zeta\in\mu_p}\zeta^m(1-\zeta^{-1})^n\\=&p^{-1}\Tr_{\bq(\zeta_p)/\bq}\zeta_p^m(1-\zeta_p^{-1})^n\quad\text{  if $n\geq 1$}    \end{align*}
\end{definition}

Clearly $b_{m,n}$ only depends on $m\pmod p$.

\begin{lemma} One has $b_{m,n}\in\bz$ and
\begin{equation} b_{m,n}=\begin{cases} (-1)^{\bar{m}}{n\choose \bar{m}} & 0\leq n< p\\ (-1)^{\bar{m}}{n\choose \bar{m}} - (-1)^{\bar{m}} {n\choose \bar{m}+p} & p\leq n<2p\end{cases}
\label{bcomp}\end{equation}
where $0\leq\bar{m}<p$ is the representative for $m\pmod p$. Moreover,
$$ p^{\lfloor \frac{n+p-2}{p-1}\rfloor-1}\mid b_{m,n} $$ for $n\geq 1$ and hence $$p^j\mid b_{m,n} $$ for $j(p-1)<n\leq (j+1)(p-1)$.
\label{blemma}\end{lemma}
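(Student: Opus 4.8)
The plan is to reduce the lemma to the elementary identity
\[ b_{m,n}=\sum_{\substack{0\le k\le n\\ k\equiv m\ (p)}}(-1)^k\binom{n}{k}, \]
which comes out of the definition after expanding $(1-\zeta^{-1})^n$ by the binomial theorem and using that $\sum_{\zeta\in\mu_p}\zeta^j$ equals $p$ or $0$ according as $p\mid j$ or not. Since $p$ is odd, writing $k=\bar m+pj$ gives $(-1)^k=(-1)^{\bar m}(-1)^j$, so
\[ b_{m,n}=(-1)^{\bar m}\sum_{j\ge 0}(-1)^j\binom{n}{\bar m+pj}. \]
This formula is visibly an integer, and for $0\le n<p$ only the term $j=0$ survives, while for $p\le n<2p$ only $j=0,1$ survive; this gives (\ref{bcomp}).

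For the divisibility I would pass to the trace. For $n\ge 1$ the summand $\zeta=1$ contributes nothing, so $b_{m,n}=p^{-1}\Tr_{\bq_p(\zeta_p)/\bq_p}\bigl(\zeta_p^m(1-\zeta_p^{-1})^n\bigr)$. Since $1-\zeta_p^{-1}=-\zeta_p^{-1}(1-\zeta_p)$ is a uniformizer of $\bz_p[\zeta_p]$ with maximal ideal $\mathfrak p$, the element $\zeta_p^m(1-\zeta_p^{-1})^n$ lies in $\mathfrak p^n$. The extension $\bq_p(\zeta_p)/\bq_p$ is tamely ramified of degree $e=p-1$ (as $p\nmid p-1$), so its different is $\mathfrak p^{\delta}$ with $\delta=e-1=p-2$; one checks this directly from $\Phi_p'(\zeta_p)=p\zeta_p^{p-1}/(\zeta_p-1)$. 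The standard formula for the trace of a power of $\mathfrak p$ then gives $\Tr_{\bq_p(\zeta_p)/\bq_p}(\mathfrak p^j)=p^{\lfloor (j+\delta)/e\rfloor}\bz_p$, because $p^{-c}\mathfrak p^j\subseteq\mathfrak p^{-\delta}$ exactly when $c(p-1)\le j+p-2$. Taking $j=n$ yields $v_p(b_{m,n})\ge\lfloor\frac{n+p-2}{p-1}\rfloor-1$, which is the claimed bound.

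The last assertion follows by unwinding the floor: if $j(p-1)<n\le (j+1)(p-1)$ then $(j+1)(p-1)\le n+p-2<(j+2)(p-1)$, so $\lfloor\frac{n+p-2}{p-1}\rfloor=j+1$ and the previous estimate becomes $p^j\mid b_{m,n}$.

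Everything here is routine; the only step needing care is the identification of $\Tr_{\bq_p(\zeta_p)/\bq_p}(\mathfrak p^j)$, which rests on the different exponent $p-2$ of $\bq_p(\zeta_p)/\bq_p$ and on keeping the two normalizations of the valuation apart (the one with $v(1-\zeta_p)=1$ on $\bq_p(\zeta_p)$ versus $v_p$ with $v_p(p)=1$). As an alternative one could read off the divisibility from the binomial sum via Kummer's theorem on the $p$-adic valuation of $\binom{n}{k}$, but the trace argument is cleaner and is the one I would write up.
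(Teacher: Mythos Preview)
Your proof is correct and follows essentially the same route as the paper: binomial expansion plus the orthogonality relation $\sum_{\zeta\in\mu_p}\zeta^k\in\{0,p\}$ for the explicit formula, and the trace/different argument (with $\delta=p-2$) for the $p$-divisibility. Your write-up is in fact slightly more explicit than the paper's, in that you record the closed form $b_{m,n}=(-1)^{\bar m}\sum_{j\ge 0}(-1)^j\binom{n}{\bar m+pj}$ and spell out the floor computation for the final assertion.
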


\begin{proof} Formula (\ref{bcomp}) follows from the binomial expansion of $(1-\zeta^{-1})^n$ and the fact that $$\sum_{\zeta\in\mu_p}\zeta^k=\begin{cases} 0 & p\nmid k\\p & p\mid k.\end{cases}$$ In particular
$ b_{m,0}=0,1$ according to whether $p\nmid m$ or $p\mid m$. The different of the extension $\bq(\zeta_p)/\bq$ is $(1-\zeta_p)^{p-2}$, so we have
\begin{align*}& \Tr_{\bq(\zeta_p)/\bq}\left(\zeta_p^m(1-\zeta_p^{-1})^n\right)\subseteq p^N\bz \\ \Leftrightarrow &\,((1-\zeta_p)^n)\subseteq\left( p^N(1-\zeta_p)^{2-p}\right)=\left((1-\zeta_p)^{N(p-1)+2-p}\right)\\  \Leftrightarrow&\, n\geq N(p-1)+2-p\Leftrightarrow N\leq\frac{n+p-2}{p-1}.
\end{align*}

\end{proof}

\begin{definition}Define integers $\beta_{n,j}\in\bz$ by $\beta_{1,j}:=\frac{1}{p}{p\choose j}
$ for $1\leq j\leq p-1$ and
\begin{equation}
\left(\sum_{j=1}^{p-1}\beta_{1,j}x^j\right)^n=\sum_{j=n}^{n(p-1)}\beta_{n,j}x^j
\notag\end{equation}
\end{definition}

\begin{prop} An element $a=\sum_i a_i\pi_K^i\in A_K$ lies in $A_K^{\psi=1}$  if and only if for all $N\in\bz$ one has
\begin{equation}
\sum_{n=0}^\infty a_{N+en}  {\frac{N}{e}+n \choose n}b_{\frac{N}{e}+n,n}= \sum_{0\leq n\leq j\leq n(p-1)}a^\sigma_{\frac{N+je}{p}}{\frac{N+je}{pe}\choose n} \beta_{n,j}\cdot p^n
\label{monster}\end{equation}
with the convention that $a_r=0$ for $r\notin\bz$. The equation (\ref{monster}) holds for all $N\in\bz$ if and only if it holds for all $N\in p\bz$.
\end{prop}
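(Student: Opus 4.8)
The plan is to rephrase the condition $\psi(a)=a$ as an equality of Laurent series in $\pi_K$ and to compare the coefficient of $\pi_K^N$ on the two sides for each $N\in\bz$. Since $\psi=p^{-1}\varphi^{-1}\Tr_{B_K/\varphi B_K}$ and $\varphi$ is injective onto $\varphi(B_K)$, the condition $a\in A_K^{\psi=1}$ is equivalent to $\varphi(\psi(a))=\varphi(a)$, i.e. to $p^{-1}\Tr_{B_K/\varphi B_K}(a)=\varphi(a)$. Note that $p\nmid e$, since $e(p-1)f=[K:\bq_p]$ is prime to $p$.

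First I would compute the left side. By the proof of Lemma \ref{lemma:3} the conjugates of $\pi_K$ over $\varphi(B_K)$ are the elements $\pi_K^{(\zeta)}$, $\zeta\in\mu_p$, with $(\pi_K^{(\zeta)})^e=(1+\pi)\zeta-1$ and $\pi_K^{(1)}=\pi_K$, so $\Tr_{B_K/\varphi B_K}(a)=\sum_{\zeta\in\mu_p}a(\pi_K^{(\zeta)})$. Writing $(1+\pi)\zeta-1=\zeta\,\pi_K^{e}\bigl(1+(1-\zeta^{-1})\pi_K^{-e}\bigr)$ and taking for $\zeta^{1/e}$ the $e$-th root of $\zeta$ lying in $\mu_p$ (possible as $p\nmid e$), one gets $\pi_K^{(\zeta)}=\zeta^{1/e}\pi_K\bigl(1+(1-\zeta^{-1})\pi_K^{-e}\bigr)^{1/e}$, where the binomial expansion of the last factor converges $p$-adically because $v_p(1-\zeta^{-1})=\tfrac{1}{p-1}>0$. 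Expanding $a(\pi_K^{(\zeta)})=\sum_i\sum_{n\ge 0}a_i\binom{i/e}{n}\zeta^{i/e}(1-\zeta^{-1})^n\pi_K^{\,i-en}$ and summing over $\mu_p$ collapses the inner sum to $p\,b_{i/e,n}$ by the definition of $b_{m,n}$; since $e^{-1}(N+en)=\tfrac{N}{e}+n$ in $\bz_{(p)}$, the coefficient of $\pi_K^N$ in $\varphi(\psi(a))=p^{-1}\Tr_{B_K/\varphi B_K}(a)$ comes out to be $\sum_{n\ge 0}a_{N+en}\binom{\frac{N}{e}+n}{n}b_{\frac{N}{e}+n,\,n}$, the series converging by the divisibility $p^{j}\mid b_{m,n}$ of Lemma \ref{blemma}. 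This is exactly the left side of (\ref{monster}).

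Next I would compute the right side. Here $\varphi(\pi_K)^e=\varphi(\pi)=(1+\pi)^p-1=\pi^p+pQ(\pi)$ with $Q(x)=\sum_{j=1}^{p-1}\beta_{1,j}x^j$, so $\varphi(\pi_K)=\pi_K^p u$ with $u^e=1+pQ(\pi^{-1})$ (using $\beta_{1,j}=\beta_{1,p-j}$, i.e. $\binom{p}{j}=\binom{p}{p-j}$); since $p\nmid e$, $u=(1+pQ(\pi^{-1}))^{1/e}$ lies in $\co_{F'}[[\pi^{-1}]]$ and is $\equiv 1\pmod p$. Expanding $u^i=\sum_{n\ge 0}\binom{i/e}{n}p^n\sum_j\beta_{n,j}\pi^{-j}$ and substituting into $\varphi(a)=\sum_i a_i^\sigma\varphi(\pi_K)^i=\sum_{i,n,j}a_i^\sigma\binom{i/e}{n}p^n\beta_{n,j}\pi_K^{\,pi-ej}$, the coefficient of $\pi_K^N$ — after re-indexing by $i=(N+je)/p$, so that $i/e=(N+je)/(pe)$ — is $\sum_{0\le n\le j\le n(p-1)}a^\sigma_{\frac{N+je}{p}}\binom{\frac{N+je}{pe}}{n}\beta_{n,j}p^n$, which is the right side of (\ref{monster}). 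Hence (\ref{monster}) for a fixed $N$ is precisely the vanishing of the coefficient of $\pi_K^N$ in $\varphi(\psi(a))-\varphi(a)=\varphi(\psi(a)-a)$, so by injectivity of $\varphi$, (\ref{monster}) holds for all $N\in\bz$ exactly when $a\in A_K^{\psi=1}$.

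For the last assertion I would show that an element of $\varphi(B_K)$ whose $\pi_K^N$-coefficient vanishes for all $N\in p\bz$ must be zero. Put $z=\varphi(\psi(a)-a)\in\varphi(B_K)$ and write $z=\sum_i w_i^\sigma\varphi(\pi_K)^i=\sum_i w_i^\sigma\pi_K^{pi}u^i$ with the $w_i\in F'$ of bounded denominator and $w_i\to 0$. Since $u=1+pS$ with $S\in\pi^{-1}\co_{F'}[[\pi^{-1}]]$, also $u^i=1+pS_i$ with $S_i\in\pi^{-1}\co_{F'}[[\pi^{-1}]]$, so the coefficient of $\pi_K^N$ in $z$ equals $w_{N/p}^\sigma$ (when $p\mid N$, and $0$ otherwise) plus $p$ times an $\co_{F'}$-linear combination of the $w_i^\sigma$ with $i>N/p$. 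If these vanish for all $N\in p\bz$ and some $w_{i_0}\neq 0$ is chosen with $v_p(w_{i_0})$ minimal, then taking $N=pi_0$ forces $v_p(w_{i_0})\ge 1+v_p(w_{i_0})$, a contradiction; hence $z=0$ and (\ref{monster}) holds for all $N$. The step I expect to be the main obstacle is the bookkeeping in the trace computation: the $\pi_K^{(\zeta)}$ for $\zeta\neq 1$ do not lie in $B_K$, so one must work in a suitable completed extension, justify the convergence and rearrangement of the fractional-power expansions, and verify that after summing over $\mu_p$ all roots of unity and fractional powers cancel, leaving only the integers $b_{m,n}$, with no contribution indexed by an exponent $i$ with $e\nmid i$ lost in the process; everything else is routine manipulation of convergent series.
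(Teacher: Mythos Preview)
Your derivation of the identity (\ref{monster}) itself---expanding both $p^{-1}\Tr_{B_K/\varphi B_K}(a)$ and $\varphi(a)$ as Laurent series in $\pi_K$ and comparing coefficients---is essentially the same as the paper's, including the use of the conjugates $((1+\pi)\zeta-1)^{1/e}$ and the identification of the integers $b_{m,n}$ on the trace side. The technical worry you single out (working in $A_K[\zeta_p]$ and justifying the fractional-binomial rearrangements) is real but routine, and the paper treats it at the same level of detail you do.

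Where you genuinely diverge is in the proof of the last assertion. The paper argues by induction on $m$ that ``(\ref{monster}) mod $p^m$ for all $N\in\bz$'' is equivalent to ``(\ref{monster}) mod $p^m$ for all $N\in p\bz$'': the base case $m=1$ is Lemma~\ref{EK}, and the inductive step uses the surjectivity of $A_K^{\psi=1}\to E_K^{\psi=1}$ (Lemma~\ref{lift}, which in turn relies on the computation $A_K/(\psi-1)\cong\bz_p$) to peel off a genuine $\psi$-fixed element and divide by $p$. Your argument is more direct: you observe that $z=\varphi(\psi(a)-a)\in\varphi(A_K)$, that the coefficient of $\pi_K^{pk}$ in $z$ equals $w_k^\sigma$ plus $p$ times an $\co_{F'}$-combination of $w_i^\sigma$ with $i>k$, and then a minimal-valuation argument forces all $w_i=0$. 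This is correct (note $w=\psi(a)-a\in A_K$, so $w_i\in\co_{F'}$ and the minimum of $v_p(w_i)$ over nonzero $w_i$ is attained), and it avoids both the induction and the appeal to Lemma~\ref{lift}. The paper's route, on the other hand, has the advantage that its mod-$p$ analysis (Lemma~\ref{EK}) is reused later, and the surjectivity lemma is independently needed; so the paper's detour is not wasted effort in context, whereas your argument is self-contained but does not feed into the rest of the section.
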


\begin{proof} This is just comparing coefficients in the identity $p^{-1}\Tr_{B/\varphi(B)}(a)=\varphi(a)$.
One has $\varphi(\pi)=(1+\pi)^p-1=\pi^p\left(1+p\cdot y\right)$ with $y=\sum_{j=1}^{p-1}\beta_{1,j}\pi^{-j}$ and hence
\[ \varphi(\pi_K)=\pi_K^p\cdot\lambda\cdot(1+p\cdot y)^{1/e}\]
with $\lambda\in\mu_e$ and $(1+Z)^{1/e}$ the binomial series. In fact, $\lambda=1$ since $\varphi(\pi_K)\equiv \pi_K^p\mod p$.
Therefore
\begin{align*} \varphi(\pi_K^m)=& \pi_K^{pm}\left(1+p\cdot y\right)^{\frac{m}{e}}=\pi_K^{pm}\sum_{n=0}^\infty {\frac{m}{e}\choose n} y^n\cdot p^n\\=
&\sum_{n=0}^\infty {\frac{m}{e}\choose n} \sum_{j=n}^{n(p-1)}\beta_{n,j}\pi_K^{pm-ej}\cdot p^n\end{align*}
and the coefficient of $\pi_K^N$ in $\varphi(a)=\sum_ma_m^\sigma\varphi(\pi_K^m)$ is
$$ \sum_{m,n,j,N=pm-ej}a^\sigma_m {\frac{m}{e}\choose n} \beta_{n,j}\cdot p^n$$
which is the right hand side of (\ref{monster}).
The conjugates of $\pi$ over $\varphi(B)$ are $(1+\pi)\zeta-1=\pi\cdot\zeta\cdot\left(1+(1-\zeta^{-1})\pi^{-1}\right)$, hence the conjugates of $\pi_K^m$ are
$$\pi_K^m\cdot\zeta^{\frac{m}{e}}\cdot\left(1+(1-\zeta^{-1})\pi^{-1}\right)^{\frac{m}{e}}=\pi_K^m\cdot\zeta^{\frac{m}{e}}\cdot\sum_{n=0}^\infty{\frac{m}{e}\choose n} (1-\zeta^{-1})^n\pi^{-n}$$
and
$$ p^{-1}\Tr_{B/\varphi(B)}(\pi_K^m)= \pi_K^m\cdot \sum_{n=0}^\infty{\frac{m}{e}\choose n} b_{\frac{m}{e},n} \pi^{-n}= \sum_{n=0}^\infty{\frac{m}{e}\choose n} b_{\frac{m}{e},n} \pi_K^{m-en} $$
and the coefficient of $\pi_K^N$ in $p^{-1}\Tr_{B/\varphi(B)}(a)$ is the left hand side of (\ref{monster}). Note here that $B(\zeta)/\varphi(B)$ is totally ramified so that all the conjugates must be congruent modulo $1-\zeta$.

Denote by (\ref{monster})$_m$ the equation (\ref{monster}) modulo $p^m$. By Lemma \ref{EK} below, (\ref{monster})$_1$ for all $N\in\bz$ is equivalent to (\ref{monster})$_1$ for all $N\in p\bz$. We shall show by induction on $m$ that this equivalence holds for all $m$. Suppose $a\in A_K$ satisfies (\ref{monster})$_{m+1}$ for all $N\in p\bz$. Let $b\in A_K^{\psi=1}$ be a lift of $\bar{a}\in E_K^{\psi=1}$ which exists by Lemma \ref{lift} below, and write $a-b=c\cdot p$. Then $a-b$ satisfies (\ref{monster})$_{m+1}$ for all $N\in p\bz$, hence $c$ satisfies (\ref{monster})$_{m}$ for all $N\in p\bz$. By induction assumption $c$ satisfies (\ref{monster})$_{m}$ for all $N\in \bz$. But then $p\cdot c$ satisfies (\ref{monster})$_{m+1}$ for all $N\in \bz$, hence so does $a=b+c\cdot p$.
\end{proof}

\begin{lemma} An element $a=\sum_i a_i\pi_K^i\in E_K$ lies in $E_K^{\psi=1}$  if and only if for all $k\in \bz$ one has
\begin{equation}
\sum_{n=0}^{p-1}a_{kp+ne}(-1)^n=a_k^\sigma.
\label{monstermodp}\end{equation}
\label{EK}\end{lemma}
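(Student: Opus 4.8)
The statement to prove is Lemma \ref{EK}: an element $a=\sum_i a_i\bar{\pi}_K^i\in E_K$ satisfies $\psi(a)=a$ if and only if for all $k\in\bz$ one has $\sum_{n=0}^{p-1}a_{kp+ne}(-1)^n=a_k^\sigma$. This is the characteristic-$p$ shadow of equation (\ref{monster}), so the plan is to run the same coefficient comparison carried out in the preceding proposition, but now reduced modulo $p$, where everything collapses dramatically.

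The first step is to recall that in $E_K = k((\bar{\pi}_K))$ the Frobenius $\varphi$ acts by $\varphi(\bar{\pi})=\bar{\pi}^p$, hence $\varphi(\bar{\pi}_K)=\bar{\pi}_K^p$ (the correction term $1+py$ vanishes mod $p$, and the root of unity $\lambda$ is again trivial since $\varphi(\bar{\pi}_K)\equiv\bar{\pi}_K^p$). Thus $\varphi(a)=\sum_m a_m^\sigma\bar{\pi}_K^{pm}$, and the coefficient of $\bar{\pi}_K^{N}$ in $\varphi(a)$ is $a_{N/p}^\sigma$ (zero unless $p\mid N$). This matches the mod-$p$ reduction of the right-hand side of (\ref{monster}): every term with $n\geq 1$ carries a factor $p^n$ and dies, leaving only $n=0$, $j=0$, i.e. $a_{N/p}^\sigma$.

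The second step is the left-hand side. Reducing the trace computation $p^{-1}\Tr_{B/\varphi B}$ mod $p$, we need $b_{m,n}\bmod p$. By Lemma \ref{blemma}, $b_{m,n}$ is divisible by $p$ once $n\geq p$ (indeed $p^j\mid b_{m,n}$ for $j(p-1)<n\leq(j+1)(p-1)$, so $n\geq p$ forces $j\geq 1$), and for $0\leq n<p$ we have $b_{m,n}=(-1)^{\bar m}\binom{n}{\bar m}$. Since $0\le n<p$ and $0\le\bar m<p$, the binomial coefficient $\binom{n}{\bar m}$ is $1$ if $\bar m=n$... — more precisely, for a fixed $N$ the sum over $n$ has terms indexed by $m=N+en$, and one extracts that mod $p$ only the range $0\le n\le p-1$ with the binomial $\binom{\frac{m}{e}\bmod p}{?}$ survives; the upshot, after matching indices via $m=N+en$, is exactly $\sum_{n=0}^{p-1}(-1)^n a_{N+en}$. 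Setting $N=kp$ (which by the last sentence of the preceding proposition suffices, and which is also forced here since the RHS vanishes for $p\nmid N$, forcing the LHS relation to be homogeneous in a way one checks is automatically implied) gives precisely (\ref{monstermodp}).

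The main obstacle is purely bookkeeping: one must check that the combinatorial identity $\sum_n \binom{m/e}{n}b_{m/e,n}\bar{\pi}_K^{m-en}$ reduces mod $p$ to $\sum_{n=0}^{p-1}(-1)^n(\text{shift by }en)$ applied coefficientwise, and in particular that no extra surviving terms appear from the interaction of $\binom{m/e}{n}\bmod p$ (which can be computed by Lucas's theorem, since $m/e\in\bz_{(p)}$) with $b_{m/e,n}\bmod p$. Concretely, for $n$ in the range $0\le n\le p-1$ one uses $b_{m/e,n}\equiv(-1)^{\overline{m/e}}\binom{n}{\overline{m/e}}\pmod p$ and the fact that $\binom{m/e}{n}\binom{n}{\overline{m/e}}$ telescopes appropriately; once this is seen, comparing coefficients of $\bar\pi_K^N$ in $p^{-1}\Tr_{B/\varphi B}(a)=\varphi(a)$ over $E_K$ yields the stated equivalence, and the reduction to $N\in p\bz$ follows because the right-hand side $a_k^\sigma$ already exhausts all $k\in\bz$ as $N=kp$ ranges over $p\bz$ while it vanishes for $p\nmid N$ — consistency of the latter equations being a formal consequence, which is exactly what is quoted as Lemma \ref{EK} in the induction step of the preceding proof.
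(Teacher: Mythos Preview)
Your approach is the same as the paper's: reduce equation (\ref{monster}) modulo $p$ and compare coefficients. The right-hand side and the restriction to $0\le n\le p-1$ on the left are handled correctly. However, you leave the crux of the argument unverified.

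The key step you identify but do not carry out is the evaluation of $\binom{m}{n}\binom{n}{\bar m}\pmod p$ where $m=\tfrac{N}{e}+n$. The paper proves directly that
\[
\binom{m}{n}\binom{n}{\bar m}\equiv
\begin{cases}1 & \bar m=n\\ 0 & \bar m\neq n\end{cases}
\pmod p,
\]
by noting that for $\bar m<n$ the numerator $m(m-1)\cdots(m-n+1)$ contains a factor $\equiv 0\pmod p$, for $\bar m=n$ it is $\equiv \bar m!\pmod p$, and for $\bar m>n$ the second binomial vanishes. Your phrase ``telescopes appropriately'' does not substitute for this computation, and invoking Lucas's theorem is not quite the right tool since $m\in\bz_{(p)}$ is generally not an integer.

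This same identity also does the work you try to offload as ``a formal consequence'': for $p\nmid N$ one has $\bar m-n\equiv N/e\not\equiv 0\pmod p$, so every term on the left of (\ref{monster})$_1$ vanishes, matching the vanishing of the right side. This is what makes the equations for $N\notin p\bz$ automatic. Your appeal to how Lemma \ref{EK} is quoted in the preceding proposition is circular: that proposition uses the present lemma as the base case of its induction, so the vanishing for $p\nmid N$ must be established here, not assumed.

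Once you insert the two-line verification of $\binom{m}{n}\binom{n}{\bar m}\equiv[\bar m=n]$, your argument is complete and coincides with the paper's.
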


\begin{proof} The only nonzero term on the right hand side of (\ref{monster})$_1$ is $a_{\frac{N}{p}}^\sigma$ corresponding to $n=j=0$, and the nonzero terms on the left hand side are for $n\leq p-1$ by Lemma \ref{blemma}. For $m\in\bz_{(p)}$ one has
$${ m\choose n}{n\choose \bar{m}} = \frac{m(m-1)\cdots(m-n+1)}{n!}\cdot\frac{n!}{\bar{m}!(n-\bar{m})!}\equiv\begin{cases} 0 & \bar{m}< n\\ 1 & \bar{m}=n\end{cases}$$
since for $\bar{m}<n$ one of the factors in $m(m-1)\cdots(m-n+1)$ is divisible by $p$ whereas for $\bar{m}=n$ this product is congruent to $\bar{m}!$ modulo $p$. For $\bar{m}>n$ one has ${ n\choose\bar{m}}=0$, so ${ m\choose n}{n\choose \bar{m}}\equiv 0$ whenever $\bar{m}\neq n$. Using (\ref{bcomp}) the left hand side of (\ref{monster})$_1$ is
$$\sum_{n=0}^{p-1} a_{N+en}  {m \choose n} {n\choose \bar{m}} (-1)^{\bar{m}}$$
for $m=\frac{N}{e}+n$. So the left hand side vanishes for $N\notin p\bz$ and is equal to the left hand side of (\ref{monstermodp}) for $N=pk$.
\end{proof}

For later reference we also record here a more explicit version of (\ref{monster})$_2$.

\begin{lemma} Let $H_0=0$ and $H_n=\sum_{i=1}^n\frac{1}{i}$ be the harmonic number. Then (\ref{monster})$_2$ holds if and only if for all $k\in\bz$ one has
\begin{equation}
\sum_{n=0}^{p-1} a_{kp+ne}(-1)^n\left(1+\frac{kp}{e}H_n\right)+\sum_{n=p+1}^{2(p-1)}  a_{kp+ne}(-1)^{n-p}\cdot p\cdot H_{n-p}\left(1+\frac{k}{e}\right) \equiv a_k^\sigma  \label{star2}\end{equation}
\label{modp2}\end{lemma}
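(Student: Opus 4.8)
The plan is to use the preceding Proposition (the one characterising $A_K^{\psi=1}$ via (\ref{monster})), which reduces the validity of (\ref{monster})$_2$ for all $N\in\bz$ to its validity for all $N\in p\bz$; so it suffices to evaluate (\ref{monster}) modulo $p^2$ at $N=pk$ and match the outcome with (\ref{star2}). Throughout put $w:=\frac{pk}{e}$ and $m:=w+n$; since $p\nmid e$ we have $v_p(w)\geq 1$, hence $m\equiv n\pmod p$, and the residue $\bar m\in[0,p)$ equals $n$ for $0\leq n\leq p-1$ and equals $n-p$ for $p\leq n\leq 2p-2$.

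First the right hand side of (\ref{monster}) at $N=pk$: the summand indexed by $(n,j)$ contains $a^\sigma_{(pk+je)/p}=a^\sigma_{k+je/p}$, which vanishes by the convention unless $p\mid j$ (again using $p\nmid e$). The only admissible pair with $j=0$ is $(0,0)$, contributing $a^\sigma_k\binom{k/e}{0}\beta_{0,0}=a^\sigma_k$ (with $\beta_{0,0}=1$); any admissible pair with $j\geq p$ has $n\geq j/(p-1)>1$, hence $n\geq 2$, so the factor $p^n$ kills it modulo $p^2$. Thus the right hand side is $\equiv a_k^\sigma\pmod{p^2}$, matching (\ref{star2}).

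Now the left hand side. By Lemma \ref{blemma} we have $p^2\mid b_{m,n}$ once $n>2(p-1)$, so only $0\leq n\leq 2p-2$ survive modulo $p^2$. Feeding $\bar m$ as above into (\ref{bcomp}) gives $b_{m,n}=(-1)^n$ exactly when $0\leq n\leq p-1$, and $b_{m,n}=(-1)^{n-p}\bigl(\binom{n}{p}-1\bigr)$ when $p\leq n\leq 2p-2$; since $\binom{p+r}{p}=\prod_{i=1}^{r}(1+p/i)\equiv 1+pH_r\pmod{p^2}$, the latter becomes $b_{m,n}\equiv(-1)^{n-p}\,p\,H_{n-p}\pmod{p^2}$, which in particular vanishes for $n=p$. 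For the binomial factor I would expand by Vandermonde, $\binom{m}{n}=\sum_{i=0}^{n}\binom{w}{i}\binom{n}{i}$, and use $\binom{w}{i}\equiv\frac{(-1)^{i-1}}{i}\,w\pmod{p^2}$ for $1\leq i\leq p-1$, $\binom{w}{p}\equiv\frac{k}{e}\pmod p$ (the factor $w+p=p(1+w/p)$ in the numerator contributes the extra power of $p$, and $w/p=k/e$), $\binom{w}{i}\equiv0\pmod p$ for $p<i\leq 2p-2$, together with $\binom{n}{p}\equiv1\pmod p$ for $p\leq n\leq 2p-2$. This yields $\binom{m}{n}\equiv1+w\sum_{i=1}^{n}\frac{(-1)^{i-1}}{i}\binom ni=1+\frac{pk}{e}H_n\pmod{p^2}$ for $0\leq n\leq p-1$ (via the classical identity $\sum_{i=1}^{n}\frac{(-1)^{i-1}}{i}\binom ni=H_n$) and $\binom{m}{n}\equiv1+\frac ke\pmod p$ for $p\leq n\leq 2p-2$; in this last range only $\binom mn$ modulo $p$ matters since $b_{m,n}$ already carries a factor $p$. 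Substituting these into $\sum_{n}a_{pk+en}\binom mn b_{m,n}$ and discarding the vanishing terms ($n=p$ and $n\geq 2p-1$) produces exactly the left hand side of (\ref{star2}), establishing the equivalence.

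The only genuine work lies in the two $p$-adic estimates for $\binom{w+n}{n}$ — most delicately the appearance of $1+\frac ke$ for $p\leq n\leq 2p-2$, which in the Vandermonde expansion is precisely the single surviving term $\binom{w}{p}\binom{n}{p}\equiv\frac ke\pmod p$ — and in keeping track of which harmonic-number contributions survive modulo $p^2$. I expect this elementary but slightly fiddly accounting, rather than any conceptual difficulty, to be the main obstacle.
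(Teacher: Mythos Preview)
Your proof is correct and follows essentially the same approach as the paper: reduce to $N=pk$ via the preceding Proposition, evaluate $b_{m,n}$ from (\ref{bcomp}) together with $\binom{p+r}{p}\equiv 1+pH_r\pmod{p^2}$, and compute $\binom{kp/e+n}{n}\pmod{p^2}$ --- the only variation being that you obtain the latter via Vandermonde's identity and the alternating-sum formula $\sum_{i=1}^n\frac{(-1)^{i-1}}{i}\binom{n}{i}=H_n$, whereas the paper expands the product $\prod_{j=1}^n(kp/e+j)$ directly. One small slip: in your parenthetical justification of $\binom{w}{p}\equiv k/e\pmod p$ the factor carrying the power of $p$ in the numerator is $w=p\cdot k/e$ itself, not ``$w+p$''; the conclusion is unaffected.
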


\begin{proof} The only nonzero term on the right hand side of (\ref{monster})$_2$ for $N=kp$ is $a_{k}^\sigma$ corresponding to $n=j=0$ since for $n=1$ there is no $1\leq j\leq (p-1)$ with $p\mid (N+je)=kp+je$.  The nonzero terms on the left hand side are for $n\leq 2(p-1)$ by Lemma \ref{blemma}. Note that for $1\leq j\leq n <2p$ only $j=p$ is divisible by $p$. So computing modulo $p^2$ we have
\begin{align*} {\frac{kp}{e}+n \choose n} =& \frac{\prod_{j=1}^n \left(\frac{kp}{e}+j\right)}{n!}\equiv \frac{n!+\frac{kp}{e}\sum_{j=1}^n\frac{n!}{j}+ (\frac{kp}{e})^2\sum_{1\leq j_1<j_2\leq n}\frac{n!}{j_1j_2}}{n!}\\ \equiv &1+\frac{kp}{e} H_n + (\frac{kp}{e})^2\sum_{1\leq j_1<j_2\leq n}\frac{1}{j_1j_2} \\
\equiv & \begin{cases} 1+\frac{kp}{e} H_n & n<p \\  1+\frac{k}{e} +\frac{kp}{e}H_{n-p}+(\frac{k}{e})^2\cdot p\cdot H_{n-p} & p\leq n<2p.\end{cases}
\end{align*}
Here we have used $H_{p-1}\equiv 0\mod p$ and $\sum_{j=p+1}^n\frac{1}{j}\equiv H_{n-p}\mod p$. By (\ref{bcomp}) we have
$$ b_{\frac{kp}{e}+n,n}=\begin{cases}  {n\choose n}(-1)^n=(-1)^n & n<p\\ 0 & n=p \\ (-1)^{n-p}\left({n\choose n-p} -  {n\choose n}\right) & p< n<2p\end{cases}$$
and
$$ {n\choose n-p} -  {n\choose n} =  \frac{(p+n-p)(p+n-p-1)\cdots (p+1)}{(n-p)!}-1\equiv p\cdot \sum_{j=1}^{n-p}\frac{1}{j}. $$
So the summand for $n=p$ vanishes and for $p<n<2p$ we have
\begin{align*} {\frac{kp}{e}+n \choose n} b_{\frac{kp}{e}+n,n}\equiv &\left(1+\frac{k}{e} +\frac{kp}{e}H_{n-p}+(\frac{k}{e})^2\cdot p\cdot H_{n-p}\right) (-1)^{n-p}\cdot p\cdot H_{n-p} \\ \equiv & (-1)^{n-p}\left(1+\frac{k}{e}\right)\cdot p\cdot H_{n-p} .\end{align*}
\end{proof}

\begin{lemma} The map $A_K^{\psi=1}\to E_K^{\psi=1}$ is surjective. \label{lift}\end{lemma}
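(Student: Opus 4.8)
The plan is to prove surjectivity by a one-step $p$-adic correction argument, after first reducing it to a torsion-freeness statement. Recall that $\psi$ is $\bz_p$-linear on $A_K$ (since $\Tr_{B_K/\varphi B_K}$ is $\varphi(B_K)$-linear, $\bz_p\subseteq\varphi(B_K)$, and $\varphi^{-1}$ fixes $\bz_p$), that it preserves $A_K$, and hence that it induces an operator on $E_K=A_K/pA_K$; the map in the statement is reduction modulo $p$, restricted to $\psi$-fixed points. I would then reduce the lemma to the claim that the $\bz_p$-module $A_K/(\psi-1)A_K$ is torsion-free. Indeed, given $\bar a\in E_K^{\psi=1}$, choose any lift $a_1\in A_K$; since $(\psi-1)(\bar a)=0$ we have $(\psi-1)(a_1)\in pA_K$, say $(\psi-1)(a_1)=p\,y_1$ with $y_1\in A_K$. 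The class of $y_1$ in $A_K/(\psi-1)A_K$ is killed by $p$, hence vanishes if that module is torsion-free, so $y_1=(\psi-1)(w_1)$ for some $w_1\in A_K$; then $a:=a_1-p\,w_1$ satisfies $(\psi-1)(a)=p\,y_1-p\,y_1=0$ and $a\equiv a_1\equiv\bar a\pmod{p}$, which gives the desired lift.

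For the torsion-freeness of $A_K/(\psi-1)A_K$ I would appeal to the interpretation of $A_K^{\psi=1}$ as an Iwasawa cohomology group. The $\psi$-operator and the underlying $\bz_p$-module of the $(\varphi,\Gamma_K)$-module $A_K(1)$ of $\bz_p(1)$ are those of $A_K$, so $A_K/(\psi-1)A_K\cong A_K(1)/(\psi-1)A_K(1)$ as $\bz_p$-modules; and, in the same circle of ideas as Theorem \ref{reciprocity}(a), the two-term complex $A_K(1)\xrightarrow{\psi-1}A_K(1)$ placed in degrees $1$ and $2$ represents $R\Gamma_{Iw}(K,\bz_p(1))$ by \cite{chercol99}, so that $A_K(1)/(\psi-1)A_K(1)\cong H^2_{Iw}(K,\bz_p(1))$. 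By the computation already carried out in the proof of Lemma \ref{lemma:coho_of_zpr} one has $H^2_{Iw}(K,\bz_p(1))\cong\bz_p$, which is torsion-free, completing the argument.

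The step I expect to be the genuine obstacle is precisely this torsion-freeness: the operator $\psi-1$ is very far from surjective on $A_K$ (its cokernel is all of $\bz_p$), so one cannot produce $w_1$ by a naive geometric series $-\sum_{n\geq 0}\psi^n(y_1)$, which does not converge $p$-adically; moreover any attempt to prove the torsion-freeness purely from the power-series description of $A_K^{\psi=1}$ (e.g. via Lemma \ref{EK}) seems to run in a circle with Lemma \ref{lift} itself. The cohomological identification above is what breaks the circle. Once torsion-freeness is granted, the rest is the purely formal correction in the first paragraph, so no further computation is required.
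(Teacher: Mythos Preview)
Your proof is correct and is essentially the same as the paper's: both reduce surjectivity to the $p$-torsion-freeness of $A_K/(\psi-1)A_K$, and both obtain that from the identification $A_K/(\psi-1)A_K\cong H^2_{Iw}(K,\bz_p(1))\cong\bz_p$ via \cite{chercol99}. The only cosmetic difference is that the paper packages your explicit lift-and-correct argument as an application of the snake lemma to the commutative diagram with rows $0\to A_K\xrightarrow{p}A_K\to E_K\to 0$ and vertical maps $\psi-1$.
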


\begin{proof} This follows from the snake lemma applied to
\[\begin{CD}
0 @>>>A_K@>p>> A_K @>>> E_K @>>> 0\\
@. @V\psi-1 VV @V\psi-1 VV@V\psi-1 VV @.\\
0 @>>>A_K@>p>> A_K @>>> E_K @>>> 0
\end{CD}\]
and the fact that $A_K/(\psi-1)A_K\cong H^2_{Iw}(K,\bz_p(1))\cong \bz_p$ (see \cite{chercol99}[Rem. II.3.2.]) is $p$-torsion free.
\end{proof}

\begin{definition} For  $a=\sum_i a_i\pi_K^i\in A_K$ and $\nu\geq 1$ we set
$$ l_\nu(a):=\min\{i \ \vert\, p^\nu\nmid a_i\}.$$
In particular
$$l(a):=l_1(a)=v_{\pi_K}(\bar{a})$$ is the valuation of $\bar{a}\in E_K$.
\end{definition}
Note that $l(a)$ is independent of a choice of uniformizer for $A_K$ but $l_\nu(a)$ for $\nu\geq 2$ is not.

\begin{prop} Let $a\in A_K^{\psi=1}$.
\begin{itemize}
\item[a)] For all $\nu\geq 1$ we have $$l_\nu(a)\geq -\frac{\nu(p-1)+1}{p}\cdot e.$$ In particular $l(a)\geq -e$.
\item[b)] If $l(a)<-e+e(p-1)$ then $$l_2(a)>l(a)-e(p-1)$$
while if  $l(a)\geq -e+e(p-1)$ then $l_2(a)\geq -e$.
\item[c)] If $l(a)<-e+2e(p-1)$ and $l_2(a)\geq l(a)-e(p-1)$ then $$l_3(a)>l(a)-2e(p-1)$$
while if  $l(a)\geq -e+2e(p-1)$  and $l_2(a)\geq l(a)-e(p-1)$ then $l_3(a)\geq -e$.
\end{itemize}
\label{mainestimate}\end{prop}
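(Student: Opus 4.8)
The plan is to extract everything from the coefficient identity (\ref{monster}) for $A_K^{\psi=1}$ and its reductions modulo $p^\nu$, using the $p$‑divisibilities of the integers $b_{m,n}$ from Lemma \ref{blemma} to control which monomials can contribute. Write $a=\sum_i a_i\pi_K^i$; recall $l_\nu(a)$ is the least index with $p^\nu\nmid a_i$, so a) is the assertion that $p^\nu\mid a_i$ whenever $i<-\frac{\nu(p-1)+1}{p}e$.

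I would prove a) by induction on $\nu$ (the statement is vacuous when $a=0$), and in each step I would simply evaluate (\ref{monster}) modulo $p^\nu$ at the single index $N=l_\nu(a)\cdot p$. For $\nu=1$: if $\ell:=l(a)<-e$ then $\ell p+ne\le\ell p+(p-1)e<\ell$ for $0\le n\le p-1$, so the left side of (\ref{monstermodp}) at $k=\ell$ vanishes, forcing $\bar a_\ell=0$ --- impossible. For $\nu\ge2$, put $\ell:=l_\nu(a)$: by Lemma \ref{blemma} the left side modulo $p^\nu$ involves only the coefficients $a_{\ell p+en}$, $0\le n\le\nu(p-1)$, the one at index $\ell p+en$ carrying a factor of $p$‑adic valuation at least $\lfloor(n-1)/(p-1)\rfloor$, while the right side is $a_\ell^\sigma$ plus terms divisible by $p^2$ supported at indices $>\ell$. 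If $\ell<-\frac{\nu(p-1)+1}{p}e$, then every such term except $a_\ell^\sigma$ vanishes modulo $p^\nu$: those with $n\le p-1$ because their indices are $<\ell$, those with larger $n$ and those on the right by the already‑established cases of a) for smaller $\nu$. The tightest of these constraints comes from $n=\nu(p-1)$, whose coefficient has valuation $\nu-1$, so it only needs divisibility of $a$ by $p^1$ --- this is exactly where the case $\nu=1$ is used, and a short arithmetic check shows the constraint it imposes is precisely $\ell<-\frac{\nu(p-1)+1}{p}e$. Hence $p^\nu\mid a_\ell$, a contradiction.

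Parts b) and c) run the same scheme one, resp.\ two, $p$‑adic digits deeper. For b), assuming $l_2(a)$ were smaller than claimed, I would evaluate (\ref{star2}) at $k=l_2(a)$. In both of its cases the hypothesis on $l(a)$ together with the failure of the bound forces $l_2(a)p+2(p-1)e<l(a)$ and $l_2(a)<-e$. The first shows that $p$ divides $a$ at every index occurring in (\ref{star2}), so the sum over $p+1\le n\le2(p-1)$ drops out modulo $p^2$, and after dividing the remaining sum by $p$ (the coefficients $\binom{kp/e+n}{n}b_{kp/e+n,n}$ being $(-1)^n$ times a unit congruent to $1$ modulo $p$ for $0\le n\le p-1$) one obtains a congruence of exactly the shape (\ref{monstermodp}) for the second‑digit series $(\overline{a_i/p})_i$. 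The second fact, $l_2(a)<-e$, then shows the indices entering this congruence all lie below $l_2(a)$, forcing the second digit of $a$ at $l_2(a)$ to vanish --- a contradiction. Part c) is identical with (\ref{star2}) replaced by (\ref{monster}) reduced modulo $p^3$ and with third digits in place of second; here the extra hypothesis $l_2(a)\ge l(a)-e(p-1)$ is exactly what guarantees that $p^2$ divides $a$ over the range of indices that has to drop out of that identity, and the case split on whether $l(a)<-e+2e(p-1)$ records, as in b), whether the largest index involved stays in the range already controlled by a) and b).

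The hard part --- and where an error is most likely to hide --- is the bookkeeping in part c): one must keep precise track of the power of $p$ carried by each summand of the mod‑$p^3$ identity (coming from $b_{m,n}$ via Lemma \ref{blemma}, and from the factors of $p$ produced by the harmonic‑number expansion of $\binom{kp/e+n}{n}$ exactly as in the proof of Lemma \ref{modp2}) in order to confirm that, on the relevant index ranges, every summand that could obstruct the descent has already been pinned down by parts a) and b), and that the threshold coming out is genuinely $l(a)-2e(p-1)$ rather than something weaker.
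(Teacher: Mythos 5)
Your proof is correct and rests on the same engine as the paper's: the coefficient identity (\ref{monster}) read modulo $p^\nu$ at $N=pi$, together with the $p$-divisibilities of $b_{m,n}$ from Lemma \ref{blemma} and the previously established bounds on lower-indexed coefficients. The genuine difference lies in the termination mechanism. You evaluate at the minimal violating index $\ell=l_\nu(a)$ and obtain a direct contradiction (all other terms of (\ref{monster}) vanish modulo $p^\nu$, so $p^\nu\mid a_\ell$ after all); the paper instead fixes an arbitrary violating index $i$, shows that some $i'=pi+en<i$ with $n\leq p-1$ must again violate the bound, and concludes from the convergence $a_i\to 0$ in $A_K$ that no such $i$ exists. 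Both arguments hinge on the same inequalities; your rearrangement is arguably the cleaner of the two since it avoids invoking the topology of $A_K$. A second, smaller difference: in parts b) and c) you route through the explicit harmonic-number formula (\ref{star2}) (and would have to extend it modulo $p^3$ for c)), interpreting the outcome as a congruence for the second, resp.\ third, $p$-adic digit of $a$. The paper never needs this: it applies Lemma \ref{blemma} and the index estimates directly to (\ref{star}), so that for b) it suffices that the summands with $p\leq n\leq 2(p-1)$ be divisible by $p^2$, and for c) that those with $p\leq n\leq 2(p-1)$ land below $l_2(a)$ (this is exactly where the hypothesis $l_2(a)\geq l(a)-e(p-1)$ enters) while those with $2p-1\leq n\leq 3(p-1)$ land below $l(a)$ (handled by the case split on $l(a)$). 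So the bookkeeping you flag as the danger zone in part c) is real but lighter than you fear: no mod-$p^3$ analogue of (\ref{star2}) is required, only the valuation bound on $b_{m,n}$ and careful index arithmetic.
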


\begin{remark} Part b) is a small improvement of part a) for $\nu=2$ and $a$ with $$l(a)> -(2-\frac{1}{p})e+e(p-1)$$ while part c) improves a) for $\nu=3$ and $a$ with
 $$l(a)> -(3-\frac{2}{p})e+2e(p-1)$$ and $l_2(a)\geq l(a)-e(p-1)$.
\end{remark}

\begin{proof} Suppose $a=\sum_i a_i\pi_K^i\in A_K^{\psi=1}$. Part a) is equivalent to the statement
\begin{equation} i<-\frac{\nu(p-1)+1}{p}\cdot e\, \Rightarrow\, p^\nu\mid a_i\label{snu}\end{equation}
which we denote by (\ref{snu})$_\nu$ if we want to emphasize dependence on $\nu$.  We shall prove (\ref{snu})$_\nu$ by induction on $\nu$, the statement  (\ref{snu})$_0$ being trivial. Now assume  (\ref{snu})$_{\nu'}$ for $\nu'\leq\nu$  and assume $p^{\nu+1}\nmid a_i$ for some
$$i<-\frac{(\nu+1)(p-1)+1}{p}\cdot e.$$
We shall show that there is another $i'<i$ with $p^{\nu+1}\nmid a_{i'}$. Hence there are infinitely many  $i<0$ with $p^{\nu+1}\nmid a_i$  which contradicts the fact that $a\in A_K$. This proves (\ref{snu})$_{\nu+1}$.

 In order to find $i'$ we look at the equation (\ref{monster}) for $N=pi$
\begin{equation}
\sum_{n=0}^\infty a_{pi+en}  {\frac{pi}{e}+n \choose n}b_{\frac{pi}{e}+n,n}= a_i^\sigma+\sum_{1\leq n\leq p\lambda\leq n(p-1)}a^\sigma_{i+\lambda e}{\frac{i}{e}+\lambda\choose n} \beta_{n,p\lambda}\cdot p^n
\label{star}\end{equation}
and first notice that $$p^{\nu+1-n}\mid a_{i+\lambda e}$$ for $\frac{n}{p}\leq \lambda\leq \frac{n(p-1)}{p}$. This is because of
$$ i+\lambda e< -\frac{(\nu+1)(p-1)+1}{p}\cdot e + \frac{n(p-1)}{p}\cdot e= -\frac{(\nu+1-n)(p-1)+1}{p}\cdot e $$ and the induction assumption. Since ${\frac{i}{e}+\lambda\choose n} \beta_{n,p\lambda}$ is a $p$-adic integer we conclude that $p^{\nu+1}$ divides the sum over $\lambda,n$ in the right hand side of (\ref{star}) and hence does {\em not} divide the right hand side of (\ref{star}).

Considering the left hand side of (\ref{star}) we first recall that Lemma \ref{blemma} implies that
\begin{equation}p^j\mid b_{\frac{pi}{e}+n,n}\label{div1}\end{equation}
for $j(p-1)< n\leq (j+1)(p-1)$. For $n$ in this range we have
\begin{align} pi+ne \leq\,  &  pi+(j+1)(p-1)e<-\bigl((\nu+1)(p-1)+1\bigr)e+(j+1)(p-1)e\label{ineq1}\\
=&-\bigl((\nu+1-j)(p-1)+1\bigr)e+(p-1)e\notag\\\leq &-\frac{(\nu+1-j)(p-1)+1}{p}\cdot e
\notag\end{align}
provided this last inequality holds which is equivalent to
\begin{align*}
&p\bigl((\nu+1-j)(p-1)+1\bigr)-p(p-1)\geq (\nu+1-j)(p-1)+1\\
\Leftrightarrow\, & (p-1)\bigl((\nu+1-j)(p-1)+1\bigr)\geq p(p-1)\\
\Leftrightarrow\, & \bigl((\nu+1-j)(p-1)+1\bigr)\geq p\\
\Leftrightarrow\, & (\nu+1-j)\geq 1 \Leftrightarrow \nu\geq j.
\end{align*}
So for for $1\leq j\leq\nu$ inequality (\ref{ineq1}) holds, and the induction assumption implies $$p^{\nu+1-j}\mid a_{pi+ne}.$$ Using (\ref{div1}) we conclude that $p^{\nu+1}$ divides all summands in the left hand side of (\ref{star}) except perhaps those with $n<p$ (corresponding to $j=0$). Since $p^{\nu+1}$ does not divide the right hand side, it does not divide the left hand side of (\ref{star}). So there must be one summand with $n<p$ not divisible by $p^{\nu+1}$ and hence some $i':=pi+en$ with $n\leq p-1$ so that
$p^{\nu+1}\nmid  a_{i'}$. It remains to remark that
\begin{equation}i'=pi+en \leq pi+e(p-1) < pi - i(p-1)=i \label{iprime}\end{equation}
since $i<-e$.

To prove b)  we use the same argument. Assuming the existence of $$i\leq \min\{ l(a)-e(p-1),-e-1\}$$with $p^2\nmid a_i$ we find another $i'<i$ with $p^2\nmid a_{i'}$. On the right hand side of (\ref{star}),  apart from $a_i^\sigma$, all summands are divisible by $p^2$ (note there are none with $n=1$ since $\lambda$ has to be an integer). On the left hand side, summands for $n>2(p-1)$ are divisible by $p^2$ by Lemma \ref{blemma}.  For $p\leq n\leq 2(p-1)$  we have, assuming $l(a)<-e+e(p-1)$,
\begin{align*} pi+en \leq &\, \,p\bigl(l(a)-e(p-1)\bigr)+2(p-1)e=l(a)+ (p-1)l(a)-(p-2)(p-1)e\\<\,\,&l(a)+(p-1)(-e+e(p-1))-(p-2)(p-1)e=l(a)   \end{align*}
and therefore $p\mid a_{pi+en}$. If $l(a)\geq -e+e(p-1)$ we have
\begin{align*} pi+en < & \,\,p(-e)+2(p-1)e=-e+e(p-1)\leq l(a)  \end{align*} and again conclude $p\mid a_{pi+en}$.
So all summands on the left hand side with $n\geq p$ are divisible by $p^2$. Hence some $i':=pi+en$ with $n\leq p-1$ satisfies  $p^2\nmid a_{i'}$. Moreover,  (\ref{iprime}) holds since $i<-e$.

For c) we use this argument yet another time. Assume
$$i\leq\min\{ l(a)-2e(p-1),-e-1\}$$ and $p^3\nmid a_i$.  On the right hand side of (\ref{star}) we need $p\mid a_{i+\lambda e}$ for $\frac{2}{p}\leq \lambda\leq \frac{2(p-1)}{p}$, i.e. $\lambda=1$. But $$i+e\leq \min\{l(a)-2e(p-1)+e,-1\} <l(a),$$ so $p\mid a_{i+e}$.  Assume first $l(a)<-e+2e(p-1)$. On the left hand side we have for
$p\leq n\leq 2(p-1)$
\begin{align*} pi+en \leq & \,p\bigl(l(a)-2e(p-1)\bigr)+2(p-1)e\\
=&l(a)-e(p-1)+(p-1)l(a)+e(p-1)-(2p-2)(p-1)e\\
<\,&l(a)-e(p-1)+(p-1)(-e+2e(p-1))-(2p-3)(p-1)e\\=&l(a)-e(p-1)\leq  l_2(a)\end{align*}
and therefore $p^2\mid a_{pi+en}$. For $2p-1\leq n\leq 3(p-1)$ we just add $(p-1)e$ to this last estimate to conclude
\begin{align*} pi+en \leq & \,p\bigl(l(a)-2e(p-1)\bigr)+3(p-1)e\\
<&l(a)-e(p-1)+e(p-1)=l(a) \end{align*}
and hence $p\mid a_{pi+en}$. Now assume $l(a)\geq -e+2e(p-1)$. For $p\leq n\leq 2(p-1)$ we have
\begin{align*} pi+en \leq & \,p(-e)+2(p-1)e\leq l(a)-e(p-1)\leq l_2(a) \end{align*}
and  therefore $p^2\mid a_{pi+en}$. For $2p-1\leq n\leq 3(p-1)$ we again add $(p-1)e$ to this last estimate to conclude
$pi+en<l(a)$ and $p\mid a_{pi+en}$.
As before we conclude that for some $i':=pi+en$ with $n\leq p-1$ we have $p^3\nmid a_{i'}$. Moreover  (\ref{iprime}) holds since $i<-e$.
\end{proof}

Before drawing consequences of Prop. \ref{mainestimate} we make the following definition.

\begin{definition} Let $\varpi$ be the uniformizer of $K$ given by
$$\varpi=\root e\of {\zeta_p-1}=\varphi^{-1}(\pi_K)\vert_{t=0}$$
and denote by $\vw$ the unnormalized valuation of the field $K$, i.e. $$\vw(p)=e(p-1).$$
For $a\in B_K^{\dagger,1}$ define
$$\vw(a):=\vw\left(\varphi^{-1}(a)\vert_{t=0}\right).$$
\label{vwdef}\end{definition}

\begin{corollary} For all $a\in A_K^{\psi=1}$ the series $\varphi^{-1}(a)$ converges, i.e. $A_K^{\psi=1}\subseteq B_K^{\dagger,1}$.
\label{phiconv}\end{corollary}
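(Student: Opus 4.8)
The plan is to verify convergence of $\varphi^{-1}(a)$ coefficient-by-coefficient in the $t$-adic filtration of $B_{dR}^+$, using the decay rate of the coefficients $a_i$ supplied by Prop.~\ref{mainestimate}~a). For $a=\sum_i a_i\pi_K^i\in A_K$ one forms the formal expansion $\varphi^{-1}(a)=\sum_i \varphi^{-1}(a_i)\,\varphi^{-1}(\pi_K)^i$, in which each $\varphi^{-1}(\pi_K)^i$ is a well-defined element of $B_{dR}^+$: since $\zeta_p\in K$ we have $F'=F$, the operator $\varphi^{-1}$ acts on $\co_F$ as the inverse Frobenius (so it preserves $v_p$), and from $\varphi^{-1}(\pi)=\zeta_pe^{t/p}-1$ together with the choice of $\pi_K=\root e\of\pi$ fixed in subsection~\ref{sec:tame1} one obtains
\[
\varphi^{-1}(\pi_K)=\varpi\cdot u,\qquad u:=\Bigl(1+\tfrac{\zeta_p}{\zeta_p-1}\bigl(e^{t/p}-1\bigr)\Bigr)^{1/e}\in 1+tK[[t]],
\]
the $e$-th root being a genuine power series because $p\nmid e$, and $v_p(\varpi)=\tfrac1{e(p-1)}$. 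Writing $u=1+\sum_{j\geq 1}c_jt^j$ with $c_j\in K$, and noting that for every $n\in\bz$ the coefficient of $t^j$ in $u^n$ is a $\bz_p$-linear combination of products of at most $j$ of the $c_k$ (the $\bz_p$-coefficients being the integers $\binom nm$), one gets $v_p\bigl([t^j]u^n\bigr)\geq -D_j$ with $D_j$ depending on $j$ only; hence, for all $i\in\bz$,
\[
v_p\bigl([t^j]\varphi^{-1}(\pi_K^i)\bigr)=i\,v_p(\varpi)+v_p\bigl([t^j]u^i\bigr)\geq \frac{i}{e(p-1)}-D_j .
\]

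Next I would feed in Prop.~\ref{mainestimate}~a): for $a\in A_K^{\psi=1}$ and every $\nu\geq 1$ one has $p^\nu\mid a_i$ whenever $i<-\tfrac{\nu(p-1)+1}{p}e$. Applied with the largest admissible $\nu$ for a given large negative $i$ this yields $v_p(a_i)\geq \tfrac{p\,|i|}{e(p-1)}-C_0$ for a constant $C_0$ independent of $i$, so that
\[
v_p(a_i)+\frac{i}{e(p-1)}\ \geq\ \frac{(p-1)\,|i|}{e(p-1)}-C_0\ =\ \frac{|i|}{e}-C_0\ \xrightarrow[\;i\to-\infty\;]{}\ +\infty .
\]
Combining the two estimates gives $v_p\bigl([t^j]\varphi^{-1}(a_i\pi_K^i)\bigr)=v_p(a_i)+v_p\bigl([t^j]\varphi^{-1}(\pi_K^i)\bigr)\to+\infty$ as $i\to-\infty$ for each fixed $j$ (and the tail $i\to+\infty$ is controlled even more easily, since $v_p(a_i)\geq 0$ and $v_p([t^j]\varphi^{-1}(\pi_K^i))\to+\infty$ there). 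Hence the partial sums $S_M=\sum_{i\geq -M}\varphi^{-1}(a_i)\varphi^{-1}(\pi_K)^i$ form a Cauchy sequence for the natural topology of $B_{dR}^+$, namely the inverse limit of the $p$-adic topologies on the quotients $B_{dR}^+/t^mB_{dR}^+$; they therefore converge, and their limit lies in $K[[t]]\subseteq B_{dR}^+$ because each $S_M$ does and $K[t]/t^m$ is closed in $B_{dR}^+/t^mB_{dR}^+$. By the very definition of $\tilde{B}^{\dagger,1}$ this is exactly the assertion that $a\in B_K^{\dagger,1}$, with $\varphi^{-1}(a)\in K[[t]]$ so that $\varphi^{-1}(a)\vert_{t=0}\in K$ makes sense as in Def.~\ref{vwdef}.

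I expect the only real work to lie in the first paragraph: checking carefully that $\varphi^{-1}(\pi_K)$ equals $\varpi$ times a unit of $1+tK[[t]]$, and that the estimate $v_p([t^j]u^n)\geq -D_j$ is uniform in $n\in\bz$ --- this uniformity, which rests on $p\nmid e$ and the $p$-integrality of the binomial numbers $\binom nm$, is precisely what makes the negative powers $\varphi^{-1}(\pi_K)^i$ ($i\to-\infty$) manageable. One should also keep in mind that "$\varphi^{-1}(a)$ converges" is to be read as convergence of the displayed formal expansion, rather than as a statement that presupposes $a\in\tilde{B}^{\dagger,1}$. Granting this, Prop.~\ref{mainestimate}~a) supplies everything: the margin $\frac{|i|}{e}$ driving the convergence is exactly the gap between the decay rate $\frac{p}{e(p-1)}$ of $v_p(a_i)$ and the growth rate $\frac1{e(p-1)}$ of $-v_p\bigl([t^j]\varphi^{-1}(\pi_K^i)\bigr)$.
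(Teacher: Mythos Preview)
Your argument is correct, and its heart is the same as the paper's: both extract from Prop.~\ref{mainestimate}~a) the linear lower bound $v_p(a_i)\geq \frac{p|i|}{e(p-1)}-C_0$ for $i\to-\infty$ and compare it against $v_p(\varpi^i)=\frac{i}{e(p-1)}$, leaving a margin of $\frac{|i|}{e}$. Your $j=0$ estimate is exactly the paper's inequality~(\ref{growth}).

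The difference lies in the passage from this coefficient estimate to convergence in $B_{dR}$. The paper only checks convergence of $\sum_i a_i\varpi^i$ in $K$ (i.e.\ at $t=0$) and then invokes \cite{colmez99}[Prop.~II.25], which supplies the bootstrap from convergence of the specialization to convergence of the full $\varphi^{-1}$-expansion. You instead work term-by-term in the $t$-filtration, writing $\varphi^{-1}(\pi_K^i)=\varpi^i u^i$ with $u\in 1+tK[[t]]$ and showing $v_p([t^j]u^n)\geq -D_j$ uniformly in $n\in\bz$; the uniformity comes from the integrality of $\binom{n}{m}$ for $n\in\bz$ together with $p\nmid e$, exactly as you point out. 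Your route is more self-contained and makes the landing in $K[[t]]$ (hence also Prop.~\ref{recprop}~b2)) explicit, at the cost of the small extra check on the $u^n$. The paper's route is shorter but imports the black box from \cite{colmez99}.
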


\begin{proof} By a) we have $p^\nu\mid a_i$ for
$$-\frac{(\nu+1)(p-1)+1}{p}\cdot e \leq i < -\frac{\nu(p-1)+1}{p}\cdot e$$
and hence
$$ v_p(a_i)\geq \nu\geq -\frac{ip+e}{e(p-1)} -1 $$
and
\begin{equation} \vw(a_i\varpi^i)\geq -(ip+e)-e(p-1)+i  =-(p-1)i-pe. \label{growth} \end{equation}
This implies
$$\lim_{i\to-\infty}\vw(a_i\varpi^i)=\infty$$
and hence the series $\sum_{i\in\bz}a_i\varpi^i$ converges in $K\subseteq \widehat{\bar{\bq}}_p$. By \cite{colmez99}[Prop. II.25] this implies that $\varphi^{-1}(a)$ converges in $B_{dR}$.
\end{proof}

\begin{prop} For each $a\in E_K^{\psi=1}$ we have $l(a)\geq -e$. If $l(a)>-e$ then $l(a)\not\equiv-e \mod p$.
Conversely, for each $c\in k^\times$ and $n\in\bz$ with $$-e<n\not\equiv-e \mod p$$ there is an element $a\in E_K^{\psi=1}$ with $l(a)=n$ and leading coefficient $c$.
\label{existence}\end{prop}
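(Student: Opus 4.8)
The plan is to derive the two assertions about $a\in E_K^{\psi=1}$ by direct inspection of the $\psi=1$ relation~(\ref{monstermodp}) of Lemma~\ref{EK}, and to prove the converse by transporting the question through the isomorphisms of Prop.~\ref{prop:diagram} and computing a logarithmic derivative explicitly.

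\emph{The two direct assertions.} Let $a=\sum_i a_i\bar\pi_K^i\in E_K^{\psi=1}$ be nonzero and set $i_0=l(a)$. If $i_0<-e$, apply~(\ref{monstermodp}) with $k=i_0$: it reads $\sum_{n=0}^{p-1}(-1)^na_{i_0p+ne}=a_{i_0}^\sigma$, and every index occurring satisfies $i_0p+ne\le i_0p+(p-1)e<i_0$ precisely because $i_0<-e$, so the left-hand side vanishes and $a_{i_0}=0$, a contradiction; thus $l(a)\ge -e$. (Alternatively, $l(a)=l(\tilde a)$ for any lift $\tilde a\in A_K^{\psi=1}$, which exists by Lemma~\ref{lift}, and $l(\tilde a)\ge -e$ is part a) of Prop.~\ref{mainestimate}.) Next, if $i_0>-e$ and $i_0\equiv -e\pmod p$, write $i_0=pm-e$ with $m\ge1$ and apply~(\ref{monstermodp}) with $k=m-e$: the indices $kp+ne=pm-pe+ne$, $0\le n\le p-1$, all lie strictly below $i_0$ except for $n=p-1$, which gives exactly $i_0$; since $p$ is odd, $(-1)^{p-1}=1$, so the relation collapses to $a_{m-e}^\sigma=a_{i_0}$. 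As $m-e<pm-e=i_0$ we have $a_{m-e}=0$, hence $a_{i_0}=0$, again a contradiction.

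\emph{The converse.} By Prop.~\ref{prop:diagram} the reduction map $\widehat{A_K^{\NN=1}}\to\widehat{E_K^\times}$ and $\nabla\log\colon\widehat{A_K^{\NN=1}}\to A_K^{\psi=1}(1)$ are isomorphisms, and by Lemma~\ref{lift} the reduction $A_K^{\psi=1}(1)\to E_K^{\psi=1}$ is surjective (the Tate twist being irrelevant for the underlying set); composing, one obtains a surjection $\widehat{E_K^\times}\twoheadrightarrow E_K^{\psi=1}$. Since $\nabla$ is $\co_{F'}$-linear it preserves $pA_K$ and so descends to a derivation $\bar\nabla$ of $E_K$, and the surjection is $\bar u\mapsto\bar\nabla\log(\bar u):=\bar\nabla(\bar u)/\bar u$. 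Given $c\in k^\times$ and $n$ with $-e<n\not\equiv -e\pmod p$, set $i_1:=n+e\ge1$ (so $p\nmid i_1$), $d:=ec/i_1\in k^\times$, and take $\bar u:=1+d\bar\pi_K^{i_1}\in 1+\bar\pi_K k[[\bar\pi_K]]\subseteq\widehat{E_K^\times}$. From $\pi_K^e=\pi$, $\nabla(\pi)=1+\pi$ and $p\nmid e$ one computes $\bar\nabla(\bar\pi_K)=e^{-1}\bar\pi_K^{\,1-e}(1+\bar\pi_K^e)$, hence $\bar\nabla\log(\bar u)=\tfrac{di_1}{e}\,\bar\pi_K^{\,i_1-e}(1+\bar\pi_K^e)(1+d\bar\pi_K^{i_1})^{-1}\in E_K^{\psi=1}$, whose leading term is $\tfrac{di_1}{e}\bar\pi_K^{\,n}=c\,\bar\pi_K^{\,n}$, as required.

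\emph{Expected main obstacle.} The subtle part is the converse: one has to check carefully that composing the maps of Prop.~\ref{prop:diagram} really yields the logarithmic-derivative map---i.e.\ that $\nabla$ is compatible with reduction modulo $p$---and to compute $\bar\nabla$ on the uniformizer $\bar\pi_K$. Once this is set up, the two direct assertions are just bookkeeping with~(\ref{monstermodp}).
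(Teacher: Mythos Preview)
Your proof is correct and follows essentially the same route as the paper's: the paper also derives $l(a)\ge -e$ (citing Prop.~\ref{mainestimate} rather than redoing the index check you give), rules out $l(a)\equiv -e\pmod p$ via the single surviving term in (\ref{monstermodp}) with the same choice of $k$, and establishes the converse by computing $\nabla\log(1+c\pi_K^{n+e})$ via Prop.~\ref{prop:diagram}. Your extra care in explaining why $\nabla$ descends to $E_K$ and why the composite surjection is literally $\bar u\mapsto\bar\nabla(\bar u)/\bar u$ is a welcome clarification that the paper leaves implicit.
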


\begin{proof} That $l(a)\geq -e$ is Prop. \ref{mainestimate} a). Assume that $l(a)>-e$ and $l(a)\equiv -e\mod p$. Then $l(a)=kp+(p-1)e$ for some $k\in\bz$ and
\[ k=\frac{l(a)-(p-1)e}{p}=l(a)-\left(1-\frac{1}{p}\right)(l(a)+e)<l(a),\]
so we have $a_k=0$. Further $a_{kp+ie}=0$ for $i=0,..,p-2$ since $kp+ie<l(a)$. Hence there is only one nonzero term in (\ref{monstermodp}) which gives a contradiction.

To show the second part one can solve (\ref{monstermodp}) by an easy recursion. Alternatively, Proposition
\ref{prop:diagram} implies that $\nabla\log(a)\in E_K^{\psi=1}$ for any $a\in E_K^\times$. Now compute
\[ \nabla\log(1+c\pi_K^n)=\frac{\nabla(1+c\pi_K^n)}{1+c\pi_K^n}=\frac{cn/e\cdot(\pi_K^{n-e}+\pi_K^n)}{1+c\pi_K^n}=
\frac{cn}{e}\cdot\pi_K^{n-e}+\cdots \]
and note that for $p\nmid n$ one can produce any leading coefficient.
\end{proof}

\begin{remark} Elements $a\in E_K^{\psi=1}$ with $l(a)=-e$ exist, e.g.
\[\nabla\log(\pi^j)=j\cdot\pi^{-1}+j=j\cdot\pi_K^{-e}+j,  \]
but their leading coefficient is restricted to elements in $\bof_p$.
\label{remark:-e}\end{remark}

\begin{corollary} If $a\in A_K^{\psi=1}$ and $$l(a)<-e+e(p-1)$$ we have $\vw(a)=l(a)$.
\label{easy}\end{corollary}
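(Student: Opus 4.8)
The plan is to reduce this to a coefficient-by-coefficient estimate for $a=\sum_i a_i\pi_K^i$. Using the convergent expansion $\varphi^{-1}(a)|_{t=0}=\sum_i\sigma^{-1}(a_i)\varpi^i\in K$ furnished by Corollary \ref{phiconv} together with its proof (where $\sigma$ is the Frobenius of $F=F'$, hence preserves the valuation), one has $\vw(\sigma^{-1}(a_i)\varpi^i)=e(p-1)v_p(a_i)+i$ for each $i$; moreover $a_{l(a)}$ is a $p$-adic unit, so the term $i=l(a)$ has $\vw$-valuation exactly $l(a)$. Hence it suffices to prove
\[ e(p-1)v_p(a_i)+i>l(a)\qquad\text{for every }i\neq l(a),\]
for then the ultrametric inequality shows $\vw(a)=\vw(\varphi^{-1}(a)|_{t=0})=l(a)$.

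I would then split the indices $i<l(a)$ into three ranges (the case $i>l(a)$ being trivial, since $\vw(\sigma^{-1}(a_i)\varpi^i)\geq i>l(a)$). For $l(a)-e(p-1)<i<l(a)$ the very definition of $l(a)=l_1(a)$ gives $p\mid a_i$, whence $\vw(\sigma^{-1}(a_i)\varpi^i)\geq e(p-1)+i>l(a)$. For $l(a)-2e(p-1)<i\leq l(a)-e(p-1)$ I apply Proposition \ref{mainestimate} b), which is available because $l(a)<-e+e(p-1)$ and forces $l_2(a)>l(a)-e(p-1)\geq i$, so $p^2\mid a_i$ and $\vw(\sigma^{-1}(a_i)\varpi^i)\geq 2e(p-1)+i>l(a)$. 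For $l(a)-3e(p-1)<i\leq l(a)-2e(p-1)$ I apply Proposition \ref{mainestimate} c), whose hypotheses $l(a)<-e+2e(p-1)$ and $l_2(a)\geq l(a)-e(p-1)$ are consequences of the previous two steps; this gives $p^3\mid a_i$ and $\vw(\sigma^{-1}(a_i)\varpi^i)\geq 3e(p-1)+i>l(a)$.

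For the remaining, most negative, indices $i\leq l(a)-3e(p-1)$ I would invoke the growth estimate (\ref{growth}) from the proof of Corollary \ref{phiconv}, namely $\vw(a_i\varpi^i)\geq -(p-1)i-pe$ (a consequence of Proposition \ref{mainestimate} a)). Plugging in $i\leq l(a)-3e(p-1)$ turns the desired inequality into $p\,l(a)<3e(p-1)^2-pe$, which follows from the hypothesis $l(a)<e(p-2)=-e+e(p-1)$ once one checks that $pe(p-2)+pe\leq 3e(p-1)^2$, i.e.\ $p\leq 3(p-1)$, valid for the odd prime $p$.

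The main obstacle, and the only point requiring care, is the matching of these ranges: the naive divisibility $p\mid a_i$ has exactly one $\varpi$-adic unit of slack too little at $i=l(a)-e(p-1)$, and the growth estimate of Corollary \ref{phiconv} is likewise too weak near that index, so one genuinely needs the sharper divisibilities $p^2\mid a_i$ and $p^3\mid a_i$ supplied by Proposition \ref{mainestimate} b), c) to bridge the gap down to the range where part a) takes over. One should double-check that parts a), b), c) together cover all $i<l(a)$ under the sole hypothesis $l(a)<-e+e(p-1)$, and that the terminal arithmetic inequality holds for every odd $p$.
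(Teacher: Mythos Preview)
Your proof is correct and follows essentially the same approach as the paper: split the indices $i<l(a)$ into ranges and combine the divisibilities from Proposition~\ref{mainestimate} with the growth estimate (\ref{growth}). The one difference is that you introduce an extra range $l(a)-3e(p-1)<i\leq l(a)-2e(p-1)$ handled via part~c), whereas the paper observes that the growth estimate already suffices from $i\leq l(a)-2e(p-1)$ onward: indeed $l(a)-2e(p-1)<-e-e(p-1)=-ep<-2e$, so (\ref{growth}) gives $\vw(a_i\varpi^i)\geq -(p-1)i-pe>(p-1)\cdot 2e-pe=(p-2)e>l(a)$. Thus your invocation of Proposition~\ref{mainestimate}~c) and the subsequent arithmetic check are unnecessary, though not incorrect.
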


\begin{proof} Since $\vw(a_{l(a)}\varpi^{l(a)})=l(a)$ we need to show $$\vw(a_i\varpi^i)>l(a)$$ for $i\neq l(a)$. This is clear for $i>l(a)$, and also for $$l(a)-e(p-1)<i<l(a)$$ since in that range $p\mid a_i$ and so $\vw(a_i\varpi^i)\geq e(p-1)+i>l(a)$. For
$$l(a)-2e(p-1)<i\leq l(a)-e(p-1)$$
we have $p^2\mid a_i$ by part b) and hence $\vw(a_i\varpi^i)\geq 2e(p-1)+i>l(a)$. Finally for
$$i\leq l(a)-2e(p-1)<-e-e(p-1)=-ep<-2e$$
we have by (\ref{growth})
$$  \vw(a_i\varpi^i)\geq -(p-1)i-pe > (p-1)2e-pe=(p-2)e>l(a)$$
using the assumption on $l(a)$.
\end{proof}

In order to study $\vw(a)$ for $a\in A_K^{\psi=1}$ with $l(a)> -e+e(p-1)$ we need to use Lemma \ref{modp2}. The next proposition will show that $\vw(a)$ cannot only depend on $l(a)$ in this case. In the situation of Prop. \ref{notsoeasy} b) one can have $\vw(a)=l(a)$ but for any $b\in A_K^{\psi=1}$ with $l(b)<l(a)-e(p-1)$ and $p^2\nmid a_{l(b)}+pb_{l(b)}$ one has $$l(a+pb)=l(a),\quad \vw(a+pb)\leq l(b)+e(p-1)<l(a)=\vw(a).$$

\begin{prop} Let $a'\in A_K^{\psi=1}$ with $$l(a')=\mnu p-e+e(p-1)$$ for some $\mu\in\bz$ with $1\leq\mnu<\frac{e(p-1)}{p}$.
\begin{itemize} \item[a)] There exists $a\equiv a' \mod p$ with
$$ l_2(a)\geq \mnu p-e=l(a)-e(p-1).$$
\item[b)] For $a$ as in a) we have $\vw(a)\geq l(a)$ with equality if $p\nmid\mnu-e$. This last condition is automatic for $e<p$.
\end{itemize}
\label{notsoeasy}\end{prop}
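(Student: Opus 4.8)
The plan is to prove a) by a finite successive‑correction argument, and then to derive b) from a) together with Prop.~\ref{mainestimate} and a refinement of the computation in Cor.~\ref{easy} that exploits the mod $p^2$ relation (\ref{star2}) of Lemma~\ref{modp2}.

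For part a): by Prop.~\ref{mainestimate} b) we have $l_2(a')\geq -e$, and $p\mid a'_i$ for all $i<l(a')=\mu p-e+e(p-1)$, so $a'_i/p$ lies in $\co_{F'}$ for every $i<\mu p-e$. I will construct $b\in A_K^{\psi=1}$ with $a:=a'+pb$ satisfying $l_2(a)\geq\mu p-e$, treating the indices $i=-e,-e+1,\dots,\mu p-e-1$ in increasing order and at each step forcing $a_i\equiv 0\pmod{p^2}$ while keeping $a$ in $A_K^{\psi=1}$ (only ever adding elements of $pA_K^{\psi=1}$, so that (\ref{star2}) remains available for the current $a$). If $i\not\equiv -e\pmod p$, I correct $a$ by $pb_i$, where $b_i\in A_K^{\psi=1}$ (which exists by Lemma~\ref{lift}) reduces mod $p$ to an element $\nabla\log(1+c\bar\pi_K^{\,i+e})\in E_K^{\psi=1}$; as computed in the proof of Prop.~\ref{existence} this element has $\pi_K$‑valuation $i$ with leading coefficient $c(i+e)/e$, so a suitable $c$ kills the $\pi_K^i$‑coefficient of $a$ modulo $p^2$ without altering lower‑index coefficients. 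For $i=-e$, I first apply (\ref{star2}) at $k=-e$: here the factor $1+k/e=1+(-e)/e$ vanishes, killing the entire second sum, and modulo $p^2$ the first sum collapses to its $n=p-1$ term, so (\ref{star2}) reads $a'_{-e}\equiv(a'_{-e})^\sigma\pmod{p^2}$; hence $\overline{a'_{-e}/p}\in\bof_p$ and can be cancelled using an element with $l=-e$ and leading coefficient in $\bof_p$ as in Remark~\ref{remark:-e}. Finally, at a ``bad'' index $i_\ast=-e+sp$ with $1\leq s\leq\mu-1$ no correction is needed: applying (\ref{star2}) at $k=s-e$ to the current $a\in A_K^{\psi=1}$, its $n=p-1$ first‑sum term is $\equiv a_{i_\ast}\pmod{p^2}$, all other first‑sum terms (of index $<i_\ast$) vanish mod $p^2$ by the inductive hypothesis together with $l_2(a)\geq -e$, the entire second sum vanishes mod $p^2$ because its indices lie in $[sp+e,(s+e)p-2e]\subseteq[-e,l(a))$ and each such term already carries a factor $p$, and the right‑hand side $(a_{s-e})^\sigma$ vanishes mod $p^2$ since $s-e<i_\ast$; hence $a_{i_\ast}\equiv 0\pmod{p^2}$ automatically. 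The main obstacle is exactly the treatment of these bad indices, where Prop.~\ref{existence} forbids a free correction and one must instead read the vanishing off from (\ref{star2}).

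For part b): by Cor.~\ref{phiconv} the series $\varphi^{-1}(a)$ converges, and $\vw(a)=\vw\bigl(\sum_i a_i^{\sigma^{-1}}\varpi^i\bigr)\geq\min_i\bigl(e(p-1)v_p(a_i)+i\bigr)$. Splitting this minimum over the ranges $i\geq l(a)$, $l_2(a)\leq i<l(a)$, $l_3(a)\leq i<l_2(a)$ and $i<l_3(a)$, and bounding $v_p(a_i)$ respectively by the trivial bound, by part a) (i.e. $l_2(a)\geq l(a)-e(p-1)$), by Prop.~\ref{mainestimate} c) (whose hypotheses hold since $\mu<e(p-1)/p$ and by a)), and by Prop.~\ref{mainestimate} a), one checks — the arithmetic going through because $p\geq 3$ — that every term is $\geq l(a)$, whence $\vw(a)\geq l(a)$; the same analysis shows that the only terms of $\varphi^{-1}(a)|_{t=0}$ of valuation exactly $l(a)$ are the leading term $a_{l(a)}^{\sigma^{-1}}\varpi^{l(a)}$ and, when $l_2(a)=\mu p-e$, the term $a_{\mu p-e}^{\sigma^{-1}}\varpi^{\mu p-e}$. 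To handle the latter, I will write $\varpi^{e(p-1)}=(\zeta_p-1)^{p-1}=-pu$ with $u\equiv 1$ modulo the maximal ideal of $K$ (as in (\ref{kummerchoice})) and extract $a_{\mu p-e}$ from (\ref{star2}) at $k=\mu-e$: modulo $p^2$ everything there vanishes except the $n=p-1$ first‑sum term ($\equiv a_{\mu p-e}$) and the $n=2(p-1)$ second‑sum term ($\equiv -p\,a_{l(a)}\mu/e$, using $H_{p-2}\equiv 1\pmod p$), giving $a_{\mu p-e}\equiv p\,a_{l(a)}\mu/e\pmod{p^2}$; in particular $l_2(a)=\mu p-e$ exactly when $p\nmid\mu$. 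A short computation then shows that the two valuation‑$l(a)$ terms cancel precisely when $\overline{a_{l(a)}}=\overline{a_{\mu p-e}/p}$, i.e. when $\bar\mu=\bar e$ in $\bof_p$, i.e. when $p\mid\mu-e$; hence $\vw(a)=l(a)$ whenever $p\nmid\mu-e$, while if $p\mid\mu$ then $l_2(a)>\mu p-e$, there is no competing term, and $\vw(a)=l(a)$ as well. Finally, if $e<p$ then $1\leq\mu<e(p-1)/p<e<p$, so $0<e-\mu<p$ and $p\nmid\mu-e$.
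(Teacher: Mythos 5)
Your proposal is correct and takes essentially the same approach as the paper. Your forward-sweep correction argument for part a) is the paper's increase-$l_2$-until-stuck argument read constructively: the paper's contradiction at a bad index $\mu'p-e$ with $\mu'<\mu$ is precisely your observation that (\ref{star2}) forces $a_{i_\ast}\equiv 0\pmod{p^2}$ there automatically, and your treatment of $i=-e$ via Remark~\ref{remark:-e} matches the paper's. For part b) you use the same ingredients (Prop.~\ref{mainestimate}, the identity (\ref{star222}) from (\ref{star2}) at $k=\mu-e$, and $\varpi^{e(p-1)}\equiv -p$) and reach the same cancellation criterion. One small bookkeeping caveat in part b): for the range $i<l_3(a)$ the growth estimate (\ref{growth}) alone does \emph{not} yield $\vw(a_i\varpi^i)\geq l(a)$ when $i$ is only slightly below $l_3(a)$ (the inequality $\mu\leq e(p-1)(p-2)/p^2$ needed there is strictly weaker than $\mu<e(p-1)/p$); as in the paper you must split this range further, using $p^3\mid a_i$ on $(l(a)-3e(p-1),\,l_3(a))$ and reserving (\ref{growth}) for $i\leq l(a)-3e(p-1)$, where the constraint $\mu\leq e(p-1)(2p-3)/p^2$ does follow from $\mu<e(p-1)/p$ and $p\geq 3$.
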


\begin{proof} First note that $l_2(a')\geq -e$ by Prop. \ref{mainestimate} b). If $l_2(a')=-e$ then equation (\ref{star2}) for $k:=-e$ reads
\begin{equation}
{a'}_{-e}^{\sigma}\equiv a'_{kp+e(p-1)}=a'_{-e} \notag\end{equation}
since $i=kp+en<l_2(a')$ for $n<p-1$ and $i=kp+en\leq-e+e(p-1)<l(a')$ for $p+1\leq n\leq 2(p-1)$. Hence $a'_{-e}/p\mod p\in\bof_p$. Adding an element $pb$ to $a'$, where $b$ with $l(b)=-e$ is as in Remark \ref{remark:-e}, we can assume that $l_2(a')>-e$. More generally, as long as $l_2(a')<l(a')$, we can add elements $pb$ to $a'$ whose existence is guaranteed by Prop. \ref{existence} and increase $l_2(a')$ until $l_2(a')$ is not one of the possible $l(b)$, i.e. $$l_2(a')=\mnu' p-e=(\mnu'-e)p+(p-1)e$$ for some $\mnu'\geq 1$.   Equation (\ref{star2}) for $k:=\mnu'-e$ then reads
\begin{equation}
0\equiv a'_{kp+e(p-1)}+\sum_{n=p+1}^{2(p-1)}  a'_{kp+ne}\cdot(-1)^{n-p}\cdot p\cdot H_{n-p}\left(1+\frac{k}{e}\right)  \label{star22}\end{equation}
since $i=kp+en<l_2(a')$ for $n<p-1$ and also $i=k<l_2(a')$ so that $a'_i\equiv 0$ for those $i$. If $\mnu'<\mnu$ we have for $p+1\leq n\leq 2(p-1)$  $$kp+ne < (\mnu-e)p+2(p-1)e=l(a')$$ and hence $p\mid a'_{kp+ne}$. So if $\mnu'<\mnu$ then
 $a'_{kp+e(p-1)}$ is the only non-zero term in (\ref{star22}) and we arrive at a contradiction. Therefore $\mnu'\geq\mnu$ and we have found our $a$, or otherwise we arrive at an $a$ with $l_2(a)=l(a)$. In either case this proves part a).

Equation (\ref{star22}) for $k:=\mnu-e$ gives
\begin{align}
0 \equiv & a_{kp+e(p-1)}+ a_{l(a)}\cdot (-1)\cdot p\cdot H_{p-2}\left(1+\frac{\mnu-e}{e}\right)\notag \\
\equiv & a_{kp+e(p-1)} - a_{l(a)}\cdot p\cdot \frac{\mnu}{e} \pmod {p^2}\label{star222}\end{align}
since $p\mid a_{kp+ne}$ for $kp+ne<kp+2(p-1)e=l(a)$. Note also
$$ H_{p-2}=H_{p-1}-\frac{1}{p-1}\equiv 0-(-1)=1 \pmod p.$$
For part b) we need to show that $\vw(a_i\varpi^i)\geq l(a)$ for all $i\in\bz$ (and compute the sum over those $i$ for which there is equality). As in the proof of Corollary \ref{easy} for $i>l(a)$ and $l(a)-e(p-1)<i<l(a)$ we obviously have $\vw(a_i\varpi^i)> l(a)$. By (\ref{star222}) we have
\begin{align}a_{l(a)-e(p-1)}\varpi^{l(a)-e(p-1)}+a_{l(a)}\varpi^{l(a)}\equiv &\left(\frac{p\mnu}{\varpi^{e(p-1)}e}+1\right)a_{l(a)}\varpi^{l(a)}\notag
\\=&\left(-\frac{\mnu}{e}+1\right)a_{l(a)}\varpi^{l(a)}+O(\varpi^{l(a)+1})\label{twoterms}\end{align}
since
$$\varpi^{e(p-1)}=(\zeta_p-1)^{p-1}\equiv -p \pmod {(\zeta_p-1)^p}.$$
So if $p\nmid -\frac{\mnu}{e}+1$ this is the leading term of valuation $l(a)$. For $$l(a)-2e(p-1)<i < l(a)-e(p-1),$$ since $l_2(a)\geq l(a)-e(p-1)$ by part a), we have $p^2\mid a_i$ and hence $\vw(a_i\varpi^i)\geq 2e(p-1)+i>l(a)$.
For
$$l(a)-3e(p-1)< i\leq l(a)-2e(p-1)$$
we have $p^3\mid a_i$ by c) of Prop. \ref{mainestimate} and hence $\vw(a_i\varpi^i)\geq 3e(p-1)+i>l(a)$ .
Finally for $$i\leq l(a)-3e(p-1)<-e-e(p-1)=-ep$$
we have by (\ref{growth})
$$  \vw(a_i\varpi^i)\geq -(p-1)i-pe > (p-1)pe-pe=(p-2)pe\geq (2p-3)e>l(a)$$
using the assumption on $l(a)$.
\end{proof}


\subsection{Isotypic components}\label{sec:isotypic} We introduce some notation for isotypic components. Recall that
\[ G\cong \Sigma\ltimes\Delta\]
with $\Sigma$ cyclic of order $f$ and $\Delta$ cyclic of order $e(p-1)$. For any $\Sigma$-orbit $[\eta]$ we define the idempotent
$$ e_{[\eta]}=\sum_{\eta'\in\widehat{\Sigma_\eta}}e_\chi\in\bz_p[G]$$
where the irreducible characters $\chi=([\eta],\eta')$ of $G$ are parametrized as in section \ref{sec:tame}. For any $\bz_p[G]$-module $M$ its $[\eta]$-isotypic component
\[ M^{[\eta]}:=e_{[\eta]}M\]
is a again a $\bz_p[G]$-module. The $\Sigma$-orbit \begin{equation}[\eta]=\{\eta,\eta^p,\eta^{p^2},\dots,\eta^{p^{f_\eta-1}}\}=\{\eta_0^{n_1},\dots,\eta_0^{n_{f_\eta}}\}\label{orbit}\end{equation} corresponds to an orbit $\{n_1,\dots,n_{f_\eta}\}\subseteq\bz/e(p-1)\bz$ of residue classes modulo $e(p-1)$ under the multiplication-by-$p$ map, i.e. we have $n_{i+1}\cong n_ip\mod e(p-1)$ where we view the index $i$ as a class in $\bz/f_\eta\bz$. We shall use the notation
\[[\eta]=\{n_1,\dots,n_{f_\eta}\}=[n_i]\]
to denote both the orbit of residue classes in $\bz/e(p-1)\bz$ and the orbit of characters. By (\ref{kummerchoice})  the group
\[ \Delta_e:=\Gal(K/F(\zeta_p))\]
acts on $\root e\of {\zeta_p-1}=\varphi^{-1}(\pi_K)|_{t=0}$ via the character $\eta_0$ defined in section \ref{sec:tame} and acts on $\pi_K$ via $\eta_0^p$. The $[\eta]=\{n_1,\dots,n_{f_\eta}\}$-isotypic component of the $\bz_p[\Sigma\ltimes \Delta_e]$-module $A_K$ is
\begin{equation} \{a=\sum a_n\pi_K^n\vert \text{ $a_n=0$ for $n\mod e\notin\{n_1,\dots,n_{f_\eta}\}$}\}\notag\end{equation}
but $A_K^{[\eta]}$ is much harder to describe since $\pi_K$ is not an eigenvector for the full group $\Delta$. However, there is  the following fact about leading terms.

\begin{lemma} Fix $\nu\geq 1$, $a=\sum_j a_j\pi_K^j\in A_K$ and denote by $e_\eta\in\co_F[\Delta]$ the idempotent for $\eta=\eta_0^n$. If
\begin{equation} p\cdot l_\nu(a) \equiv n\mod e(p-1)\label{lcong}\end{equation} then
\[ l_\nu(e_\eta a)=l_\nu(a)\]
and the leading coefficients modulo $p^\nu$ of $e_\eta a$ and $a$ agree.
If $a=e_\eta a$ is an eigenvector for $\Delta$ then (\ref{lcong}) holds.
\label{eigen}\end{lemma}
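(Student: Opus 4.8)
The plan is to reduce the whole statement to a single fact about how an element $\delta\in\Delta$ moves the uniformizer $\pi_K$. Note first that $\Delta=\Gal(K/F)$ acts $\co_F$-linearly on $A_K$: it fixes $F$, hence the residue field $k$ of $A_K$, hence $\co_F=W(k)$ (an automorphism of $\co_F$ trivial mod $p$ is the identity). So $e_\eta a=\frac{1}{e(p-1)}\sum_{\delta\in\Delta}\eta_0(\delta)^{-n}\sum_j a_j\,\delta(\pi_K)^j$, and everything hinges on the claim that $\delta(\pi_K)/\pi_K\in\co_F[[\pi_K]]^\times$ with constant term $\eta_0(\delta)^p$ — equivalently $\delta(\pi_K)^j\in\eta_0(\delta)^{pj}\pi_K^j+\pi_K^{j+1}\co_F[[\pi_K]]$ for every $j\in\bz$ (the constant term being a unit, this holds for negative $j$ as well).

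I would prove this claim as follows. Since $\Delta_e$ is normal in $G$ (the conjugation action of $\Sigma$ on $\Delta$ is $\delta\mapsto\delta^p$, preserving the order-$e$ subgroup), the subring $A_F=A_K^{\Delta_e}$ is $\Gal(K_\infty/\bq_p)$-stable, so $\Delta$ acts on it, fixing $\co_F$ and acting on $\pi$ by $\delta(\pi)=(1+\pi)^{\chi^{\cyclo}(\delta)}-1$ as in (\ref{gamma-act}); here $\chi^{\cyclo}(\delta)\in\mu_{p-1}(\bz_p)$ because the image of $\Delta$ under $\chi^{\cyclo}$ has order $p-1$, so $\delta(\pi)\in\pi\,\co_F[[\pi]]^\times$. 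The plus part $A_K^+=\co_F[[\pi_K]]$ is likewise $\Gal(K_\infty/\bq_p)$-stable, so $\delta(\pi_K)\in\co_F[[\pi_K]]$; since $\delta(\pi_K)^e=\delta(\pi)\in\pi_K^e\co_F[[\pi_K]]^\times$ and $p\nmid e$, an $e$-th root argument in the normal domain $\co_F[[\pi_K]]$ yields $\delta(\pi_K)\in\pi_K\co_F[[\pi_K]]^\times$, with constant term $c_\delta\in\mu_{e(p-1)}(\co_F)$ satisfying $c_\delta^e=\chi^{\cyclo}(\delta)$. To identify $c_\delta$, apply $\varphi^{-1}(-)|_{t=0}$: as $\delta$ commutes with $\varphi$ and with $t\mapsto\chi^{\cyclo}(\delta)t$, the left-hand side is $\delta(\varphi^{-1}(\pi_K)|_{t=0})=\delta(\varpi)$ with $\varpi=\root e\of{\zeta_p-1}$, while from $\delta(\pi_K)\in c_\delta\pi_K+\pi_K^2\co_F[[\pi_K]]$ the right-hand side lies in $\sigma^{-1}(c_\delta)\varpi+\varpi^2\co_{K}$; on the other hand $\varpi$ and $\root{e(p-1)}\of{-p}$ differ by a principal unit of $K$, so $\delta(\varpi)/\varpi\equiv\eta_0(\delta)\bmod\mathfrak m_K$. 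Comparing residues in $k$ and using that $\sigma$ acts on $k$ by $x\mapsto x^p$ gives $c_\delta\equiv\eta_0(\delta)^p\bmod p$, hence $c_\delta=\eta_0(\delta)^p$ since both are roots of unity of order prime to $p$. (For $\delta\in\Delta_e$ one recovers the exact identity $\delta(\pi_K)=\eta_0(\delta)^p\pi_K$ of the text directly, by Kummer theory on $A_F$.)

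Given the claim the conclusion is a short computation. With $L:=l_\nu(a)$, expanding gives
\[ (e_\eta a)_i=a_i\cdot\frac{1}{e(p-1)}\sum_{\delta\in\Delta}\eta_0(\delta)^{pi-n}\;+\;\sum_{j<i}a_j\cdot\big(\text{an element of }\co_F\big),\]
the second term lying in $\co_F$ because the relevant coefficients of $\delta(\pi_K)^j$ are in $\co_F$ and $e(p-1)\in\co_F^\times$. For $i<L$ every $a_j$ with $j\le i$ is divisible by $p^\nu$, so $(e_\eta a)_i\equiv 0\bmod p^\nu$; for $i=L$ the second sum is again $\equiv 0\bmod p^\nu$, while $pL\equiv n\pmod{e(p-1)}$ forces $\sum_\delta\eta_0(\delta)^{pL-n}=e(p-1)$, so the first term equals $a_L$ exactly. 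Thus $(e_\eta a)_L\equiv a_L\not\equiv 0\bmod p^\nu$, giving $l_\nu(e_\eta a)=L$ and the equality of leading coefficients mod $p^\nu$. For the last assertion, if $\delta(a)=\eta_0(\delta)^n a$ for all $\delta$, comparing coefficients of $\pi_K^L$ and dropping the $p^\nu$-divisible contributions from $j<L$ gives $\eta_0(\delta)^n a_L\equiv\eta_0(\delta)^{pL}a_L\bmod p^\nu$; since $p^\nu\nmid a_L$ and $\eta_0$ takes values in roots of unity of order prime to $p$, this forces $\eta_0(\delta)^{n}=\eta_0(\delta)^{pL}$ for every $\delta$, i.e. $e(p-1)\mid n-pl_\nu(a)$, which is (\ref{lcong}).

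The one genuinely delicate point is the identification $c_\delta=\eta_0(\delta)^p$ together with the basic fact that $\delta(\pi_K)/\pi_K$ is an honest unit of $\co_F[[\pi_K]]$ with no constant or negative-degree terms: without exactness there the averaging would pin down the leading coefficient only up to $O(p)$, and without integrality the tail $\sum_{j>L}$ would interfere. This is precisely where the tame structure of $K/\bq_p$ enters — it is the same input underlying Lemma \ref{expequal} — and the underlying power-series bookkeeping is essentially that of \cite{daigle_thesis_14}.
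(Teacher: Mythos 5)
Your proof is correct and follows essentially the same route as the paper's: the core computation in both is to show that $\delta(\pi_K)=c_\delta\pi_K u$ with $u\in 1+\pi_K\mathcal O_F[[\pi_K]]$, to pin down the $e$-fold ambiguity in the root of unity $c_\delta$ by applying $\varphi^{-1}|_{t=0}$ and matching against the Kummer-theoretic definition of $\eta_0$ (getting $c_\delta=\eta_0(\delta)^p$ after accounting for the $\sigma^{-1}$ twist on coefficients), and then to average over $\Delta$ and read off the $\pi_K^{l_\nu(a)}$-coefficient modulo $p^\nu$. Your write-up makes explicit a few points the paper leaves implicit — the $\mathcal O_F$-linearity of the $\Delta$-action, the uniqueness of the $e$-th root in $\mathcal O_F[[\pi_K]]$, and the integrality bookkeeping for the tail $j<l_\nu(a)$ including negative exponents — but the argument is the same one.
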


\begin{proof} Denote by
\[ \omega:\Delta\to\Gal(F(\zeta_p)/F)\to\bz_p^\times\]
the Teichmueller character. For $\delta\in\Delta$ we have
\begin{align*} \delta(\pi_K)=&\left((1+\pi)^{\omega(\delta)}-1\right)^{1/e}=\left(\sum_{i=1}^\infty {{\omega(\delta)}\choose {i}}\pi^i\right)^{1/e}\\
=&\lambda(\delta)\pi_K\left(1+\sum_{i=2}^\infty \frac{1}{\omega(\delta)}{{\omega(\delta)}\choose {i}}\pi^{i-1}\right)^{1/e}
\end{align*}
where $\lambda(\delta)\in\mu_{e(p-1)}$ satisfies $\lambda(\delta)^e=\omega(\delta)$ and $(1+Z)^{1/e}$ denotes the usual binomial series. Applying $\varphi^{-1}\vert_{t=0}$ we find
\begin{align*} \delta(\root e\of {\zeta_p-1})\equiv&\lambda(\delta)^{1/p}\cdot\root e\of {\zeta_p-1}\mod \varpi^2 \end{align*}
and since $\root e\of {\zeta_p-1}\equiv\root {e(p-1)} \of {-p} \mod \varpi^2$ we obtain $\lambda(\delta)=\eta_0(\delta)^p$.
In particular, for any $a\in A_K$
\[ \delta(a)\equiv \eta_0(\delta)^{p\cdot l_\nu(a)}\cdot a_{l_\nu(a)}\cdot\pi_K^{l_\nu(a)}\mod (p^\nu,\,\pi_K^{l_\nu(a)+1})\]
and
\begin{align*} e_\eta a=&\frac{1}{e(p-1)}\sum_{\delta\in\Delta}\eta^{-1}(\delta)\delta(a)\equiv\frac{1}{e(p-1)}\sum_{\delta\in\Delta} \eta_0(\delta)^{p\cdot l_\nu(a)-n}\cdot a_{l_\nu(a)}\cdot\pi_K^{l_\nu(a)}\\
&\equiv\begin{cases}a_{l_\nu(a)}\cdot\pi_K^{l_\nu(a)} & \text{if $p\cdot l_\nu(a) \equiv n\mod e(p-1)$}\\
0& \text{if $p\cdot l_\nu(a) \not\equiv n\mod e(p-1)$} \end{cases}\end{align*}
where the congruences are modulo $(p^\nu,\,\pi_K^{l_\nu(a)+1})$. This implies both statements in the lemma.
\end{proof}

\begin{remark} With the notation introduced in this section we have
\[ e_{[\eta]}=\sum_{i=1}^{f_\eta}e_{\eta^{p^i}}.\]
\end{remark}

\subsection{The main result} We view $\Sigma$ is as a subgroup of $G$ in such a way that $\root e(p-1)\of {-p}\in K^\Sigma$ where $\root e(p-1)\of {-p}$ is the choice of root corresponding to our choice of root $\pi_K$ of $\pi$. Then the $\bz_p[\Sigma]$-algebra $\bz_p[G]$ is finite free of rank $e(p-1)$. For each choice of $\eta$ the $[\eta]$-isotypic component of $\bz_p[G]$ is free of rank $f_\eta$ over $\bz_p[\Sigma]$ and for each $\eta\neq \omega$ the $[\eta]$-isotypic component $$(A_K^{\psi=1}(1))^{[\eta]}$$ of $A_K^{\psi=1}(1)$ is free of rank $f_\eta$ over $\bz_p[\Sigma][[\gamma_1-1]]$. Write
$$[\eta]=\{n_1,\dots,n_{f_\eta}\}=[n_1]\subseteq\bz/e(p-1)\bz$$ and pick representatives $n_i\in\bz$ with \begin{equation} 0 < n_i < e(p-1),\quad i=1,\dots,f_\eta.\notag\end{equation}
Note that our running assumption $\eta\vert_{\Delta_e}\neq 1$ implies $e\nmid n_i$.

\begin{prop} Fix $\eta\vert_{\Delta_e}\neq 1$ and let $\{\alpha_i\vert\ i=1,\dots,f_\eta\}$ be a $\bz_p[\Sigma][[\gamma_1-1]]$-basis of $(A_K^{\psi=1}(1))^{[\eta]}$. Let $n_{i,r}$
be representatives for the residue classes $[n_1-re]\subseteq\bz/e(p-1)\bz$ with
$$0<n_{i,r}<e(p-1)$$
indexed such that $n_i-re\equiv n_{i,r}\mod e(p-1)$. Consider the two $\bz_p[\Sigma]$-lattices
\[ L_r:=\bigoplus_{i=1}^{f_\eta}\bz_p[\Sigma]\cdot(\nabla^{r-1}\alpha_i^{\sigma^{-1}})(\root e \of {\zeta_p-1})\]
and
\[ \co_{K}^{[n_1-re]}=\bigoplus_{i=1}^{f_\eta}\co_F\cdot(\root {e(p-1)} \of {-p})^{n_{i,r}}\]
in the $[n_1-re]$-isotypic component
\[ K^{[n_1-re]}=\bigoplus_{i=1}^{f_\eta}F\cdot(\root {e(p-1)} \of {-p})^{n_{i,r}}=\bigoplus_{i=1}^{f_\eta}F\cdot(\root {e(p-1)} \of {-p})^{n_i-re}\]
of $K$.
Then the conjunction of (\ref{eta-not-1}) (in Prop. \ref{reform}) for $\chi=([n_1-re],\eta')$ over all $\eta'$ holds if and only if
$L_r$ and $\co_{K}^{[n_1-re]}$ have the same $\bz_p[\Sigma]$-volume, i.e.
\begin{equation}\Det_{\bz_p[\Sigma]}L_r=\Det_{\bz_p[\Sigma]}\co_{K}^{[n_1-re]}\label{sigmavol}\end{equation}
inside $\Det_{\bq_p[\Sigma]}{K}^{[n_1-re]}$.
\label{explicit2}\end{prop}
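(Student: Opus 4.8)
The plan is to feed the Cherbonnier--Colmez reciprocity law into a resolvent computation in the spirit of Prop.~\ref{froeh}. First I would record the consequence of Theorem~\ref{reciprocity}~b3) at $n=1$ (which is legitimate by Prop.~\ref{recprop}): extracting the $t^{r-1}$-coefficient of $p^{-1}\varphi^{-1}(\Exp^*_{\bz_p}(u))=\sum_{r\ge1}\exp^*_{\bq_p(r)}(\pr_{1,1-r}(u))\,t^{r-1}$ and using Lemma~\ref{lemma:del-is-dt} (which gives $\frac{d^{r-1}}{dt^{r-1}}\varphi^{-1}=p^{-(r-1)}\varphi^{-1}\nabla^{r-1}$), one obtains, for any $u\in H^1_{Iw}(K,\bz_p(1))$ with $a:=\Exp^*_{\bz_p}(u)$,
\[
\exp^*_{\bq_p(r)}(\pr_{1,1-r}(u))=\frac{1}{(r-1)!\,p^{r}}\,\varphi^{-1}(\nabla^{r-1}a)\vert_{t=0}=\frac{1}{(r-1)!\,p^{r}}\,(\nabla^{r-1}a^{\sigma^{-1}})(\root e\of{\zeta_p-1}),
\]
the convergence of $\varphi^{-1}(\nabla^{r-1}a)$ being controlled by the estimates of Prop.~\ref{mainestimate} exactly as in Cor.~\ref{phiconv}, and the last equality holding because our chosen $\Sigma$ fixes $\pi_K$, so $\varphi^{-1}(\cdot)\vert_{t=0}$ is ``apply $\sigma^{-1}$ to the $\co_F$-coefficients and substitute $\pi_K=\varpi$''.

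Next I would apply this to $u=\beta_{Iw,i}:=(\Exp^*_{\bz_p})^{-1}(\alpha_i)$. Since $\Exp^*_{\bz_p}$ is a $\Lambda_K$-isomorphism (Theorem~\ref{reciprocity}~a)), the $\beta_{Iw,i}$ form a $\bz_p[\Sigma][[\gamma_1-1]]$-basis of $(H^1_{Iw}(K,\bz_p(1)))^{[\eta]}$; quotienting by $(\gamma_1-{\chi^{\cyclo}}(\gamma_1)^{r})$ and invoking Lemma~\ref{lemma:coho_of_zpr} with $n=1$ (so $K_1=K$, as $\zeta_p\in K$) and $r$ replaced by $1-r$, the classes $\beta_i:=\pr_{1,1-r}(\beta_{Iw,i})$ form a $\bz_p[\Sigma]$-basis of an isotypic direct summand of $H^1(K,\bz_p(1-r))/tor$. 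Keeping track of the $\Sigma\ltimes\Delta$-action through the reciprocity isomorphism, the Tate twist in (\ref{tseq}), and the normalisation $\root e\of{\zeta_p-1}=\varphi^{-1}(\pi_K)\vert_{t=0}$ — using that $\Delta_e$ acts on $\pi_K$ via $\eta_0^p$ (Lemma~\ref{eigen}) and that $\nabla$ lowers the $\pi_K$-degree by $e$ while carrying $A_K(1)$ to $A_K(r)$ — identifies this summand with $(H^1(K,\bz_p(1-r))/tor)^{[n_1-re]}$. Since $\exp^*_{\bq_p(r)}\colon H^1(K,\bq_p(1-r))\xrightarrow{\sim}K$, the first display then gives $L_r=(r-1)!\,p^{r}\cdot\exp^*_{\bq_p(r)}\bigl((H^1(K,\bz_p(1-r))/tor)^{[n_1-re]}\bigr)$, a $\bz_p[G]^{[n_1-re]}$-stable $\bz_p[\Sigma]$-lattice in $K^{[n_1-re]}$.

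Now I would translate (\ref{eta-not-1}). By (\ref{k1dec})--(\ref{k1urdec}) the conjunction over all $\eta'$ of (\ref{eta-not-1}) for $\chi=([n_1-re],\eta')$ says exactly that the $[n_1-re]$-block of $[(r-1)!]\cdot(p^{(r-1)c(\chi)})_\chi\cdot[\per(b)]^{-1}\cdot[\per(\exp^*(\beta))]$ lies in $K_1(\bz_p^{ur}[G])$. Here $\per\colon K\otimes_{\bq_p}\qbar_p\cong\qbar_p[G]$ is the period isomorphism, so, up to $\im K_1(\bz_p^{ur}[G])$, $[\per(\exp^*(\beta))]$ (resp.\ $[\per(b)]$) is the class comparing, under $\per$, the $\bz_p[G]$-lattice $\exp^*_{\bq_p(r)}(H^1(K,\bz_p(1-r))/tor)$ (resp.\ the inverse different $\mathfrak{d}_{K/\bq_p}^{-1}$, of which $b$ is a $\bz_p[G]$-basis by Prop.~\ref{froeh}) with $\bz_p[G]$. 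As $K/\bq_p$ has ramification index $e(p-1)$ and the hypothesis $\eta\vert_{\Delta_e}\neq1$ forces $e\nmid n_i$, hence $0<n_{i,r}<e(p-1)$, one computes $\mathfrak{d}_{K/\bq_p}^{-1,[n_1-re]}=(-p)^{-1}\co_K^{[n_1-re]}$; feeding this in together with $c(\chi)=f_\eta$ and the second paragraph, the displayed $K_1$-condition becomes: $L_r$ and $\co_K^{[n_1-re]}$ have the same volume over $\bz_p^{ur}[G]^{[n_1-re]}$.

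Finally I would deduce (\ref{sigmavol}). Since $p\nmid|G|$, the block $\bz_p[G]^{[n_1-re]}$ is a maximal order, free of rank $f_\eta$ over $\bz_p[\Sigma]$, and for the two $\bz_p[G]^{[n_1-re]}$-stable lattices $L_r$ and $\co_K^{[n_1-re]}$ the equality of volumes over $\bz_p^{ur}[G]^{[n_1-re]}$ is equivalent, via the resolvent computation in the proof of Prop.~\ref{froeh}, to the equality of their $\bz_p[\Sigma]$-volumes; and since both are already defined over $\bz_p[\Sigma]$ while $K_1(\bq_p[\Sigma])/\im K_1(\bz_p[\Sigma])$ injects into $K_1(\bq_p^{ur}[\Sigma])/\im K_1(\bz_p^{ur}[\Sigma])$, this is precisely (\ref{sigmavol}). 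The hard part will be the bookkeeping in the second paragraph: pinning down that the isotypic index of the $\pr_{1,1-r}$-summand is exactly $[n_1-re]$ requires careful tracking of the $\Sigma\ltimes\Delta$-action across the reciprocity isomorphism, the twist (\ref{tseq}), and the Kummer normalisation, given that $\pi_K$ is only a $\Delta_e$-eigenvector up to its leading term (Lemma~\ref{eigen}) and not a $\Delta$-eigenvector at all; making the resolvent dictionary of the last step fully explicit is the other, more routine, technicality.
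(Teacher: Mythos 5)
Your proposal follows essentially the same route as the paper's proof: apply Theorem~\ref{reciprocity}~b3) at $n=1$ (justified by Prop.~\ref{recprop}), differentiate via Lemma~\ref{lemma:del-is-dt} to get $\exp^*_{\bq_p(r)}(\pr_{1,1-r}(u))=\frac{p^{-r}}{(r-1)!}(\nabla^{r-1}\alpha^{\sigma^{-1}})(\root e\of{\zeta_p-1})$, transport the $\alpha_i$ through $\Exp^*_{\bz_p}$ and $\pr_{1,1-r}$ to a $\bz_p[\Sigma]$-basis of the relevant isotypic piece of $H^1(K,\bz_p(1-r))/\mathrm{tor}$, and unwind (\ref{eta-not-1}) into a lattice comparison with the isotypic inverse different $\mathfrak d_{K/\bq_p}^{-1,[n_1-re]}=p^{-1}\co_K^{[n_1-re]}$, whereupon all the $p$ and $(r-1)!$ factors cancel.

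The one step you explicitly defer — pinning down that the summand hit by $\pr_{1,1-r}$ is exactly $[n_1-re]$ — is precisely where the paper supplies the decisive input: $\pr_{1,1-r}$ is not $\Lambda_K$-linear but $\Lambda_K$-$\kappa_{-r}$-semilinear, where $\kappa_j(g)=g\,\chi^{\cyclo}(g)^j$ for $g\in G\times\Gamma_K$ (this comes straight out of the twist in (\ref{tseq})). On the $\Delta$-factor, $\chi^{\cyclo}$ restricts to the Teichm\"uller character $\omega=\eta_0^{pe}$, and chasing this through shows a $\Lambda_K e_{[n_1]}$-basis of $(A_K^{\psi=1}(1))^{[n_1]}$ is sent to a $\bz_p[G]e_{[n_1-re]}$-basis of $(H^1(K,\bz_p(1-r))/\mathrm{tor})^{[n_1-re]}$. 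So you do not need any new bookkeeping device beyond what you have already identified; the semilinearity is exactly the clean way to execute it. In your final paragraph, the equivalence between equality of $\bz_p^{ur}[G]^{[n_1-re]}$-volumes and equality of $\bz_p[\Sigma]$-volumes does not actually require the resolvent computation of Prop.~\ref{froeh}; the cleaner reason is that $\Res^G_\Sigma\chi=\Ind_{\Sigma_\eta}^\Sigma\eta'$ (Mackey, using $\Sigma G_\eta=G$), so that the map from $\chi$-components to $\kappa$-components of the comparison class is injective, together with $\bz_p^{ur}[G]\cap\bq_p[G]=\bz_p[G]$ for the descent from $\bz_p^{ur}$ to $\bz_p$.
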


\begin{proof} Let $\alpha$ be a $\Lambda_Ke_{[n_1]}$-basis of $(A_K^{\psi=1}(1))^{[n_1]}$. Then
\[ \beta_{Iw}:=(\Exp^*_{\bz_p})^{-1}(\alpha)\]
is a $\Lambda_Ke_{[n_1]}$-basis of $H^1_{Iw}(K,\bz_p(1))^{[n_1]}$ and the element
$$\beta=\pr_{1,1-r}(\beta_{Iw})$$ of Corollary \ref{lemma:coho_of_zpr2} is a
$\bz_p[G]e_{[n_1-re]}$-basis of $(H^1(K,\bz_p(1-r))/{tor})^{[n_1-re]}$. This follows from the fact that the isomorphism
$\pr_{1,1-r}$ of Lemma \ref{lemma:coho_of_zpr} is not $\Lambda_K$-linear but $\Lambda_K$-$\kappa_{-r}$-semilinear where $\kappa_j$ is the automorphism of $\Lambda_K$ given by $g\mapsto g\chi^{\cyclo}(g)^{j}$ for $g\in G\times\Gamma_K$.
Theorem \ref{reciprocity} and Prop. \ref{recprop} imply
\begin{align*} \exp_{\bq_p(r)}^*(\beta)=&\frac{1}{(r-1)!}\left(\frac{d}{dt}\right)^{r-1}p^{-1}\varphi^{-1}(\alpha)\vert_{t=0}\\
=&\frac{p^{-r}}{(r-1)!}(\nabla^{r-1}\alpha^{\sigma^{-1}})(\root e \of {\zeta_p-1}).
\end{align*}
Hence the $\bz_p[G]e_{[n_1-re]}$-lattice
\begin{equation}\bz_p[G]\cdot(r-1)!\cdot p^{r-1}\cdot\exp_{\bq_p(r)}^*(\beta)\subset K^{[n_1-re]}\label{lattice}\end{equation}
is free over $\bz_p[\Sigma]$ with basis
\[ (r-1)!\cdot p^{r-1}\cdot\frac{p^{-r}}{(r-1)!}(\nabla^{r-1}\alpha_i^{\sigma^{-1}})(\root e \of {\zeta_p-1})=p^{-1}\cdot(\nabla^{r-1}\alpha_i^{\sigma^{-1}})(\root e \of {\zeta_p-1})\]
where $i=1,\dots,f_\eta$. Now the conjunction of (\ref{eta-not-1}) for $\chi=([n_1-re],\eta')$ over all $\eta'$ is equivalent to the statement that the lattice (\ref{lattice}) and the $[n_1-re]$-isotypic component of the inverse different
\[ (\root e \of {\zeta_p-1})^{-(e(p-1)-1)}\co_K\]
have the same $\bz_p[\Sigma]$-volume. Since $e\nmid n_1$ we have
\[\left((\root e \of {\zeta_p-1})^{-(e(p-1)-1)}\co_K\right)^{[n_1-re]}=\left(p^{-1}\co_K\right)^{[n_1-re]}\]
and the statement follows.
\end{proof}

\subsection{Proof for $r=1,2$ and small $e$} We retain the notation of the previous section. As in Prop. \ref{prop:2} denote by $\xi$ a $\bz_p[\Sigma]$-basis of $\co_F$.

\begin{prop}  There exists a $\bz_p[\Sigma][[\gamma_1-1]]$-basis
$$\alpha_i=\xi\cdot\pi_K^{l(\alpha_i)}+\cdots \in A_K^{\psi=1},\quad i=1,\dots,f_\eta$$
of $(A_K^{\psi=1})^{[n_1-e]}$ with
$$ l(\alpha_i)=\begin{cases} n_i-e & \text{if $p\nmid n_i$}\\ n_i-e+e(p-1) & \text{if $p\mid n_i$.}\end{cases}$$
\label{tamebasis}\end{prop}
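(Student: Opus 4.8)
Here is how I would go about proving Proposition~\ref{tamebasis}.

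The plan is to write down $f_\eta$ explicit elements of $M:=(A_K^{\psi=1})^{[n_1-e]}$ with the stated leading terms and then to recognize them as a basis by Nakayama's lemma. First I would record the module structure. Since $\eta\vert_{\Delta_e}\neq 1$ we have $e\nmid n_i$ for all $i$, so $[n_1-e]$ is a full $\Sigma$-orbit of size $f_\eta$; moreover $[n_1-e]=[\eta_0^{n_1-e}]$ and $\eta_0^{n_1-e}\neq\omega=\eta_0^{e}$ (otherwise $e\mid n_1$). Because the $\Delta$-action on $A_K^{\psi=1}(1)$ differs from that on $A_K^{\psi=1}$ exactly by the twist $\chi^{\cyclo}\vert_\Delta=\omega=\eta_0^e$, one has $(A_K^{\psi=1})^{[n_1-e]}=(A_K^{\psi=1}(1))^{[n_1]}$, which is the module appearing in the structure assertion preceding Prop.~\ref{explicit2}; thus $M$ is free of rank $f_\eta$ over the semilocal ring $R:=\bz_p[\Sigma][[\gamma_1-1]]$, in particular torsion free (the torsion submodule $\bz_p(1)$ of $A_K^{\psi=1}(1)$ sits in the component $[\omega]\neq[n_1]$). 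Since the Jacobson radical of $R$ is $(p,\gamma_1-1)$ and $R/(p,\gamma_1-1)\cong\bof_p[\Sigma]$, it suffices to produce $f_\eta$ elements $\alpha_i\in M$ with the asserted leading terms whose images in the free rank-$f_\eta$ module $\bar M:=M/(p,\gamma_1-1)M$ over $\bof_p[\Sigma]$ are $\bof_p[\Sigma]$-linearly independent.

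Next I would pin down the exponents and build the elements. By Prop.~\ref{existence} and Remark~\ref{remark:-e}, the smallest $\bar\pi_K$-adic valuation of a nonzero element of $E_K^{\psi=1}$ whose valuation lies in the class $n_i-e\pmod{e(p-1)}$ is $n_i-e$ if $p\nmid n_i$ (it exceeds $-e$, as $n_i>0$, and is $\not\equiv-e\pmod p$) and $n_i-e+e(p-1)$ if $p\mid n_i$ (then $n_i-e$ is $>-e$ but $\equiv-e\pmod p$, hence not attained, while $n_i-e+e(p-1)\equiv-2e\not\equiv-e\pmod p$, using $p\nmid e$, is attained). Set $l(\alpha_i)$ equal to this value and $m_i:=l(\alpha_i)+e\in\{n_i,\,n_i+e(p-1)\}$, so that $e/m_i\in\bz_p^\times$ in both cases. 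To construct $\alpha_i$ one starts from $\nabla\log$ of (a lift of) $1+\tfrac{e}{m_i}\bar\xi\,\bar\pi_K^{m_i}\in E_K^\times$, which lies in $E_K^{\psi=1}$ by Prop.~\ref{prop:diagram}; using $\nabla(\pi_K^{m_i})=\tfrac{m_i}{e}\pi_K^{m_i-e}(1+\pi_K^e)$ this has leading term $\bar\xi\,\bar\pi_K^{m_i-e}$. One then projects by $e_{[n_1-e]}$: since $p(m_i-e)\equiv n_{i+1}-e$ lies in the orbit $[n_1-e]$, the computation in the proof of Lemma~\ref{eigen} (which shows the $\pi_K^{m_i-e}$-coefficient of $e_{[n_1-e]}\pi_K^{m_i-e}$ equals $1$) guarantees the leading term survives. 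Finally one lifts this to $A_K^{\psi=1}$ by Lemma~\ref{lift}, re-projects by $e_{[n_1-e]}$, and corrects $p$-adically by further elements of $M$ of valuation $\geq l(\alpha_i)$ furnished by Prop.~\ref{existence} (the limit exists since $A_K^{\psi=1}$ is $p$-adically complete), obtaining $\alpha_i\in M$ with leading term exactly $\xi\,\pi_K^{l(\alpha_i)}$.

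It then remains to check that $\bar\alpha_1,\dots,\bar\alpha_{f_\eta}$ are $\bof_p[\Sigma]$-independent in $\bar M$; equivalently, reducing first mod $p$, that their images in $(E_K^{\psi=1})^{[n_1-e]}/(\gamma_1-1)$ are. This is where the real work lies, and it is the step I expect to be the main obstacle. The exponents $l(\alpha_i)$ occupy the $f_\eta$ distinct residue classes $n_i-e\pmod{e(p-1)}$ and each is minimal in its class; the operator $\gamma_1-1$ strictly raises the $\bar\pi_K$-adic valuation (by $e(p-1)$ to leading order, since $\gamma_1$ sends $\bar\pi_K$ to $\bar\pi_K+O(\bar\pi_K^{e(p-1)+1})$), while $\Sigma$ preserves valuations and acts on leading coefficients through the normal basis $\bar\xi$ of $k$ over $\bof_p[\Sigma]$. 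A nontrivial $\bof_p[\Sigma]$-relation among the $\bar\alpha_i$ in the quotient would therefore force the minimal leading monomial $\bar\xi\,\bar\pi_K^{l(\alpha_i)}$ of some $\alpha_i$ to lie in $(\gamma_1-1)(E_K^{\psi=1})^{[n_1-e]}$, hence to arise from an element of $E_K^{\psi=1}$ of valuation $\le l(\alpha_i)-e(p-1)$ in the same $\Delta$-isotypic piece --- contradicting minimality. Making this interplay of $\gamma_1-1$ and the $\Sigma$-action with the $\bar\pi_K$-adic filtration on the individual $\Delta$-eigenspaces fully rigorous is the delicate point; the tools available for it are Lemmas~\ref{EK} and \ref{eigen} and the explicit recursion (\ref{monstermodp}) governing $E_K^{\psi=1}$, together with the structure theorem that fixes the rank of $M$ so that ``independent'' and ``basis'' coincide.
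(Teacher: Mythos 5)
Your overall strategy matches the paper's: invoke Nakayama's Lemma to reduce to exhibiting an $\bof_p[\Sigma]$-basis of $(E_K^{\psi=1})^{[n_1-e]}/(\gamma_1-1)$, manufacture elements with the prescribed leading terms via Prop.~\ref{existence} and Lemma~\ref{eigen}, lift via Lemma~\ref{lift}, and then show independence by a valuation argument involving $\gamma_1-1$. The construction part of your write-up is sound. The independence argument --- which you explicitly flag as the delicate step --- does contain a genuine gap, however.

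You argue that if a nonzero $\bof_p[\Sigma]$-combination $a'$ of the $\bar\alpha_j$ equals $(\gamma_1-1)a$, then $l(a)\le l(a')-e(p-1)$, ``in the same $\Delta$-isotypic piece, contradicting minimality.'' But the minimality you established is minimality of $l(\alpha_j)$ \emph{within its own residue class} $n_j-e\pmod{e(p-1)}$, whereas $a$ lives in the full isotypic component $[n_1-e]$, which is the direct sum of $f_\eta$ such classes. If $l(a')=l(\alpha_{i_0})$ for the minimizing index $i_0$ among the nonzero coefficients, all you get directly is $l(a)<l(\alpha_{i_0})$, and this says nothing about minimality unless $l(a)$ is known to sit in the residue class $n_{i_0}-e$. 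Your observation that $\gamma_1-1$ raises the $\bar\pi_K$-adic valuation ``by $e(p-1)$ to leading order'' is only the $p\nmid l(a)$ case; when $p\mid l(a)$ the shift is by $e(p-1)p^\kappa$ with $\kappa=v_p(l(a))$. What you need --- and what the paper isolates as Lemma~\ref{gammaaction}, stated right after the proposition --- is the exact formula $l((\gamma_1-1)a)=(j+e(p-1))p^\kappa$ for $l(a)=jp^\kappa$, $p\nmid j$. This simultaneously gives the congruence $l((\gamma_1-1)a)\equiv l(a)\pmod{e(p-1)}$ (so $l(a)$ is indeed in the residue class of $l(a')$, making your minimality argument work once you further invoke Prop.~\ref{existence} to rule out $l(a)=n_{i_0}-e\equiv -e\pmod p$), the uniform lower bound $l((\gamma_1-1)a)\ge -e+e(p-1)$ (which forces $p\mid n_{i_0}$, a fact you also need), and the criterion that equality $l(a')=l(a)+e(p-1)$ holds if and only if $p\nmid l(a)$. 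The paper's proof uses the latter two facts together with Prop.~\ref{existence} to derive $p\mid l(a)$, hence $p\mid l(a')$, hence $p\mid 2e$, a contradiction --- a slightly different finishing move than the residue-class minimality argument you suggest, but both routes hinge on the precise statement of Lemma~\ref{gammaaction}. The tools you list as available (Lemmas~\ref{EK}, \ref{eigen}, the recursion (\ref{monstermodp})) do not by themselves yield this; the dedicated lemma has to be proved first.
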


\begin{proof} By Nakayama's Lemma it suffices to find a $\bof_p[\Sigma]$-basis for \begin{equation}(A_K^{\psi=1})^{[n_1-e]}/(p,\gamma_1-1)\cong \left(A_K^{\psi=1}/(p,\gamma_1-1)\right)^{[n_1-e]}.\label{isotypic2}\end{equation} By Lemma \ref{lift} we have $A_K^{\psi=1}/pA_K^{\psi=1}=E_K^{\psi=1}$. By Prop. \ref{existence} (reductions mod $p$ of) elements $\alpha_i$ as described in Prop. \ref{tamebasis} exist in $E_K^{\psi=1}$. By projection and Lemma \ref{eigen} we can also assume that they are in the $[n_1-e]$-isotypic component. Let $a'$ be a nonzero $\bz_p[\Sigma]$-linear combination of the $\alpha_i$ and assume $$a'\equiv (\gamma_1-1)a \mod p$$ for some $a\in A_K^{\psi=1}$. By Lemma \ref{gammaaction} below we have $l(a')\geq-e+e(p-1)$. Since $l(a')=l(\alpha_i)$ for some $i$, this implies $$l(a')\equiv -e+e(p-1)\equiv -2e \mod p.$$ Using Lemma \ref{gammaaction} again we have $l(a)\leq l(a')-e(p-1)\equiv -e\mod p$. Since $l(a)\not\equiv -e\mod p$ by Prop. \ref{existence} we have strict inequality. Lemma \ref{gammaaction} then shows $p\mid l(a)$ and hence $p\mid l(a')$, contradicting $l(a')\equiv -2e\mod p$. We conclude that the $\alpha_i$ are linearly independent in (\ref{isotypic2}). Since the $\bof_p[\Sigma]$-rank of
(\ref{isotypic2}) is $f_\eta$ this finishes the proof.
\end{proof}

\begin{lemma} For $a\in E_K^{\psi=1}$ with $l(a)=j p^\kappa$ with $p\nmid j$ we have
\[ l((\gamma_1-1)a)=(j+e(p-1))p^\kappa.\]
In particular
\[ l((\gamma_1-1)a)\geq l(a)+e(p-1)\]
with equality if and only if $p\nmid l(a)$, and
\[ l((\gamma_1-1)a)\geq -e+e(p-1)\]
for all $a\in E_K^{\psi=1}$.
\label{gammaaction}\end{lemma}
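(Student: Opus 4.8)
The plan is to reduce the statement to an explicit valuation computation for $(\gamma_1-1)$ applied to monomials $\bar\pi_K^m$, and then to locate the leading term of $(\gamma_1-1)a$ using the description of $E_K^{\psi=1}$ provided by Propositions \ref{existence} and \ref{mainestimate}.

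The first step is to make the $\gamma_1$-action on $E_K=k((\bar\pi_K))$ explicit. Since $\gamma_1=\gamma^{p-1}$ topologically generates $\Gamma_K$ and $\zeta_p\in K$, the scalar $c:=\chi^{\cyclo}(\gamma_1)$ lies in $1+p\bz_p$ with $v_p(c-1)=1$. On $A_{F'}$ one has $\gamma_1(1+\pi)=(1+\pi)^c$, hence $\gamma_1(\pi)=\pi\cdot g$ with $g=\frac{(1+\pi)^c-1}{\pi}=c+\binom{c}{2}\pi+\cdots$. A look at $p$-adic valuations gives $\bar g=1+\theta_0\bar\pi^{p-1}+\cdots$ with $\theta_0\in k^\times$ (the intermediate coefficients $\binom{c}{n}$, $2\le n\le p-1$, are divisible by $p$ because $v_p(c-1)=1$, whereas $\binom{c}{p}$ is a $p$-adic unit). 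As $\pi_K^e=\pi$ and $p\nmid e$, the unit $g$ has a canonical $e$-th root in $A_K^\times$ congruent to $1$ modulo $(\pi,p)$ and $\gamma_1(\pi_K)=\pi_K\,g^{1/e}$; reducing mod $p$ and extracting the $e$-th root of the power series yields $\gamma_1(\bar\pi_K)=\bar\pi_K\,v$ with $v-1=\theta\,\bar\pi_K^{\,e(p-1)}+\cdots$ and $\theta\in k^\times$. For $m\in\bz$, write $m=jp^{v_p(m)}$ with $p\nmid j$; using $(1+y)^{p^\kappa}=1+y^{p^\kappa}$ in the characteristic-$p$ ring $E_K$ we obtain $v^m-1=(v^{p^{v_p(m)}})^{j}-1=j\,(v-1)^{p^{v_p(m)}}+\cdots$, and therefore
\[(\gamma_1-1)\bar\pi_K^m=\bar\pi_K^m(v^m-1)=j\,\theta^{p^{v_p(m)}}\,\bar\pi_K^{\,m+e(p-1)p^{v_p(m)}}+(\text{higher order}),\]
an element of valuation exactly $m+e(p-1)p^{v_p(m)}$ with nonzero leading coefficient.

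Now write $a=\sum_{m\ge L}a_m\bar\pi_K^m$ with $L:=l(a)=jp^\kappa$, $p\nmid j$, so that $(\gamma_1-1)a=\sum_{m\ge L}a_m(\gamma_1-1)\bar\pi_K^m$; the $m=L$ summand has valuation exactly $(j+e(p-1))p^\kappa$ and nonzero coefficient. The crux is to show that every other summand has strictly larger valuation, i.e. $m+e(p-1)p^{v_p(m)}>L+e(p-1)p^\kappa$ whenever $m>L$ and $a_m\ne 0$. This is automatic when $v_p(m)\ge\kappa$, so one is left to exclude $a_m\ne 0$ for $m>L$ with $v_p(m)<\kappa$ (which forces $\kappa\ge 1$, i.e. $p\mid L$) and $m-L<e(p-1)\bigl(p^\kappa-p^{v_p(m)}\bigr)$. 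This is the only place where $\psi(a)=a$ is used: I would run a bootstrapping argument on such indices $m$, feeding them into the recursion (\ref{monstermodp}), whose value at a suitable index $N$ (with $pN\le m\le pN+e(p-1)$) pairs $a_m$ with coefficients at strictly smaller indices and with coefficients of the same excluded type, while the vanishing of $a$ below $L$, the divisibility bounds of Proposition \ref{mainestimate}, and the restrictions modulo $p$ and modulo $e$ on the exponents occurring in $a$ (Proposition \ref{existence} and, for $a$ in a single isotypic component, Lemma \ref{eigen}) allow an induction on $m$ forcing $a_m=0$; this step runs closely parallel to the proof of Proposition \ref{mainestimate}. Granting it, $l((\gamma_1-1)a)=(j+e(p-1))p^\kappa$.

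The three ``in particular'' assertions are then immediate: $(j+e(p-1))p^\kappa=l(a)+e(p-1)p^\kappa\ge l(a)+e(p-1)$, with equality precisely when $p^\kappa=1$, i.e. $p\nmid l(a)$; and since $l(a)\ge -e$ (Proposition \ref{existence}) we get $(j+e(p-1))p^\kappa\ge -e+e(p-1)p^\kappa\ge -e+e(p-1)$ when $\kappa\ge 1$ and $=l(a)+e(p-1)\ge -e+e(p-1)$ when $\kappa=0$. The main obstacle, as indicated, is the middle step: proving that the function $m\mapsto m+e(p-1)p^{v_p(m)}$ on the support of $a$ attains its minimum at $m=l(a)$, which is the genuinely delicate combinatorial input and the only point where the full force of $a\in E_K^{\psi=1}$ is needed.
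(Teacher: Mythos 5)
You correctly pinpoint the gap: the paper's own proof computes $(\gamma_1-1)\pi_K^n$ only for a single monomial $\pi_K^n$, and the assertion that the $m=l(a)$ summand dominates in $(\gamma_1-1)a$ --- equivalently, that $m+e(p-1)p^{v_p(m)}$ attains its minimum over the support of $a$ at $m=l(a)$ --- is not addressed there at all (the paper only explicitly argues the final inequality $l((\gamma_1-1)a)\ge -e+e(p-1)$, via Proposition~\ref{mainestimate}(a)). Your diagnosis of the crux is therefore accurate, and in this respect you see more than the paper does. However, I do not believe the ``bootstrapping'' step you sketch can be carried out, because the exact formula $l((\gamma_1-1)a)=(j+e(p-1))p^\kappa$ appears to fail when $\kappa\ge 1$. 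Take $p=5$, $e=2$ (so $e(p-1)=8$), $F=\bq_p$, and set $a:=\nabla\log\bigl((1+\pi_K^7)(1+\pi_K^8)\bigr)$, which lies in $E_K^{\psi=1}$ by the observation in the proof of Proposition~\ref{existence} that $\nabla\log$ of any unit of $E_K$ does. Expanding with $\nabla\pi_K^m=\tfrac{m}{e}(\pi_K^{m-e}+\pi_K^m)$ gives $a=\pi_K^5+4\pi_K^6+\pi_K^7+4\pi_K^8+\cdots$, so $l(a)=5$, hence $j=1$, $\kappa=1$ and the lemma predicts $l((\gamma_1-1)a)=(1+8)\cdot 5=45$. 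But the monomial formula gives $(\gamma_1-1)\pi_K^6=3\pi_K^{14}+\cdots$, while every other $m$ in the support of $a$ contributes to $(\gamma_1-1)a$ only in valuation $>14$: the $m=5$ term starts at $45$, and for $m\ge 7$ the leading valuation $m+8\cdot 5^{v_p(m)}$ is $\ge 15$. Hence the coefficient of $\pi_K^{14}$ in $(\gamma_1-1)a$ equals $3\cdot 4\equiv 2\not\equiv 0\pmod 5$, giving $l((\gamma_1-1)a)=14\ne 45$; in particular $p\mid l(a)$ while $p\nmid l((\gamma_1-1)a)$, which is precisely the implication invoked in the proof of Proposition~\ref{tamebasis}.

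The second and third assertions of the lemma, and both directions of the equality criterion $l((\gamma_1-1)a)=l(a)+e(p-1)\Leftrightarrow p\nmid l(a)$, are fine and follow from the monomial computation together with Proposition~\ref{mainestimate}(a) exactly as you argue; it is only the exact formula in the case $\kappa\ge 1$ that breaks. The condition $\psi(a)=a$ does not force the offending coefficients to vanish --- in the example, the recursion (\ref{monstermodp}) with $k=0$ only requires $a_6=a_8$, not $a_6=0$ --- so no bootstrapping from (\ref{monstermodp}) alone can supply the control your argument needs. Both the statement of the lemma (in the case $p\mid l(a)$) and its application in Proposition~\ref{tamebasis} appear to need repair.
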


\begin{proof} Since $\chi^{\cyclo}(\gamma_1)=1+p$ we find from (\ref{gamma-act}) that (in $E_K$)
\[ \gamma_1(\pi)=\pi+\pi^p+\pi^{p+1}\]
and hence for $n=jp^\kappa$
\begin{align*} (\gamma_1-1)\pi_K^n=&(\pi+\pi^p+\pi^{p+1})^{n/e}-\pi^{n/e}=\pi_K^n\left((1+\pi^{p-1}+\pi^p)^{n/e}-1\right)\\
=&\pi_K^n\left((1+\pi^{p^\kappa(p-1)}+\pi^{p^{\kappa+1}})^{j/e}-1\right)\\
=&\frac{j}{e}\cdot\pi_K^{n+ep^\kappa(p-1)}+\cdots
\end{align*}
and this is indeed the leading term since $p\nmid j$. The last assertion follows from Prop. \ref{mainestimate} a).
\end{proof}

\begin{prop} If $e<p$ the identity (\ref{sigmavol}) holds for $r=1$.
\label{r1prop}\end{prop}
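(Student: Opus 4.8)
The plan is to verify the criterion of Proposition~\ref{explicit2} for $r=1$, i.e.\ to prove $\Det_{\bz_p[\Sigma]}L_1=\Det_{\bz_p[\Sigma]}\co_K^{[n_1-e]}$ for each $\eta$ with $\eta\vert_{\Delta_e}\neq 1$, using the explicit $\bz_p[\Sigma][[\gamma_1-1]]$-basis $\{\alpha_i\}$ of Proposition~\ref{tamebasis}. Set $m_i:=l(\alpha_i)$, so $m_i=n_i-e$ if $p\nmid n_i$ and $m_i=n_i-e+e(p-1)$ if $p\mid n_i$ (the latter forcing $n_i\ge p>e$). First I would replace each $\alpha_i$, when necessary, by an element congruent to it mod $p$ lying in the same isotypic component (changing neither the basis property, by Nakayama, nor the leading term) so as to arrange $\vw(\alpha_i)=m_i$: for $p\nmid n_i$ this holds automatically since $m_i=n_i-e<-e+e(p-1)$ and Corollary~\ref{easy} applies, while for $p\mid n_i$ one writes $n_i=\mu_i p$ with $1\le\mu_i<e(p-1)/p$ and invokes Proposition~\ref{notsoeasy}, whose hypothesis $p\nmid\mu_i-e$ holds precisely because $e<p$. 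It then follows that $\alpha_i^{\sigma^{-1}}(\varpi)=\sigma^{-1}(\xi)\,\varpi^{m_i}+(\text{terms of }\vw>m_i)$, where $\varpi=\root e\of{\zeta_p-1}$ and $\sigma^{-1}(\xi)$ is again a $\bz_p[\Sigma]$-basis of $\co_F$.

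Since $p\nmid f=|\Sigma|$, the ring $\bz_p[\Sigma]$ is a product of complete discrete valuation rings and $\bz_p^{ur}[\Sigma]\cong\prod_{\psi\in\widehat{\Sigma}}\bz_p^{ur}$; correspondingly $K^{[n_1-e]}\otimes_{\bq_p}\bq_p^{ur}$ decomposes, its $\psi$-component being the $[n_1-e]$-isotypic subspace $\bigoplus_i\bq_p^{ur}\cdot(\varpi')^{n_{i,1}}$ of $\bq_p^{ur}(\varpi')$, where $\varpi'=\root{e(p-1)}\of{-p}$ (by construction $\Sigma$ fixes both $\varpi$ and $\varpi'$). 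It thus suffices to show that for each $\psi$ the change-of-basis matrix $T_\psi$ expressing $\{\alpha_i^{\sigma^{-1}}(\varpi)\}$ in the $\bq_p^{ur}$-basis $\{(\varpi')^{n_{i,1}}\}$ satisfies $v_p(\det T_\psi)=0$. From (\ref{kummerchoice}) one has $\varpi=\varpi'\cdot(u')^{1/e}$ with $\vw\bigl((u')^{1/e}-1\bigr)\ge e$, so $\varpi^{m_i}=(\varpi')^{m_i}+(\vw>m_i)$ and $(\varpi')^{m_i}=(-p)^{\kappa_i}(\varpi')^{n_{i,1}}$ with $\kappa_i:=\frac{m_i-n_{i,1}}{e(p-1)}\in\{0,\pm1\}$. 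Hence the $(i,i)$-entry of $T_\psi$ is a $\bz_p^{ur}$-unit times $(-p)^{\kappa_i}$, while the $\vw$-estimate together with $0<n_{i,1}<e(p-1)$ shows the $(i,j)$-entry has $p$-adic valuation $\ge\kappa_i+1$ when $n_{i,1}>n_{j,1}$ and $\ge\kappa_i$ when $n_{i,1}<n_{j,1}$; expanding $\det T_\psi$ over permutations $\pi$, any nontrivial cycle of $\pi$ fails to be increasing along $i\mapsto\pi(i)$, so every term with $\pi\neq\mathrm{id}$ has valuation $>\sum_i\kappa_i$ and therefore $v_p(\det T_\psi)=\sum_i\kappa_i$.

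It remains to check $\sum_i\kappa_i=0$. By the case analysis, $\kappa_i=1\iff p\mid n_i$ and $\kappa_i=-1\iff n_i<e$ (which forces $p\nmid n_i$), so $\sum_i\kappa_i=\#\{i:p\mid n_i\}-\#\{i:n_i<e\}$. The orbit $\{n_i\}\subset(0,e(p-1))$ is permuted by $x\mapsto px\bmod e(p-1)$, and since $\gcd(p,e(p-1))=1$ one has $p\mid(px\bmod e(p-1))\iff x<e(p-1)/p$; hence $\#\{i:p\mid n_i\}=\#\{i:n_i<e(p-1)/p\}$. As $e<p$ forces $e-1<e(p-1)/p<e$, the intervals $(0,e(p-1)/p)$ and $(0,e)$ contain the same integers, so $\#\{i:p\mid n_i\}=\#\{i:n_i<e\}$ and $\sum_i\kappa_i=0$, which finishes the proof. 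I expect the main difficulty to lie exactly in the interaction of the two places where $e<p$ is used: obtaining a sufficiently clean description of $\vw(\alpha_i)$ --- which is why Proposition~\ref{notsoeasy} and the bound $e<p$ seem unavoidable, and why the method does not immediately cover arbitrary $e$ --- and then checking that the powers of $p$ thereby produced in $L_1$ are exactly those built into the $[n_1-e]$-isotypic part of the inverse different.
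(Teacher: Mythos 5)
Your proof is correct and follows essentially the same route as the paper's: both rest on the computation $\vw(\alpha_i)=l(\alpha_i)$ via Corollary~\ref{easy} and Proposition~\ref{notsoeasy} (using $e<p$ to guarantee $p\nmid\mu_i-e$), and both come down to matching the indices $i$ with $n_i<e$ against those with $p\mid n_i$. The paper exhibits the rescaled $q_i$ directly as a $\bz_p[\Sigma]$-basis of $\co_K^{[n_1-e]}$ after observing (since all $n_i$ lie in one class mod $p-1$ and $e\le p-1$) that there is at most one such pair, leaving the triangularity/determinant argument implicit; you instead reduce to $v_p(\det T_\psi)=\sum_i\kappa_i=0$ and verify the latter by the bijection $x\mapsto px\bmod e(p-1)$ on the orbit, which is a slightly more systematic way of arriving at the same count.
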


\begin{proof} We first remark that for each $i$ we have
$$ \vw(\alpha_i)=l(\alpha_i)=\begin{cases} n_i-e & \text{if $p\nmid n_i$}\\ n_i-e+e(p-1) & \text{if $p\mid n_i$}\end{cases}$$
by Corollary \ref{easy} and Proposition \ref{notsoeasy}. Note that there is at most one $n_i$, $n_1$ say, with
\[ 0<n_1\leq e-1 \]
since all the $n_i$ lie in the same residue class modulo $p-1$ and $e\leq p-1$. Then
$$n_2=pn_1\leq ep-p<ep-e=e(p-1)$$
and conversely, $p\mid n_2$ if and only if $0<n_1:=n_2/p\leq e-1.$
For all other $i$ we have $n_i-e=n_{i,1}$. So if no $n_i-e$ is negative then
$$q_i:=\alpha_i^{\sigma^{-1}}(\root e \of {\zeta_p-1})\in K$$
is already a basis of $\co_{K}^{[n_1-e]}$. Otherwise
\[ p \cdot q_1, p^{-1}\cdot q_2, q_3,\dots, q_{f_\eta} \]
is a basis of $\co_{K}^{[n_1-e]}$. Since $L_1$ is the span of the $q_i$ the statement follows.
\end{proof}

\begin{remark} Although not covered by Prop. \ref{prop:cep} it is in fact true that the equivariant local Tamagawa number conjecture for $r=1$ is equivalent to (\ref{sigmavol}) for $r=1$ and so Prop. \ref{r1prop} proves this conjecture for $e<p$. However, for $r=1$ one can give a direct proof without any assumption on $e$ other than $p\nmid e$ by studying the exponential map instead of the dual exponential map. Since the exponential power series gives a $G$-equivariant isomorphism $$\exp:p\cdot\co_{K}\cong 1+p\cdot\co_{K}$$ one can easily compute the (equivariant) relative volume of $\exp(\co_{K})$ and $\widehat{\co_{K}^\times}\subseteq H^1(K,\bz_p(1))$. For more work on the case $r=1$ see \cite{bleycobbe16} and references therein.
\end{remark}

To prepare for the proof of Prop. \ref{r2prop} below we need to compute $\vw(\nabla \alpha_i)$, i.e. prove the analogues of Corollary \ref{easy} and Prop. \ref{notsoeasy} for $\nabla a\in A_K^{\psi=p}$.

\begin{lemma} Assume $e<p/2$. For $a\in A_K^{\psi=1}$ with $$p\nmid l(a)<-e+e(p-1)$$ or with $$l(a)=\mu p-e+e(p-1)$$ and chosen as in Prop. \ref{notsoeasy} a) we have
\begin{equation} \vw(\nabla a)=l(\nabla a)=l(a)-e.
\notag
\end{equation}
\label{notdivbyp}\end{lemma}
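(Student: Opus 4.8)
The plan is to compute $\nabla$ explicitly on power series, read off $l(\nabla a)=l(a)-e$, and then reduce $\vw(\nabla a)=l(a)-e$ to a monomial-by-monomial $\varpi$-adic estimate in the spirit of Corollary~\ref{easy}. First I would record $\nabla(\pi_K^m)=\tfrac{m}{e}(\pi_K^{m-e}+\pi_K^m)$ (from $\pi_K^e=\pi$ and $\nabla\pi=1+\pi$), so that for $a=\sum_i a_i\pi_K^i$ the coefficient of $\pi_K^j$ in $\nabla a$ is $(\nabla a)_j=\tfrac{1}{e}\bigl((j+e)a_{j+e}+j\,a_j\bigr)$. Since $p\nmid e$ and $p\mid a_i$ for $i<l(a)$, this gives $p\mid(\nabla a)_j$ for $j<l(a)-e$; for $j=l(a)-e$ the summand $\tfrac{l(a)}{e}a_{l(a)}$ is a $p$-adic unit --- in case (i) because $p\nmid l(a)$, in case (ii) because $l(a)\equiv -2e\pmod p$ and $p\nmid 2e$ --- while $\tfrac{l(a)-e}{e}a_{l(a)-e}$ is divisible by $p$, so $l(\nabla a)=l(a)-e$. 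It remains to show $\vw(\nabla a)=l(a)-e$. Writing $\varphi^{-1}(\nabla a)|_{t=0}=\sum_j(\nabla a)_j^{\sigma^{-1}}\varpi^j$, the growth bound (\ref{growth}) gives $\vw\bigl((\nabla a)_j^{\sigma^{-1}}\varpi^j\bigr)\ge -(p-1)j-2pe$, so this series converges in $K$ (and equals $p\,\tfrac{d}{dt}\varphi^{-1}(a)|_{t=0}$ by Lemma~\ref{lemma:del-is-dt}); the $\vw$-value of its $j$-th monomial is $e(p-1)v_p((\nabla a)_j)+j$. I must therefore prove that the minimal value $l(a)-e$ among these is attained at a single $j$ (case (i)), or at exactly two values of $j$ whose monomials do not cancel (case (ii)).

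For case (i) put $\ell:=l(a)<-e+e(p-1)$. Bounding $v_p((\nabla a)_j)\ge\min(v_p(a_j),v_p(a_{j+e}))$ I would run the window argument from the proof of Corollary~\ref{easy}, one level deeper because of the shift by $e$: on $\ell-e(p-1)\le j\le\ell-e-1$ one has $v_p((\nabla a)_j)\ge 1$; on $\ell-2e(p-1)\le j\le\ell-e(p-1)-1$ one uses part b) of Proposition~\ref{mainestimate} ($l_2(a)>\ell-e(p-1)$, so $p^2\mid a_i$ for $i<\ell-e(p-1)$); on $\ell-3e(p-1)\le j\le\ell-2e(p-1)-1$ one uses part c) ($p^3\mid a_i$ for $i<\ell-2e(p-1)$); and for $j\le\ell-3e(p-1)-1$ the bound (\ref{growth}) above. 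A short inequality in $p\ge3$, $e\ge1$ and $\ell<-e+e(p-1)$ shows in each window that $e(p-1)v_p((\nabla a)_j)+j>\ell-e$ for $j\ne\ell-e$; hence $j=\ell-e$ gives the unique minimal monomial and $\vw(\nabla a)=\ell-e=l(\nabla a)$.

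For case (ii) put $\ell:=l(a)=\mu p-e+e(p-1)$ with $1\le\mu<e(p-1)/p$; note $e<p/2$ forces $\mu<(p-1)/2<p$, so $p\nmid\mu$. Since $a$ is chosen as in part a) of Proposition~\ref{notsoeasy} we have $l_2(a)\ge\ell-e(p-1)=\mu p-e$, and equation (\ref{star222}) gives $a_{\mu p-e}\equiv\tfrac{\mu}{e}\,p\,a_\ell\pmod{p^2}$; as $\mu,e,a_\ell$ are $p$-adic units this forces $v_p(a_{\mu p-e})=1$, hence $l_2(a)=\ell-e(p-1)$ exactly. The window analysis of case (i) now yields $e(p-1)v_p((\nabla a)_j)+j>\ell-e$ for every $j$ except $j=\ell-e$ and $j=\ell-ep$, and at $j=\ell-ep$ one has $v_p((\nabla a)_{\ell-ep})=1$ precisely (the summand $(\ell-e(p-1))a_{\ell-e(p-1)}=(\mu p-e)a_{\mu p-e}$ has $p$-valuation $1$, and $(\ell-ep)a_{\ell-ep}$ has $p$-valuation $\ge 2$), so this monomial also has $\vw$-value $\ell-e$. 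The main obstacle is to show these two extremal monomials do not cancel: using $a_{\mu p-e}\equiv\tfrac{\mu}{e}\,p\,a_\ell\pmod{p^2}$ together with $\varpi^{e(p-1)}=(\zeta_p-1)^{p-1}\equiv-p$ (see after (\ref{kummerchoice})) one computes
\[(\nabla a)_{\ell-ep}^{\sigma^{-1}}\varpi^{\ell-ep}\equiv\tfrac{\mu}{e}\,a_\ell^{\sigma^{-1}}\varpi^{\ell-e},\qquad (\nabla a)_{\ell-e}^{\sigma^{-1}}\varpi^{\ell-e}\equiv\tfrac{\ell}{e}\,a_\ell^{\sigma^{-1}}\varpi^{\ell-e}\pmod{\varpi^{\ell-e+1}},\]
so their sum has leading coefficient $\tfrac{1}{e}\,a_\ell^{\sigma^{-1}}(\ell+\mu)\varpi^{\ell-e}$ with $\ell+\mu\equiv\mu-2e\pmod p$. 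This is precisely where $e<p/2$ is used: then $0<\mu<e(p-1)/p<2e<p$, so $\mu-2e\not\equiv0\pmod p$ and the sum has $\vw$-value exactly $\ell-e$; therefore $\vw(\nabla a)=\ell-e=l(\nabla a)$. The coefficient identity for $\nabla$, the value of $l(\nabla a)$, and the window estimates are routine elaborations of Corollary~\ref{easy}, Proposition~\ref{mainestimate} and (\ref{growth}); the only genuine difficulty is the non-cancellation just described.
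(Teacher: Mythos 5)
Your argument is essentially correct, and the non-cancellation computation in case (ii) --- computing the leading coefficient modulo $p$ to be $\frac{\mu}{e}-2=\frac{1}{e}(\mu-2e)$ and invoking $e<p/2$ to conclude $p\nmid\mu-2e$ --- is exactly the paper's. But the estimate that all other monomials have $\vw$-value strictly greater than $l(a)-e$ is handled by a different (and somewhat more laborious) bookkeeping. You expand $\nabla a=\sum_j(\nabla a)_j\pi_K^j$ and bound $v_p((\nabla a)_j)\ge\min(v_p(a_j),v_p(a_{j+e}))$ via a fresh window argument in $j$. Because $(\nabla a)_j$ mixes $a_j$ and $a_{j+e}$, the two coefficients sit in different windows of Proposition~\ref{mainestimate} for $j$ near a window boundary (e.g.\ $j$ just below $l(a)-e(p-1)$ but $j+e$ already above it), so the uniform bound $p^2\mid(\nabla a)_j$ you invoke in the second window is not literally correct; the argument survives only because in that boundary strip $j$ itself is larger, so $e(p-1)\cdot 1+j>l(a)-e$ still holds. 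You would have to make this index-dependent reasoning explicit in each window, and redo the windows for case (ii) where $l(a)\ge -e+e(p-1)$ so the case~(i) bounds on $\ell$ no longer apply (for $p=3$ case~(ii) is in fact vacuous under $e<p/2$, which rescues your final growth-bound window). The paper avoids all of this by keeping the two contributions separate: writing $\nabla a=\sum_m\frac{m}{e}a_m(\pi_K^{m-e}+\pi_K^m)$ and reading off, for $m\neq l(a)$ (resp.\ $m\neq l(a),\,l(a)-e(p-1)$), that $\vw(\tfrac{m}{e}a_m\varpi^{m-e})=\vw(a_m\varpi^m)-e>l(a)-e$ directly from the estimate $\vw(a_m\varpi^m)>l(a)$ already established in Corollary~\ref{easy} (resp.\ in the proof of Proposition~\ref{notsoeasy}~b)). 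This recycles prior work with no new window analysis. Your route is salvageable, but the paper's is cleaner precisely because it groups by the source index $m$ rather than the target index $j$.
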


\begin{proof} Since
\begin{equation} \nabla \pi_K^j=\frac{j}{e}\pi_K^{j-e}+\frac{j}{e}\pi_K^{j}               \label{nablaaction}\end{equation}
it is clear that $l(\nabla a)=l(a)-e$ if $p\nmid l(a)$. To compute $\vw(\nabla a)$ note that from the proof of Cor. \ref{easy}  we already know
$$\vw(a_j\varpi^j)>l(a)$$ for $j\neq l(a)$. But this implies
\begin{equation}\vw\left(a_j\frac{j}{e}\varpi^{j-e}\right)>l(a)-e,\quad \vw\left(a_j\frac{j}{e}\varpi^{j}\right)>l(a)>l(a)-e\label{nablaest}\end{equation} for $j\neq l(a)$.
This finishes the proof for the case $p\nmid l(a)<-e+e(p-1)$. If $$l(a)=\mu p-e+e(p-1)$$ then recall from the proof of
Prop. \ref{notsoeasy} b) that we had to compute modulo $p^2$ and there were two terms in (\ref{twoterms}) with valuation $l(a)$ arising from $j=l(a)$ and $j=l(a)-e(p-1)$. Normalizing the leading coefficient to be $\xi$ (as in the $\alpha_i$) we have
\[ a\equiv\xi\cdot\frac{\mu p}{e}\cdot\pi_K^{l(a)-e(p-1)} + \cdots + \xi\cdot \pi_K^{l(a)}+\cdots \mod p^2\]
and
\[ \nabla a\equiv\xi\cdot\frac{\mu p}{e}\cdot\frac{\mu p-e}{e}\cdot\pi_K^{l(a)-e-e(p-1)} + \cdots + \xi\cdot\frac{l(a)}{e}\cdot \pi_K^{l(a)-e}+\cdots \mod p^2\]
and hence
\begin{align*}&\frac{\mu p}{e}\cdot\frac{\mu p-e}{e}\cdot\varpi^{l(a)-e-e(p-1)} + \frac{l(a)}{e}\cdot \varpi^{l(a)-e}\\
\equiv&\left(-\frac{\mu}{e}\cdot\frac{\mu p-e}{e}+ \frac{l(a)}{e} \right)\cdot \varpi^{l(a)-e} \mod p^2.\end{align*}
Computing the leading coefficient modulo $p$ we find
\[ \left(\frac{\mu}{e}+ \frac{-2e}{e} \right)=\frac{\mu}{e}-2 \]
which is divisible by $p$ if and only if $p\mid \mu-2e$. Since $e<p/2$ we have
$$-p<-2e < \mu-2e < \frac{e(p-1)}{p}-2e=\left(-1-\frac{1}{p}\right)e<0$$
and hence $p\nmid\mu-2e$.
In the proof of Prop. \ref{notsoeasy} b) we showed $\vw(a_j\varpi^j)>l(a)$ for $j\neq l(a),l(a)-e(p-1)$ and as above this implies that the corresponding terms in $\nabla a$ all have valuation larger than $l(a)-e$.
\end{proof}

We handle the case $p\mid l(a)$ in a separate Lemma. Similar to Prop. \ref{notsoeasy} we need to compute modulo $p^2$.

\begin{lemma} Assume $e<p/4$ and $0< \mu p <-e+e(p-1)$. Then there exists $a\in(A_K^{\psi=1})^{[\mu p]}$ with $l(a)=\mu p$ and
\[ \vw(\nabla a)=l(\nabla a)=\mu p-e+e(p-1).\]
Moreover we can choose $a$ with any leading coefficient.
\label{divbyp}\end{lemma}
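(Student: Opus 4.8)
The plan is to mimic the proof of Proposition \ref{notsoeasy}, but now tracking the effect of $\nabla$ on the leading terms modulo $p^2$. First I would construct a candidate $a\in (A_K^{\psi=1})^{[\mu p]}$ with $l(a)=\mu p$. By Proposition \ref{existence} there is an element of $E_K^{\psi=1}$ with $l=\mu p$ and any prescribed leading coefficient (this requires $p\nmid\mu p$, which fails, so I instead start from a series with $l$ congruent to $\mu p$ but coprime to $p$, or—better—lift via Lemma \ref{lift} and then project to the $[\mu p]$-isotypic component using Lemma \ref{eigen}); then I lift to $A_K^{\psi=1}$ by Lemma \ref{lift}. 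Since $l(a)=\mu p$ is divisible by $p$, formula (\ref{nablaaction}) shows that $\nabla\pi_K^{\mu p}=\frac{\mu p}{e}\pi_K^{\mu p-e}+\frac{\mu p}{e}\pi_K^{\mu p}$ contributes nothing mod $p$ at degree $\mu p-e$; so the naive leading term of $\nabla a$ is killed mod $p$ and we must descend further, exactly as in the $p\mid\mu$ case of Prop. \ref{notsoeasy}.

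Next I would use the $\psi=1$ relation, in the refined form of Lemma \ref{modp2} (equation (\ref{star2}) or its consequence (\ref{star222})), to pin down the coefficient $a_{l(a)-e(p-1)}=a_{\mu p-e(p-1)}$ modulo $p^2$ in terms of $a_{\mu p}$; this is the analogue of the identity $a_{kp+e(p-1)}\equiv a_{l(a)}\cdot p\cdot\frac{\mu}{e}$ from (\ref{star222}), now with $k=\mu-\frac{e(p-1)}{p}$ or the appropriate value, so that we pick up a factor $p$ and a harmonic-number coefficient $H_{p-2}\equiv 1\bmod p$. Using Proposition \ref{mainestimate} a),b),c) I would show that all coefficients $a_j$ with $j<\mu p-e(p-1)$ satisfy $p^2\mid a_j$ (the bounds on $l_2,l_3$ guarantee this once $e<p/4$ and $\mu p<-e+e(p-1)$), so that the only two terms of $\nabla a$ with $\varpi$-valuation as low as $\mu p-e$ come from $a_{\mu p}$ and $a_{\mu p-e(p-1)}$, namely
\[ \xi\cdot\frac{\mu p}{e}\cdot\frac{\mu p-e}{e}\cdot\varpi^{\mu p-e-e(p-1)}+\xi\cdot\frac{\mu p}{e}\cdot\varpi^{\mu p-e}. \]
Both have valuation $\mu p-e$ a priori, but using $\varpi^{e(p-1)}\equiv -p\bmod (\zeta_p-1)^p$ I would combine them into a single term whose leading coefficient mod $p$ is a unit times $\bigl(\frac{\mu}{e}\cdot\frac{-e}{e}+\text{(contribution from the second term)}\bigr)$; after simplification this is a unit multiple of something like $\mu+\text{(small)}$ divided by $e$, which the hypothesis $e<p/4$ (giving $-p<\mu-\text{const}\cdot e<0$) forces to be a $p$-adic unit. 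Hence the leading term of $\nabla a$ survives at valuation $\mu p-e$—wait, but the claimed answer is $\mu p-e+e(p-1)$, so in fact the correct reading is that \emph{both} contributions at degree $\mu p-e$ cancel mod $p^2$, and the true leading term of $\nabla a$ appears $e(p-1)$ further out; I would organize the cancellation so that the surviving term is the one at $\varpi$-exponent $\mu p-e+e(p-1)=\mu p-e+e(p-1)$, coming from $a_{\mu p}$ together with the $\frac{j}{e}\pi_K^{j}$ tail of (\ref{nablaaction}) applied to $a_{\mu p-e(p-1)}$-type coefficients, and check its leading coefficient is a unit by the same $e<p/4$ estimate.

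Finally I would verify $\vw(\nabla a)=l(\nabla a)$ by checking that every other term $a_j\cdot\frac{j}{e}(\varpi^{j-e}+\varpi^{j})$ of $\nabla a$ has $\varpi$-valuation strictly greater than $\mu p-e+e(p-1)$; this follows term-by-term from the growth estimate (\ref{growth}) together with parts b) and c) of Proposition \ref{mainestimate}, exactly as in Corollary \ref{easy} and the end of the proof of Proposition \ref{notsoeasy} b), with the ranges of $j$ split at multiples of $e(p-1)$ below $l(a)$. The freedom to prescribe the leading coefficient of $a$ is inherited from Proposition \ref{existence} (after the isotypic projection, which by Lemma \ref{eigen} preserves the leading coefficient since $p\cdot l_\nu(a)\equiv \mu p\bmod e(p-1)$ holds automatically for an eigenvector). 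The main obstacle I anticipate is the bookkeeping of the modulo-$p^2$ cancellation: making sure that the two $\varpi^{\mu p-e}$-contributions genuinely cancel (not merely that their sum is a non-unit), which forces the true leading term out to $\mu p-e+e(p-1)$, and then pinning down that \emph{its} coefficient is a unit—this is where the sharper hypothesis $e<p/4$ (rather than $e<p/2$ as in Lemma \ref{notdivbyp}) is consumed, and where I would need to be careful that Proposition \ref{mainestimate} c) applies, i.e. that the hypothesis $l_2(a)\ge l(a)-e(p-1)$ needed there has been arranged when constructing $a$.
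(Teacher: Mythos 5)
Your sketch diverges from the paper in a way that opens a genuine gap. You propose to take an arbitrary $a\in(A_K^{\psi=1})^{[\mu p]}$ with $l(a)=\mu p$ and then control the coefficient $a_{\mu p-e(p-1)}$ modulo $p^2$ via a relation of type (\ref{star222}). No such relation is available here: $a_{\mu p}$ appears only in the equations (\ref{star2}) with indices $k=\mu$ (on the left) and $k=\mu p$ (on the right), while $a_{\mu p-e(p-1)}$ appears only with indices $k=\mu-e$, $k=\mu-2e$ and $k=\mu p-e(p-1)$, so the two coefficients never sit in the same equation and one cannot be pinned down in terms of the other. More fundamentally, what the argument actually needs is $l_2(a)=\mu p$, i.e.\ \emph{every} $a_j$ with $j<\mu p$ divisible by $p^2$. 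For a generic $a$ with $l(a)=\mu p$, Proposition \ref{mainestimate}(b) only gives $l_2(a)>\mu p-e(p-1)$; if some $a_j$ with $\mu p-e(p-1)<j<\mu p$, $p\nmid j$, is exactly divisible by $p$, then $a_j\frac{j}{e}\varpi^{j-e}$ contributes to $\nabla a$ at $\varpi$-valuation $e(p-1)+j-e<\mu p-e+e(p-1)$, destroying $\vw(\nabla a)=l(\nabla a)$. The estimates you invoke at the end (Corollary \ref{easy}, Proposition \ref{mainestimate}(b),(c)) do not rule this out, and the caveat you flag about the hypothesis of \ref{mainestimate}(c) is not the real issue, since that hypothesis is automatic from \ref{mainestimate}(b). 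The paper secures $l_2(a)=\mu p$ by construction: it builds $a'\in A_K^{\psi=1}$ with $a'_i\equiv 0\bmod p^2$ for all $i<\mu p+e(p-1)$, $i\neq\mu p$, and $a'_{\mu p}\equiv 1$, checking compatibility with (\ref{star2}); the significant companion coefficient is then $a'_{\mu p+e(p-1)}\equiv -1$, sitting \emph{above} $\mu p$, forced by (\ref{star2}) at $k=\mu$ (using $H_{p-1}\equiv 0\bmod p$). Taking $a=e_{[\mu p]}a'$ gives $l(a)=l_2(a)=\mu p$ by Lemma \ref{eigen}.

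There is also no ``cancellation at degree $\mu p-e$''. The two relevant contributions to $\nabla a$ land at the \emph{same} $\varpi$-valuation $\mu p-e+e(p-1)$: the term $\frac{\mu p}{e}\varpi^{\mu p-e}$ from $a'_{\mu p}$, whose coefficient is divisible by $p$ and hence is raised by $e(p-1)$ upon using $\varpi^{e(p-1)}\equiv -p$, and the term $-\frac{\mu p+e(p-1)}{e}\varpi^{\mu p-e+e(p-1)}$ from $a'_{\mu p+e(p-1)}\equiv -1$. These \emph{add} to a leading coefficient $\equiv 1-\frac{\mu}{e}\bmod p$, a unit because $0<\mu<e<p$. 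The expression you display, produced by substituting $\mu p$ for $l(a)$ in the formula of Lemma \ref{notdivbyp}, mixes terms whose $\varpi$-valuations ($\mu p-e$ versus $\mu p-e+e(p-1)$) are different, so they cannot cancel. (A minor point: the obstruction in Proposition \ref{existence} is $n\equiv-e\bmod p$, not $p\mid n$; since $p\nmid e$ one has $\mu p\not\equiv -e\bmod p$, so that proposition in fact applies directly.)
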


\begin{proof} The statement about the leading coefficient will be clear from the proof, so to alleviate notation we take the leading coefficient to be $1$. First we can find $a'\in A_K^{\psi=1}$ with
\[ a'\equiv\pi_K^{\mu p}-\pi_K^{\mu p+e(p-1)}+\cdots \pmod {p^2},\]
i.e. with $a_i'\equiv 0$ for all $i<\mu p+e(p-1)$ and $i\neq \mu p$. To see this, first note that (\ref{star2}) is satisfied for $k=\mu$ since $H_{p-1}\equiv 0 \pmod p$ (and we take $a_{\mu p+ne}'$ arbitrary but divisible by $p$ for $n=p+1,\dots, 2(p-1)$). In any equation (\ref{star2}) with index $k<\mu$ the coefficient $a_{\mu p}'$ does not occur on the left hand side since $kp+ne$ is a multiple of $p$ only for $n=0$ among $n\in\{0,\dots,p-1,p+1,\dots,2(p-1)\}$. On the right hand side we always have $a_k'\equiv 0$ since $k<\mu<\mu p$. Similarly, the coefficient $a_{\mu p+e(p-1)}'$ does not occur on the left hand side for $k<\mu$ since $kp+ne=\mu p+e(p-1)$ implies $n \equiv -1 \pmod p$, i.e. $n=p-1$. So the fact that $a_i'\not\equiv 0$ for $i=\mu p,\mu p+e(p-1)$ forces no further nonzero terms in equations with index $k<\mu$. Equations (\ref{star2}) with index $k>\mu$ can always be satisfied inductively by adjusting the variable $a'_{kp+(p-1)e}$ since $a_{kp+(p-1)e}'$ does not occur in any equation with index $k'<k$.

With the notation introduced in subsection \ref{sec:isotypic} set
\[ a=e_{[\mu p]}a' \in (A_K^{\psi=1})^{[\mu p]}\]
so that $l(a)=l_2(a)=\mu p$ by Lemma \ref{eigen}. We have
\[\nabla a'\equiv \frac{\mu p}{e}\cdot\pi_K^{\mu p-e}+\frac{\mu p}{e}\cdot\pi_K^{\mu p}-\frac{\mu p+e(p-1)}{e}\cdot\pi_K^{\mu p-e+e(p-1)}+\cdots \pmod {p^2}\]
and hence
\begin{align*}\nabla a=&\nabla e_{[\mu p]}a'=e_{[\mu p-e]}\nabla a'\\
\equiv &\frac{\mu p}{e}\cdot\pi_K^{\mu p-e}+\cdots-\left(\frac{\mu p+e(p-1)}{e}-\frac{\mu p}{e}x\right)\cdot\pi_K^{\mu p-e+e(p-1)}+\cdots\pmod {p^2}
\end{align*}
where $x$ is the coefficient of $\pi_K^{\mu p-e+e(p-1)}$ in the expansion of $e_{[\mu p-e]}(\pi_K^{\mu p-e}+\pi_K^{\mu p})$. Moreover
\[ l(\nabla a)=l(\nabla e_{[\mu p]}a')=l(e_{[\mu p-e]}\nabla a')=l(\nabla a')=\mu p-e+e(p-1).\]
In order to show that $\vw(\nabla a)=l(\nabla a)$ write
\[\nabla a=\sum_ib_i\cdot\pi_K^i.\]
The terms for $i=\mu p-e$ and $i=\mu p-e+e(p-1)$ contribute the leading term in the variable $\varpi$
\begin{align*} &\frac{\mu p}{e}\cdot\varpi^{\mu p-e}-\left(\frac{\mu p+e(p-1)}{e}-\frac{\mu p}{e}x\right)\cdot\varpi^{\mu p-e+e(p-1)}+\cdots\\
=&\left(-\frac{\mu}{e}-\frac{\mu p+e(p-1)}{e}+\frac{\mu p}{e}x\right)\cdot\varpi^{\mu p-e+e(p-1)}+\cdots\end{align*}
since, similarly to (\ref{twoterms}), we have $p\nmid -\frac{\mu}{e}+1$ as $e<p$. For the terms with $i\neq \mu p-e+e(p-1)$, $\mu p-e$ we must again verify that
\[ \vw(b_i\varpi^i)>\mu p-e+e(p-1). \]
This is clear for $i>\mu p-e+e(p-1)$ and for
\[ \mu p-e<i< \mu p-e+e(p-1)\]
since then $p\mid b_i$. For $i<\mu p-e$ it suffices to show by (\ref{nablaest}) that we have instead $$\vw(a_i\varpi^i)>\mu p+e(p-1)$$ for $i<\mu p$. Since $l_2(a)=\mu p$ we have $\vw(a_i)\geq 2e(p-1)$ for
\[ \mu p-e(p-1)<i<\mu p\]
and hence $\vw(a_i\varpi^i)>\mu p+e(p-1)$. For
\[ \mu p-2e(p-1)<i\leq \mu p-e(p-1)\]
we have by $\vw(a_i)\geq 3e(p-1)$ by Prop. \ref{mainestimate} a) since
\[i\leq \mu p-e(p-1)<-\left(3-\frac{2}{p}\right)\cdot e.\]
Indeed this last inequality is equivalent to
\[\mu p<\left((p-1)-(3-\frac{2}{p})\right)\cdot e\Leftrightarrow \mu < e-\left(\frac{4}{p}-\frac{2}{p^2}\right)\cdot e\]
which holds by our assumption $4e<p$, noting that $e-1$ is the maximal value for $\mu$.
Finally for $$i\leq \mu p-2e(p-1)<-e-e(p-1)=-ep$$
we have by (\ref{growth})
\begin{multline*}  \vw(a_i\varpi^i)\geq -(p-1)i-pe > (p-1)pe-pe=(p-2)pe\geq (2p-3)e\\
=-e+2e(p-1)>\mu p+e(p-1).\end{multline*}

\end{proof}

\begin{prop} If $e<p/4$ the identity (\ref{sigmavol}) holds for $r=2$.
\label{r2prop}\end{prop}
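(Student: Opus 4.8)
The plan is to mimic the proof of Proposition \ref{r1prop}, replacing $\alpha_i$ by $\nabla\alpha_i$ and invoking the sharper valuation estimates of Lemmas \ref{notdivbyp} and \ref{divbyp} in place of Corollary \ref{easy} and Proposition \ref{notsoeasy}. By Proposition \ref{explicit2} it suffices, for each $\Sigma$-orbit $[\eta]=[n_1]=\{n_1,\dots,n_{f_\eta}\}$ with $\eta\vert_{\Delta_e}\neq1$, to produce a $\bz_p[\Sigma][[\gamma_1-1]]$-basis $\{\alpha_i\}$ of $(A_K^{\psi=1}(1))^{[\eta]}$ such that the lattice $L_2=\bigoplus_i\bz_p[\Sigma]\cdot(\nabla\alpha_i^{\sigma^{-1}})(\root e\of{\zeta_p-1})$ has the same $\bz_p[\Sigma]$-volume as $\co_K^{[n_1-2e]}=\bigoplus_i\co_F\cdot(\root{e(p-1)}\of{-p})^{n_{i,2}}$, where $0<n_{i,2}<e(p-1)$ and $n_{i,2}\equiv n_i-2e\pmod{e(p-1)}$; recall $e\nmid n_i$.

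First I would fix the basis. Starting from Proposition \ref{tamebasis}, each $\alpha_i$ has leading coefficient $\xi$ (a $\bz_p[\Sigma]$-basis of $\co_F$), with $l(\alpha_i)=n_i-e$ if $p\nmid n_i$ and $l(\alpha_i)=n_i-e+e(p-1)$ if $p\mid n_i$. When $p\mid(n_i-e)$ --- equivalently $l(\alpha_i)=n_i-e$ is a positive multiple of $p$ (positivity being forced by $e\nmid n_i$ and $e<p$) --- I replace $\alpha_i$ by the element supplied by Lemma \ref{divbyp}, which lies in the same $\Delta$-eigenspace and still has leading coefficient $\xi$; when $p\mid n_i$ I take $\alpha_i$ of the refined shape of Proposition \ref{notsoeasy}(a). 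One must check that the modified family is still a basis: this follows from the Nakayama argument of Proposition \ref{tamebasis} together with Lemma \ref{gammaaction}, the $l_2$-control needed as input being exactly what Lemma \ref{divbyp}(a) and Proposition \ref{notsoeasy}(a) provide.

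Next I would compute $\vw(\nabla\alpha_i)$ case by case. If $p\nmid n_i$ and $p\nmid(n_i-e)$ then $p\nmid l(\alpha_i)$ and $l(\alpha_i)<-e+e(p-1)$, so Lemma \ref{notdivbyp} gives $\vw(\nabla\alpha_i)=l(\alpha_i)-e=n_i-2e$. If $p\mid n_i$ then $l(\alpha_i)=\mu p-e+e(p-1)$ with $\mu=n_i/p$, so Lemma \ref{notdivbyp} again gives $\vw(\nabla\alpha_i)=l(\alpha_i)-e=n_i-2e+e(p-1)$ (only $e<p/2$ is used here). If $p\mid(n_i-e)$ then Lemma \ref{divbyp} gives $\vw(\nabla\alpha_i)=l(\alpha_i)-e+e(p-1)=n_i-2e+e(p-1)$ (this is where $e<p/4$ is needed). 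In every case $\vw(\nabla\alpha_i)\equiv n_i-2e\pmod{e(p-1)}$ and the leading coefficient of $\nabla\alpha_i$ is a $\bz_p^\times$-multiple of $\xi$; using $\root e\of{\zeta_p-1}=(\text{unit})\cdot\root{e(p-1)}\of{-p}$ and $(\root{e(p-1)}\of{-p})^{e(p-1)}=-p$ it follows that the $i$-th basis vectors of $L_2$ and $\co_K^{[n_1-2e]}$ differ by a unit in $\co_F$ times $(-p)^{k_i}$, where $e(p-1)k_i=\vw(\nabla\alpha_i)-n_{i,2}$.

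Hence $\Det_{\bz_p[\Sigma]}L_2=\Det_{\bz_p[\Sigma]}\co_K^{[n_1-2e]}$ will follow once $\sum_i k_i=0$. Here $k_i=0$ except at the "boundary" indices: the unique index (if any) with $n_i<2e$ contributes $-1$, while each of the (at most one) indices with $p\mid n_i$ or with $p\mid(n_i-e)$ contributes $+1$. As at the end of the proof of Proposition \ref{r1prop}, one shows --- using that all $n_i$ lie in a single residue class mod $p-1$ whereas the relevant intervals have length $<p-1$ (again forcing $e<p/4$) --- that at most one of the two kinds of "$+1$" index can occur, that its predecessor in the orbit under multiplication by $p$ is precisely the "$-1$" index, and conversely; so the contributions cancel and $\sum_i k_i=0$. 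By Proposition \ref{explicit2} this is (\ref{sigmavol}) for $r=2$. The step I expect to be most delicate is this final bookkeeping --- identifying exactly which boundary indices occur in a given orbit and checking they pair up --- together with the verification that the modified family remains a basis; no input beyond Lemmas \ref{notdivbyp} and \ref{divbyp}, hence beyond the hypothesis $e<p/4$, should be required.
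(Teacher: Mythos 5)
Your proposal is correct and follows essentially the same route as the paper: apply Proposition \ref{explicit2}, choose the basis from Proposition \ref{tamebasis} (modified via Proposition \ref{notsoeasy}(a) / Lemma \ref{divbyp} when $p\mid l(\alpha_i)$ respectively $p\mid n_i-e$), compute $\vw(\nabla\alpha_i)$ via Lemmas \ref{notdivbyp} and \ref{divbyp}, and do the boundary bookkeeping. Your observation that at most one $n_i$ can lie in $(0,2e)$ (because $2e<p-1$) is a small sharpening of the paper's treatment --- the paper lists the case ``$n_1<e$ and $e<n_h<2e$'' among four cases, but as you note that case is in fact vacuous under $e<p/4$ --- and it makes the pairing of the ``$+1$'' and ``$-1$'' indices more transparent.
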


\begin{proof} By Lemmas \ref{notdivbyp} and \ref{divbyp} we can choose $\alpha_i$ such that
$$ \vw(\nabla\alpha_i)=l(\nabla\alpha_i)=\begin{cases} n_i-2e & \text{if $p\nmid n_i$ and $p\nmid n_i-e$}\\ n_i-2e+e(p-1) & \text{if $p\mid n_i$ or $p\mid n_i-e$.}\end{cases}$$
As in the proof of Prop. \ref{r1prop}, for each $0<n_1<e$ there is a unique $n_2=pn_1$ divisible by $p$. Similarly for each $n_h$ with $e<n_h<2e$ (which is unique if it exists) there is a unique $$n_{h+1}-e=p(n_h-e)$$ divisible by $p$. Note here that $n_h\leq 2e-1$ and hence $$n_{h+1}\leq p(e-1)+e<e(p-1)$$ using $2e<p$.
Let
$$q_i:=\nabla\alpha_i^{\sigma^{-1}}(\root e \of {\zeta_p-1})\in K$$
be the basis of $L_2$. We again find that
\begin{align*} &p \cdot q_1, p^{-1}\cdot q_2, \dots, p\cdot q_h,p^{-1}\cdot q_{h+1},\dots, q_{f_\eta}
 &\text{if $n_1<e$ and $e<n_h<2e$}\\
&p \cdot q_1, p^{-1}\cdot q_2, \dots, q_h, q_{h+1},\dots, q_{f_\eta} &\text{if $n_1<e$ and $\not\exists$ $e<n_h<2e$}\\
&q_1, q_2, \dots, p\cdot q_h, p^{-1}\cdot q_{h+1},\dots, q_{f_\eta} &\text{if $\not\exists$  $n_1<e$ and $e<n_h<2e$}\\
&q_1, q_2, \dots, q_h, q_{h+1},\dots, q_{f_\eta} &\text{if $\not\exists$ $n_1<e$ nor $e<n_h<2e$}\end{align*}
is a basis of $\co_{K}^{[n_1-2e]}$ and the statement follows.
 \end{proof}

\begin{bibdiv}
\begin{biblist}
\bib{bb05}{article}{
  author={Benois, D.},
  author={Berger, L.},
  title={Th\'eorie d'Iwasawa des repr\'esentations cristallines II},
  journal={Comment. Math. Helv.},
  volume={83},
  number={3},
  pages={603\ndash 677},
  date={2008},
}

\bib{bk88}{article}{
  author={Bloch, S.},
  author={Kato, K.},
  title={L-functions and Tamagawa numbers of motives},
  booktitle={In: The Grothendieck Festschrift I, Progress in Math. \bf {86}},
  publisher={Birkh\"auser},
  place={Boston},
  date={1990},
  pages={333\ndash 400},
}

\bib{bleycobbe16}{article}{
  author={Bley, W.},
  author={Cobbe, A.},
  title={The equivariant local $\epsilon $-conjecture for unramified twists of $\bz _p(1)$},
  eprint={http://arxiv.org/pdf/1602.07858.pdf},
}

\bib{breuning04}{article}{
  author={Breuning, M.},
  title={Equivariant local epsilon constants and \'etale cohomology},
  journal={J. London Math. Soc.},
  volume={70},
  number={2},
  date={2004},
  pages={289\ndash 306},
}

\bib{bufl06}{article}{
  author={Burns, D.},
  author={Flach, M.},
  title={On the equivariant Tamagawa number conjecture for Tate motives, Part II},
  journal={Documenta Math., Extra Volume: John H. Coates Sixtieth Birthday},
  date={2006},
  pages={133\ndash 163},
}

\bib{chercol99}{article}{
  author={Cherbonnier, F.},
  author={Colmez, P.},
  title={Th\'eorie d'Iwasawa des repr\'esentations p-adiques d'un corps local},
  journal={Jour. AMS},
  date={1999},
  volume={12},
  pages={241\ndash 268},
}

\bib{col79}{article}{
  author={Coleman, R.},
  title={Division values in local fields},
  journal={Invent. Math.},
  volume={53},
  date={1979},
  pages={91\ndash 116},
}

\bib{colmez99}{article}{
  author={Colmez, P.},
  title={Repr\'esentations cristallines es repr\'esentations de hauteur finie},
  journal={J. reine angew. Math.},
  volume={514},
  date={1999},
  pages={119\ndash 143},
}

\bib{daigle_thesis_14}{thesis}{
  author={Daigle, J.},
  title={On the local Tamagawa Number Conjecture for Tate motives},
  date={2014},
  organization={Caltech},
  note={Ph.D. thesis},
}

\bib{deligne73}{article}{
  author={Deligne, P.},
  title={Les constants des \'equations fonctionnelles des fonctions L, In:Modular functions of one variable II},
  publisher={Springer},
  series={Lecture Notes in Math. {\bf 349}},
  date={1973},
  pages={501\ndash 597},
}

\bib{deligne87}{article}{
  author={Deligne, P.},
  title={Le d\'eterminant de la cohomologie},
  book={title={Current Trends in Arithmetical Algebraic Geometry}, series={Contemp. Math. \bf {67}}, publisher={Amer. Math. Soc.}, date={1987}},
  pages={313\ndash 346},
}

\bib{fpr91}{article}{
  author={Fontaine, J.-M.},
  author={Perrin-Riou, B.},
  title={Autour des conjectures de Bloch et Kato: cohomologie galoisienne et valeurs de fonctions L},
  book={title={Motives (Seattle, WA, 1991)}, series={Proc. Sympos. Pure Math. \bf {55}, Part 1}, publisher={Amer. Math. Soc.}, date={1994}},
  pages={599\ndash 706},
}

\bib{froehlich}{article}{
  author={Fr\"ohlich, A.},
  title={Arithmetic and Galois module structure for tame extensions},
  journal={J. Reine Angew. Math.},
  volume={286/287},
  date={1976},
  pages={380\ndash 440},
}

\bib{fk}{article}{
  author={Fukaya, T.},
  author={Kato, K.},
  title={A formulation of conjectures on $p$-adic zeta functions in non-commutative Iwasawa theory},
  journal={Proc. St. Petersburg Math. Soc.},
  volume={\bf 11},
  date={2005},
}

\bib{kato932}{article}{
  author={Kato, K.},
  title={Lectures on the approach to Iwasawa theory of Hasse-Weil $L$-functions via $B_{dR}$, Part II},
  status={preprint 1993},
}

\bib{langalg}{book}{
  author={Lang, S.},
  title={Algebra},
  publisher={Springer},
  series={Graduate Text in Mathematics \bf {211}},
  date={2002},
}

\bib{neukirch}{book}{
  author={Neukirch, J.},
  title={Algebraic Number Theory},
  publisher={Springer},
  series={Grundlehren der mathematischen Wissenschaften {\bf 322}},
  date={1999},
}

\bib{nsw}{book}{
  author={Neukirch, J.},
  author={Schmidt, A.},
  author={Wingberg, K.},
  title={Cohomology of Number Fields},
  publisher={Springer},
  series={Grundlehren der mathematischen Wissenschaften {\bf 323}},
  date={2000},
}

\bib{pr90}{article}{
  author={Perrin-Riou, B.},
  title={Th\'eorie d'Iwasawa $p$-adique locale et globale},
  journal={Invent. Math.},
  volume={99},
  date={1990},
  pages={247\ndash 292},
}

\bib{pr94}{article}{
  author={Perrin-Riou, B.},
  title={Th\'eorie d'Iwasawa des repr\'esentations $p$-adiques sur un corps local},
  journal={Invent. Math.},
  volume={115},
  date={1994},
  pages={81\ndash 149},
}

\bib{serre95}{book}{
  author={Serre, J.P.},
  title={Local Fields},
  series={Graduate Texts in Mathematics \bf {67}},
  publisher={Springer},
  place={New York},
  date={1995},
  edition={Second corrected printing},
}

\bib{venj13}{article}{
  author={Venjakob, O.},
  title={On Kato's local $\epsilon $-isomorphism Conjecture for rank one Iwasawa modules},
  journal={Algebra Number Theory},
  volume={7},
  date={2013},
  number={10},
  pages={2369\ndash 2416},
}

\bib{wash}{book}{
  author={Washington, L.C.},
  title={Introduction to Cyclotomic Fields},
  series={Graduate Texts in Mathematics \bf {83}},
  publisher={Springer},
  place={New York},
  date={1997},
  edition={Second Edition},
}

\end{biblist}
\end{bibdiv}

\end{document}